\documentclass[12pt]{article}
\DeclareUnicodeCharacter{FFFD}{}
\usepackage{amsmath,amssymb,amsthm}
\usepackage{ytableau}
\usepackage{graphicx}
\usepackage{enumitem}
\usepackage{appendix}
\usepackage{tikz}
\usepackage{makecell}
\usepackage[all]{xy}
\usepackage{natbib}
\usepackage[a4paper,top=1in,bottom=1in,left=1.25in,right=1.25in]{geometry}
\usepackage{pifont}

\setlist[enumerate,1]{leftmargin=*}
\setlist[itemize]{leftmargin=1em}

\usepackage{hyperref}
\usepackage[capitalize,nameinlink]{cleveref}
\usepackage{aliascnt}

\theoremstyle{plain}
\newtheorem{theorem}{Theorem}[section]

\newaliascnt{lemma}{theorem}
\newtheorem{lemma}[lemma]{Lemma}
\aliascntresetthe{lemma}

\newaliascnt{proposition}{theorem}
\newtheorem{proposition}[proposition]{Proposition}
\aliascntresetthe{proposition}

\newaliascnt{corollary}{theorem}
\newtheorem{corollary}[corollary]{Corollary}
\aliascntresetthe{corollary}

\theoremstyle{definition}
\newaliascnt{definition}{theorem}
\newtheorem{definition}[definition]{Definition}
\aliascntresetthe{definition}

\newaliascnt{example}{theorem}
\newtheorem{example}[example]{Example}
\aliascntresetthe{example}

\newaliascnt{conjecture}{theorem}
\newtheorem{conjecture}[conjecture]{Conjecture}
\aliascntresetthe{conjecture}

\theoremstyle{remark}
\newaliascnt{remark}{theorem}
\newtheorem{remark}[remark]{Remark}
\aliascntresetthe{remark}

\newaliascnt{question}{theorem}
\newtheorem{question}[question]{Question}
\aliascntresetthe{question}

\crefname{theorem}{Theorem}{Theorems}       \Crefname{theorem}{Theorem}{Theorems}
\crefname{lemma}{Lemma}{Lemmas}             \Crefname{lemma}{Lemma}{Lemmas}
\crefname{proposition}{Proposition}{Propositions} \Crefname{proposition}{Proposition}{Propositions}
\crefname{corollary}{Corollary}{Corollaries} \Crefname{corollary}{Corollary}{Corollaries}
\crefname{definition}{Definition}{Definitions} \Crefname{definition}{Definition}{Definitions}
\crefname{example}{Example}{Examples}       \Crefname{example}{Example}{Examples}
\crefname{remark}{Remark}{Remarks}          \Crefname{remark}{Remark}{Remarks}
\crefname{question}{Question}{Questions}    \Crefname{question}{Question}{Questions}
\crefname{conjecture}{Conjecture}{Conjectures} \Crefname{conjecture}{Conjecture}{Conjectures}

\crefname{equation}{Equation}{Equations}    \Crefname{equation}{Equation}{Equations}
\crefname{figure}{Figure}{Figures}          \Crefname{figure}{Figure}{Figures}
\crefname{table}{Table}{Tables}             \Crefname{table}{Table}{Tables}

\renewenvironment{proof}{\noindent\textbf{Proof.}\ }{\hfill$\square$\par}

\title{Some closed formulas for super Kazhdan-Lusztig polynomials}
\author{\textbf{Shunsuke Hirota}}
\date{\textit{\today}}
\begin{document}

\maketitle
Kazhdan--Lusztig theory for finite-dimensional representations of the
general linear Lie superalgebra $\mathfrak{gl}(m|n)$ is well understood
through the Khovanov arc-diagram combinatorics. On the other hand, the
Kazhdan--Lusztig conjecture for the full super category $\mathcal O$ was
established by Cheng--Lam--Wang and Brundan--Losev--Webster, and the
corresponding Kazhdan--Lusztig polynomials can be computed by Brundan's algorithm.

The aim of this paper is to deepen our understanding of super category
$\mathcal O$ and its Kazhdan--Lusztig polynomials by bringing together
three ingredients: auxiliary modules associated with changes of Borel
subalgebras introduced by Cheng--Lam--Wang, the Koszul grading studied by
Brundan--Losev--Webster, and  Whittaker coinvariants functor investigated
by Brundan--Goodwin. We obtain, in particular, closed formulas for certain
Kazhdan--Lusztig polynomials associated with odd reflections, determine the
socles of some auxiliary modules, and realize certain
pullbacks of standard modules via parabolic Miura transforms.

We also use Khovanov arc-diagram combinatorics to characterize certain
regular dominant weights intrinsically in terms of the underlying abelian
category. We discuss how this viewpoint can be used to obtain concrete examples
relevant to the study of Morita equivalence classes of blocks of super
category $\mathcal O$.

\tableofcontents

\section{Introduction}
\label{sec:intro}

\subsection{Background}

Highest weight representations are basic objects in the representation theory of semisimple
Lie algebras. Among the most fundamental results in the subject is the Kazhdan--Lusztig
conjecture, which asserts that the composition multiplicities of Verma modules are governed by
the combinatorics of the Hecke algebra. This monumental result has had a profound influence on
a broad range of representation theory. The Kazhdan--Lusztig polynomials underlying this
phenomenon continue to be extensively studied from a combinatorial point of view, particularly
in type $A$.

On the other hand, Brundan \cite{brundan2003kazhdan} introduced super Kazhdan--Lusztig polynomials by naturally
extending the Fock space formulation of type $A$ Kazhdan--Lusztig theory. He conjectured that
their values at $q=1$ govern the composition multiplicities of Verma supermodules for the
general linear Lie superalgebra $\mathfrak{gl}(m|n)$, the natural super analogue of
$\mathfrak{gl}_n$, and gave an algorithm for computing these super Kazhdan--Lusztig
polynomials. In the same paper, he proved the corresponding statement for finite-dimensional
representations and recovered the general irreducible character formula previously obtained
by Serganova \cite{serganova1996kazhdan}.

The super Kazhdan--Lusztig polynomials arising in finite-dimensional representation theory
are by now very well understood, especially through the language of the Khovanov arc diagrams
developed by Brundan and Stroppel \cite{brundanstroppelI}. The general super Kazhdan--Lusztig polynomials considered
in the present paper are substantially more complicated.

Brundan's conjecture was proved by Cheng--Lam--Wang \cite{cheng2015brundan} using a remarkable categorical
equivalence known as super duality \cite{cheng2010irreducible}. More generally, they considered variants of
Kazhdan--Lusztig polynomials and Verma supermodules associated with mutually non-conjugate
Borel subalgebras, a phenomenon specific to Lie superalgebras. Cao--Lam \cite{cao2016inversion} subsequently gave an
algorithm for the corresponding Kazhdan--Lusztig polynomials for arbitrary choices of Borel
subalgebra.

Later, Brundan--Losev--Webster \cite{brundan2017tensor} gave another proof using the theory of the degenerate affine
Hecke algebra, formulated through the uniqueness of tensor product categorifications \cite{losev2015uniqueness}. They
showed that suitable subquotients of highest weight categories for Lie superalgebras are
equivalent to corresponding categories for ordinary Lie algebras. This allowed them to prove
Koszulity and thereby give a representation-theoretic interpretation of the variable $q$ in
the super Kazhdan--Lusztig polynomials. Their theory also provided another proof of super
duality \cite{leonard2020graded}.

The purpose of the present paper is to deepen the understanding of these results, with
particular emphasis on super Kazhdan--Lusztig polynomials and the modules introduced by
Cheng--Lam--Wang. We focus especially on the replacement of mutually non-conjugate Borel
subalgebras, or equivalently on odd reflections, which is one of the characteristic phenomena
of Lie superalgebra representation theory.

Schematically, one may say that super Kazhdan--Lusztig polynomials are independent of the
choice of Borel subalgebra $\mathfrak b$ after specializing $q=1$, whereas their graded
versions do depend on $\mathfrak b$.

Throughout the paper, we pay particular attention to the case $m=n$, namely
$\mathfrak{gl}(n|n)$. Existing results strongly suggest that this case exhibits especially
important and elegant structures.

For example, for $\mathfrak{gl}(3|3)$ one can visualize the collection of Borel subalgebras
with fixed even part as a finite graph whose vertices are Borel subalgebras and whose edges
correspond to odd reflections.

\begin{center}
\begin{tikzpicture}[scale=0.75]

% y=0（共有）
\node (A) at (0,0) {$()$};
\node (B) at (2,0) {$(1)$};
\node (D) at (6,0) {$(21)$};
\node (F) at (8,0) {$(2^2)$};

% plus（y>0）
\node (Cp) at (4, 1) {$(1^2)$};
\node (Ep) at (6, 2) {$(1^3)$};
\node (Gp) at (8, 2) {$(21^2)$};

% minus（y<0）
\node (Cm) at (4,-1) {$(2)$};
\node (Em) at (6,-2) {$(3)$};
\node (Gm) at (8,-2) {$(31)$};

% 辺：plus 側
\draw (A) -- (B);
\draw (B) -- (Cp);
\draw (Cp) -- (D) (Cp) -- (Ep);
\draw (D) -- (F) (D) -- (Gp) (Ep) -- (Gp);

% 辺：minus 側（同型）
\draw (B) -- (Cm);
\draw (Cm) -- (D) (Cm) -- (Em);
\draw (D) -- (Gm) (Em) -- (Gm);

% y=0（鏡側）
\node (AR) at (18,0) {$(3^3)$};
\node (BR) at (16,0) {$(3^22)$};
\node (DR) at (12,0) {$(321)$};
\node (FR) at (10,0) {$(31^2)$};

% plus（鏡側）
\node (CpR) at (14, 1) {$(32^2)$};
\node (EpR) at (12, 2) {$(2^3)$};
\node (GpR) at (10, 2) {$(2^21)$};

% minus（鏡側）
\node (CmR) at (14,-1) {$(3^21)$};
\node (EmR) at (12,-2) {$(3^2)$};
\node (GmR) at (10,-2) {$(32)$};

% 辺：plus（鏡側）
\draw (AR) -- (BR);
\draw (BR) -- (CpR);
\draw (CpR) -- (DR) (CpR) -- (EpR);
\draw (DR) -- (FR) (DR) -- (GpR) (EpR) -- (GpR);

% 辺：minus（鏡側）
\draw (BR) -- (CmR);
\draw (CmR) -- (DR) (CmR) -- (EmR);
\draw (DR) -- (GmR) (EmR) -- (GmR);

% 追加の接続
\draw (Gp) -- (FR) (Gm) -- (FR);
\draw (F) -- (GpR) (F) -- (GmR);
\draw (Gp) -- (GpR) (Gm) -- (GmR);

\end{tikzpicture}
\end{center}

In this picture, there is a distinguished ``cube'' in the middle.
Brundan--Goodwin showed that this cube plays a central role in the
realization of the super Soergel functor via Whittaker coinvariants.
We emphasize, in particular, that a special case of generalization of the auxiliary modules
introduced by Cheng--Lam--Wang appears naturally in this construction and
plays an essential role in the functor.
Note that here the upper triangular Borel subalgebra corresponds to $()$.

\subsection{Contents}

We now describe the contents of the paper in more detail.

In \cref{sec:super-kl-combinatorics}, we review the purely combinatorial theory of super Kazhdan--Lusztig
polynomials and their variants, following the existing literature.

In \cref{subsec:category-o-definitions}, we recall the basic representation theory of $\mathfrak{gl}(m|n)$.

In \cref{subsec:categorification-c-monomial}, using the variants of Verma supermodules introduced by Cheng--Lam--Wang,
together with the Duflo--Serganova functor \cite{gorelik2022duflo}, we prove a result that may be viewed as a
refinement of BGG reciprocity \cref{thm:U-N-reciprocity}. By combining this with the Koszul grading and building on the
work of Cao--Lam \cite{cao2016inversion}, we obtain the following result, which shows that the behavior of super
Kazhdan--Lusztig polynomials under odd reflections can be described by a closed formula.

\begin{theorem}
\label{oddkl}
For \(\mathbf b,\mathbf b'\in L(m,n)\) and \(\lambda\in\Lambda\), one has
\[
  \sum_{r\geq 0}
  \bigl[
    M^{\mathbf b}(\lambda-\rho^{\mathbf b})
    :
    L^{\mathbf b'}(\lambda-\rho^{\mathbf b'})\langle -r\rangle
  \bigr]q^r
  =
  q^{\ell_{\lambda}(\mathbf b,\mathbf b')},
\]
where \(\ell_{\lambda}(\mathbf b,\mathbf b')\) is the combinatorial quantity
defined in \cref{ld}.
\end{theorem}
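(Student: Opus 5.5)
The plan is to reduce the statement to two facts. The first is that the ungraded multiplicity is exactly one. The second is that the Koszul grading pins down the single degree in which $L^{\mathbf b'}(\lambda-\rho^{\mathbf b'})$ occurs in $M^{\mathbf b}(\lambda-\rho^{\mathbf b})$.

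\textbf{Ungraded statement.} Recall that $L^{\mathbf b'}(\lambda-\rho^{\mathbf b'})\cong L^{\mathbf b}(\lambda-\rho^{\mathbf b})$. This follows from the standard odd-reflection rule, applied along a path of odd reflections from $\mathbf b$ to $\mathbf b'$, since the $\rho$-shifted highest weight $\lambda$ is preserved at each step. So the left-hand side specialized at $q=1$ equals $[M^{\mathbf b}(\lambda-\rho^{\mathbf b}):L^{\mathbf b}(\lambda-\rho^{\mathbf b})]$, which is $1$ because it is the top composition factor of a Verma module. Hence the graded multiplicity is a single monomial $q^{r_0}$, and the whole task is to compute $r_0$.

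\textbf{Computing the degree.} I would realize $M^{\mathbf b}(\lambda-\rho^{\mathbf b})$ as a graded module in the Koszul graded lift (Brundan--Losev--Webster), normalized so that the $\mathbf b'$-standard objects have heads in degree $0$. The aim is to show that the grading shift of its head, relative to $L^{\mathbf b'}(\lambda-\rho^{\mathbf b'})$, is $\ell_\lambda(\mathbf b,\mathbf b')$.

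\emph{Reduction to a single odd reflection.} For one odd reflection across an odd simple root $\alpha$, there are two cases.
\begin{itemize}
\item If $(\lambda,\alpha)\neq 0$ (atypical direction absent), the two Verma modules coincide up to the obvious identification, so the shift is $0$.
\item If $(\lambda,\alpha)=0$, there is a nonsplit map between $M^{\mathbf b}$ and $M^{\mathbf b'}$ whose kernel and cokernel are governed by $\alpha$. In the graded lift this map is homogeneous of degree $1$, so the shift is $\pm1$ depending on the direction of the reflection.
\end{itemize}
The contributions then add along a path. This requires the graded BGG-type reciprocity of \cref{thm:U-N-reciprocity}: it identifies the graded multiplicities of simples in $\mathbf b$-Vermas with graded multiplicities of $\mathbf b$-Vermas in $\mathbf b'$-standard filtrations of projectives, and so makes the composition of shifts well defined.

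\textbf{Matching with the combinatorics.} I would compare the accumulated shift with the definition of $\ell_\lambda$ in \cref{ld}. Via Cao--Lam's description of the canonical and dual canonical bases under change of Borel, this amounts to a bar-involution computation in the Fock space, where $\ell_\lambda$ should count atypical odd reflections with sign. Concretely, the $\mathbf b$-standard monomial for $\lambda$ expands in the $\mathbf b'$-dual canonical basis. The only term involving the dual canonical element indexed by $\lambda$ itself has coefficient $q^{\ell_\lambda}$, because the transition matrix is unitriangular up to these shifts.

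\textbf{Main obstacle.} I expect the hardest step to be path-independence together with the sign and normalization conventions of the grading: two different paths of odd reflections must give the same total shift. My first attempt would be to avoid paths altogether. I would use the Duflo--Serganova reduction of \cref{subsec:categorification-c-monomial} to cut the atypicality down one root at a time, and check that $\ell_\lambda$ is intrinsically defined, as the count of atypical roots $\alpha$ that are positive for $\mathbf b$ but negative for $\mathbf b'$. If that fails, I would instead verify directly from the Fock space side that $\ell_\lambda$ is additive along any path.
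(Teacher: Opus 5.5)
Your first step is false, and the rest of the plan inherits the error. The isomorphism $L^{\mathbf b'}(\lambda-\rho^{\mathbf b'})\cong L^{\mathbf b}(\lambda-\rho^{\mathbf b})$ survives only typical reflections. By \cref{5.2.ch_eq}(4b), across an odd reflection with $(\lambda,\alpha)=0$ one has $L^{\mathbf b}(\lambda-\rho^{\mathbf b})\cong L^{\mathbf b'}(\lambda+\alpha-\rho^{\mathbf b'})$. What is invariant at fixed $\lambda$ is the Verma character (part (1)), not the simple head.

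A rank-one example shows this. Take $\mathfrak{gl}(1|1)$, $\mathbf b=01$, $\mathbf b'=10$, $\beta=\varepsilon_1-\delta_1$ (so $\rho^{10}=\rho^{01}+\beta$), and $(\lambda,\beta)=0$. Then $M^{01}(\lambda-\rho^{01})$ is uniserial with head $\Bbbk_{\lambda-\rho^{01}}$ and socle $\Bbbk_{\lambda-\rho^{10}}\cong L^{10}(\lambda-\rho^{10})$. It is the socle that produces the $q^{1}$.

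If the factor really were the head of $M^{\mathbf b}(\lambda-\rho^{\mathbf b})$, no normalization could produce a nonzero exponent. A graded lift of an indecomposable module is unique up to an overall shift, which moves every layer at once. The statement's own normalization (the case $\mathbf b=\mathbf b'$) puts the head in degree $0$. So your plan would output $q^0$, contradicting the theorem whenever $\ell_\lambda(\mathbf b,\mathbf b')>0$.

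Multiplicity one is true, but for a different reason: $\operatorname{ch}M^{\mathbf b}(\lambda-\rho^{\mathbf b})=\operatorname{ch}M^{\mathbf b'}(\lambda-\rho^{\mathbf b'})$, and $L^{\mathbf b'}(\lambda-\rho^{\mathbf b'})$ occurs once in the latter. The real content of the theorem is locating this, in general non-top, composition factor in grading layer $\ell_\lambda(\mathbf b,\mathbf b')$.

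Your single-step bookkeeping does not repair this. The canonical map across an atypical edge has degree $+1$ in \emph{both} directions. Degrees of composites add only while the composite stays nonzero, and crossing an atypical edge and coming back gives the zero map, since it is induced from the vanishing rank-one composite. So the real issue is not path-independence of a signed sum. You need control along a single shortest path, whose atypical colours are distinct and number $\ell_\lambda(\mathbf b,\mathbf b')$ by the remark in \cref{subsec:finite-young-lattices}. Your fallback count, the atypical roots positive for $\mathbf b$ and negative for $\mathbf b'$, is exactly this number.

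The paper inducts along such a geodesic in $L(m,n)$. Typical steps change nothing, because the Verma modules coincide. At an atypical step $\mathbf b'\to\mathbf b''$ it uses the rank-one parabolic projective $U^{\mathbf b'\subset\mathbf b'+\mathbf b''}$. Its two-step $M^{\mathbf b'}$-flag has Verma subquotients with heads $L^{\mathbf b'}(\lambda-\rho^{\mathbf b'})$ and $L^{\mathbf b''}(\lambda-\rho^{\mathbf b''})$, sitting one grading degree apart. Since projectives have $U$-flags, this moves the exponent by exactly one.

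Alternatively, your homomorphism idea can be made precise. By \cref{5.2.ch_eq}(3) there is a unique (up to scalar) nonzero map $M^{\mathbf b'}(\lambda-\rho^{\mathbf b'})\to M^{\mathbf b}(\lambda-\rho^{\mathbf b})$. The head of its image is the unique copy of $L^{\mathbf b'}(\lambda-\rho^{\mathbf b'})$, so the exponent equals the degree of this map. But you would then have to prove that the composite of single-reflection maps along a geodesic is nonzero, and your proposal never addresses this.

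Two smaller points. The Fock-space paragraph assumes its conclusion rather than deriving it. And \cref{thm:U-N-reciprocity} concerns $U$-flags and $N$-multiplicities for a single $\mathbf b$; it does not compare $\mathbf b$ with $\mathbf b'$ as you describe.
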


Closely related combinatorics governing homomorphisms between Verma supermodules associated
with different Borel subalgebras was previously established by the author \cite{hirota2025odd,Hirota_PathSubgroupoids} in the settings of
general Kac--Moody Lie superalgebras and Nichols algebras of diagonal type \cite{andruskiewitsch2017finite}. From this point of
view, the theorem above is in a sense a natural consequence that one should expect, and it is
reasonable to anticipate analogous statements for other Lie superalgebras.

We also slightly extend the framework of Cheng--Lam--Wang for their variant modules and the
corresponding Kazhdan--Lusztig polynomials. This allows us to treat in a unified language an
important class of modules considered by Brundan--Goodwin, namely modules that are sent by
the Whittaker coinvariant functor to simple modules over finite $W$-superalgebras.

In \cref{subsec:brundan-goodwin}, we make several remarks concerning these modules and, in particular, prove that they are rigid.

In \cref{subsubsec:kac-wakimoto-weights}, we give a new characterization
of Kac--Wakimoto weights, also known as Kostant, totally connected, or
tame weights.

\begin{theorem}\label{kwww}
Let $\mu\in\Lambda^{\mathrm{reg,dom}}$. Then the following conditions are
equivalent.
\begin{enumerate}
\item
The character of $L(\mu)$ is given by the Kac--Wakimoto character formula.

\item
The set
\[
\left\{
L^{\mathbf b}
\bigl(\mu+\rho^{0^m1^n}-\rho^{\mathbf b}\bigr)
\ \middle|\
\mathbf b\in L(m,n)
\right\}
\]
is precisely the set of composition factors of $K(\mu)$.

\item
We have
$\ell K(\mu)=\operatorname{aty}(\mu)+1$.

\item
We have
\[
\ell\left(
\operatorname{rad}P_{\mathcal F}(\mu)/
\operatorname{rad}^2P_{\mathcal F}(\mu)
\right)
=
\operatorname{aty}(\mu)+1.
\]
\end{enumerate}
\end{theorem}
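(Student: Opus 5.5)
Given the introduction's description, the plan combines the reciprocity \cref{thm:U-N-reciprocity}, the Koszul grading, and Khovanov arc-diagram combinatorics for the finite-dimensional category $\mathcal F$, with $K(\mu)$ the Kac module. Throughout, write $k=\operatorname{aty}(\mu)$. The skeleton is (1)$\Leftrightarrow$(3)$\Leftrightarrow$(4) via arc diagrams, and (2)$\Leftrightarrow$(3) via odd-reflection modules. For (1)$\Leftrightarrow$(3) I would use the Brundan--Stroppel description: the composition factors of $K(\mu)$ are the $L(\nu)$ with $\underline{\nu}\mu$ oriented, i.e. cap diagrams of $\nu$ compatible with the weight diagram of $\mu$, and each occurs with multiplicity one. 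Their number is $2^{\#\text{caps}}$ summed appropriately; more precisely, for a fixed $\mu$ the composition factors of $K(\mu)$ correspond to subsets of caps that can be "flipped", constrained by nesting.

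I expect the length of $K(\mu)$ to equal $k+1$ exactly when the cup diagram of $\mu$ is totally nested with no gaps, i.e. the $\times/\circ$-free core is a consecutive string of $\vee$'s. The Kac--Wakimoto (tame/Kostant) condition is known, by Su--Zhang and Chmutov--Hoyt--Reif, to be equivalent to this "totally connected and nested" condition. So (1)$\Leftrightarrow$(3) reduces to this arc-diagram count. For (4), I would use the Koszul grading: $\operatorname{rad}P/\operatorname{rad}^2P$ is the degree-one part, counted by $\operatorname{Ext}^1(L(\mu),L(\nu))$. By Brundan--Stroppel this equals the number of $\nu$ that differ from $\mu$ by a single cup/cap move, namely the $k$ "lower" neighbours and the "upper" neighbours. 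Working out when this count is $k+1$ should again yield total nesting. I will compare the two counts via the explicit formula for degree-one arrows in Khovanov's arc algebra.

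For (2), note that $K(\mu)$ has simple head $L(\mu)=L^{0^m1^n}$ at the appropriate shift, since $\mu$ is regular dominant. Consider the modules $L^{\mathbf b}(\mu+\rho^{0^m1^n}-\rho^{\mathbf b})$. Under odd reflections a simple highest weight module stays simple, but its highest weight changes by either $-\alpha$ or $0$. Hence the displayed set is in bijection with the set of distinct simple modules obtained by naively shifting the highest weight along a path of odd reflections without reflecting. Using \cref{oddkl} with $q=1$, each such $L^{\mathbf b}$ occurs in $M^{\mathbf b}(\mu+\rho^{0^m1^n}-\rho^{\mathbf b})$ with multiplicity one. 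Moreover, $K(\mu)$ is a quotient of the parabolic Verma, so each such simple that is finite-dimensional lies in $K(\mu)$. The remaining task is counting. I would show that the set always has at least $k+1$ distinct members, given by the successive atypical odd reflections in the $0^m1^n\to 1^n0^m$ direction, and that its members are all composition factors of $K(\mu)$. Equality with the full factor set then holds iff $\ell K(\mu)=k+1$.

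The main obstacle is the exact identification of distinct members of the set with the $k+1$ weights along the nested chain. The approach I would try first is an explicit translation of odd reflections into moves on weight diagrams: swapping an adjacent $\vee$ and $\wedge$ when the reflection is atypical. A careful inductive argument on the nesting depth should then control both (2) and (4) simultaneously.
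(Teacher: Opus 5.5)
Your handling of (1)$\Leftrightarrow$(3) and of (4) broadly follows the paper's route. The direction (2)$\Leftrightarrow$(3), however, has a genuine gap, in two places.

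\textbf{First gap: membership in $K(\mu)$.} The step ``$K(\mu)$ is a quotient of the parabolic Verma, so each such simple that is finite-dimensional lies in $K(\mu)$'' runs the wrong way. Being a quotient of $M^{0^m1^n}(\mu)$ only shows that composition factors of $K(\mu)$ occur in $M^{0^m1^n}(\mu)$. Nothing in that argument stops a multiplicity-one factor of $M^{0^m1^n}(\mu)$ from lying in the kernel, and you have not shown these simples are finite-dimensional. A working argument:
reach $\mathbf b$ from $0^m1^n$ by adding boxes one at a time, i.e.\ by odd reflections in roots $\alpha_1,\dots,\alpha_t\in\Delta^+_{\bar1}$, and take $e_{-\alpha_t}\cdots e_{-\alpha_1}v_\mu\in K(\mu)$.
\begin{itemize}
\item It is nonzero, because $K(\mu)\cong\Lambda(\mathfrak g_{-1})\otimes L_{\bar0}(\mu)$ with $\mathfrak g_{-1}$ acting freely.
\item It is $\mathbf b$-singular of weight $\mu+\rho^{0^m1^n}-\rho^{\mathbf b}$. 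Indeed, $e_{-\alpha}$ applied to a $\mathbf b'$-singular vector, with $\alpha$ $\mathbf b'$-simple, gives an $r_\alpha\mathbf b'$-singular vector, and $\rho^{r_\alpha\mathbf b'}=\rho^{\mathbf b'}+\alpha$.
\item Hence the image of the induced map from $M^{\mathbf b}(\mu+\rho^{0^m1^n}-\rho^{\mathbf b})$ has top $L^{\mathbf b}(\mu+\rho^{0^m1^n}-\rho^{\mathbf b})$. So every member of the set is a factor of $K(\mu)$, and in particular finite-dimensional.
\end{itemize}

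\textbf{Second gap: the count.} You only aim for \emph{at least} $k+1$ members. But (2)$\Rightarrow$(3) needs \emph{exactly} $k+1$; otherwise (2) only gives $\ell K(\mu)\geq k+1$. This exact count is what the paper's proof rests on, and it does not depend on nesting, so no induction on nesting depth is needed.
In the $\rho$-shifted coordinates one has $(\mu+\rho,\varepsilon_i-\delta_j)=\mu_i-\mu_{m+j}$. Regular dominance means $\mu_1>\cdots>\mu_m$ and $\mu_{m+1}<\cdots<\mu_{m+n}$. So for two atypical pairs, $i<i'$ forces $j>j'$, and the atypical boxes $(j,m+1-i)$ form a chain in the rectangle.
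Hence $L(m,n)_{\mu+\rho}$ is a path with $k+1$ vertices. The set therefore has exactly $k+1$ members, pairwise non-isomorphic (e.g.\ by \cref{oddkl}). Multiplicity-freeness of $K(\mu)$ from \cref{Fhw} then gives (2)$\Leftrightarrow$(3) at once.

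\textbf{Smaller corrections to the arc-diagram parts.}
\begin{itemize}
\item Your description of the factors of $K(\mu)$ via flipping subsets of caps (``$2^{\#\text{caps}}$'') actually describes $\{\nu\mid\mu\subset\nu\}$, the $K$-flag of $P(\mu)$, which always has $2^k$ members. The factors of $K(\mu)$ are the $\nu\subset\mu$, i.e.\ cup diagrams $\underline{\nu}$ oriented by $\mu$. When the $\vee$'s are consecutive, non-crossing forces the leftward cups to form a nested prefix, giving exactly $k+1$. A $\wedge$ between two $\vee$'s produces more.
\item In (4) you have the roles swapped. The $k$ guaranteed degree-one factors are the heads of $K(\lambda)\langle1\rangle$ with $\mu\subset\lambda$, obtained by reversing one cap of $\overline{\mu}$; these are upper neighbours, not lower ones. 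The remaining contributions are the degree-one factors of $K(\mu)$, of which there is exactly one iff no $\wedge$ lies between two $\vee$'s, as in the paper.
\item For typical $\mu$, condition (4) fails: $P_{\mathcal F}(\mu)=L(\mu)$, so the left side is $0$. One must therefore assume $\operatorname{aty}(\mu)\geq1$; the paper's proof has the same omission.
\end{itemize}
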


In particular, combining condition~(4) with the results of
Coulembier--Serganova \cite{coulembier2017homological}, we obtain a
characterization of Kac--Wakimoto weights purely in terms of the
underlying abelian category. Hence the class of Kac--Wakimoto weights is
preserved under equivalences of blocks of super category $\mathcal O$.

One motivation for studying super Kazhdan--Lusztig polynomials is the still open problem of
classifying the Morita equivalence classes of blocks of the super category $\mathcal O$.
Coulembier--Serganova \cite{coulembier2017homological} have made progress from a homological-algebraic perspective, while
Brundan--Goodwin \cite{brundan2019whittaker} have approached the problem using Whittaker coinvariants. Nevertheless, the
problem displays a rather counterintuitive level of difficulty, and only a small part of the
classification is presently understood.
In \cref{subsec:morita-classification-blocks}, we propose a third approach to this problem by \cref{kwww}.
This method should substantially
reduce the amount of computation required. Nevertheless, even after this reduction, the
remaining calculations were sufficiently formidable that we did not pursue the classification
of additional examples. 

In \cref{subsec:parabolic-miura-transforms}, we explain that certain natural modules over finite $W$-superalgebras,
constructed via what we call the parabolic Miura transform, can be interpreted as images of
$\mathfrak b$-Verma supermodules under the Whittaker coinvariant functor.

 Unlike in the classical setting, a dominant projective module is not itself a Verma module. For maximally singular weights we obtain a closed formula for filtration of projectives \cref{subsubsec:maximally-singular-weights}, which may again admits an
interpretation in terms of the graph above. The case of the Kac Wakimoto weight appears to be the most interesting one, but here we only
provide explicit examples \cref{app:gl22-examples,app:gl33-examples}.

\subsection{Conjectures and Questions}

\begin{question}\label{q:H0_Verma_truncated_Yangian}
For an arbitrary Borel subalgebra $\mathfrak b$, can one describe
$H_{0}M^{\mathfrak b}(\lambda)$ purely in terms of the truncated super Yangian?
See \cref{rem:H0_Verma_Miura_limits}.
\end{question}

\begin{question}\label{q:socle_H0_Verma}
What is the socle of $M^{\mathfrak b}(\lambda)$?
What is the socle of $H_{0}M^{\mathfrak b}(\lambda)$?
\end{question}

\begin{question}
Fix a central character. When $\ell M^{()}(\lambda)$ is maximal?
\end{question}

\begin{question}
For $\mathfrak{gl}(n|n)$, can one find any distinguished properties of graphs
$R\langle 0\rangle$ and $r\langle 0\rangle$ introduced in \cref{app:gl22-examples} ?
\end{question}

\begin{question}
Does an analogue of \cref{oddkl} hold for general Kac--Moody Lie superalgebras or
Nichols algebras of diagonal type?
\end{question}

The following conjecture holds for $\mathfrak{gl}(1|1)$.
\begin{conjecture}
\label{conj:highest-weight-structures-from-borels}
Let $\mathcal B$ be a block of super category $\mathcal O$ for
$\mathfrak{gl}(m|n)$. Every highest weight structure on $\mathcal B$ is
equivalent to the highest weight structure induced by some Borel
subalgebra $\mathbf b\in L(m,n)$.
\end{conjecture}

\subsection*{Acknowledgements}

I am grateful to Alexander Sherman for a comment that inspired the proof of \cref{thm:U-N-reciprocity}. I also thank Shun-Jen Cheng for helpful advice and for pointing out several relevant references.

This work was supported by the Japan Society for the Promotion of Science (JSPS) through the Research Fellowship for Young Scientists (DC1), Grant Number JP25KJ1664.

\section{Super Kazhdan--Lusztig combinatorics}
\label{sec:super-kl-combinatorics}

The material in this section follows
\cite{brundan2003kazhdan,brundan2014representations,brundan2017tensor,
cheng2015brundan,cao2016inversion}.
Although the treatment in \cref{subsec:c-monomial-bases} is slightly more
general than that found in the existing literature, it contains no
essentially new results. We therefore omit the proofs.
We use the conventions of
\cite{brundan2014representations,brundan2017tensor}, in which canonical
bases correspond to projective modules. Our Bruhat order is opposite to
that of
\cite{cheng2015brundan,cao2016inversion,brundan2003kazhdan}.

\subsection{Finite Young lattices}
\label{subsec:finite-young-lattices}
\begin{definition}\label{g.ydi}
A \emph{partition} is a weakly decreasing finite sequence of positive integers;
we also allow the empty partition $\varnothing$.
For example, we write the partition $5+3+3+1$ as $(5,3^2,1)$.

We identify partitions with Young diagrams in French notation.
Thus the rows are left-justified, with row lengths weakly decreasing from
bottom to top. A box is assigned coordinates $(i,j)$, where $i$ is the column
number counted from the left and $j$ is the row number counted from the bottom.
\end{definition}

For a word $a$, we write $a^r$ for the concatenation of $r$ copies of $a$.
Thus, for example,
\[
0^m1^n
=
\underbrace{00\cdots 0}_{m}
\underbrace{11\cdots 1}_{n},
\qquad
(01)^n
=
\underbrace{0101\cdots 01}_{n}.
\]

For $m,n\in\mathbb Z_{\geq 0}$, let $L(m,n)$ denote the set of all
$0^m1^n$-sequences, namely all words
\[
\mathbf b=b_1b_2\cdots b_{m+n}
\]
containing exactly $m$ entries equal to $0$ and $n$ entries equal to $1$.

Each $\mathbf b\in L(m,n)$ determines a lattice path from $(n,0)$ to $(0,m)$:
a $1$ corresponds to a unit step to the left, while a $0$ corresponds to a
unit step upward. The region weakly southeast of this path inside the
$m\times n$ rectangle is a Young diagram contained in the rectangle $(n^m)$.
This gives a bijection
\[
L(m,n)
\longleftrightarrow
\{\lambda\mid \lambda\subseteq (n^m)\}.
\]
In what follows, we freely identify a sequence with the corresponding Young
diagram.

\begin{definition}[Edge-colored finite Young lattice]
\label{def:edge-colored-finite-young-lattice}
We regard $L(m,n)$ as an edge-colored graph as follows.
Its color set is
\[
C:=\{1,\ldots,n\}\times\{1,\ldots,m\}.
\]
Two vertices $x,y\in L(m,n)$ are joined by an edge of color $(i,j)$ if and
only if the corresponding Young diagrams differ by the addition or removal
of the box with coordinates $(i,j)$.
\end{definition}

\begin{definition}
Let $G=(V,E,c)$ be a colored simple graph with color set $\mathcal C$, where
$c:E\to\mathcal C$. For a subset $A\subseteq\mathcal C$, set
\[
E(A):=\{e\in E\mid c(e)\in A\}.
\]
Define an equivalence relation $\sim_A$ on $V$ by
\[
u\sim_A v
\quad\Longleftrightarrow\quad
u\text{ and }v\text{ lie in the same connected component of }(V,E(A)).
\]

The \emph{quotient of $G$ by the colors in $A$}, denoted $G/A$, is obtained
by contracting every connected component of $(V,E(A))$ to a single vertex
and retaining all edges whose colors do not belong to $A$.

More explicitly, its vertex set is
\[
V_A:=V/{\sim_A},
\]
and two distinct classes $[x],[y]\in V_A$ are joined whenever there exist
representatives $x'\in[x]$ and $y'\in[y]$ such that
\[
\{x',y'\}\in E
\qquad\text{and}\qquad
c(\{x',y'\})\notin A.
\]
The surviving edge inherits the color of the corresponding edge of $G$.
\end{definition}

\begin{remark}
For every $A\subseteq C$, the quotient $L(m,n)/A$ is naturally an
edge-colored graph, with colors inherited from the edges that survive the
contraction. It has the following useful property:

\[
\text{a walk is shortest if and only if its edge colors are pairwise distinct.}
\]

Equivalently, the distance between two vertices is the number of distinct
colors appearing along any shortest walk between them.

The same combinatorial phenomenon appears in the study of homomorphisms
between Verma modules having the same character but associated with different
Borel subalgebras, both for symmetrizable Kac--Moody Lie superalgebras and
for Nichols algebras of diagonal type; see \cite{hirota2025odd}.
\end{remark}

\subsection{$\mathbf b$-canonical bases}
\label{subsec:b-canonical-bases}

Let $q$ be an indeterminate. We denote by $U_q\mathfrak{sl}_\infty$ the
quantum group over $\mathbb Q(q)$ generated by
$e_i,f_i,k_i^{\pm1}$ for $i\in\mathbb Z$, subject to
\[
k_i k_i^{-1}=k_i^{-1}k_i=1,\qquad k_i k_j=k_j k_i,
\]
\[
k_i e_j k_i^{-1}
=
q^{2\delta_{i,j}-\delta_{i,j+1}-\delta_{i,j-1}}e_j,
\qquad
k_i f_j k_i^{-1}
=
q^{-2\delta_{i,j}+\delta_{i,j+1}+\delta_{i,j-1}}f_j,
\]
\[
e_i f_j-f_j e_i
=
\delta_{i,j}\frac{k_i-k_i^{-1}}{q-q^{-1}},
\]
together with $e_i e_j=e_j e_i$ and $f_i f_j=f_j f_i$ if $|i-j|>1$,
and the quantum Serre relations
\[
e_i^2e_j-(q+q^{-1})e_ie_je_i+e_je_i^2=0,\qquad
f_i^2f_j-(q+q^{-1})f_if_jf_i+f_jf_i^2=0
\]
for $|i-j|=1$.

We equip $U_q\mathfrak{sl}_\infty$ with the Hopf algebra structure determined by
\[
\Delta(e_i)=k_i^{-1}\otimes e_i+e_i\otimes1,\qquad
\Delta(f_i)=1\otimes f_i+f_i\otimes k_i,\qquad
\Delta(k_i)=k_i\otimes k_i,
\]
with $\varepsilon(e_i)=\varepsilon(f_i)=0$ and $\varepsilon(k_i)=1$, and
\[
S(e_i)=-k_i e_i,\qquad
S(f_i)=-f_i k_i^{-1},\qquad
S(k_i)=k_i^{-1}.
\]

Let $V$ be the natural $U_q\mathfrak{sl}_\infty$-module with basis
$\{v_j\mid j\in\mathbb Z\}$. The action is given by
\[
f_i v_j=\delta_{i,j}v_{i+1},\qquad
e_i v_j=\delta_{i+1,j}v_i,\qquad
k_i v_j=q^{\delta_{i,j}-\delta_{i+1,j}}v_j.
\]
We also consider the $U_q\mathfrak{sl}_\infty$-module $W$ with basis
$\{w_j\mid j\in\mathbb Z\}$, on which
\[
f_i w_j=\delta_{i+1,j}w_i,\qquad
e_i w_j=\delta_{i,j}w_{i+1},\qquad
k_i w_j=q^{\delta_{i+1,j}-\delta_{i,j}}w_j.
\]

The assignment $q\mapsto q^{-1}$ defines an involutive automorphism
$c\mapsto\overline c$ of $\mathbb Q(q)$. The \emph{bar involution} on
$U_q\mathfrak{sl}_\infty$ is the $\mathbb Q(q)$-antilinear algebra
automorphism determined by
\[
\overline{e_i}=e_i,\qquad
\overline{f_i}=f_i,\qquad
\overline{k_i}=k_i^{-1}.
\]
Thus $\overline{cu}=\overline c\,\overline u$ for
$c\in\mathbb Q(q)$ and $u\in U_q\mathfrak{sl}_\infty$.

For $r\in\mathbb Z_{\geq0}$, define
\[
[r]:=\frac{q^r-q^{-r}}{q-q^{-1}},
\qquad
[r]!:=\prod_{a=1}^r[a],
\]
with $[0]!=1$. The divided powers are
\[
e_i^{(r)}:=\frac{e_i^r}{[r]!},
\qquad
f_i^{(r)}:=\frac{f_i^r}{[r]!}.
\]

Let $\mathbf b=(b_1,\ldots,b_{m+n})\in L(m,n)$. Define
\[
\mathbb T^{\mathbf b}
:=
V^{b_1}\otimes\cdots\otimes V^{b_{m+n}},
\qquad
V^{b_i}:=
\begin{cases}
V, & b_i=0,\\
W, & b_i=1.
\end{cases}
\]
We regard $\mathbb T^{\mathbf b}$ as a
$U_q\mathfrak{sl}_\infty$-module via the iterated coproduct.

For $\lambda=(\lambda_1,\ldots,\lambda_{m+n})\in\mathbb Z^{m+n}$, set
\[
v_\lambda^{\mathbf b}
:=
v_{\lambda_1}^{b_1}\otimes\cdots\otimes
v_{\lambda_{m+n}}^{b_{m+n}},
\qquad
v_j^0:=v_j,\quad v_j^1:=w_j.
\]
We call $v_\lambda^{\mathbf b}$ a \emph{monomial}. The set
$\{v_\lambda^{\mathbf b}\mid\lambda\in\mathbb Z^{m+n}\}$ is called the
\emph{monomial basis} of $\mathbb T^{\mathbf b}$.

\begin{definition}
Fix $\mathbf b=(b_1,\ldots,b_{m+n})\in L(m,n)$. For
$\lambda,\mu\in\mathbb Z^{m+n}$, we write
$\mu\preceq_{\mathbf b}\lambda$ if the following two conditions hold.

First, for every $r\in\mathbb Z$,
\[
\sum_{\substack{1\leq i\leq m+n\\ \mu_i\leq r}}(-1)^{b_i}
=
\sum_{\substack{1\leq i\leq m+n\\ \lambda_i\leq r}}(-1)^{b_i}.
\]
Second, for every $1\leq j\leq m+n$ and $r\in\mathbb Z$,
\[
\sum_{\substack{1\leq i\leq j\\ \mu_i\leq r}}(-1)^{b_i}
\geq
\sum_{\substack{1\leq i\leq j\\ \lambda_i\leq r}}(-1)^{b_i}.
\]
We call $\preceq_{\mathbf b}$ the \emph{$\mathbf b$-Bruhat order} on
$\mathbb Z^{m+n}$.
\end{definition}

We denote by $\widetilde{\mathbb T}^{\mathbf b}$ the completion of
$\mathbb T^{\mathbf b}$ with respect to the linear topology considered in
\cite{cheng2015brundan,cao2016inversion}. Thus its elements are represented
by possibly infinite linear combinations
$\sum_{\lambda\in\mathbb Z^{m+n}}a_\lambda v_\lambda^{\mathbf b}$ with
$a_\lambda\in\mathbb Q(q)$, subject to the corresponding convergence
condition.

We shall also use the subspace
$\widehat{\mathbb T}^{\mathbf b}\subseteq\widetilde{\mathbb T}^{\mathbf b}$
spanned by elements of the form
\[
v_\lambda^{\mathbf b}
+
\sum_{\mu\prec_{\mathbf b}\lambda}
r_{\mu\lambda}(q)v_\mu^{\mathbf b},
\qquad
r_{\mu\lambda}(q)\in\mathbb Q(q).
\]

Let $w_{\mathbf b}$ denote the shuffle corresponding to $\mathbf b$.
For $\lambda\in\mathbb Z^{m+n}$, write
$w_{\mathbf b}\lambda=(\lambda'_1,\ldots,\lambda'_{m+n})$.
We say that $\lambda$ is \emph{$\mathbf b$-dominant} if
\[
\lambda'_m\leq\cdots\leq\lambda'_1,
\qquad
\lambda'_{m+1}\leq\cdots\leq\lambda'_{m+n},
\]
and \emph{$\mathbf b$-typical} if
$\lambda'_i\neq\lambda'_j$ for all $1\leq i\leq m<j\leq m+n$.

Let $[1,m+n]=I_1\sqcup\cdots\sqcup I_d$ be the decomposition into maximal
consecutive intervals on which $\mathbf b$ is constant. For each $t$, let
$\mathfrak S_{I_t}$ be the symmetric group on the positions in $I_t$, and
let $\mathcal H_{I_t}$ be its Iwahori--Hecke algebra. It is generated by
$H_i$ for $i,i+1\in I_t$, subject to
\[
(H_i-q^{-1})(H_i+q)=0,\qquad
H_iH_{i+1}H_i=H_{i+1}H_iH_{i+1},
\]
and $H_iH_j=H_jH_i$ if $|i-j|>1$.

For $x\in\mathfrak S_{I_t}$, set
$H_x:=H_{i_1}\cdots H_{i_r}$ for any reduced expression
$x=s_{i_1}\cdots s_{i_r}$. The bar involution on $\mathcal H_{I_t}$ is the
$\mathbb Q(q)$-antilinear algebra automorphism determined by
$\overline{H_x}=H_{x^{-1}}^{-1}$, or equivalently by
$\overline{H_i}=H_i^{-1}=H_i-(q^{-1}-q)$.

Set
$\mathcal H^{\mathbf b}:=
\mathcal H_{I_1}\otimes\cdots\otimes\mathcal H_{I_d}$,
equipped with the tensor product bar involution
$\overline{h_1\otimes\cdots\otimes h_d}
=\overline{h_1}\otimes\cdots\otimes\overline{h_d}$.

The algebra $\mathcal H^{\mathbf b}$ acts on $\mathbb T^{\mathbf b}$ from
the right. If $i$ and $i+1$ belong to the same interval $I_t$, then
\[
v_\lambda^{\mathbf b}H_i
=
\begin{cases}
v_{\lambda\cdot s_i}^{\mathbf b},
& \lambda\succ_{\mathbf b}\lambda\cdot s_i,\\[2mm]
q^{-1}v_\lambda^{\mathbf b},
& \lambda=\lambda\cdot s_i,\\[2mm]
v_{\lambda\cdot s_i}^{\mathbf b}
-(q-q^{-1})v_\lambda^{\mathbf b},
& \lambda\prec_{\mathbf b}\lambda\cdot s_i.
\end{cases}
\]
Here $s_i=(i,i+1)$ acts on $\mathbb Z^{m+n}$ by interchanging the
$i$-th and $(i+1)$-st entries.

\begin{proposition}
\cite{brundan2003kazhdan,cheng2015brundan}
There exists a $\mathbb Q(q)$-antilinear involution
$\overline{\phantom{x}}:\widehat{\mathbb T}^{\mathbf b}
\to\widehat{\mathbb T}^{\mathbf b}$ with the following properties.
\begin{enumerate}
\item
For every $u\in U_q\mathfrak{sl}_\infty$, $h\in\mathcal H^{\mathbf b}$,
and $v\in\widehat{\mathbb T}^{\mathbf b}$ for which the expression is
defined,
\[
\overline{uvh}=\overline u\,\overline v\,\overline h.
\]

\item
For every $\lambda\in\mathbb Z^{m+n}$,
\[
\overline{v_\lambda^{\mathbf b}}
=
v_\lambda^{\mathbf b}
+
\sum_{\mu\prec_{\mathbf b}\lambda}
r_{\mu\lambda}(q)v_\mu^{\mathbf b},
\qquad
r_{\mu\lambda}(q)\in\mathbb Z[q,q^{-1}],
\]
where the sum may be infinite.

\item
If $\lambda$ is $\mathbf b$-dominant and $\mathbf b$-typical, then
$\overline{v_\lambda^{\mathbf b}}=v_\lambda^{\mathbf b}$.
\end{enumerate}
\end{proposition}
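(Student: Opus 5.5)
The plan is to construct the involution by Lusztig's quasi-$R$-matrix, following \cite{brundan2003kazhdan,cheng2015brundan}. On the one-factor modules $V$ and $W$ I take the antilinear bar map that fixes every basis vector $v_j$, respectively $w_j$. This is compatible with the bar involution of $U_q\mathfrak{sl}_\infty$ because all structure constants in the action above are $0$, $1$, or powers of $q^{\pm1}$ that are inverted together with $k_i\mapsto k_i^{-1}$. Let $\Theta=\sum_{\nu\geq0}\Theta_\nu$ with $\Theta_\nu\in U^-_\nu\otimes U^+_\nu$ and $\Theta_0=1\otimes1$ be the quasi-$R$-matrix adapted to our coproduct, so that $\Delta(u)\Theta=\Theta\,\overline{\Delta}(\overline u)$. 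I then define inductively
\[
\psi(x\otimes y):=\Theta\bigl(\psi(x)\otimes\psi(y)\bigr)
\]
on $\mathbb T^{\mathbf b}=\mathbb T^{\mathbf b'}\otimes V^{b_{m+n}}$. The identity $\Theta\overline\Theta=1$ (with $\overline\Theta$ meaning bar applied to both tensor factors of $\Theta$) gives $\psi^2=1$, and the intertwining identity gives $\psi(uv)=\overline u\,\psi(v)$. Since $\Theta$ is an infinite sum in $\mathfrak{sl}_\infty$, $\psi(v_\lambda^{\mathbf b})$ is in general an infinite sum. The first task is therefore to check that each $\Theta_\nu$ moves a monomial $v_\lambda^{\mathbf b}$ only to monomials $v_\mu^{\mathbf b}$ with $\mu\prec_{\mathbf b}\lambda$, and that the resulting sums converge in the topology of $\widetilde{\mathbb T}^{\mathbf b}$.

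For this, note that $f_i$ raises the index on $V$ and lowers it on $W$, while $e_i$ does the opposite. Pushing a weight $\nu>0$ across a tensor position then changes the partial signed counts $\sum_{i\le j,\ \lambda_i\le r}(-1)^{b_i}$ in exactly the direction defining $\preceq_{\mathbf b}$, and leaves the total counts unchanged. This gives property~(2), with coefficients in $\mathbb Z[q,q^{-1}]$ because $\Theta_\nu$ lies in the integral form and acts on the minuscule modules $V$, $W$ by integral matrices. Convergence should follow from the finiteness conditions built into the completion of \cite{cheng2015brundan}. Property~(2) also shows that $\psi$ preserves $\widehat{\mathbb T}^{\mathbf b}$.

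For the Hecke compatibility $\psi(vH_i)=\psi(v)\overline{H_i}$, I use that on a constant block $V\otimes V$ (or $W\otimes W$) the generator $H_i$ acts, up to normalization, as the braiding $R=\Theta\circ(\text{flip})\circ q^{\pm(\cdot,\cdot)}$. The hexagon identities let me localize $H_i$ to the factors $i,i+1$, with the remaining factors carried along by the coproduct. This reduces the claim to a direct check on basis vectors $v_a\otimes v_b$ with $a<b$, $a=b$, and $a>b$, matching the three cases of the formula for $v_\lambda^{\mathbf b}H_i$.

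For property~(3), I expand the iterated $\Theta$ as a product of pairwise quasi-$R$-matrices $\Theta_{st}$ acting on factors $s<t$. It then suffices to show that every $\Theta_{st}$ with $\nu>0$ kills the relevant pair of basis vectors. On a mixed pair $v_a\otimes w_b$, a nonzero term of weight $\nu$ needs $f$'s forming the chain $i=a,a+1,\dots$ on $v_a$ and $e$'s forming the chain $i=b,b+1,\dots$ on $w_b$ with the same multiset of indices, which forces $a=b$. The case $w_b\otimes v_a$ is symmetric. Hence $\mathbf b$-typicality kills all mixed contributions. On same-type pairs, $\mathbf b$-dominance puts the indices in the order where $e$ on the right factor and $f$ on the left factor cannot both be nonzero. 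This gives $\overline{v_\lambda^{\mathbf b}}=v_\lambda^{\mathbf b}$.

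I expect the main obstacle to be the factorization of the multi-factor $\Theta$ into pairwise pieces when the intermediate vectors have already been modified. The cleanest route I would try first is an induction on $m+n$: show directly that $\Theta$ acts trivially on $v_{\lambda'}^{\mathbf b'}\otimes v^{b}_{\lambda_{m+n}}$, using the weight-chain argument above together with the fact that $e_i$'s acting on the last factor must be matched by $f_i$'s acting on some earlier factor with the same index. Convergence in the completion is the other delicate point, and I would quote it from \cite{cheng2015brundan}.
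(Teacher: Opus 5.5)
Your strategy is the standard quasi-$R$-matrix construction behind the references the paper cites: apply $\Theta$ inductively and take convergence from Cheng--Lam--Wang. The paper itself gives no argument beyond the citation. However, the triangularity step is wrong, and a check on two tensor factors would have shown it.

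With $\Theta_\nu\in U^-_\nu\otimes U^+_\nu$, which is the right choice for this coproduct, the $f$'s raise the index on a left $V$-factor. The $e$'s lower it on a right $V$-factor and raise it on a right $W$-factor. So
\[
\psi(v_a\otimes v_{a+1})=v_a\otimes v_{a+1}+(q-q^{-1})\,v_{a+1}\otimes v_a,
\qquad
\psi(v_a\otimes w_a)=v_a\otimes w_a+\sum_{k\geq1}c_k\,v_{a+k}\otimes w_{a+k},
\]
with $c_1=q-q^{-1}$. The definition of $\preceq_{\mathbf b}$ (look at $j=1$, $r=a$) gives $(a+1,a)\succ_{00}(a,a+1)$ and $(a+k,a+k)\succ_{01}(a,a)$. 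So every correction term lies strictly \emph{above} $\lambda$, not below it. Your claim that the signed partial counts move ``in exactly the direction defining $\preceq_{\mathbf b}$'' is therefore false. So is your conclusion that $\psi$ preserves $\widehat{\mathbb T}^{\mathbf b}$ as defined there: the upward-infinite sum $\psi(v_a\otimes w_a)$ is not a finite combination of elements $v_\lambda+\sum_{\mu\prec_{\mathbf b}\lambda}r_{\mu\lambda}v_\mu$.

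A different choice of $\Theta$ cannot repair this, because properties (1) and (3) already force the upward direction. Take $\mathbf b=001$ and $c\notin\{a,a+1\}$. The weight $(a+1,a,c)$ is $\mathbf b$-dominant and $\mathbf b$-typical, and $(a+1,a,c)\succ_{\mathbf b}(a,a+1,c)$, so $v_{(a,a+1,c)}=v_{(a+1,a,c)}H_1$ and
\[
\overline{v_{(a,a+1,c)}}
=v_{(a+1,a,c)}\bigl(H_1+q-q^{-1}\bigr)
=v_{(a,a+1,c)}+(q-q^{-1})\,v_{(a+1,a,c)} .
\]
Thus (1)--(3) as printed are incompatible. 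The order has to be reversed in (2), in the definition of $\widehat{\mathbb T}^{\mathbf b}$, and in \cref{thm:canonical-basis}; equivalently, flip the inequality in the second condition defining $\preceq_{\mathbf b}$. This is what the paper's remark that its order is opposite to Brundan's and Cheng--Lam--Wang's intends, and it is what makes $b^{\mathbf b}_\lambda$ correspond to projectives. What you should prove is (2) with $\mu\succ_{\mathbf b}\lambda$ for the printed order. With that correction your weight-chain argument works, and the completion from which you import convergence is the upward one.

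For (3), you do not need any pairwise factorization of the iterated $\Theta$, which only holds up to Cartan twists. Argue by induction instead.
\begin{itemize}
\item By induction, $v^{\mathbf b'}_{\lambda'}$ is bar-invariant.
\item A nonzero $U^+_\nu$-action on the last factor forces $\nu=\alpha_x+\alpha_{x+1}+\cdots+\alpha_{y-1}$, with $x=\lambda_{m+n}$ if $b_{m+n}=1$ and $y=\lambda_{m+n}$ if $b_{m+n}=0$.
\item The chains by which $U^-_\nu$ moves the earlier factors must tile $\{x,\ldots,y-1\}$.
\item The tile at the endpoint $\lambda_{m+n}$ comes either from a factor of the other parity with index $\lambda_{m+n}$, which typicality excludes, or from a factor of the same parity out of dominant order, which dominance excludes.
\end{itemize}
Hence $\Theta_\nu$ kills $v^{\mathbf b'}_{\lambda'}\otimes v^{b_{m+n}}_{\lambda_{m+n}}$ for every $\nu>0$.
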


Brundan proved the following generation result for the monomial basis and
constructed the bar involution independently of Lusztig's general theory.

\begin{theorem}[\cite{brundan2003kazhdan}]
Let $\mathbf b=0^m1^n$. Then every monomial $v_\lambda^{\mathbf b}$ can be
expressed as a possibly infinite linear combination of elements
$u\,v_\mu^{\mathbf b}$, where $\mu$ is $\mathbf b$-typical and
$u\in U_q\mathfrak{sl}_\infty$.
\end{theorem}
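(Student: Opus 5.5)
The plan is to reduce an arbitrary monomial $v_\lambda^{\mathbf b}$, $\mathbf b=0^m1^n$, to typical ones by repeatedly applying the Chevalley generators $f_a$ (or $e_a$). The price is a correction term that is ``more typical'' or pushed further towards $-\infty$; the resulting possibly infinite series is then shown to converge in $\widetilde{\mathbb T}^{\mathbf b}$. Throughout, $\lambda=(\lambda_1,\dots,\lambda_{m+n})$ with the first $m$ tensor factors equal to $V$ and the last $n$ equal to $W$. Atypicality of $\lambda$ means that some value $a$ occurs both as a $V$-entry $\lambda_i$ ($i\le m$) and as a $W$-entry $\lambda_j$ ($j>m$).

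\emph{Step 1 (one reduction).} Suppose $\lambda$ is atypical and choose the minimal value $a$ occurring in an atypical pair. Let $\lambda'$ be obtained from $\lambda$ by replacing the relevant $V$-entry $a$ by $a-1$; if necessary, choose the position so that $\lambda'$ has no new coincidence at $a-1$ except those forced. Since $f_{a-1}v_{a-1}=v_a$, $f_{a-1}w_a=w_{a-1}$, and $f_{a-1}$ kills all other basis vectors, the coproduct formula $\Delta(f)=1\otimes f+f\otimes k$ gives
\[
f_{a-1}v^{\mathbf b}_{\lambda'}=q^{c}\,v^{\mathbf b}_\lambda+\sum_\nu q^{c_\nu}v^{\mathbf b}_\nu .
\]
Here each $\nu$ is obtained from $\lambda'$ by changing a single entry equal to $a-1$ (in $V$) to $a$, or a single entry equal to $a$ (in $W$) to $a-1$. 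Solving for $v^{\mathbf b}_\lambda$ expresses it as $f_{a-1}v^{\mathbf b}_{\lambda'}$ minus the other terms. Each term $v^{\mathbf b}_\nu$ with a $W$-entry lowered destroys the coincidence at $a$, but it may create a coincidence at $a-1$. The same holds for $\lambda'$ itself, where the $V$-entry has moved to $a-1$.

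\emph{Step 2 (order and induction).} I will attach to $\lambda$ the pair (number of atypical coincidences, sum of the atypical values). The first component is bounded by $\min(m,n)$. I will check that each term produced in Step 1 either has strictly fewer coincidences or the same number with the minimal atypical value strictly decreased. Iterating gives a formal expansion
\[
v^{\mathbf b}_\lambda=\sum_k u_k\,v^{\mathbf b}_{\mu^{(k)}}+(\text{remainder}),
\]
with $u_k\in U_q\mathfrak{sl}_\infty$ built from products of $f$'s. Each $\mu^{(k)}$ either is typical or has its atypical entries pushed below any prescribed bound after sufficiently many steps.

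\emph{Step 3 (convergence).} I expect this to be the main obstacle. One must show that the remainder tends to $0$ in the linear topology of \cite{cheng2015brundan,cao2016inversion}, and that the infinite sum is well defined. This requires that only finitely many terms lie outside each basic neighbourhood, i.e.\ that entries escaping to $-\infty$ are ``small'' in that topology, and that the weights of all terms are constant, since $f_{a-1}$ has weight compensated by the changes in $\lambda'$. I would first verify the argument for $\mathfrak{gl}(1|1)$, i.e.\ $V\otimes W$. There $v_a\otimes w_a$ is expressed via $f_{a-1}(v_{a-1}\otimes w_a)$ and $v_{a-1}\otimes w_{a-1}$, giving the explicit telescoping series $\sum_{k\ge0}(-q)^{?}f_{a-k-1}(v_{a-k-1}\otimes w_{a-k})$. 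Convergence there is exactly the defining condition of the completion. The general case follows by treating one atypical pair at a time, with the other tensor factors contributing only powers of $q$ through $k$.
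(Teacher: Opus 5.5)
Your Step~2 is where the argument breaks. The claimed decrease of (number of coincidences, atypical values) is false, and the remark in Step~3 that the other tensor factors only contribute powers of $q$ is exactly what fails. The generator $f_{a-1}$ also acts on every other $V$-entry equal to $a-1$, and this produces rearrangements of $\lambda$ with exactly the same atypicality data. Moreover, if $a$ occurs twice in the $V$-block, then $\lambda'$ is still atypical at $a$ (e.g.\ $\lambda=(a,a\mid a)$, $\lambda'=(a-1,a\mid a)$).

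Concretely, for $m=2$, $n=1$ and $\lambda=(a,a-1\mid a)$ you get $\lambda'=(a-1,a-1\mid a)$ and
\[
f_{a-1}v_{(a-1,a-1\mid a)}=v_{(a-1,a-1\mid a-1)}+q\,v_{(a-1,a\mid a)}+q^{2}\,v_{(a,a-1\mid a)}.
\]
The middle term has the same coincidence at $a$. Applying Step~1 to it returns the same $\lambda'$ and the same identity, so the recursion cycles. This is structural: $v_{a-1}\otimes v_{a-1}$ is an eigenvector of $H_1$, and $U_q\mathfrak{sl}_\infty$ commutes with $\mathcal H^{\mathbf b}$. Moves of this type therefore only ever reach the combination $q\,v_{(a-1,a\mid a)}+q^{2}v_{(a,a-1\mid a)}$, never the other $H_1$-eigencomponent of $v_{(a,a-1\mid a)}$.

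So you need an extra device before Step~3 even makes sense, together with a measure that genuinely decreases. Possible devices include:
\begin{itemize}
\item moving the $W$-entry instead of the $V$-entry when the neighbouring value is occupied in the $V$-block;
\item using divided powers to move a whole group of equal entries at once;
\item exploiting that the span of typical monomials is $\mathcal H^{\mathbf b}$-stable.
\end{itemize}
Even then, convergence of the resulting iterated expansion, with several atypical pairs interacting, still has to be proved rather than reduced to the one-pair case.

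Second, the direction of your telescoping has to match the completion on which the bar involution is defined. With the coproduct and the actions on $V$ and $W$ fixed in this paper, that completion has tails in which atypical pairs move to $+\infty$. For $\mathfrak{gl}(1|1)$, apply the bar involution to $e_a(v_{a+1}\otimes w_a)=v_a\otimes w_a+q\,v_{a+1}\otimes w_{a+1}$. This gives
\[
\overline{v_a\otimes w_a}+q^{-1}\,\overline{v_{a+1}\otimes w_{a+1}}=v_a\otimes w_a+q\,v_{a+1}\otimes w_{a+1}.
\]
Comparing coefficients of $v_{a+1}\otimes w_{a+1}$ shows this is impossible if each $\overline{v_b\otimes w_b}$ is $v_b\otimes w_b$ plus terms $v_c\otimes w_c$ with $c<b$. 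Note that the displayed formula for $\preceq_{\mathbf b}$ points the other way, so you must fix one convention consistently.

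Your $f$-series tends to $-\infty$ and does not converge in this completion. The correct model is $v_a\otimes w_a=\sum_{k\geq0}(-q)^k e_{a+k}(v_{a+k+1}\otimes w_{a+k})$. In the example above, $f_a v_{(a,a-1\mid a+1)}=v_{(a,a-1\mid a)}+q\,v_{(a+1,a-1\mid a+1)}$ starts from a typical monomial, isolates $v_{(a,a-1\mid a)}$, and telescopes upward.
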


Taking the Hecke algebra action into account, this can be reformulated as
follows.

\begin{corollary}
Let $\mathbf b=0^m1^n$. Then every monomial $v_\lambda^{\mathbf b}$ can be
expressed as a possibly infinite linear combination of elements
$u\,v_\mu^{\mathbf b}h$, where $\mu$ is $\mathbf b$-dominant and
$\mathbf b$-typical, $u\in U_q\mathfrak{sl}_\infty$, and
$h\in\mathcal H^{\mathbf b}$.
\end{corollary}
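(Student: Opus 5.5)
The plan is to deduce the corollary from Brundan's generation theorem by a straightening argument inside each Hecke factor. By the theorem, each monomial $v_\lambda^{\mathbf b}$ with $\mathbf b=0^m1^n$ is a possibly infinite linear combination of terms $u\,v_\mu^{\mathbf b}$ with $\mu$ $\mathbf b$-typical and $u\in U_q\mathfrak{sl}_\infty$. It therefore suffices to show the following claim:
\[
v_\mu^{\mathbf b}=v_{\mu^+}^{\mathbf b}\,h
\qquad\text{for some }h\in\mathcal H^{\mathbf b},
\]
where $\mu^+$ is $\mathbf b$-dominant and $\mathbf b$-typical. Since the left $U_q\mathfrak{sl}_\infty$-action and the right $\mathcal H^{\mathbf b}$-action commute, this gives $u\,v_\mu^{\mathbf b}=u\,v_{\mu^+}^{\mathbf b}h$, which is of the required form.

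To prove the claim, I first note that for $\mathbf b=0^m1^n$ there are exactly two intervals, $I_1=[1,m]$ and $I_2=[m+1,m+n]$. The shuffle $w_{\mathbf b}$ is the identity, so $\mathbf b$-dominance simply means that $\mu_1,\dots,\mu_m$ is weakly decreasing and $\mu_{m+1},\dots,\mu_{m+n}$ is weakly increasing. Typicality depends only on the multisets $\{\mu_1,\dots,\mu_m\}$ and $\{\mu_{m+1},\dots,\mu_{m+n}\}$, so permuting within $I_1$ and within $I_2$ preserves it. I then let $\mu^+$ be the unique $\mathbf b$-dominant rearrangement of $\mu$ under $\mathfrak S_{I_1}\times\mathfrak S_{I_2}$. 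The next step is to check, directly from the $\mathbf b$-Bruhat order, that $\mu^+$ is the $\preceq_{\mathbf b}$-maximal element of its orbit. More precisely, for adjacent $i,i+1$ in the same interval, swapping a pair that is out of dominant order moves strictly down in $\preceq_{\mathbf b}$. This is a sign computation in the partial sums: a $V$-pair and a $W$-pair contribute with opposite signs $(-1)^{b_i}$, which is exactly why the dominance conditions run in opposite directions on the two intervals.

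Given this, I argue by induction on the length of the minimal $x\in\mathfrak S_{I_1}\times\mathfrak S_{I_2}$ with $\mu=\mu^+\cdot x$. If $\mu\neq\mu^+$, write $\mu=\nu\cdot s_i$ with $\nu$ closer to $\mu^+$ and $\nu\succ_{\mathbf b}\nu\cdot s_i$. The first case of the formula for the Hecke action then gives $v_\nu^{\mathbf b}H_i=v_\mu^{\mathbf b}$. By induction $v_\nu^{\mathbf b}=v_{\mu^+}^{\mathbf b}H_y$ for a suitable $y$, and hence $v_\mu^{\mathbf b}=v_{\mu^+}^{\mathbf b}H_yH_i$. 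When $\mu$ has repeated entries in an interval, I take $x$ minimal in its coset so that no step passes through the case $\lambda=\lambda\cdot s_i$; the induction then goes through unchanged.

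Finally, the infinite sums from the theorem are converted term by term. The relevant convergence is in the completion $\widetilde{\mathbb T}^{\mathbf b}$, and the rewriting is an identity of individual monomials, so it does not affect convergence. I expect the main obstacle to be the sign bookkeeping in the comparison $\nu\succ_{\mathbf b}\nu\cdot s_i$, that is, confirming that the dominant representative is the top of the orbit for the stated Bruhat convention. The convention here is opposite to \cite{brundan2003kazhdan}, so this orientation must be checked on both the $V$-block and the $W$-block before invoking the first case of the action formula.
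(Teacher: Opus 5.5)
Your proposal is correct and is the deduction the paper intends when it says the corollary follows from Brundan's theorem ``taking the Hecke algebra action into account'': the $\mathbf b$-dominant rearrangement $\mu^+$ is indeed the $\preceq_{\mathbf b}$-maximum of its $\mathfrak S_{I_1}\times\mathfrak S_{I_2}$-orbit (decreasing on the $V$-block, increasing on the $W$-block), so the first case of the Hecke action gives $v_\mu^{\mathbf b}=v_{\mu^+}^{\mathbf b}H_x$. One wording slip: swapping an adjacent pair that is \emph{out of} dominant order moves \emph{up}, not down, although your induction step ($\nu\succ_{\mathbf b}\nu\cdot s_i$ with $\nu$ closer to $\mu^+$) uses the correct direction.
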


The following extension to arbitrary $\mathbf b$ appears natural.

\begin{conjecture}
Let $\mathbf b\in L(m,n)$. Then every monomial $v_\lambda^{\mathbf b}$ can
be expressed as a possibly infinite linear combination of elements
$u\,v_\mu^{\mathbf b}h$, where $\mu$ is $\mathbf b$-dominant and
$\mathbf b$-typical, $u\in U_q\mathfrak{sl}_\infty$, and
$h\in\mathcal H^{\mathbf b}$.
\end{conjecture}

\begin{remark}
If $\mathbf b=(01)^n$, then every maximal constant interval has length one,
so $\mathcal H^{\mathbf b}\cong\mathbb Q(q)$. Thus the Hecke algebra action
is trivial.
\end{remark}

\begin{theorem}\label{thm:canonical-basis}
\cite{brundan2003kazhdan,cheng2015brundan}
For every $\mathbf b\in L(m,n)$, the completed tensor space
$\widehat{\mathbb T}^{\mathbf b}$ admits a canonical basis
$\{b_\lambda^{\mathbf b}\mid\lambda\in\mathbb Z^{m+n}\}$.
It is uniquely characterized by
$\overline{b_\lambda^{\mathbf b}}=b_\lambda^{\mathbf b}$ and
\[
b_\lambda^{\mathbf b}
=
v_\lambda^{\mathbf b}
+
\sum_{\mu\prec_{\mathbf b}\lambda}
d_{\lambda,\mu}^{\mathbf b}(q)v_\mu^{\mathbf b},
\qquad
d_{\lambda,\mu}^{\mathbf b}(q)\in q\mathbb Z[q].
\]
Equivalently,
$b_\lambda^{\mathbf b}
=\sum_{\mu}d_{\lambda,\mu}^{\mathbf b}(q)v_\mu^{\mathbf b}$, where
$d_{\lambda,\lambda}^{\mathbf b}(q)=1$ and
$d_{\lambda,\mu}^{\mathbf b}(q)=0$ unless
$\mu\preceq_{\mathbf b}\lambda$.
\end{theorem}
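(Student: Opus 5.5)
The plan follows the standard Lusztig-type argument for existence and uniqueness of canonical bases, adapted to the completed space $\widehat{\mathbb T}^{\mathbf b}$. Both halves rely on the bar involution of the preceding proposition, which is unitriangular with respect to $\preceq_{\mathbf b}$ and has coefficients in $\mathbb Z[q,q^{-1}]$.

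\textbf{Uniqueness.} Suppose $b$ and $b'$ both satisfy the characterization for the same $\lambda$. Then $x=b-b'$ is bar-invariant and is a (possibly infinite) combination of the $v_\mu^{\mathbf b}$ with $\mu\prec_{\mathbf b}\lambda$, with coefficients in $q\mathbb Z[q]$. Suppose $x\neq 0$, and let $\mu$ be a $\preceq_{\mathbf b}$-maximal index with nonzero coefficient $a(q)$. Comparing coefficients of $v_\mu^{\mathbf b}$ in $\overline x=x$ and using unitriangularity gives $\overline{a(q)}=a(q)$. This is impossible for a nonzero element of $q\mathbb Z[q]$.

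Such a maximal index exists because the first condition in the definition of $\preceq_{\mathbf b}$ confines everything to a fixed weight space. Within a weight space, the convergence condition of the completion (from \cite{cheng2015brundan,cao2016inversion}) guarantees that every interval $\{\mu : \nu\preceq_{\mathbf b}\mu\preceq_{\mathbf b}\lambda\}$, and more generally every set of terms lying above a given one, is finite.

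\textbf{Existence.} Fix $\lambda$ and write $\overline{v_\nu^{\mathbf b}}=\sum_\mu r_{\mu\nu}v_\mu^{\mathbf b}$, where $r_{\nu\nu}=1$ and $r_{\mu\nu}\neq 0$ only if $\mu\preceq_{\mathbf b}\nu$. Since the bar map is an involution, these coefficients satisfy $\sum_\nu \overline{r_{\mu\nu}}\,r_{\nu\kappa}=\delta_{\mu\kappa}$.

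We seek coefficients $d_{\lambda,\mu}$, with $d_{\lambda,\lambda}=1$, satisfying
\[
d_{\lambda,\mu}-\overline{d_{\lambda,\mu}}=\sum_{\mu\prec_{\mathbf b}\nu\preceq_{\mathbf b}\lambda}\overline{d_{\lambda,\nu}}\,r_{\mu\nu}.
\]
These are defined by downward induction on $\mu$ in the order $\preceq_{\mathbf b}$. The right-hand side involves only finitely many $\nu$ by the interval finiteness above. By the standard argument using the involution identity, the right-hand side $s$ satisfies $\overline s=-s$. Hence $s$ has the form $p-\overline p$ for a unique $p\in q\mathbb Z[q]$, and we set $d_{\lambda,\mu}:=p$.

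The resulting infinite sum $b_\lambda^{\mathbf b}=\sum_\mu d_{\lambda,\mu}v_\mu^{\mathbf b}$ must lie in $\widehat{\mathbb T}^{\mathbf b}$. It does, since it has leading term $v_\lambda^{\mathbf b}$ and involves only $\mu\preceq_{\mathbf b}\lambda$, which is the spanning condition defining $\widehat{\mathbb T}^{\mathbf b}$ (modulo convergence). It is bar-invariant because the defining relations are exactly the coefficientwise equations $\overline{b}=b$. The elements $b_\lambda^{\mathbf b}$ form a topological basis because their transition matrix to the monomials is unitriangular.

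\textbf{Main obstacle.} The difficult step is the topological one. One must check that the downward induction is well founded and that the infinite sums defining $\overline{b}$ converge in the completion, so that coefficientwise comparison is legitimate. The approach here is to use the finiteness of the sets $\{\nu : \mu\preceq_{\mathbf b}\nu\preceq_{\mathbf b}\lambda\}$ inside a fixed weight space and to invoke the convergence condition of \cite{cheng2015brundan,cao2016inversion}, exactly as in Brundan's treatment of the case $\mathbf b=0^m1^n$.

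The final reformulation in the theorem is immediate from the characterization, since it simply sets $d^{\mathbf b}_{\lambda,\lambda}=1$ and declares $d^{\mathbf b}_{\lambda,\mu}=0$ for $\mu\not\preceq_{\mathbf b}\lambda$.
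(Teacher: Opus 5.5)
Your proposal is correct and follows essentially the route the paper relies on: the paper gives no proof and cites Brundan and Cheng--Lam--Wang, whose results rest on exactly this Lusztig-lemma argument (unitriangular bar involution with coefficients in $\mathbb Z[q,q^{-1}]$, finite intervals, and recursive solution of $d-\overline d=s$ with $\overline s=-s$), and like the paper you defer the topological input to those references, namely that formal sums supported on $\{\mu\mid\mu\preceq_{\mathbf b}\lambda\}$ converge in $\widetilde{\mathbb T}^{\mathbf b}$ and that bar may be computed termwise there. One correction: finiteness of $\{\mu\mid\nu\preceq_{\mathbf b}\mu\preceq_{\mathbf b}\lambda\}$ does not come from the convergence condition but directly from the definition of $\preceq_{\mathbf b}$, because taking $r$ smaller than, respectively at least, all entries of $\nu$ and $\lambda$ in the second condition sandwiches the partial sums for $\mu$ and forces every entry of $\mu$ to exceed $r$, respectively not to exceed $r$.
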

```

\subsection{$\mathbf c$-monomial bases}
\label{subsec:c-monomial-bases}

For integers $0\leq r\leq n\leq m$, let $\mathcal C(m,n)_r$ be the set of
words consisting of $m-r$ symbols $0$, $n-r$ symbols $1$, and $r$ symbols
$s$, and set
$\mathcal C(m,n):=\bigsqcup_{r=0}^{n}\mathcal C(m,n)_r$.

We define a partial order $\subseteq$ on $\mathcal C(m,n)$ by declaring
$\mathbf c\subseteq\mathbf c'$ if $\mathbf c$ can be obtained from
$\mathbf c'$ by repeatedly replacing a symbol $s$ by either $01$ or $10$.
Thus replacing an $s$ by $01$ or $10$ moves downward in the partial order.
For example, in $\mathcal C(n,n)$,
\[
0^n1^n\subseteq 0^{n-1}s1^{n-1},
\qquad
(01)^n\subseteq s^n.
\]
Moreover, $\mathcal C(m,n)_0=L(m,n)$ and
$\mathcal C(n,n)_n=\{s^n\}$.

\begin{definition}
Let $\mathbf b,\mathbf b'\in L(m,n)$ be adjacent vertices in the finite
Young lattice. Then there exist words $\mathbf a$ and $\mathbf d$ such that
$\mathbf b=\mathbf a\,01\,\mathbf d$ and
$\mathbf b'=\mathbf a\,10\,\mathbf d$. We define
\[
\mathbf b+\mathbf b':=\mathbf a\,s\,\mathbf d
\in\mathcal C(m,n)_1.
\]
\end{definition}

Consider $\mathbb T^{01}=V\otimes W$ and
$\mathbb T^{10}=W\otimes V$. Let
$U^{01}\subseteq\mathbb T^{01}$ be the subspace spanned by the canonical
basis elements $b^{01}_{(\lambda_1,\lambda_2)}$ belonging to the class under
consideration, and define $U^{10}\subseteq\mathbb T^{10}$ similarly.

\begin{lemma}
There is an isomorphism of $U_q\mathfrak{sl}_\infty$-modules
$\Phi:U^{01}\xrightarrow{\sim}U^{10}$ given by
\[
\Phi\!\left(b^{01}_{(\lambda_1,\lambda_2)}\right)
=
\begin{cases}
b^{10}_{(\lambda_1+1,\lambda_1+1)},
& \lambda_1=\lambda_2,\\[2mm]
b^{10}_{(\lambda_2,\lambda_1)},
& \lambda_1\neq\lambda_2.
\end{cases}
\]
\end{lemma}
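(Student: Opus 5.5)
The plan is to compute both canonical bases completely and then check directly that the linear map $\Phi$ commutes with the generators. Both computations are small, because $\mathbb T^{01}=V\otimes W$ and $\mathbb T^{10}=W\otimes V$ are rank-two tensor spaces. Every maximal constant interval has length one, so $\mathcal H^{01}$ and $\mathcal H^{10}$ act trivially, and every $\lambda=(\lambda_1,\lambda_2)$ is dominant in both cases. I take ``the class under consideration'' to mean a single weight space (block) for $U_q\mathfrak{sl}_\infty$. The weight of $v_a\otimes w_b$ and of $w_b\otimes v_a$ is $\varepsilon_a-\varepsilon_b$, so the classes are of two kinds: typical ones with $a\neq b$, and the single atypical one of weight $0$.

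\emph{Step 1: canonical bases.} For $\lambda_1\neq\lambda_2$, part (3) of the bar-involution proposition gives $\overline{v^{01}_\lambda}=v^{01}_\lambda$. Hence $b^{01}_\lambda=v^{01}_\lambda$ and likewise $b^{10}_{(\lambda_2,\lambda_1)}=v^{10}_{(\lambda_2,\lambda_1)}$. In this case $\Phi$ is just the flip $v_a\otimes w_b\mapsto w_b\otimes v_a$ on weight spaces that are one-dimensional on each side.

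For the atypical class, I compute $\overline{v_a\otimes w_a}$ from the compatibility $\overline{uv}=\overline u\,\overline v$. For example, apply $f_a$ (or $e_a$) to a typical bar-invariant monomial such as $v_a\otimes w_{a+1}$ and use the coproduct $\Delta(f_i)=1\otimes f_i+f_i\otimes k_i$. This produces a bar-invariant combination of $v_a\otimes w_a$ and $v_{a+1}\otimes w_{a+1}$. Comparing with the triangularity in \cref{thm:canonical-basis}, I expect
\[
b^{01}_{(a,a)}=v_a\otimes w_a+q\,v_{a\pm1}\otimes w_{a\pm1},
\]
where the sign is dictated by $\preceq_{01}$. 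The same computation in $W\otimes V$ gives $b^{10}_{(a,a)}=w_a\otimes v_a+q\,w_{a\mp1}\otimes v_{a\mp1}$. Uniqueness in \cref{thm:canonical-basis} then pins both bases down.

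\emph{Step 2: equivariance.} I define $\Phi$ linearly by the stated formula and check that it commutes with $k_i$, $e_i$ and $f_i$ on each canonical basis vector. Commuting with $k_i$ is immediate, since $\Phi$ preserves weights; the shift $(a,a)\mapsto(a+1,a+1)$ stays inside weight $0$. For $e_i$ and $f_i$, I compute their action on the explicit vectors from Step 1 via the coproduct. Concretely, $f_i$ and $e_i$ move between the atypical vectors and the typical monomials $v_a\otimes w_{a\pm1}$, and between typical monomials themselves. In each case both sides come out as the same combination of canonical basis vectors, with the index shift $a\mapsto a+1$ occurring exactly where a typical vector enters or leaves weight $0$. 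Since $\Phi$ is bijective on canonical bases, it is an isomorphism.

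As a consistency check, $V\otimes W\cong W\otimes V$ abstractly via the braiding $\Theta\circ\mathrm{flip}$, which is bar-compatible. So $\Phi$ should agree with the braiding up to normalization, and this predicts that the canonical bases correspond.

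\emph{Main obstacle.} The delicate step is the atypical computation. One must get the direction of the Bruhat order $\preceq_{\mathbf b}$, which is opposite to \cite{cheng2015brundan}, right in conjunction with the chosen coproduct. Only then does the correction term sit at $a+1$ versus $a-1$ consistently on both sides, so that the shift in $\Phi$ is exactly $(\lambda_1,\lambda_1)\mapsto(\lambda_1+1,\lambda_1+1)$. I would first fix this by working out the case $a=0$ by hand and reading off the pattern.
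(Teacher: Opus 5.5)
The paper states this lemma without proof, and your direct computation is the natural way to prove it. The generator-by-generator check in Step~2 does go through. For instance,
\[
e_i b^{01}_{(a,a)}=\delta_{i,a-1}\,v_{a-1}\otimes w_a+\delta_{i,a}(q+q^{-1})\,v_a\otimes w_{a+1}+\delta_{i,a+1}\,v_{a+1}\otimes w_{a+2},
\]
and $e_i b^{10}_{(a+1,a+1)}$ is exactly its flip.

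\textbf{The gap.} You leave open precisely what the lemma asserts, namely whether the shift is $+1$ or $-1$. You propose to settle it by reading the direction off $\preceq_{01}$, but that is the wrong source. The sign is forced by the coproduct, through the very computation you describe. With $\Delta(f_a)=1\otimes f_a+f_a\otimes k_a$ one gets
\[
f_a(v_a\otimes w_{a+1})=v_a\otimes w_a+q\,v_{a+1}\otimes w_{a+1},
\qquad
f_a(w_{a+1}\otimes v_a)=w_{a+1}\otimes v_{a+1}+q\,w_a\otimes v_a .
\]
Both are bar-invariant, since $\overline{f_a}=f_a$ and typical monomials are bar-invariant. Each has coefficients $1$ and $q$, and rescaling by $q^{-1}$ would break bar-invariance. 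So these must be $b^{01}_{(a,a)}$ and $b^{10}_{(a+1,a+1)}$. Since $\Phi$ is the flip on typical vectors, equivariance then forces $\Phi(b^{01}_{(a,a)})=b^{10}_{(a+1,a+1)}$. This is the entire content of the $+1$, and the upper signs in your display are the right ones.

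\textbf{Why deferring to the order would mislead you.} The canonical basis requires $(a+1,a+1)\prec_{01}(a,a)$ and $(a,a)\prec_{10}(a+1,a+1)$. The displayed definition of $\preceq_{\mathbf b}$, read literally, gives $\mu\preceq_{01}\lambda$ iff $\mu_1\le\lambda_1$ on a common weight space, which is the reverse. Under that literal order, expand downward:
\[
v_{(a+1,a+1)}=\sum_{k\ge0}(-1)^kq^{-k-1}\bigl(v_{(a-k,a-k)}+q\,v_{(a-k+1,a-k+1)}\bigr).
\]
Applying the bar map then gives leading coefficient $q^2$ on $v_{(a+1,a+1)}$, contradicting part~(2) of the bar-involution proposition. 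If you instead made that order consistent by switching to the opposite coproduct, the same computation gives $\Phi(b^{01}_{(a+1,a+1)})=b^{10}_{(a,a)}$, a shift of $-1$.

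\textbf{Two smaller points.} First, $U^{01}$ cannot be a single weight space, since that is not a $U_q\mathfrak{sl}_\infty$-module. It must be the span of all $b^{01}_\lambda$, which is a proper submodule of $V\otimes W$ not containing $v_a\otimes w_a$. That is why the naive flip on all of $V\otimes W$ fails to be equivariant and a shift appears at all. Your Step~2 computation is what shows this span is stable.

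Second, your braiding remark is not quite right as stated. The braiding is not an isomorphism $V\otimes W\cong W\otimes V$ before completion, because it sends $v_a\otimes w_a$ to an infinite sum. However, it is the flip on typical vectors, so its restriction to $U^{01}$ is exactly $\Phi$. That gives equivariance without any case-checking.
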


In what follows, we identify $U^{01}$ and $U^{10}$ via $\Phi$ and denote the
resulting $U_q\mathfrak{sl}_\infty$-module simply by $U$.

Let $\mathbf c=(c_1,\ldots,c_{m+n-r})\in\mathcal C(m,n)_r$, and choose
$\mathbf b\in L(m,n)$ such that $\mathbf b\subseteq\mathbf c$. Define
\[
T^{\mathbf c}
:=
T^{c_1}\otimes\cdots\otimes T^{c_{m+n-r}},
\qquad
T^{c_i}
:=
\begin{cases}
V, & c_i=0,\\
W, & c_i=1,\\
U, & c_i=s.
\end{cases}
\]
By the preceding lemma, the resulting $U_q\mathfrak{sl}_\infty$-module is
independent, up to the canonical identifications above, of the choice of
$\mathbf b\subseteq\mathbf c$. We regard $T^{\mathbf c}$ as a submodule of
$\mathbb T^{\mathbf b}$.

Now fix $\mathbf b\in L(m,n)$ and
$\mathbf c\in\mathcal C(m,n)_r$ with $\mathbf b\subseteq\mathbf c$.
For each $i$, set
\[
\ell(c_i):=
\begin{cases}
1, & c_i\in\{0,1\},\\
2, & c_i=s,
\end{cases}
\qquad
a_i:=1+\sum_{t<i}\ell(c_t).
\]
Thus the $i$-th factor of $T^{\mathbf c}$ uses the entries
$\lambda_{a_i},\ldots,\lambda_{a_i+\ell(c_i)-1}$ of
$\lambda\in\mathbb Z^{m+n}$.

For $c_i=0$ or $1$, set
\[
v_{\lambda_{a_i}}^{\,b_{a_i};c_i}
:=
\begin{cases}
v_{\lambda_{a_i}}, & c_i=0,\\
w_{\lambda_{a_i}}, & c_i=1.
\end{cases}
\]
If $c_i=s$, then
$(b_{a_i},b_{a_i+1})\in\{(0,1),(1,0)\}$, and we set
\[
v_{\lambda_{a_i},\lambda_{a_i+1}}^{
\,b_{a_i},b_{a_i+1};s}
:=
b_{\lambda_{a_i},\lambda_{a_i+1}}^{
(b_{a_i},b_{a_i+1})}\in U.
\]

We then define
\[
v_\lambda^{\mathbf b\subseteq\mathbf c}
:=
\bigotimes_{i=1}^{m+n-r}
v_{\lambda_{a_i},\ldots,\lambda_{a_i+\ell(c_i)-1}}^{
\,b_{a_i},\ldots,b_{a_i+\ell(c_i)-1};c_i}.
\]
We call $v_\lambda^{\mathbf b\subseteq\mathbf c}$ a
\emph{hypercubic monomial}. The span of all such hypercubic monomials is
naturally identified with $T^{\mathbf c}$.

\begin{theorem}\label{thm:canonical-basis-bc}
Let $\mathbf b\in L(m,n)$ and $\mathbf c\in\mathcal C(m,n)$ satisfy
$\mathbf b\subseteq\mathbf c$. Then
$\widehat T^{\mathbf c}\subseteq\widehat{\mathbb T}^{\mathbf b}$ is stable
under the bar involution and admits a canonical basis
$\{b_\lambda^{\mathbf b\subseteq\mathbf c}
\mid\lambda\in\mathbb Z^{m+n}\}$.

It is uniquely characterized by
$\overline{b_\lambda^{\mathbf b\subseteq\mathbf c}}
=b_\lambda^{\mathbf b\subseteq\mathbf c}$ and
\[
b_\lambda^{\mathbf b\subseteq\mathbf c}
=
v_\lambda^{\mathbf b\subseteq\mathbf c}
+
\sum_{\mu\prec_{\mathbf b}\lambda}
d_{\lambda,\mu}^{\mathbf b\subseteq\mathbf c}(q)
v_\mu^{\mathbf b\subseteq\mathbf c},
\qquad
d_{\lambda,\mu}^{\mathbf b\subseteq\mathbf c}(q)\in q\mathbb Z[q].
\]
Moreover, under the natural inclusion
$\widehat T^{\mathbf c}\subseteq\widehat{\mathbb T}^{\mathbf b}$, one has
$b_\lambda^{\mathbf b\subseteq\mathbf c}=b_\lambda^{\mathbf b}$.
\end{theorem}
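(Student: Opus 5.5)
The plan is to deduce the statement from \cref{thm:canonical-basis}, using uniqueness of canonical bases together with a unitriangularity comparison between hypercubic monomials and ordinary monomials. There are three steps: (i) show that the hypercubic monomials form a topological basis of $\widehat T^{\mathbf c}$ that is unitriangular with respect to the $\mathbf b$-monomials for $\preceq_{\mathbf b}$; (ii) show that $\widehat T^{\mathbf c}$ is bar-stable; (iii) show that the ordinary canonical basis elements $b_\lambda^{\mathbf b}$ lie in $\widehat T^{\mathbf c}$ and have the required expansion. Existence and the identity $b_\lambda^{\mathbf b\subseteq\mathbf c}=b_\lambda^{\mathbf b}$ then come together, and uniqueness follows from the standard argument: the difference of two candidates is bar-invariant with coefficients in $q\mathbb Z[q]$, hence zero.

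\textbf{Step (i).} Work factor by factor. For each $s$-slot with $(b_{a_i},b_{a_i+1})\in\{01,10\}$, the factor $v^{b_{a_i},b_{a_i+1};s}$ is a two-tensor canonical basis element $b^{01}$ or $b^{10}$. By \cref{thm:canonical-basis} in rank two, it equals the corresponding monomial plus $q\mathbb Z[q]$-multiples of monomials that are strictly lower in the two-letter Bruhat order.

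One then checks that the $\mathbf b$-Bruhat order is compatible with tensoring. If $\mu$ differs from $\lambda$ only in the entries of one slot, and is lower there in the local order, then $\mu\prec_{\mathbf b}\lambda$. This holds because the partial sums defining $\preceq_{\mathbf b}$ change only for prefixes ending inside that slot, and those changes have the right sign.

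Consequently $v_\lambda^{\mathbf b\subseteq\mathbf c}=v_\lambda^{\mathbf b}+\sum_{\mu\prec_{\mathbf b}\lambda}(\cdots)v^{\mathbf b}_\mu$, with coefficients in $q\mathbb Z[q]$. The sums may be infinite, and I would check convergence using the completion topology of \cite{cheng2015brundan}. Inverting this unitriangular system shows that $\widehat T^{\mathbf c}$, defined as the closure of the span of hypercubic monomials, has these monomials as a basis in the $\widehat{\ }$-sense.

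\textbf{Step (ii).} Bar-stability is the main obstacle. Here is a cleaner route. In the rank-two case, $U$ is spanned by bar-invariant elements: the canonical basis elements of $\mathbb T^{01}$ in the given class. Globally, Brundan's bar involution on a tensor product has the form $\overline{x\otimes y}=\Theta(\overline x\otimes\overline y)$ for a quasi-$R$-matrix $\Theta$ in a completion of $U_q^-\otimes U_q^+$. Since $U$ is a $U_q\mathfrak{sl}_\infty$-submodule, $\Theta$ preserves $T^{\mathbf c}$ and hence its completion. So I would establish bar-stability by induction on the number of tensor factors, using the factorization $\mathbb T^{\mathbf b}=\mathbb T^{\mathbf b_{<k}}\otimes\mathbb T^{\mathbf b_{\ge k}}$ and the compatibility of the bar involution with it, as in \cite{cheng2015brundan,cao2016inversion}.

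If the quasi-$R$-matrix argument is awkward in the completed setting, there is a fallback. Reduce to Brundan's generation result (the corollary for $0^m1^n$, and the analogous results of Cheng--Lam--Wang for general $\mathbf b$), and note that $\overline{u\,v\,h}=\overline u\,\overline v\,\overline h$ preserves submodules spanned by such expressions.

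\textbf{Step (iii).} Given (i) and (ii), run the usual Lusztig lemma inside $\widehat T^{\mathbf c}$. By (i), $\overline{v_\lambda^{\mathbf b\subseteq\mathbf c}}$ is $v_\lambda^{\mathbf b\subseteq\mathbf c}$ plus lower terms, so a canonical basis $b_\lambda^{\mathbf b\subseteq\mathbf c}$ exists with coefficients in $q\mathbb Z[q]$. Substituting the expansion from (i) gives $b_\lambda^{\mathbf b\subseteq\mathbf c}\in v_\lambda^{\mathbf b}+\sum_{\mu\prec_{\mathbf b}\lambda}q\mathbb Z[q]v_\mu^{\mathbf b}$, since products and sums of elements of $q\mathbb Z[q]$ stay in $q\mathbb Z[q]$. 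This element is bar-invariant, so the uniqueness in \cref{thm:canonical-basis} forces $b_\lambda^{\mathbf b\subseteq\mathbf c}=b_\lambda^{\mathbf b}$.
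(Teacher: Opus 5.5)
The paper does not prove this statement. It says the material of this subsection follows Cheng--Lam--Wang and Cao--Lam, and omits the proofs. Your argument is correct, but steps (ii) and (iii) redo work that step (i) already makes formal.

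\textbf{Step (i) already gives everything.} Step (i) shows that each hypercubic monomial is $v^{\mathbf b}_\lambda$ plus $q\mathbb Z[q]$-multiples of strictly $\preceq_{\mathbf b}$-lower monomials. These are in fact finite sums, since the rank-two canonical basis elements have at most two terms. Now write $v^{\mathbf b}_\mu=\sum_\nu e_{\mu\nu}v^{\mathbf b\subseteq\mathbf c}_\nu$ for the inverse transition.
\begin{itemize}
\item Since $\preceq_{\mathbf b}$-intervals are finite, the matrix $(e_{\mu\nu})$ is also unitriangular with off-diagonal entries in $q\mathbb Z[q]$; each entry is a finite sum over chains.
\item Hence every $v^{\mathbf b}_\lambda$ lies in $\widehat T^{\mathbf c}$, completed by the same upper-bounded sums as $\widehat{\mathbb T}^{\mathbf b}$. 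So $\widehat T^{\mathbf c}=\widehat{\mathbb T}^{\mathbf b}$, and bar-stability comes for free.
\item Re-expand the already available $b^{\mathbf b}_\lambda$ in hypercubic monomials. The coefficient of $v^{\mathbf b\subseteq\mathbf c}_\nu$ is $\sum_{\mu}d^{\mathbf b}_{\lambda,\mu}e_{\mu\nu}$, a finite sum over $\nu\preceq_{\mathbf b}\mu\preceq_{\mathbf b}\lambda$. For $\nu\neq\lambda$ each product has at least one factor in $q\mathbb Z[q]$.
\item This gives existence and $b^{\mathbf b\subseteq\mathbf c}_\lambda=b^{\mathbf b}_\lambda$ at once, without rerunning Lusztig's lemma. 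Your uniqueness argument then finishes the proof.
\end{itemize}

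\textbf{The fallback in step (ii) should be dropped.} Generation of $\mathbb T^{\mathbf b}$ by $\mathbf b$-dominant typical monomials for general $\mathbf b$ appears in the paper only as a conjecture, not as a result of Cheng--Lam--Wang.

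\textbf{What the quasi-$R$-matrix argument is really for.} In Cheng--Lam--Wang it proves a statement not included in this theorem: independence of the choice of $\mathbf b\subseteq\mathbf c$.
\begin{itemize}
\item The map $\Phi$ is $\mathbb Q(q)$-linear and sends canonical basis elements of $U^{01}$ to canonical basis elements of $U^{10}$, so it commutes with bar on $U$.
\item The global bar involution is $\Theta$ composed with the factorwise bar. Therefore the bar involutions of $\widehat{\mathbb T}^{0^{\mathbf c}}$ and $\widehat{\mathbb T}^{1^{\mathbf c}}$ agree on the common space $\widehat T^{\mathbf c}$.
\end{itemize}
This underlies the comparison of canonical bases across an odd reflection. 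The paper uses it implicitly when it calls $T^{\mathbf c}$ independent of $\mathbf b$. With $\mathbf b$ fixed, as in the theorem, it is not needed.
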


\section{Super category $\mathcal O$}
\label{sec:super-category-o}

\subsection{Definitions}
\label{subsec:category-o-definitions}

\subsubsection{Borel subalgebras}
\label{subsubsec:borel-subalgebras}
Let $\mathfrak g=\mathfrak{gl}(m|n)$, and let
\[
\mathfrak h
:=
\bigoplus_{i=1}^{m}\Bbbk e_{ii}
\oplus
\bigoplus_{j=1}^{n}\Bbbk e_{m+j,m+j}
\]
be the standard Cartan subalgebra. We denote by
$\varepsilon_1,\ldots,\varepsilon_m,\delta_1,\ldots,\delta_n\in\mathfrak h^*$
the standard basis, defined by
$\varepsilon_i(e_{kk})=\delta_{i,k}$ and
$\delta_j(e_{m+k,m+k})=\delta_{j,k}$.
We equip $\mathfrak h^*$ with the bilinear form
\[
(\varepsilon_i,\varepsilon_j)=\delta_{i,j},\qquad
(\delta_i,\delta_j)=-\delta_{i,j},\qquad
(\varepsilon_i,\delta_j)=0.
\]

The sets of even and odd roots are
\[
\Delta_{\bar0}
=
\{\varepsilon_i-\varepsilon_j\mid i\neq j\}
\cup
\{\delta_p-\delta_q\mid p\neq q\},
\qquad
\Delta_{\bar1}
=
\{\pm(\varepsilon_i-\delta_j)\mid
1\leq i\leq m,\ 1\leq j\leq n\},
\]
respectively, and $\Delta=\Delta_{\bar0}\sqcup\Delta_{\bar1}$.

We fix the standard system of positive even roots
\[
\Delta_{\bar0}^{\mathrm{st},+}
=
\{\varepsilon_i-\varepsilon_j\mid 1\leq i<j\leq m\}
\cup
\{\delta_p-\delta_q\mid 1\leq p<q\leq n\},
\]
or equivalently the standard Borel subalgebra of
$\mathfrak g_{\bar0}\cong\mathfrak{gl}(m)\oplus\mathfrak{gl}(n)$.

With this even part fixed, the Borel subalgebras of $\mathfrak g$ are
parametrized by $L(m,n)$. Let
$\mathbf b=(b_1,\ldots,b_{m+n})\in L(m,n)$.
Reading $\mathbf b$ from left to right, place
$\varepsilon_1,\ldots,\varepsilon_m$ in the positions occupied by $0$'s and
$\delta_1,\ldots,\delta_n$ in the positions occupied by $1$'s, preserving
the order within each family. This determines a total order
\[
\gamma_1^{\mathbf b}
<
\gamma_2^{\mathbf b}
<
\cdots
<
\gamma_{m+n}^{\mathbf b}
\]
on
$\{\varepsilon_1,\ldots,\varepsilon_m,\delta_1,\ldots,\delta_n\}$.
Set
\[
\Delta^{\mathbf b,+}
:=
\{\gamma_a^{\mathbf b}-\gamma_c^{\mathbf b}
\mid 1\leq a<c\leq m+n\}.
\]

We identify the word $\mathbf b$ with the corresponding Borel subalgebra
\[
\mathbf b
=
\mathfrak h
\oplus
\bigoplus_{\alpha\in\Delta^{\mathbf b,+}}\mathfrak g_\alpha.
\]
Thus, from now on, $L(m,n)$ is identified with the set of Borel
subalgebras of $\mathfrak g$ whose even part is the fixed standard even
Borel, and the same symbol $\mathbf b$ is used for a word and the
corresponding Borel subalgebra.

\begin{definition}\label{RBgdef}
The \emph{odd reflection graph} $OR(\mathfrak g)$ is the edge-colored graph
with vertex set $L(m,n)$ and color set $\Delta_{\bar1}/\{\pm1\}$.
Two vertices $\mathbf b,\mathbf b'\in L(m,n)$ are joined by an edge of
color $\{\pm\alpha\}$ if and only if $\mathbf b$ and $\mathbf b'$ are
related by the odd reflection with respect to the odd simple root $\alpha$.
\end{definition}

The preceding parametrization immediately gives the following.

\begin{proposition}
There is a natural isomorphism of edge-colored graphs
\[
OR(\mathfrak{gl}(m|n))\cong L(m,n).
\]
Under this isomorphism, the color set $\Delta_{\bar1}/\{\pm1\}$ is
naturally identified with the set of boxes in an $m\times n$ rectangle
by assigning $\{\pm(\varepsilon_i-\delta_j)\}$ to the box with coordinates
$(j,m+1-i)$.
\end{proposition}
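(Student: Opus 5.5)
The isomorphism on vertices is already given: both graphs have vertex set $L(m,n)$, and a word $\mathbf b$ is sent to itself, that is, to the Borel subalgebra it defines. What has to be shown is that edges match and that colors match. The plan is to compute, for a given $\mathbf b$, exactly which odd reflections are available and where each one leads, and then compare with box additions and removals in the lattice path picture.

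\emph{Step 1 (simple roots of $\mathbf b$).} The positive system $\Delta^{\mathbf b,+}=\{\gamma^{\mathbf b}_a-\gamma^{\mathbf b}_c\mid a<c\}$ comes from a total order. Hence its simple roots are $\gamma^{\mathbf b}_a-\gamma^{\mathbf b}_{a+1}$ for $1\le a<m+n$. Such a root is odd exactly when $b_a\neq b_{a+1}$, so the odd simple roots of $\mathbf b$ correspond bijectively to the occurrences of $01$ or $10$ in $\mathbf b$.

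\emph{Step 2 (effect of an odd reflection).} Let $\alpha=\gamma_a-\gamma_{a+1}$ be odd simple. I would use the standard description of the odd reflection $r_\alpha$: the new positive system is $(\Delta^{\mathbf b,+}\setminus\{\alpha\})\cup\{-\alpha\}$, and the even part is unchanged. This is exactly the positive system attached to the total order in which $\gamma_a$ and $\gamma_{a+1}$ are interchanged. In terms of words, $r_\alpha$ turns $\mathbf b=\mathbf a\,01\,\mathbf d$ into $\mathbf b'=\mathbf a\,10\,\mathbf d$, or the reverse. Suppose the swapped $0$ is the $i$-th zero and the swapped $1$ is the $j$-th one. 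Then $\alpha=\pm(\varepsilon_i-\delta_j)$, and the color of the edge is $\{\pm(\varepsilon_i-\delta_j)\}$. Since two words related by one adjacent swap determine that swap uniquely, $OR(\mathfrak g)$ is a simple graph, and its edges are precisely the pairs of words differing by one adjacent $01\leftrightarrow10$ swap.

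\emph{Step 3 (matching with boxes).} Just before position $a$, the path has read $j-1$ ones and $i-1$ zeros, so it sits at $(n-j+1,\,i-1)$. The segment $01$ goes up and then left; the segment $10$ goes left and then up. The two paths therefore differ by exactly the unit square $[n-j,\,n-j+1]\times[i-1,\,i]$. So adjacent swaps are the same thing as adding or removing a single box, which gives the edge bijection. The box in question is the one in column $n+1-j$ and row $i$, and it depends only on $(i,j)$, not on $\mathbf a$ or $\mathbf d$. Consequently the assignment ``color $\{\pm(\varepsilon_i-\delta_j)\}\mapsto$ a fixed box'' is well defined and bijective onto the $mn$ boxes.

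\emph{Step 4 (conventions).} I expect the main obstacle to be aligning this coordinate computation with the normalization $(j,\,m+1-i)$ in the statement. The direct count above gives $(n+1-j,\,i)$, which is the image of $(j,\,m+1-i)$ under the $180^\circ$ rotation of the $m\times n$ rectangle. I would first recheck the paper's conventions: which region counts as ``weakly southeast'', and whether columns or rows are indexed from the opposite corner. If the discrepancy persists, I would compose with the involution $\lambda\mapsto(\text{rotated complement of }\lambda)$. This involution is an automorphism of the uncolored graph $L(m,n)$ that relabels the colors by exactly this rotation of boxes, so it yields the stated identification of colors. In either case the argument rests only on Steps 1--3.
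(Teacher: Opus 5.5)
Your proof is correct and follows the paper's route; the paper only says the statement is immediate from the parametrization, and your Steps 1--3 spell that out. The odd simple roots are the adjacent $01$/$10$ pairs, an odd reflection swaps such a pair, and the swap toggles one square that depends only on $(i,j)$. On Step 4: the introduction fixes the convention that the distinguished Borel $0^m1^n$ corresponds to $()$, so the diagram is the region of the rectangle toward the corner $(n,m)$, read from that corner. In those coordinates your square $[n-j,n-j+1]\times[i-1,i]$ is already box $(j,m+1-i)$. Your rotated-complement involution yields exactly this convention, so either way you get the stated color identification.
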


\begin{definition}\label{ld}
For $\lambda\in\Lambda$, set
\[
A_\lambda
:=
\{\{\pm\alpha\}\in\Delta_{\bar1}/\{\pm1\}
\mid (\lambda,\alpha)\neq0\}.
\]
Using the identification of colors above, we regard $A_\lambda$ as a
subset of the color set of $L(m,n)$ and define
$L(m,n)_\lambda:=L(m,n)/A_\lambda$.
In particular, $L(m,n)_0=L(m,n)$.

For $\mathbf b,\mathbf b'\in L(m,n)$, let
$\overline{\mathbf b}$ and $\overline{\mathbf b'}$ denote their images in
$L(m,n)_\lambda$. We define
$\ell_\lambda(\mathbf b,\mathbf b')$ to be the length of a shortest path
from $\overline{\mathbf b}$ to $\overline{\mathbf b'}$ in
$L(m,n)_\lambda$.
\end{definition}

\subsubsection{$\mathbf b$-Verma modules $M^{\mathbf b}(\lambda)$}
\label{subsubsec:verma-modules}
\begin{definition}
A weight $\lambda\in\mathfrak h^*$ is called \emph{integral} if
$\lambda=\sum_{i=1}^m a_i\varepsilon_i+\sum_{j=1}^n b_j\delta_j$
for some $a_i,b_j\in\mathbb Z$. We denote the set of integral weights by
$\Lambda$.

Let $s\mathcal W$ denote the full subcategory of $\mathfrak g$-supermodules
which are $\mathfrak h$-semisimple, have integral weights, and have
finite-dimensional weight spaces. Thus every $M\in s\mathcal W$ admits a
decomposition
\[
M=\bigoplus_{\lambda\in\Lambda}M_\lambda,
\qquad
M_\lambda
:=
\{v\in M\mid hv=\lambda(h)v\text{ for all }h\in\mathfrak h\},
\]
with $\dim M_\lambda<\infty$ for all $\lambda\in\Lambda$.
\end{definition}

\begin{lemma}[See also {\cite[Lemma~2.2]{brundan2014representations}}]
There exists a map
$\operatorname{par}:\Lambda\to\mathbb Z/2\mathbb Z$ such that the full
subcategory
\[
\mathcal W
:=
\left\{
M\in s\mathcal W
\ \middle|\
M_\lambda\text{ is concentrated in parity }\operatorname{par}(\lambda)
\text{ whenever }M_\lambda\neq0
\right\}
\]
is a Serre subcategory of $s\mathcal W$ containing the trivial module.
Moreover, $s\mathcal W=\mathcal W\oplus\Pi\mathcal W$.
\end{lemma}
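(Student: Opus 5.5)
The lemma asks for a parity function $\operatorname{par}:\Lambda\to\mathbb Z/2\mathbb Z$ such that $\mathcal W$ is Serre, contains the trivial module, and splits off $s\mathcal W$. I would take the standard choice from \cite[Lemma~2.2]{brundan2014representations}. For $\lambda=\sum_i a_i\varepsilon_i+\sum_j b_j\delta_j\in\Lambda$, set
\[
\operatorname{par}(\lambda):=\sum_{j=1}^n b_j \bmod 2 ,
\]
the parity of the total $\delta$-content.

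\textbf{Compatibility with the action.} The key observation is that $\operatorname{par}$ is additive on $\Lambda$. It takes the value $0$ on even roots $\varepsilon_i-\varepsilon_j$ and $\delta_p-\delta_q$. It takes the value $1$ on odd roots $\pm(\varepsilon_i-\delta_j)$. Hence, for any root vector $x\in\mathfrak g_\alpha$ of parity $|x|$ and any $\mu\in\Lambda$,
\[
\operatorname{par}(\mu+\alpha)=\operatorname{par}(\mu)+|x|.
\]
This equals $\operatorname{par}(\mu)+\operatorname{par}(\alpha)$, and $\operatorname{par}(\alpha)=|x|$ because the $\mathbb Z/2$-grading of $\mathfrak g$ is exactly the parity of $\alpha$. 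Together with $\mathfrak h\subseteq\mathfrak g_{\bar0}$, this shows that $\mathfrak g$ maps a vector of $M_\mu$ of parity $\operatorname{par}(\mu)$ to a vector of $M_{\mu+\alpha}$ of parity $\operatorname{par}(\mu+\alpha)$.

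\textbf{The splitting $s\mathcal W=\mathcal W\oplus\Pi\mathcal W$.} For $M\in s\mathcal W$, set
\[
M^+:=\bigoplus_\mu (M_\mu)_{\operatorname{par}(\mu)},\qquad M^-:=\bigoplus_\mu (M_\mu)_{\operatorname{par}(\mu)+1}.
\]
\begin{enumerate}
\item By the compatibility above, $M^+$ and $M^-$ are $\mathfrak g$-subsupermodules.
\item They satisfy $M=M^+\oplus M^-$, with $M^+\in\mathcal W$ and $M^-\in\Pi\mathcal W$.
\item Every morphism in $s\mathcal W$ is even and preserves weights, so it preserves this decomposition. Consequently there are no nonzero morphisms between $\mathcal W$ and $\Pi\mathcal W$.
\end{enumerate}
This gives the direct sum decomposition of categories.

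\textbf{Serre property and the trivial module.} The conditions defining $\mathcal W$ are weightwise parity conditions. Such conditions pass to subobjects and quotients, and they are preserved under extensions because weight spaces are exact in short exact sequences. The trivial module is concentrated in weight $0$ and in even parity. Since $\operatorname{par}(0)=0$, it lies in $\mathcal W$.

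I do not expect a genuine obstacle. The only point needing care is that $\operatorname{par}$ must be well defined and additive on all of $\Lambda$, not merely on the root lattice. The explicit formula in terms of the $\delta$-coordinates secures this for integral weights. This is also why the lemma is stated on $\Lambda$: for nonintegral weights one would have to choose a parity on each coset of the root lattice.
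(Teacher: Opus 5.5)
Your proof is correct: $\operatorname{par}(\lambda)=\sum_j b_j \bmod 2$ is additive and takes the value $1$ exactly on odd roots. Hence $M=M^+\oplus M^-$ is a decomposition into subsupermodules, and the weightwise parity condition passes to subobjects, quotients and extensions because weight spaces are exact and all morphisms are even. The paper gives no argument of its own and simply defers to \cite[Lemma~2.2]{brundan2014representations}; your argument is the standard verification behind that citation.
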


Similarly, for $\mathfrak g_{\bar0}$, we define
$s\mathcal W_{\bar0}$ and choose the corresponding subcategory
$\mathcal W_{\bar0}$ compatibly with restriction.

\begin{lemma}[See {\cite[\S6.1]{coulembier2017gorenstein}}]
\label{indres}
The functors
\[
\operatorname{Res}^{\mathfrak g}_{\mathfrak g_{\bar0}}
:
\mathcal W\to\mathcal W_{\bar0},
\qquad
\operatorname{Ind}^{\mathfrak g}_{\mathfrak g_{\bar0}},
\operatorname{Coind}^{\mathfrak g}_{\mathfrak g_{\bar0}}
:
\mathcal W_{\bar0}\to\mathcal W
\]
are exact. Moreover,
\[
\operatorname{Ind}^{\mathfrak g}_{\mathfrak g_{\bar0}}
\dashv
\operatorname{Res}^{\mathfrak g}_{\mathfrak g_{\bar0}}
\dashv
\operatorname{Coind}^{\mathfrak g}_{\mathfrak g_{\bar0}},
\qquad
\operatorname{Ind}^{\mathfrak g}_{\mathfrak g_{\bar0}}
\cong
\operatorname{Coind}^{\mathfrak g}_{\mathfrak g_{\bar0}}.
\]
Furthermore,
\[
\operatorname{Res}^{\mathfrak g}_{\mathfrak g_{\bar0}}
\circ
\operatorname{Ind}^{\mathfrak g}_{\mathfrak g_{\bar0}}
\cong
\Lambda(\mathfrak g_{\bar1})\otimes-.
\]
The unit
\[
\operatorname{Id}_{\mathcal W_{\bar0}}
\longrightarrow
\operatorname{Res}^{\mathfrak g}_{\mathfrak g_{\bar0}}
\circ
\operatorname{Ind}^{\mathfrak g}_{\mathfrak g_{\bar0}}
\]
is a split monomorphism, while the counit
\[
\operatorname{Res}^{\mathfrak g}_{\mathfrak g_{\bar0}}
\circ
\operatorname{Ind}^{\mathfrak g}_{\mathfrak g_{\bar0}}
\longrightarrow
\operatorname{Id}_{\mathcal W_{\bar0}}
\]
is a split epimorphism.
\end{lemma}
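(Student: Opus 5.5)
The statement just given is \cref{indres}. My plan is to derive every part of it from the structure of $U(\mathfrak g)$ as a module over $U(\mathfrak g_{\bar0})$, together with the fact that $\mathfrak g_{\bar1}$ is finite dimensional and $\operatorname{ad}\mathfrak h$-semisimple. Everything I need is already recorded in \cite[\S6.1]{coulembier2017gorenstein}; below I indicate how each claim follows.

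\textbf{Exactness and the adjunctions.}
By PBW, $U(\mathfrak g)\cong\Lambda(\mathfrak g_{\bar1})\otimes U(\mathfrak g_{\bar0})$ as right $U(\mathfrak g_{\bar0})$-modules. Hence $U(\mathfrak g)$ is free of finite rank on both sides, and this gives the following.
\begin{enumerate}
\item $\operatorname{Ind}=U(\mathfrak g)\otimes_{U(\mathfrak g_{\bar0})}-$ is exact.
\item $\operatorname{Coind}=\operatorname{Hom}_{U(\mathfrak g_{\bar0})}(U(\mathfrak g),-)$ is exact.
\item $\operatorname{Res}$ is trivially exact.
\end{enumerate}
Both functors preserve $\mathcal W$. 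Weights of $\operatorname{Ind}M$ are weights of $M$ shifted by the finitely many weights of $\Lambda(\mathfrak g_{\bar1})$, so weight spaces stay finite dimensional and integral. The parity condition is preserved because $\operatorname{par}$ is chosen so that odd root vectors flip parity compatibly, which is exactly the choice made in the parity lemma above. The adjunctions $\operatorname{Ind}\dashv\operatorname{Res}\dashv\operatorname{Coind}$ are the standard tensor--hom adjunctions.

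\textbf{The isomorphism $\operatorname{Ind}\cong\operatorname{Coind}$.}
Here I will use the classical result of Bell--Farnsteiner/Gorelik: $\operatorname{Coind}\cong\operatorname{Ind}(-\otimes\Lambda^{\mathrm{top}}\mathfrak g_{\bar1})$ up to parity shift. For $\mathfrak{gl}(m|n)$, the character $\Lambda^{\mathrm{top}}\mathfrak g_{\bar1}$ of $\mathfrak g_{\bar0}$ is trivial, since the odd roots sum to zero and the supertrace contributions cancel. The parity shift is absorbed because we work inside $\mathcal W$.

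\textbf{The composite $\operatorname{Res}\circ\operatorname{Ind}$.}
Using the PBW decomposition again, $\operatorname{Res}\operatorname{Ind}M\cong U(\mathfrak g)\otimes_{U(\mathfrak g_{\bar0})}M$ carries a $\mathfrak g_{\bar0}$-stable filtration by degree in $\Lambda(\mathfrak g_{\bar1})$. Its associated graded is $\Lambda(\mathfrak g_{\bar1})\otimes M$. This filtration splits naturally via the $\mathfrak g_{\bar0}$-equivariant symmetrization map $\Lambda(\mathfrak g_{\bar1})\to U(\mathfrak g)$, which gives the isomorphism with $\Lambda(\mathfrak g_{\bar1})\otimes-$.

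\textbf{Splitting of the unit and counit.}
Under the isomorphism above, the unit $M\to\operatorname{Res}\operatorname{Ind}M$ is the inclusion of the degree-zero summand $\Lambda^0\otimes M$. It is therefore split by projection to that summand. The counit claim is checked on the same summands, now for the adjunction $\operatorname{Res}\dashv\operatorname{Coind}\cong\operatorname{Ind}$: the counit is evaluation at $1$, which is split by the same degree-zero summand.

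\textbf{Main obstacle.}
The only delicate point is the naturality of the splitting $\operatorname{Res}\operatorname{Ind}\cong\Lambda(\mathfrak g_{\bar1})\otimes-$. The symmetrization map is $\mathfrak g_{\bar0}$-equivariant, and once that is verified the rest is formal.
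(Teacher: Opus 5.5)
Your route (PBW freeness for exactness, the tensor--hom adjunctions, the Bell--Farnsteiner isomorphism with trivial twist for $\mathfrak{gl}(m|n)$, and the $\operatorname{ad}\mathfrak g_{\bar0}$-equivariant symmetrization giving $\operatorname{Res}\operatorname{Ind}M\cong\Lambda(\mathfrak g_{\bar1})\otimes M$) is the standard argument behind the cited \S6.1; the paper itself gives no proof. All of that is fine, and so is the unit step, since the unit is the inclusion of $\Lambda^0(\mathfrak g_{\bar1})\otimes M$.

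The counit step, however, uses the wrong summand. The standard isomorphism behind the Bell--Farnsteiner result is $\Phi:\operatorname{Ind}M\to\operatorname{Coind}M$, $\Phi(w\otimes m)(v)=\pi(vw)m$. Here $\pi$ is defined on $U(\mathfrak g)=\bigoplus_k U(\mathfrak g_{\bar0})\operatorname{sym}(\Lambda^k\mathfrak g_{\bar1})$ and takes values in $U(\mathfrak g_{\bar0})$; it extracts the coefficient of $\operatorname{sym}(\omega)$, where $0\neq\omega\in\Lambda^{2mn}\mathfrak g_{\bar1}$. Transporting evaluation at $1$ along $\Phi$ gives $\operatorname{sym}(a)\otimes m\mapsto\pi(\operatorname{sym}(a))m$. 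This is the projection onto the \emph{top} summand $\Lambda^{2mn}(\mathfrak g_{\bar1})\otimes M\cong M$, and it vanishes on $\Lambda^0(\mathfrak g_{\bar1})\otimes M$ because $\pi(1)=0$ when $mn>0$. So your claim that the counit is ``split by the same degree-zero summand'' is false as written: the composite of the unit with the transported counit is zero.

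The conclusion is still true, and the repair is easy. You can split by the inclusion of the top summand. More cleanly, you can avoid $\operatorname{Ind}\cong\operatorname{Coind}$ altogether by applying your symmetrization to $\operatorname{Coind}$. This gives $\operatorname{Res}\operatorname{Coind}M\cong\operatorname{Hom}_{\Bbbk}(\Lambda(\mathfrak g_{\bar1}),M)$, with action $(x\cdot f)(a)=xf(a)-f(\operatorname{ad}_x a)$ for $x\in\mathfrak g_{\bar0}$. Evaluation at $1$ is then restriction to $\Lambda^0(\mathfrak g_{\bar1})$. Extending by zero on $\bigoplus_{k>0}\Lambda^k(\mathfrak g_{\bar1})$ is a $\mathfrak g_{\bar0}$-equivariant section, because $\operatorname{ad}\mathfrak g_{\bar0}$ preserves exterior degree and acts trivially on $\Lambda^0$. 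The same description also shows $\operatorname{Coind}M\in\mathcal W$, which you asserted but only checked for $\operatorname{Ind}$.

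A minor correction on parity: $\dim\mathfrak g_{\bar1}=2mn$ is even, so $\Lambda^{\mathrm{top}}\mathfrak g_{\bar1}$ is even and no parity shift occurs at all. An odd shift could not be ``absorbed'' by working in $\mathcal W$, since $\Pi$ carries $\mathcal W$ to $\Pi\mathcal W$.
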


\begin{definition}
For $M\in\mathcal W$, define its \emph{character} by
$\operatorname{ch}M:=\sum_{\lambda\in\Lambda}\dim M_\lambda\,e^\lambda$.
\end{definition}

Let $L_{\mathfrak h}(\lambda)$ be the one-dimensional even
$\mathfrak h$-module of weight $\lambda\in\Lambda$.
For $\mathbf b\in L(m,n)$, regard $L_{\mathfrak h}(\lambda)$ as a
$\mathbf b$-module by inflation, with the nilradical of $\mathbf b$
acting trivially. Define the $\mathbf b$-Verma supermodule by
\[
M^{\mathbf b}(\lambda)
:=
\Pi^{\operatorname{par}(\lambda)}
\operatorname{Ind}_{\mathbf b}^{\mathfrak g}
L_{\mathfrak h}(\lambda).
\]
The parity shift is chosen so that $M^{\mathbf b}(\lambda)\in\mathcal W$.

We denote by $v_\lambda^{\mathbf b}$ a highest weight vector of
$M^{\mathbf b}(\lambda)$ and by $L^{\mathbf b}(\lambda)$ its simple top.
For $\mathfrak g_{\bar0}$, the corresponding Verma and simple modules are
denoted by $M_{\bar0}(\lambda)$ and $L_{\bar0}(\lambda)$.

Define the \emph{Berezinian weight} by
$\operatorname{ber}
:=\varepsilon_1+\cdots+\varepsilon_m-(\delta_1+\cdots+\delta_n)$.
A weight $\lambda\in\mathfrak h^*$ is orthogonal to every root if and only if
$\lambda=t\,\operatorname{ber}$ for some $t\in\Bbbk$.

\begin{definition}[Integral Weyl vectors {\cite{brundan2014representations}}]
Define
\[
\rho_{\bar0}
:=
\frac12\sum_{\beta\in\Delta_{\bar0}^+}\beta,
\qquad
\rho_{\bar1}^{\mathbf b}
:=
\frac12\sum_{\gamma\in\Delta_{\bar1}^{\mathbf b,+}}\gamma,
\]
and set $\rho_{\bar1}:=\rho_{\bar1}^{0^m1^n}$.

For the standard Borel, define
\[
\rho
:=
-\varepsilon_2-2\varepsilon_3-\cdots-(m-1)\varepsilon_m
+(m-1)\delta_1+(m-2)\delta_2+\cdots+(m-n)\delta_n.
\]
Equivalently,
$\rho=\rho_{\bar0}-\rho_{\bar1}
-\frac{m+n-1}{2}\operatorname{ber}$.

For $\mathbf b\in L(m,n)$, set
$\rho^{\mathbf b}
:=\rho_{\bar0}-\rho_{\bar1}^{\mathbf b}
-\frac{m+n-1}{2}\operatorname{ber}$.
Then $\rho^{\mathbf b}$ is integral.

If $\alpha$ is a $\mathbf b$-simple odd root and
$r_\alpha\mathbf b$ is obtained from $\mathbf b$ by the corresponding odd
reflection, then
$\rho^{r_\alpha\mathbf b}=\rho^{\mathbf b}+\alpha$.
Moreover, for every $\mathbf b$-simple root $\alpha$,
$(\rho^{\mathbf b},\alpha)=\frac12(\alpha,\alpha)$.
In particular, $(\rho^{\mathbf b},\alpha)=0$ for every isotropic odd
simple root $\alpha$.

It is convenient to encode $\lambda\in\Lambda$ by the $m\mid n$-tuple
$(\lambda_1,\ldots,\lambda_m\mid
\lambda_{m+1},\ldots,\lambda_{m+n})\in\mathbb Z^{m+n}$, where
$\lambda_i=(\lambda+\rho,\varepsilon_i)$ for $1\leq i\leq m$ and
$\lambda_{m+j}=(\lambda+\rho,\delta_j)$ for $1\leq j\leq n$.

We call $\lambda$ \emph{regular} if
$\lambda_i\neq\lambda_j$ whenever
$1\leq i<j\leq m$ or $m+1\leq i<j\leq m+n$.

We call $\lambda$ \emph{antidominant} if
$\lambda_1\leq\cdots\leq\lambda_m$ and
$\lambda_{m+1}\geq\cdots\geq\lambda_{m+n}$, and \emph{dominant} if
$\lambda_1\geq\cdots\geq\lambda_m$ and
$\lambda_{m+1}\leq\cdots\leq\lambda_{m+n}$.

The dot action of
$W\cong\mathfrak S_m\times\mathfrak S_n$ on $\Lambda$ is defined by
\[
w\cdot\lambda
:=
w(\lambda+\rho_{\bar0})-\rho_{\bar0}
=
w(\lambda+\rho)-\rho.
\]

Let $\Lambda^{\mathrm{dom}}$ and $\Lambda^{\mathrm{antidom}}$ denote the
sets of dominant and antidominant weights. For $\lambda\in\Lambda$, let
$\lambda^{\mathrm{dom}}$ and $\lambda^{\mathrm{antidom}}$ denote the unique
dominant and antidominant weights in $W\cdot\lambda$, respectively.
Thus we obtain maps
$()^{\mathrm{dom}}:\Lambda\to\Lambda^{\mathrm{dom}}$ and
$()^{\mathrm{antidom}}:\Lambda\to\Lambda^{\mathrm{antidom}}$.

Let $\Lambda^{\mathrm{reg}}\subseteq\Lambda$ be the set of regular weights
and set
$\Lambda^{\mathrm{reg,dom}}
:=\Lambda^{\mathrm{reg}}\cap\Lambda^{\mathrm{dom}}$.
\end{definition}

\begin{definition}
Let $L=L^{\mathbf b}(\lambda)$ be simple. Its \emph{atypicality} is
\[
\operatorname{aty}L
:=
\max
\left\{
t\ \middle|\
\begin{array}{l}
\text{there exist pairwise orthogonal distinct classes }
\{\pm\alpha_1\},\ldots,\{\pm\alpha_t\}
\in\Delta_{\bar1}/\{\pm1\},\\
(\lambda+\rho^{\mathbf b},\alpha_i)=0
\text{ for }i=1,\ldots,t
\end{array}
\right\}.
\]
This number is independent of the choice of $\mathbf b$. We call $L$
\emph{typical} if $\operatorname{aty}L=0$, and \emph{atypical} otherwise.
\end{definition}

We will use the following standard facts.

\begin{lemma}\label{5.2.ch_eq}
Let $\mathbf b,\mathbf b'\in L(m,n)$ and
$\lambda,\lambda'\in\Lambda$. Then:
\begin{enumerate}
\item
$\operatorname{ch}M^{\mathbf b}(\lambda-\rho^{\mathbf b})
=
\operatorname{ch}M^{\mathbf b'}(\lambda'-\rho^{\mathbf b'})$
if and only if $\lambda=\lambda'$.

\item
$\dim
M^{\mathbf b'}(\lambda-\rho^{\mathbf b'})_
{\lambda-\rho^{\mathbf b}}=1$.

\item
\[
\dim\operatorname{Hom}_{\mathfrak g}
\bigl(
M^{\mathbf b}(\lambda-\rho^{\mathbf b}),
M^{\mathbf b'}(\lambda-\rho^{\mathbf b'})
\bigr)
=1.
\]

\item
Suppose that $\mathbf b'$ is obtained from $\mathbf b$ by the odd
reflection with respect to a $\mathbf b$-simple odd root $\alpha$.
\begin{enumerate}
\item
If $(\lambda,\alpha)\neq0$, then
\[
M^{\mathbf b}(\lambda-\rho^{\mathbf b})
\cong
M^{\mathbf b'}(\lambda-\rho^{\mathbf b'}),
\qquad
L^{\mathbf b}(\lambda-\rho^{\mathbf b})
\cong
L^{\mathbf b'}(\lambda-\rho^{\mathbf b'}).
\]

\item
If $(\lambda,\alpha)=0$, then
\[
M^{\mathbf b}(\lambda-\rho^{\mathbf b})
\not\cong
M^{\mathbf b'}(\lambda-\rho^{\mathbf b'}),
\]
whereas
\[
L^{\mathbf b}(\lambda-\rho^{\mathbf b})
\cong
L^{\mathbf b'}(\lambda-\rho^{\mathbf b})
=
L^{\mathbf b'}
(\lambda+\alpha-\rho^{\mathbf b'}).
\]
\end{enumerate}
\end{enumerate}
\end{lemma}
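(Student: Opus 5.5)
We prove the four parts in order. Part (4) is used to reduce parts (2) and (3) to the case of a single odd reflection. Throughout, the basic tool is the PBW description
\[
M^{\mathbf b}(\mu)\cong U(\mathfrak n^{\mathbf b,-})\otimes L_{\mathfrak h}(\mu)
\]
as $\mathfrak h$-modules. It gives
\[
\operatorname{ch}M^{\mathbf b}(\lambda-\rho^{\mathbf b})
=e^{\lambda-\rho^{\mathbf b}}
\prod_{\beta\in\Delta^{+}_{\bar1}(\mathbf b)}(1+e^{-\beta})
\prod_{\alpha\in\Delta^+_{\bar0}}(1-e^{-\alpha})^{-1}.
\]
We use the identity $e^{-\rho_{\bar1}^{\mathbf b}}\prod_{\beta\in\Delta^{\mathbf b,+}_{\bar1}}(1+e^{-\beta})=\prod_{\beta}(e^{\beta/2}+e^{-\beta/2})$, and the right-hand side does not depend on $\mathbf b$ because it is symmetric under $\beta\mapsto-\beta$. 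Hence
\[
\operatorname{ch}M^{\mathbf b}(\lambda-\rho^{\mathbf b})
=e^{\lambda-\rho_{\bar0}+\frac{m+n-1}{2}\operatorname{ber}}
\prod_{\beta\in\Delta_{\bar1}^{+}}(e^{\beta/2}+e^{-\beta/2})
\prod_{\alpha\in\Delta^+_{\bar0}}(1-e^{-\alpha})^{-1}.
\]
This character is a fixed, invertible formal series multiplied by $e^{\lambda}$, so the characters for $\lambda$ and $\lambda'$ agree exactly when $\lambda=\lambda'$. This proves (1).

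For (4), suppose $\mathbf b'=r_\alpha\mathbf b$ and set $\mu=\lambda-\rho^{\mathbf b}$, so that $\lambda-\rho^{\mathbf b'}=\mu-\alpha$. Let $v=v^{\mathbf b}_\mu$ and consider $f_\alpha v$, where $f_\alpha\in\mathfrak g_{-\alpha}$. Since $\alpha$ is $\mathbf b$-simple, the vector $f_\alpha v$ is killed by every $\mathbf b'$-positive root vector other than $e_\alpha$; this is a standard check. Moreover
\[
e_\alpha f_\alpha v=[e_\alpha,f_\alpha]v=\mu(h_\alpha)v=(\lambda,\alpha)v,
\]
using $(\rho^{\mathbf b},\alpha)=0$.

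In case (a), $(\lambda,\alpha)\neq0$. Then $f_\alpha v$ generates $v$, so $M^{\mathbf b}(\mu)$ is generated by the $\mathbf b'$-highest weight vector $f_\alpha v$ of weight $\mu-\alpha$. This gives a surjection $M^{\mathbf b'}(\mu-\alpha)\twoheadrightarrow M^{\mathbf b}(\mu)$. By (1) the two modules have equal characters, so the surjection is an isomorphism, and the isomorphism of simple tops follows.

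In case (b), $(\lambda,\alpha)=0$. Then $f_\alpha v$ is a $\mathbf b'$-singular vector and generates a proper submodule. If the two Verma modules were isomorphic, $M^{\mathbf b}(\mu)$ would be generated by a vector of weight $\mu-\alpha$, contradicting the fact that $v$ does not lie in $U(\mathfrak g)f_\alpha v$ (a weight argument using $f_\alpha^2=0$). For the simple modules, the image of $f_\alpha v$ in $L^{\mathbf b}(\mu)$ is zero, because it is singular and $L^{\mathbf b}(\mu)$ is simple. Consequently $v$ itself is $\mathbf b'$-primitive of weight $\mu$, giving $L^{\mathbf b}(\mu)\cong L^{\mathbf b'}(\mu)=L^{\mathbf b'}(\lambda+\alpha-\rho^{\mathbf b'})$.

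For (2), the weight space in question has dimension equal to the coefficient of $e^{\lambda-\rho^{\mathbf b}}$ in the common character. We prove it equals $1$ by induction along a path of odd reflections from $\mathbf b'$ to $\mathbf b$, using the displayed product formula. Concretely, the weight $\lambda-\rho^{\mathbf b}$ is obtained from $\lambda-\rho^{\mathbf b'}$ by adding a sum of distinct positive odd roots $\beta\in\Delta^{\mathbf b',+}_{\bar1}\setminus\Delta^{\mathbf b,+}_{\bar1}$. The step to check is that this sum has a unique decomposition as (a sum of distinct $\mathbf b'$-negative odd roots) plus (an element of $-\mathbb Z_{\ge0}\Delta^+_{\bar0}$), in the sense required by the PBW basis. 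This holds because $\rho^{\mathbf b}-\rho^{\mathbf b'}$ is extremal: it is the full sum over the set of roots that flip sign.

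For (3), compose the maps along a path $\mathbf b=\mathbf b_0,\dots,\mathbf b_k=\mathbf b'$. At each step there is a nonzero map between consecutive Vermas: an isomorphism in case (a), and in case (b) the map $M^{\mathbf b_{i+1}}\to M^{\mathbf b_i}$, or the reverse, determined by the singular vector $f_\alpha v$. We use Verma modules over $\mathfrak n^-$, which are torsion-free, so these compositions are injective and hence nonzero. This gives $\dim\operatorname{Hom}\ge1$. For the upper bound, any homomorphism is determined by the image of the highest weight vector, which lies in the weight space $M^{\mathbf b'}(\lambda-\rho^{\mathbf b'})_{\lambda-\rho^{\mathbf b}}$, and that space has dimension $1$ by (2).

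The main obstacle is getting the directions right in (3): one must verify that the composed map actually lands in $\operatorname{Hom}(M^{\mathbf b},M^{\mathbf b'})$ and not in the reverse. We handle this by working along a path on which every step changes the module in the same direction, so that each map points from $M^{\mathbf b_i}$ toward $M^{\mathbf b_{i+1}}$.
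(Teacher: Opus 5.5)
The paper lists this lemma as standard facts and gives no proof, so I am comparing your argument with the standard one. Parts (1) and (4)(a), and the upper bound in (3), are fine; in (1) the intermediate identity should have $e^{+\rho^{\mathbf b}_{\bar1}}$, but your final formula is correct. The genuine gap is the lower bound in (3).

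\textbf{The injectivity claim is false.} Verma supermodules are free over $U(\mathfrak n^-)$, but $U(\mathfrak n^-)$ contains nilpotent odd elements, so freeness does not make nonzero maps between Verma modules injective. In case (4)(b) the maps between adjacent Verma modules are never injective. With $\mu=\lambda-\rho^{\mathbf b}$, the map $M^{\mathbf b}(\mu)\to M^{r_\alpha\mathbf b}(\mu-\alpha)$, $v^{\mathbf b}\mapsto e_\alpha v^{r_\alpha\mathbf b}$, sends $f_\alpha v^{\mathbf b}$ to $h_\alpha v^{r_\alpha\mathbf b}=(\lambda,\alpha)v^{r_\alpha\mathbf b}=0$. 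For $\mathfrak{gl}(1|1)$, both maps between the two-dimensional modules $M^{01}$ and $M^{10}$ have rank one. Your own results already rule out injectivity: by (1) the two modules have equal characters, so an injective map would be an isomorphism, contradicting (4)(b). The paper reflects this too: $N^{\mathbf c}(\lambda)$, the image of $M^{0^{\mathbf c}}\to M^{1^{\mathbf c}}$, is in general a proper quotient.

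\textbf{Composites along a path can vanish.} For example, $M^{\mathbf b}\to M^{r_\alpha\mathbf b}\to M^{\mathbf b}$ is $(\lambda,\alpha)\cdot\mathrm{id}$. The ``directions'' problem you identify is not the real obstacle: adjacent Borels admit nonzero maps both ways, $v^{\mathbf b}\mapsto e_\alpha v^{r_\alpha\mathbf b}$ and $v^{r_\alpha\mathbf b}\mapsto f_\alpha v^{\mathbf b}$. The real issue is showing the composite is nonzero.

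\textbf{A repair using only (1).} Since $\operatorname{ch}M^{\mathbf b'}(\lambda-\rho^{\mathbf b'})=\operatorname{ch}M^{\mathbf b}(\lambda-\rho^{\mathbf b})$, the weight $\lambda-\rho^{\mathbf b}$ occurs in $M^{\mathbf b'}(\lambda-\rho^{\mathbf b'})$ with multiplicity one. This is (2) at once. Your induction/extremality argument is unnecessary, and it has a sign slip: $\lambda-\rho^{\mathbf b}$ is obtained from $\lambda-\rho^{\mathbf b'}$ by \emph{subtracting} the roots in $\Delta^{\mathbf b',+}_{\bar1}\setminus\Delta^{\mathbf b,+}_{\bar1}$. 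The same character identity shows that no weight $\lambda-\rho^{\mathbf b}+\gamma$ with $\gamma\in\Delta^{\mathbf b,+}$ occurs. Hence a spanning vector $w$ of that weight space is killed by $\mathfrak n^{\mathbf b,+}$, and $v^{\mathbf b}\mapsto w$ gives a nonzero homomorphism.

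\textbf{If you prefer to compose maps,} you must use a shortest path $\mathbf b=\mathbf b_0,\dots,\mathbf b_k=\mathbf b'$ with $\mathbf b_i=r_{\alpha_i}\mathbf b_{i-1}$, and the maps $v^{\mathbf b_{i-1}}\mapsto e_{\alpha_i}v^{\mathbf b_i}$. The composite sends $v^{\mathbf b}$ to $e_{\alpha_1}\cdots e_{\alpha_k}v^{\mathbf b'}$. This is nonzero by PBW because the $\alpha_i$ are pairwise distinct $\mathbf b'$-negative odd roots. Distinctness of the roots, not injectivity of the maps, is what makes this work.

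\textbf{Two smaller points in (4)(b).} First, to kill the image of $f_\alpha v$ in $L^{\mathbf b}(\mu)$ you need $f_\alpha v$ to be $\mathbf b$-singular, which is where $e_\alpha f_\alpha v=(\lambda,\alpha)v=0$ enters. Being $\mathbf b'$-singular is not enough, since that also holds in case (a). Second, the non-isomorphism argument needs $M^{\mathbf b}(\mu)_{\mu-\alpha}=\Bbbk f_\alpha v$. Alternatively, compare tops directly: $L^{\mathbf b'}(\mu)\not\cong L^{\mathbf b'}(\mu-\alpha)$.
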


\subsubsection{Category $\mathcal O$}
\label{subsubsec:category-o}
\begin{definition}
Fix $\mathbf b\in L(m,n)$. We define $\mathcal O$ to be the Serre
subcategory of $\mathcal W$ generated by
$\{L^{\mathbf b}(\lambda)\mid\lambda\in\Lambda\}$.
By \cref{5.2.ch_eq}, as a Serre subcategory of $\mathcal W$, the category
$\mathcal O$ is independent of the choice of $\mathbf b$, although its
highest weight structure depends on $\mathbf b$.
\end{definition}

It is well known that $\mathcal O$ contains
$M^{\mathbf b}(\lambda)$ for every $\mathbf b\in L(m,n)$ and
$\lambda\in\Lambda$.

Similarly, replacing $\mathfrak g$ by $\mathfrak g_{\bar0}$, we define
the BGG category $\mathcal O_{\bar0}$ as a full subcategory of
$\mathcal W_{\bar0}$.

The functors
$\operatorname{Res}^{\mathfrak g}_{\mathfrak g_{\bar0}}$,
$\operatorname{Ind}^{\mathfrak g}_{\mathfrak g_{\bar0}}$, and
$\operatorname{Coind}^{\mathfrak g}_{\mathfrak g_{\bar0}}$
are exact and restrict to functors between $\mathcal O_{\bar0}$ and
$\mathcal O$.

For each $\mathbf b\in L(m,n)$, the category $\mathcal O$ carries a
highest weight structure whose standard objects are the Verma modules
$M^{\mathbf b}(\lambda)$. The corresponding weight poset is
$(\Lambda,\leq_{\mathbf b})$, where
$\mu\leq_{\mathbf b}\lambda$ if and only if $\lambda-\mu$ is a
nonnegative integral combination of roots in $\Delta^{\mathbf b,+}$.

For $\lambda\in\Lambda$, let $P^{\mathbf b}(\lambda)$ and
$I^{\mathbf b}(\lambda)$ denote the projective cover and injective hull
of $L^{\mathbf b}(\lambda)$ in $\mathcal O$, respectively. Similarly,
let $P_{\bar0}(\lambda)$ and $I_{\bar0}(\lambda)$ denote the projective
cover and injective hull of $L_{\bar0}(\lambda)$ in
$\mathcal O_{\bar0}$.

Fix $\mathbf b\in L(m,n)$. An object $X\in\mathcal O$ is said to have an
$M^{\mathbf b}$-\emph{flag} if it admits a finite filtration whose
subquotients are isomorphic to modules of the form
$M^{\mathbf b}(\lambda)$. We denote by
$\mathcal F M^{\mathbf b}\subseteq\mathcal O$ the full subcategory of
such objects.

We denote by $\mathcal F M_{\bar0}\subseteq\mathcal O$ the full
subcategory consisting of objects $X$ such that
$\operatorname{Res}^{\mathfrak g}_{\mathfrak g_{\bar0}}X$ admits a
Verma flag in $\mathcal O_{\bar0}$.

For $X\in\mathcal F M^{\mathbf b}$, let
$(X:M^{\mathbf b}(\lambda))$ denote the multiplicity of
$M^{\mathbf b}(\lambda)$ in an $M^{\mathbf b}$-flag of $X$.
This multiplicity is independent of the choice of flag.

\begin{proposition}\label{O.indproj}
Fix $\mathbf b_0\in L(m,n)$ and $\lambda\in\Lambda$. Then:
\begin{enumerate}
\item
The category $\mathcal F M^{\mathbf b_0}$ is closed under direct
summands.

\item
We have
\[
P^{\mathbf b_0}(\lambda)
\in
\bigcap_{\mathbf b\in L(m,n)}
\mathcal F M^{\mathbf b}.
\]

\item
We have
$\mathcal F M^{\mathbf b_0}\subseteq\mathcal F M_{\bar0}$.

\item
For every $\mathbf b\in L(m,n)$, BGG reciprocity holds:
\[
\bigl(P^{\mathbf b}(\lambda):M^{\mathbf b}(\mu)\bigr)
=
\bigl[M^{\mathbf b}(\mu):L^{\mathbf b}(\lambda)\bigr].
\]
\end{enumerate}
\end{proposition}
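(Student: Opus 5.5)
We prove the four parts of \cref{O.indproj} in the order (3), (1), (2), (4), since each later step uses the earlier ones.

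\emph{Part (3).} Since restriction is exact, it suffices to treat a single $M^{\mathbf b_0}(\lambda)$. By the PBW theorem,
\[
M^{\mathbf b_0}(\lambda)\cong U(\mathfrak g)\otimes_{U(\mathbf b_0)}L_{\mathfrak h}(\lambda)
\]
is free over $U(\mathfrak n^-_{\bar0})$, where $\mathfrak n^-_{\bar0}$ is the even nilradical opposite to the fixed even Borel. Indeed, $U(\mathfrak n^{-,\mathbf b_0})\cong U(\mathfrak n^-_{\bar0})\otimes\Lambda(\mathfrak n^{-,\mathbf b_0}_{\bar1})$ as $U(\mathfrak n^-_{\bar0})$-modules.

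Filter the finite-dimensional $\mathfrak h\oplus\mathfrak n^+_{\bar0}$-stable pieces generated by $\Lambda(\mathfrak n^{-,\mathbf b_0}_{\bar1})\otimes v_\lambda$, ordered by weight. Each subquotient of this filtration induces to an even Verma module $M_{\bar0}(\lambda-\gamma)$, where $\gamma$ runs over the sums of distinct negative odd roots. Hence $\operatorname{Res}M^{\mathbf b_0}(\lambda)$ has a Verma flag.

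\emph{Part (1).} We use the Ext-criterion. In the highest weight category $(\mathcal O,\leq_{\mathbf b_0})$, an object $X$ has an $M^{\mathbf b_0}$-flag if and only if
\[
\operatorname{Ext}^1\bigl(X,\nabla^{\mathbf b_0}(\mu)\bigr)=0\quad\text{for all }\mu,
\]
where $\nabla^{\mathbf b_0}(\mu)$ are the dual Verma modules, obtained via the duality fixing simples. This condition is visibly closed under direct summands.

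To invoke the criterion we must know that $\mathcal O$ (more precisely, each block truncated to a finite ideal) is highest weight with standard objects $M^{\mathbf b_0}(\lambda)$. This follows from part (2) applied to truncated projectives, or directly from the standard argument of Brundan.

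\emph{Part (2).} Set
\[
Q:=\operatorname{Ind}^{\mathfrak g}_{\mathfrak g_{\bar0}}P_{\bar0}(\lambda).
\]
By \cref{indres}, $Q$ is projective in $\mathcal O$, since $\operatorname{Res}$ is exact. Also $\operatorname{Hom}(Q,L^{\mathbf b_0}(\lambda))\cong\operatorname{Hom}(P_{\bar0}(\lambda),\operatorname{Res}L^{\mathbf b_0}(\lambda))\neq0$, because $L_{\bar0}(\lambda)$ is a quotient of the restriction; hence $P^{\mathbf b_0}(\lambda)$ is a direct summand of $Q$. By part (1) it therefore suffices to show $Q\in\mathcal F M^{\mathbf b}$ for every $\mathbf b$.

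Since $P_{\bar0}(\lambda)$ has a Verma flag and induction is exact, it is enough to show that $\operatorname{Ind}M_{\bar0}(\nu)$ has an $M^{\mathbf b}$-flag. Write
\[
\operatorname{Ind}M_{\bar0}(\nu)\cong\operatorname{Ind}^{\mathfrak g}_{\mathfrak b_{\bar0}}\Bbbk_\nu
\cong\operatorname{Ind}^{\mathfrak g}_{\mathbf b}\bigl(\operatorname{Ind}^{\mathbf b}_{\mathfrak b_{\bar0}}\Bbbk_\nu\bigr).
\]
The inner module equals $\Lambda(\mathbf b_{\bar1})\otimes\Bbbk_\nu$ and is finite-dimensional. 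Filter it by $\mathbf b$-weights into one-dimensional $\mathbf b$-modules; inducing each of these gives an $M^{\mathbf b}$-flag.

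\emph{Part (4).} We use the standard argument. The $\mathcal W$-duality preserves characters and fixes simples, and sends $M^{\mathbf b}(\mu)$ to $\nabla^{\mathbf b}(\mu)$, with $\operatorname{Ext}^1(M^{\mathbf b}(\nu),\nabla^{\mathbf b}(\mu))=0$ and $\dim\operatorname{Hom}(M^{\mathbf b}(\nu),\nabla^{\mathbf b}(\mu))=\delta_{\nu\mu}$. Then
\[
(P^{\mathbf b}(\lambda):M^{\mathbf b}(\mu))
=\dim\operatorname{Hom}\bigl(P^{\mathbf b}(\lambda),\nabla^{\mathbf b}(\mu)\bigr)
=[\nabla^{\mathbf b}(\mu):L^{\mathbf b}(\lambda)]
=[M^{\mathbf b}(\mu):L^{\mathbf b}(\lambda)].
\]

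\emph{Expected obstacle.} The delicate step is the Ext-vanishing between Vermas and dual Vermas for a non-distinguished $\mathbf b$. I would obtain it from the standard adjunction
\[
\operatorname{Ext}^i_{\mathcal O}\bigl(M^{\mathbf b}(\nu),X\bigr)\cong\operatorname{Ext}^i_{(\mathbf b,\mathfrak h)}(\Bbbk_\nu,X)
\]
combined with the weight consideration that $\nabla^{\mathbf b}(\mu)$ has $\mathbf b$-highest weight $\mu$.
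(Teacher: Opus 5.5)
\textbf{Part (2).} Your argument for (2) is the paper's argument; the paper proves only (2) and treats (1), (3), (4) as standard. You induce $P_{\bar0}(\lambda)$, use its even Verma flag and $\operatorname{Ind}M_{\bar0}(\nu)\in\mathcal FM^{\mathbf b}$, split off $P^{\mathbf b_0}(\lambda)$, and apply (1). Your factorization $\operatorname{Ind}^{\mathfrak g}_{\mathbf b}\operatorname{Ind}^{\mathbf b}_{\mathfrak b_{\bar0}}\Bbbk_\nu$ supplies the step the paper leaves implicit.

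There is one slip. You only need $L_{\bar0}(\lambda)$ to be a composition factor of $\operatorname{Res}L^{\mathbf b_0}(\lambda)$, since $\dim\operatorname{Hom}(P_{\bar0}(\lambda),N)=[N:L_{\bar0}(\lambda)]$. Being a quotient is a stronger claim, and it can fail for non-distinguished $\mathbf b_0$.

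\textbf{The gap is in (3).} Freeness of $M^{\mathbf b_0}(\lambda)$ over $U(\mathfrak n^-_{\bar0})$ is correct. However, your filtration gives a Verma flag of $\operatorname{Res}M^{\mathbf b_0}(\lambda)$ only if the span $\Lambda(\mathfrak n^{-,\mathbf b_0}_{\bar1})v_\lambda$ is itself $\mathfrak b_{\bar0}$-stable. Then $\operatorname{Res}M^{\mathbf b_0}(\lambda)\cong\operatorname{Ind}^{\mathfrak g_{\bar0}}_{\mathfrak b_{\bar0}}\bigl(\Lambda(\mathfrak n^{-,\mathbf b_0}_{\bar1})v_\lambda\bigr)$ and the argument works.

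This holds for $0^m1^n$ and $1^n0^m$, where $\mathfrak n^{-,\mathbf b_0}_{\bar1}$ is $\mathfrak g_{-1}$, resp.\ $\mathfrak g_1$, and is $\mathfrak g_{\bar0}$-stable. It fails in general. Take $\mathfrak{gl}(3|1)$ with $\mathbf b_0=0100$, indices $1,2,3$ even and $4$ odd. Then $e_{12}e_{41}e_{34}v_\lambda=-e_{32}v_\lambda\neq0$. This vector has weight $\lambda+\varepsilon_3-\varepsilon_2$, and $\varepsilon_3-\varepsilon_2$ is not a sum of distinct $\mathbf b_0$-negative odd roots. So the vector does not lie in $\Lambda(\mathfrak n^{-,\mathbf b_0}_{\bar1})v_\lambda$.

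Consequently the $\mathfrak b_{\bar0}$-module $E$ generated by that span is strictly larger. Filtering $E$ produces a Verma flag of $\operatorname{Ind}^{\mathfrak g_{\bar0}}_{\mathfrak b_{\bar0}}E$, which merely surjects onto $\operatorname{Res}M^{\mathbf b_0}(\lambda)$.

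\textbf{Repair.} Use the freeness you already have. A module $M\in\mathcal O_{\bar0}$ that is free over $U(\mathfrak n^-_{\bar0})$ has a Verma flag:
\begin{itemize}
\item take a vector $v$ of maximal weight; it does not lie in $\mathfrak n^-_{\bar0}M$;
\item by graded Nakayama, $v$ is part of a free basis;
\item hence $M_{\bar0}(\mathrm{wt}\,v)\hookrightarrow M$, with cokernel free over $U(\mathfrak n^-_{\bar0})$ of smaller rank, and you induct.
\end{itemize}

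\textbf{Part (1).} The same argument over $U(\mathfrak n^{-,\mathbf b_0})$ shows that $\mathcal FM^{\mathbf b_0}$ is exactly the class of modules in $\mathcal O$ that are free over $U(\mathfrak n^{-,\mathbf b_0})$. A weight-graded direct summand of such a module is again free, so (1) follows directly. This avoids the circular option in your justification of (1), which appeals to (2) even though your (2) uses (1).

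\textbf{Part (4)} is the standard argument and is fine.
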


\begin{proof}
We only prove~(2). For every $\mu\in\Lambda$,
$\operatorname{Ind}_{\mathfrak g_{\bar0}}^{\mathfrak g}
M_{\bar0}(\mu)$ belongs to
$\bigcap_{\mathbf b\in L(m,n)}\mathcal F M^{\mathbf b}$.
Since $P_{\bar0}(\lambda)$ admits an even Verma flag, it follows that
$\operatorname{Ind}_{\mathfrak g_{\bar0}}^{\mathfrak g}
P_{\bar0}(\lambda)$ belongs to the same intersection.

Moreover, $P^{\mathbf b_0}(\lambda)$ is a direct summand of
$\operatorname{Ind}_{\mathfrak g_{\bar0}}^{\mathfrak g}
P_{\bar0}(\lambda)$. Hence~(2) follows from~(1).
\end{proof}

We will also use the following consequence of Irving's result.

\begin{proposition}[\cite{mazorchuk2014parabolic}]
\label{antidom}
For $\lambda\in\Lambda$, the following are equivalent:
\begin{enumerate}
\item
$\lambda$ is antidominant.

\item
There exist $M\in\mathcal F M_{\bar0}$ and an injective homomorphism
$L^{0^m1^n}(\lambda)\hookrightarrow M$.
\end{enumerate}
\end{proposition}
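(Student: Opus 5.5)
We prove (2)$\Rightarrow$(1); the converse (1)$\Rightarrow$(2) follows directly from Irving's theorem as recorded in \cite{mazorchuk2014parabolic}. For the hard direction, suppose $\lambda$ is not antidominant. The idea is to reduce to the classical statement for $\mathfrak g_{\bar0}$, which says that the only simple modules embedding into modules with a Verma flag in $\mathcal O_{\bar0}$ are those $L_{\bar0}(\nu)$ with $\nu$ antidominant.

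Fix $M\in\mathcal F M_{\bar0}$ and an embedding $L:=L^{0^m1^n}(\lambda)\hookrightarrow M$. Since $\operatorname{Res}^{\mathfrak g}_{\mathfrak g_{\bar0}}$ is exact (\cref{indres}), we get an embedding $\operatorname{Res}L\hookrightarrow\operatorname{Res}M$. Here $\operatorname{Res}M$ has a Verma flag, so every simple submodule of $\operatorname{Res}L$ is some $L_{\bar0}(\nu)$ with $\nu$ antidominant. This uses the classical fact that the socle of a module with Verma flag consists of antidominant simples, because each Verma module has simple antidominant socle. It therefore suffices to show that $\operatorname{Res}L$ has a simple submodule $L_{\bar0}(\nu)$ with $\nu$ \emph{not} antidominant whenever $\lambda$ is not antidominant. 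To find such a submodule, look at the top $\mathfrak g_{\bar1}$-degree piece.

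Concretely, write $\mathfrak g_{\bar1}=\mathfrak g_{1}\oplus\mathfrak g_{-1}$ for the $\mathbb Z$-grading compatible with $\mathbf b=0^m1^n$. Then $L$ is a quotient of the Kac module $K(\lambda)=\operatorname{Ind}_{\mathfrak g_{\bar0}\oplus\mathfrak g_1}^{\mathfrak g}L_{\bar0}(\lambda)$, and $L$ is $\mathbb Z$-graded with degree-$0$ piece $L_{\bar0}(\lambda)$. Two steps are needed.
\begin{enumerate}
\item Show that the degree-$0$ piece is annihilated by $\mathfrak g_1$. It is then the $\mathfrak g_1$-invariants of $L$, and it is a $\mathfrak g_{\bar0}$-direct summand of $\operatorname{Res}L$ since the grading is by $\mathfrak g_{\bar0}$-modules.
\item Using the $\mathfrak g_{\bar0}$-decomposition into graded pieces, conclude that $L_{\bar0}(\lambda)$ itself is a $\mathfrak g_{\bar0}$-submodule of $\operatorname{Res}L$.
\end{enumerate}
Since $\lambda$ is not antidominant for $\mathfrak g_{\bar0}$, the dot action of $W$ being the same, this contradicts the previous paragraph. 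Hence $\lambda$ is antidominant.

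The main obstacle is the even-classical input: that every simple submodule of a module with Verma flag in $\mathcal O_{\bar0}$ is antidominant. Here one must handle the integral but possibly singular weights carefully. The plan is to use that $\operatorname{Hom}(L_{\bar0}(\nu),M_{\bar0}(\mu))\neq0$ forces $\nu=\mu^{\mathrm{antidom}}$ (the socle of a Verma is $L_{\bar0}$ of the antidominant weight in its orbit), and then to argue by induction along the Verma flag using left exactness of $\operatorname{Hom}(L_{\bar0}(\nu),-)$. One must also double-check that the conventions for $\rho$ make ``antidominant'' for $\mathfrak g$ agree with the $\mathfrak g_{\bar0}$ notion for the weight $\lambda$, using $w\cdot\lambda=w(\lambda+\rho_{\bar0})-\rho_{\bar0}=w(\lambda+\rho)-\rho$.
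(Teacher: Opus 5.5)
\textbf{Direction (2)$\Rightarrow$(1).} This part is correct and is the paper's argument: $L_{\bar0}(\lambda)\hookrightarrow\operatorname{Res}^{\mathfrak g}_{\mathfrak g_{\bar0}}L^{0^m1^n}(\lambda)\hookrightarrow\operatorname{Res}^{\mathfrak g}_{\mathfrak g_{\bar0}}M$, combined with the classical fact that simple submodules of modules with a Verma flag in $\mathcal O_{\bar0}$ are antidominant. You justify the first embedding more carefully than the paper does, via the $\mathbb Z$-grading of the Kac module. Your Step 1 is not actually needed; it suffices that $\mathfrak g_{\bar0}$ preserves degree, so the degree-$0$ piece is a $\mathfrak g_{\bar0}$-direct summand.

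\textbf{Direction (1)$\Rightarrow$(2): this is the gap.} It does not follow ``directly'' from Irving's theorem. Irving's criterion lives in $\mathcal O_{\bar0}$, and you still have to produce a $\mathfrak g$-module $M\in\mathcal F M_{\bar0}$. Deferring this to \cite{mazorchuk2014parabolic} amounts to citing the proposition itself. The paper obtains it from the induction--restriction adjunction, and your Step 2 already supplies the key input.

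For antidominant $\lambda$ the Verma module $M_{\bar0}(\lambda)=L_{\bar0}(\lambda)$ is simple. By Step 2 it is a $\mathfrak g_{\bar0}$-direct summand of $\operatorname{Res}L$, where $L=L^{0^m1^n}(\lambda)$. Hence
\[
\operatorname{Hom}_{\mathfrak g}\bigl(L,\operatorname{Coind}^{\mathfrak g}_{\mathfrak g_{\bar0}}M_{\bar0}(\lambda)\bigr)
\cong
\operatorname{Hom}_{\mathfrak g_{\bar0}}\bigl(\operatorname{Res}^{\mathfrak g}_{\mathfrak g_{\bar0}}L,\,M_{\bar0}(\lambda)\bigr)
\neq0 .
\]
Any nonzero map out of the simple module $L$ is injective. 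Moreover, by \cref{indres},
\[
\operatorname{Res}\operatorname{Coind}M_{\bar0}(\lambda)\cong\operatorname{Res}\operatorname{Ind}M_{\bar0}(\lambda)\cong\Lambda(\mathfrak g_{\bar1})\otimes M_{\bar0}(\lambda),
\]
which has a Verma flag. So $M:=\operatorname{Coind}^{\mathfrak g}_{\mathfrak g_{\bar0}}M_{\bar0}(\lambda)$ lies in $\mathcal F M_{\bar0}$, which completes the missing direction.
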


\begin{proof}
We have an embedding
\[
L_{\bar0}(\lambda)
\hookrightarrow
\operatorname{Res}^{\mathfrak g}_{\mathfrak g_{\bar0}}
L^{0^m1^n}(\lambda).
\]
The assertion follows from Irving's criterion together with the
induction--restriction adjunction.
\end{proof}

\subsubsection{Khovanov arc diagrams}
\label{subsubsec:khovanov-arc-diagrams}
For the material in this subsection, we refer the reader to
\cite{brundanstroppelI,brundanstroppelII,brundanstroppelIV,
heidersdorf2017mixed}.

Recall that $0^m1^n$ and $1^n0^m$ correspond to the distinguished and
anti-distinguished Borel subalgebras, respectively. Set
\[
\mathfrak g_1:=(0^m1^n)_{\bar1},
\qquad
\mathfrak g_{-1}:=(1^n0^m)_{\bar1},
\]
and
$\mathfrak g_{\geq0}:=\mathfrak g_{\bar0}\oplus\mathfrak g_1$ and
$\mathfrak g_{\leq0}:=\mathfrak g_{-1}\oplus\mathfrak g_{\bar0}$.
Then
$\mathfrak g=\mathfrak g_{-1}\oplus\mathfrak g_{\bar0}\oplus\mathfrak g_1$
is the natural $\mathbb Z$-grading of $\mathfrak{gl}(m|n)$, and
$[\mathfrak g_1,\mathfrak g_1]
=
[\mathfrak g_{-1},\mathfrak g_{-1}]
=0$.

Define the Kac and opposite Kac functors by
\[
K_{\geq0}
:=
\operatorname{Ind}_{\mathfrak g_{\geq0}}^{\mathfrak g}
\circ
\operatorname{Infl}_{\mathfrak g_{\bar0}}^{\mathfrak g_{\geq0}},
\qquad
K_{\leq0}
:=
\operatorname{Ind}_{\mathfrak g_{\leq0}}^{\mathfrak g}
\circ
\operatorname{Infl}_{\mathfrak g_{\bar0}}^{\mathfrak g_{\leq0}}.
\]
Here $\mathfrak g_1$, respectively $\mathfrak g_{-1}$, acts trivially
under inflation. Both functors are exact.

\begin{definition}
For $\sigma\in\{\geq0,\leq0\}$, let
$\mathcal F K_\sigma\subseteq\mathcal O$ be the full subcategory of
objects admitting a finite filtration whose subquotients are of the form
$K_\sigma(L_{\bar0}(\lambda))$ for $\lambda\in\Lambda$.
\end{definition}

\begin{definition}
Let $s\mathcal F$ be the full subcategory of $s\mathcal W$ consisting of
finite-dimensional modules, and set
$\mathcal F:=s\mathcal F\cap\mathcal W$. Thus
$s\mathcal F=\mathcal F\oplus\Pi\mathcal F$.
Similarly, let $\mathcal F_{\bar0}\subseteq s\mathcal F_{\bar0}$ be
defined using the parity convention compatible with
$\operatorname{Res}^{\mathfrak g}_{\mathfrak g_{\bar0}}$.

For $\lambda\in\Lambda^{\mathrm{reg,dom}}$, set
\[
K(\lambda):=K_{\geq0}(L_{\bar0}(\lambda)),
\qquad
L(\lambda):=L^{0^m1^n}(\lambda).
\]
With the modules $K(\lambda)$ as standard objects, $\mathcal F$ is a
highest weight category.
\end{definition}

A \emph{number line} is a linearly ordered set identified with $\mathbb Z$,
drawn from left to right in increasing order. A \emph{weight diagram} is
a labelling of the number line by
$\circ,\times,\vee,\wedge$, with all but finitely many vertices labelled
by $\wedge$.

The Bruhat order on weight diagrams is generated by interchanging a
$\vee$ and a $\wedge$, with moving a $\vee$ to the right increasing the
weight. We write $\lambda\sim\mu$ if $\mu$ can be obtained from $\lambda$
by permuting only the $\vee$'s and $\wedge$'s, while keeping the positions
of the $\circ$'s and $\times$'s fixed. Equivalently,
$\lambda\sim\mu$ if and only if there exists a weight diagram $\nu$ such
that $\lambda\leq\nu\geq\mu$. The equivalence classes are called
\emph{blocks}.

The \emph{defect} $\operatorname{def}(\lambda)$ of a weight diagram
$\lambda$ is the number of $\vee$'s. It is constant on each block and
hence defines the defect of a block.

A \emph{cup diagram} is obtained by attaching pairwise non-intersecting
cups and rays to some vertices of the number line, with cups drawn below
the number line and rays extending downward to infinity. Vertices not
incident with a cup or ray are called \emph{free vertices}. Cap diagrams
are defined similarly, with caps drawn above the number line and rays
extending upward to infinity.

If $c$ is a cup diagram and $\lambda$ is a weight diagram on the same
number line, we write $c\lambda$ for the diagram obtained by placing $c$
below $\lambda$. It is called \emph{oriented} if every free vertex is
labelled by $\circ$ or $\times$, the endpoints of each cup are labelled
by one $\vee$ and one $\wedge$, and every vertex at the top of a ray is
labelled by $\wedge$. Since a weight diagram contains only finitely many
$\vee$'s, every oriented cup diagram contains only finitely many cups.

For a cap diagram $c$, we define $\lambda c$ similarly by placing $c$
above $\lambda$. Let $c^*$ denote the reflection of $c$ in the number
line. We call $\lambda c$ oriented if $c^*\lambda$ is an oriented cup
diagram. An oriented cup, cap, or circle is called
\emph{anti-clockwise} if its left endpoint is labelled by $\vee$ and its
right endpoint by $\wedge$, and \emph{clockwise} otherwise.

\begin{proposition}[\cite{brundanstroppelI}]
For every weight diagram $\lambda$, there exists a unique cup diagram
$\underline{\lambda}$ such that $\underline{\lambda}\lambda$ is oriented
and all its cups are anti-clockwise. Equivalently, weight diagrams are in
bijection with pairs $(\underline{\lambda},\lambda_{\circ,\times})$,
where $\lambda_{\circ,\times}$ records the positions of the $\circ$'s
and $\times$'s.
\end{proposition}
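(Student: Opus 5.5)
The plan is to construct $\underline{\lambda}$ directly from the labels of $\lambda$ by a bracket-matching procedure, and then to show that the oriented, anti-clockwise condition forces exactly this matching. Discard the vertices labelled $\circ$ or $\times$; these must be free vertices in any oriented $c\lambda$. Read the remaining vertices from left to right as a word in $\vee,\wedge$, treating $\vee$ as an opening bracket and $\wedge$ as a closing bracket.

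\textbf{Existence.} Repeat the following step: whenever a $\vee$ and a $\wedge$ are neighbours, meaning that every vertex strictly between them is either labelled $\circ$ or $\times$ or is already an endpoint of a cup, and the $\vee$ lies to the left of the $\wedge$, join them by a cup.
\begin{enumerate}
\item Because the word is infinite, we organise the procedure by induction on the finitely many $\vee$'s. We process them from right to left, matching each $\vee$ to the first unmatched $\wedge$ to its right.
\item Such a $\wedge$ always exists, since all but finitely many vertices are labelled $\wedge$. Hence every $\vee$ becomes the left endpoint of a cup.
\item Every $\wedge$ that remains unmatched receives a ray.
\end{enumerate}
We then check the following properties of the resulting diagram.
\begin{enumerate}
\item Cups do not cross, by the usual property of bracket matchings.
\item No unmatched $\wedge$ lies underneath a cup. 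If an unmatched $\wedge$ sat between the endpoints of a cup, the $\vee$ of that cup would have been matched to it, or to an earlier $\wedge$. Hence the rays can be drawn downward without crossing any cup.
\item All vertices at the top of rays carry $\wedge$.
\item Every cup has $\vee$ on its left and $\wedge$ on its right, so it is anti-clockwise.
\end{enumerate}
So $\underline{\lambda}\lambda$ is oriented with all cups anti-clockwise.

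\textbf{Uniqueness.} Let $c$ be any cup diagram with $c\lambda$ oriented and all cups anti-clockwise.
\begin{enumerate}
\item Every $\vee$ must be the left endpoint of a cup. It cannot be free, since free vertices carry $\circ$ or $\times$. It cannot be the top of a ray, since ray tops carry $\wedge$. It cannot be a right endpoint, by anti-clockwiseness.
\item Consequently each $\wedge$ is either a right endpoint of a cup or the top of a ray.
\item Since rays extend to infinity and cups are non-crossing, no ray lies under a cup.
\end{enumerate}
Now take a $\vee$ whose cup is innermost. Every $\vee,\wedge$ vertex strictly inside its cup would itself have to be a cup endpoint, because rays cannot lie under a cup. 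Such a cup would be nested inside, contradicting innermostness. Therefore the innermost cup joins neighbouring $\vee\wedge$ vertices, exactly as in the construction. Removing that cup and inducting on the number of $\vee$'s shows that $c=\underline{\lambda}$.

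\textbf{Bijection.} The map $\lambda\mapsto(\underline{\lambda},\lambda_{\circ,\times})$ is inverted as follows: label left cup endpoints by $\vee$, right cup endpoints and ray tops by $\wedge$, and free vertices by $\circ$ or $\times$ according to $\lambda_{\circ,\times}$. This shows the map is injective. Here the target is understood as the pairs $(c,p)$ in which $c$ has finitely many cups and its free vertices are exactly the positions recorded by $p$. For such a pair, the labelling just described is a weight diagram whose associated cup diagram is $c$, which gives surjectivity.

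The main obstacle I expect is bookkeeping on the infinite number line. One must make sure the procedure terminates and that the ``no ray under a cup'' argument is rigorous. Inducting on the finite number of $\vee$'s, rather than on vertices, handles both issues.
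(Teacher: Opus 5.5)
Your proposal is correct and takes essentially the standard route: the paper gives no proof of its own and simply cites Brundan--Stroppel, where $\underline{\lambda}$ is built by exactly this procedure. There, neighbouring $\vee\wedge$ pairs (with only $\circ$, $\times$, or already-joined vertices between them) are repeatedly joined by cups and rays are put on the remaining $\wedge$'s, and uniqueness and the bijection follow from the forced labelling of cup endpoints and ray tops, as in your argument. Your right-to-left processing, together with the finiteness of the set of $\vee$'s, correctly handles the infinite number line in this paper's convention, where every ray top must carry $\wedge$.
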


Let $\overline{\lambda}:=(\underline{\lambda})^*$ be the corresponding
cap diagram. Thus $\lambda\overline{\lambda}$ is oriented and all its
caps are anti-clockwise. For weight diagrams $\lambda$ and $\mu$, the
conditions that $\underline{\lambda}\mu$ is oriented and that
$\lambda\overline{\mu}$ is oriented are equivalent. In this case, we
write $\lambda\subset\mu$.

\begin{lemma}[{\cite[Lemma~2.4]{brundanstroppelI}}]
For every weight diagram $\lambda$,
\[
\#\{\mu\mid\lambda\subset\mu\}
=
2^{\operatorname{def}(\lambda)},
\qquad
\#\{\mu\mid\mu\subset\lambda\}<\infty.
\]
\end{lemma}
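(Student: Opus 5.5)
The statement is the Brundan--Stroppel counting lemma: for a weight diagram $\lambda$ with $d=\operatorname{def}(\lambda)$ symbols $\vee$, exactly $2^d$ weight diagrams $\mu$ satisfy $\lambda\subset\mu$, and only finitely many $\mu$ satisfy $\mu\subset\lambda$. The plan is to read both claims off the cup diagram $\underline{\lambda}$, using the definition that $\lambda\subset\mu$ means $\underline{\lambda}\mu$ is oriented.

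\emph{Shape of $\underline{\lambda}$.} First I describe $\underline{\lambda}$ concretely via the standard matching procedure. Repeatedly join a $\vee$ to the nearest unmatched $\wedge$ on its right, ignoring $\circ$, $\times$ and already-matched vertices, to form an anti-clockwise cup. Every remaining $\wedge$ gets a ray, and $\circ,\times$ are free. Since all but finitely many vertices are $\wedge$, every $\vee$ finds a partner, so $\underline{\lambda}$ has exactly $d$ cups. By the uniqueness in the preceding proposition of Brundan--Stroppel, this is $\underline{\lambda}$.

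\emph{Counting $\mu$ with $\lambda\subset\mu$.} A weight diagram $\mu$ makes $\underline{\lambda}\mu$ oriented exactly when three conditions hold:
\begin{enumerate}
\item $\mu$ has $\circ,\times$ precisely at the free vertices of $\underline{\lambda}$, with the same labels as $\lambda$ (the orientation condition forces free vertices to carry $\circ$ or $\times$, and I will take the symbols to be those of $\lambda$, i.e.\ $\mu\sim\lambda$);
\item every ray is labelled $\wedge$;
\item each cup has one endpoint $\vee$ and one $\wedge$.
\end{enumerate}
So $\mu$ is determined by an independent binary choice on each of the $d$ cups, and each choice gives a valid weight diagram, since only finitely many vertices are non-$\wedge$. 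This yields exactly $2^{d}$ diagrams.

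\emph{Finiteness of $\{\mu\mid\mu\subset\lambda\}$.} Here $\underline{\mu}\lambda$ must be oriented, and I expect this to be the main obstacle, because now the cup diagram varies with $\mu$. The approach is to bound $\mu$ inside a finite set.
\begin{enumerate}
\item Orientedness forces the $\circ/\times$ positions of $\mu$ to coincide with those of $\lambda$.
\item Each $\vee$ of $\mu$ is the left end of a cup of $\underline{\mu}$, and that cup's endpoints must carry one $\vee$ and one $\wedge$ of $\lambda$. Hence $\mu$ has exactly $d$ symbols $\vee$, lying in the same block as $\lambda$.
\item Every vertex labelled $\vee$ in $\lambda$ must be a cup endpoint of $\underline{\mu}$, because rays require $\wedge$ and free vertices require $\circ$ or $\times$. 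So each of the $d$ cups of $\underline{\mu}$ contains exactly one of the $d$ $\vee$'s of $\lambda$.
\item A cup has its $\vee$ on the left, so the $\vee$'s of $\mu$ are among the left endpoints. Each cup's right endpoint is either the $\vee$ of $\lambda$ or else lies at bounded distance from it, because the nested non-crossing cups and rays between its endpoints use only finitely many vertices; this is the delicate bookkeeping step.
\end{enumerate}
With this, all $\vee$'s of $\mu$ lie in a bounded window determined by $\lambda$, and finiteness follows. Alternatively, one can first show $\mu\subset\lambda$ implies $\mu\le\lambda$ in the Bruhat order, and then note that a block contains only finitely many diagrams below a given one, since $\vee$'s only move left and there are $d$ of them. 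I would try this Bruhat-order argument first, checking it cup by cup.
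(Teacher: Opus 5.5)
The paper gives no proof of its own and only cites \cite[Lemma~2.4]{brundanstroppelI}. Your count of $\{\mu\mid\lambda\subset\mu\}$ is the standard argument and is correct. You are also right that one must restrict to $\mu\sim\lambda$, as Brundan--Stroppel do by working inside a block: with the paper's literal definition of orientedness, the $\circ/\times$ labels at free vertices are not pinned down. Two small remarks on this half. First, you do not need the matching construction. Orientedness of $\underline{\lambda}\lambda$ with anti-clockwise cups already forces each $\vee$ of $\lambda$ to be the left end of a cup (one per cup), the $\circ,\times$ to be free, and all other vertices to be ray vertices. Second, as written, ``join a $\vee$ to the nearest unmatched $\wedge$'' produces crossing cups for $\vee\vee\wedge\wedge$ if the left $\vee$ is processed first.

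The finiteness half has a genuine problem.

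\emph{The Bruhat-order route fails.} It is true that $\mu\subset\lambda$ implies $\mu\le\lambda$, since reversing a cup of $\underline{\mu}$ moves a $\vee$ to the right. But a block contains infinitely many diagrams below a given one, because the number line is $\mathbb Z$ and there are infinitely many $\wedge$'s to the left of every $\vee$. For example, let $\lambda$ have a single $\vee$ at $0$ and no $\circ,\times$, and let $\mu_k$ have its $\vee$ at $-k$. Then $\mu_k\le\lambda$ for all $k\ge0$, but $\mu_k\subset\lambda$ only for $k\in\{0,1\}$. So Bruhat order cannot give finiteness; this is exactly the nontrivial content of the lemma. Since you say you would try this route first, it matters that it does not work.

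\emph{The window argument in item 4 is the right idea, but its justification misses the point.} Each cup trivially has finitely many interior vertices; what you need is a bound \emph{independent of $\mu$}. You also aimed the bound at the wrong endpoint. The case that needs it is when the right end of a cup of $\underline{\mu}$ is the $\vee$ of $\lambda$, and the left end (the $\vee$ of $\mu$) lies far to the left. The uniform bound comes from three facts:
\begin{enumerate}
\item No ray of $\underline{\mu}$ can start strictly inside a cup, since it would cross the cup. So there are no rays there at all, not merely finitely many.
\item The free vertices inside a cup are among the $f$ vertices labelled $\circ,\times$ in $\lambda$.
\item At most $d-1$ cups are nested inside, since every cup of $\underline{\mu}$ contains exactly one $\vee$ of $\lambda$.
\end{enumerate}
Hence every cup of $\underline{\mu}$ has width at most $2d-1+f$. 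The $\vee$ of $\mu$ in the cup containing the $\vee$ of $\lambda$ at position $p$ therefore lies in $[p-(2d-1+f),p]$. Since $\mu$ is determined by the positions of its $d$ symbols $\vee$, only finitely many $\mu$ are possible.
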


Let $\Gamma$ be the set of all weight diagrams on $\mathbb Z$ in which
exactly $m$ vertices are labelled by $\times$ or $\vee$, exactly $n$
vertices are labelled by $\circ$ or $\vee$, and all remaining vertices
are labelled by $\wedge$.

We identify $\Lambda^{\mathrm{reg,dom}}$ with $\Gamma$ as follows. For
$\lambda\in\Lambda^{\mathrm{reg,dom}}$, set
\[
I_\times(\lambda)
:=
\{\lambda_1,\lambda_2-1,\ldots,\lambda_m-m+1\},
\]
and
\[
I_\circ(\lambda)
:=
\{1-m-\lambda_{m+1},2-m-\lambda_{m+2},
\ldots,n-m-\lambda_{m+n}\}.
\]
The elements of $I_\times(\lambda)\cap I_\circ(\lambda)$ are labelled by
$\vee$; the remaining elements of $I_\times(\lambda)$ by $\times$, the
remaining elements of $I_\circ(\lambda)$ by $\circ$, and all other
integers by $\wedge$. This determines the corresponding weight diagram
uniquely.

The number of $\vee$'s is the degree of atypicality of $\lambda$. If this
number is $r$, then $0\leq r\leq\min(m,n)$, and $\lambda$ is called
\emph{maximally atypical} if $r=\min(m,n)$. Thus the degree of atypicality
agrees with the defect of the corresponding weight diagram. Under the
identification $\Lambda^{\mathrm{reg,dom}}\cong\Gamma$, the Bruhat order
agrees with the diagrammatic Bruhat order.

The degree of an oriented cup diagram $c\lambda$ is the number of its
clockwise cups and is denoted by $\deg(c\lambda)$. Similarly, the degree
of an oriented cap diagram $\lambda c$ is the number of its clockwise
caps and is denoted by $\deg(\lambda c)$.

\begin{theorem}
[\cite{brundanstroppelI,brundanstroppelII,brundanstroppelIV}]
\label{Fhw}
The category $\mathcal F$ admits a Koszul grading. For
$\lambda\in\Lambda^{\mathrm{reg,dom}}$, the projective module
$P(\lambda)$ has a multiplicity-free $K$-flag whose subquotients are
precisely
\[
K(\mu)\langle\deg(\mu\overline{\lambda})\rangle
\qquad
(\lambda\subset\mu).
\]
Moreover, the composition factors of $K(\lambda)$ are precisely
\[
L(\mu)\langle\deg(\underline{\mu}\lambda)\rangle
\qquad
(\mu\subset\lambda),
\]
each with multiplicity one.
\end{theorem}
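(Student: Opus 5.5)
The plan is to reduce everything to the Khovanov arc algebra and argue diagrammatically there. Let $K^\infty_m|_n$ (or its appropriate limit version) be the generalized Khovanov arc algebra attached to the weight set $\Gamma$. As a vector space it has basis the oriented circle diagrams $\underline{\lambda}\nu\overline{\mu}$. Multiplication is given by the surgery procedure, and each diagram is graded by its number of clockwise cups and caps. The first step invokes the main theorem of \cite{brundanstroppelIV}. It says that each block of $\mathcal F$ is equivalent to the category of finite-dimensional modules over the corresponding block of the arc algebra. Under this equivalence, $L(\lambda)$, $K(\lambda)$ and $P(\lambda)$ correspond to the irreducible module, the cell (standard) module $V(\lambda)$, and $e_\lambda K$, under the identification $\Lambda^{\mathrm{reg,dom}}\cong\Gamma$ fixed above. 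The grading on $\mathcal F$ is then defined by transporting the diagrammatic grading of the arc algebra.

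The second step is the internal analysis of the arc algebra, following \cite{brundanstroppelI}. I would show that the basis $\{\underline{\alpha}\nu\overline{\beta}\}$ is a graded cellular basis with cell datum indexed by the middle weight $\nu$. From this, the cell module $V(\nu)$ has a basis given by the oriented cup diagrams $\underline{\alpha}\nu$, that is, by the weights $\alpha\subset\nu$. The vector $\underline{\alpha}\nu$ sits in degree $\deg(\underline{\alpha}\nu)$, and the vectors of minimal degree, namely $\alpha=\nu$, generate $V(\nu)$. One then checks that $e_\alpha V(\nu)$ is spanned by the single vector $\underline{\alpha}\nu$. Hence $L(\alpha)$ occurs in $V(\nu)$ exactly once when $\alpha\subset\nu$, in degree $\deg(\underline{\alpha}\nu)$, and not at all otherwise. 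This is the second assertion.

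For the first assertion, I would filter $e_\lambda K$ by the cellular ideal filtration, ordering the middle weight by the Bruhat order. The subquotients are spanned by the diagrams $\underline{\lambda}\mu\overline{\beta}$ with $\mu$ fixed. Each such subquotient is isomorphic to $V(\mu)\langle\deg(\lambda\overline{\mu})\rangle$, occurring exactly when $\underline{\lambda}\mu$ is oriented, i.e.\ $\lambda\subset\mu$. Reading the shift as a cap degree gives the stated $\deg(\mu\overline{\lambda})$, after matching the paper's conventions; this is consistent with graded BGG reciprocity applied to the previous step. Multiplicity-freeness is automatic, since each $\mu$ contributes one cell layer.

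The main obstacle is Koszulity. I would follow \cite{brundanstroppelII}: first show that the arc algebra is positively graded with semisimple degree-zero part, using the idempotents $e_\lambda$. Next, build explicit linear projective resolutions of the cell modules $V(\mu)$ by a diagrammatic "Kazhdan--Lusztig" complex, or alternatively prove that the standard modules are Koszul and apply the standard criterion for standard Koszul quasi-hereditary algebras (Ágoston--Dlab--Lukács). The key input is the parity-vanishing of $\operatorname{Ext}$ between cell and irreducible modules, together with the positivity of the decomposition polynomials $q^{\deg(\underline{\mu}\lambda)}$. The delicate points are the verification of the surgery rules and the passage to the infinite weight set $\Gamma$, which requires a truncation/limit argument. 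I expect the latter to be the hardest step.
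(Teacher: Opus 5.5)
Your outline is correct and follows the route the paper relies on: the paper gives no proof and simply quotes Brundan--Stroppel, whose argument is the arc-algebra reduction you describe (graded cellularity and decomposition numbers in part I, Koszulity via linear resolutions of cell modules and the standard Koszul criterion in part II, and the equivalence with $\mathcal F$ sending Kac modules to cell modules in part IV). One small correction: the shift on the $\mu$-layer of the cell filtration is $\deg(\underline{\lambda}\mu)$, which equals $\deg(\mu\overline{\lambda})$ just by reflecting in the number line, not $\deg(\lambda\overline{\mu})$ (the diagram $\lambda\overline{\mu}$ is oriented only when $\mu\subset\lambda$), and if you take $P(\lambda)=Ke_\lambda$, with basis $\underline{\alpha}\mu\overline{\lambda}$, the shift $\deg(\mu\overline{\lambda})$ appears directly.
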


Brundan and Stroppel proved considerably stronger results: they described
the basic algebra of each block and the translation functors on
$\mathcal F$ in terms of arc diagrams.

\subsection{Categorification of $\mathbf c$-monomial bases}
\label{subsec:categorification-c-monomial}

\subsubsection{Refined BGG reciprocity}
\label{subsubsec:refined-bgg-reciprocity}

Let $\mathbf b\in L(m,n)$ and $\mathbf c\in\mathcal C(m,n)$ satisfy
$\mathbf b\subseteq\mathbf c$. We naturally identify $\mathbf c$ with the
corresponding parabolic subalgebra. Let $\mathfrak l_{\mathbf c}$ denote its
Levi subalgebra. Thus $\mathfrak l_{\mathbf c}$ is generated by
$\mathfrak h$ together with the mutually orthogonal copies of
$\mathfrak{gl}(1|1)$ corresponding to the symbols $s$ in $\mathbf c$.

For $\lambda\in\mathfrak h^*$, let
$P_{\mathfrak l_{\mathbf c}}^{\mathfrak l_{\mathbf c}\cap\mathfrak b}(\lambda)$,
$M_{\mathfrak l_{\mathbf c}}^{\mathfrak l_{\mathbf c}\cap\mathfrak b}(\lambda)$,
and
$L_{\mathfrak l_{\mathbf c}}^{\mathfrak l_{\mathbf c}\cap\mathfrak b}(\lambda)$
denote the projective cover, Verma module, and simple module for
$\mathfrak l_{\mathbf c}$, respectively.

We define the $\mathbf b\subseteq\mathbf c$-parabolic projective module by
\[
U^{\mathbf b\subseteq\mathbf c}(\lambda)
:=
\operatorname{Ind}_{\mathbf c}^{\mathfrak g}
\operatorname{Infl}_{\mathfrak l_{\mathbf c}}^{\mathbf c}
P_{\mathfrak l_{\mathbf c}}^{\mathfrak l_{\mathbf c}\cap\mathfrak b}(\lambda),
\]
and similarly
\[
N^{\mathbf b\subseteq\mathbf c}(\lambda)
:=
\operatorname{Ind}_{\mathbf c}^{\mathfrak g}
\operatorname{Infl}_{\mathfrak l_{\mathbf c}}^{\mathbf c}
L_{\mathfrak l_{\mathbf c}}^{\mathfrak l_{\mathbf c}\cap\mathfrak b}(\lambda).
\]
The $\mathfrak b$-Verma module can likewise be written as
\[
M^{\mathfrak b}(\lambda)
\cong
\operatorname{Ind}_{\mathbf c}^{\mathfrak g}
\operatorname{Infl}_{\mathfrak l_{\mathbf c}}^{\mathbf c}
M_{\mathfrak l_{\mathbf c}}^{\mathfrak l_{\mathbf c}\cap\mathfrak b}(\lambda).
\]

\begin{proposition}\cite{hirota2025odd}
\label{prop:ringel-subcategory-bc}
Let $B$ be a block of category $\mathcal O$ for $\mathfrak l_{\mathbf c}$.
Then
\[
\left\{
N^{\mathbf b\subseteq\mathbf c}(\lambda)
\ \middle|\
L_{\mathfrak l_{\mathbf c}}^{\mathfrak l_{\mathbf c}\cap\mathfrak b}(\lambda)
\in B
\right\}
\]
is a semibrick in $\mathcal O$. Hence, by Ringel's classical result, its
extension closure
\[
\operatorname{Filt}_{\mathcal O}
\left\{
N^{\mathbf b\subseteq\mathbf c}(\lambda)
\ \middle|\
L_{\mathfrak l_{\mathbf c}}^{\mathfrak l_{\mathbf c}\cap\mathfrak b}(\lambda)
\in B
\right\}
\]
is an extension-closed abelian subcategory of $\mathcal O$.

Moreover,
$\operatorname{Ind}_{\mathbf c}^{\mathfrak g}
\operatorname{Infl}_{\mathfrak l_{\mathbf c}}^{\mathbf c}$
induces an equivalence between $B$ and this subcategory.
\end{proposition}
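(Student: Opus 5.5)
The plan is to work with the exact functor
\[
F:=\operatorname{Ind}_{\mathbf c}^{\mathfrak g}\operatorname{Infl}_{\mathfrak l_{\mathbf c}}^{\mathbf c}.
\]
It is exact because $U(\mathfrak g)$ is free over $U(\mathbf c)$ by PBW. I would build an explicit exact quasi-inverse on the relevant subcategory, given by a ``top degree'' functor. Write $\mathbf c=\mathfrak l_{\mathbf c}\oplus\mathfrak u$ and $\mathfrak g=\mathfrak u^-\oplus\mathbf c$. Choose $h_0\in\mathfrak h$ central in $\mathfrak l_{\mathbf c}$ such that $\operatorname{ad}h_0$ has strictly positive eigenvalues on $\mathfrak u$ and strictly negative ones on $\mathfrak u^-$. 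A suitable integral combination of the $e_{ii}$ constant along the blocks of $\mathbf c$ should do; choosing it is routine.

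Every weight appearing in a block $B$ of $\mathfrak l_{\mathbf c}$ lies in a single coset $\lambda+\mathbb Z\Delta(\mathfrak l_{\mathbf c})$. Since $h_0$ kills the roots of $\mathfrak l_{\mathbf c}$, $h_0$ takes a single value $d$ on every object of $B$. Hence for $Y\in B$ we have $F Y\cong U(\mathfrak u^-)\otimes Y$, which is graded by $h_0$-eigenvalues $\le d$ with degree-$d$ part exactly $Y$. This degree-$d$ part is stable under $\mathfrak l_{\mathbf c}$ and killed by $\mathfrak u$.

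For an object $E$ of $\operatorname{Filt}\{N^{\mathbf b\subseteq\mathbf c}(\lambda)\}$, define $G(E):=E_d$, the $h_0$-eigenspace of eigenvalue $d$, viewed as an $\mathfrak l_{\mathbf c}$-module. I would prove the following claims in order.

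\begin{enumerate}
\item $G$ is exact, since taking an $h_0$-eigenspace is exact on $\mathfrak h$-semisimple modules, and $G\circ F\cong\operatorname{Id}_B$.
\item For $E$ in the filtered subcategory, all $h_0$-eigenvalues of $E$ are $\le d$. Consequently $\mathfrak u$ kills $E_d$, and by Frobenius reciprocity there is a natural map $F(G E)\to E$.
\item This map is an isomorphism. I would show this by induction on the filtration length, using the five lemma and exactness of $F$ and $G$.
\end{enumerate}

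With these, $F:B\to\operatorname{Filt}$ is an equivalence with quasi-inverse $G$, provided $F(B)$ lands in the filtered subcategory. That holds because $B$ has finite-length objects, $F$ is exact, and $F$ sends simples to the $N$'s. The semibrick property then follows formally: by Frobenius reciprocity together with the degree argument, any $\mathbf c$-map $\operatorname{Infl}X\to U(\mathfrak u^-)\otimes Y$ lands in the top degree $Y$. This gives
\[
\operatorname{Hom}_{\mathfrak g}(N(\lambda),N(\mu))\cong\operatorname{Hom}_{\mathfrak l_{\mathbf c}}(L(\lambda),L(\mu))=\delta_{\lambda\mu}\Bbbk .
\]
Ringel's theorem then gives that the extension closure is an abelian, extension-closed subcategory, consistent with the equivalence.

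The main obstacle is claim (2), together with the essential surjectivity hidden in it. One has to check that extensions in $\mathcal O$ between the $N$'s do not create $h_0$-degrees above $d$, and that $E_d$ generates $E$. Degrees above $d$ cannot occur because $h_0$-eigenvalues are additive in extensions: the weights of $E$ are those of its subquotients. Generation follows from the five lemma once the natural map is defined, which is why I would do the induction on filtration length carefully, checking the connecting square commutes via naturality of Frobenius reciprocity. A secondary point is ensuring $h_0$ is integral-valued, so that ``top degree'' makes sense on integral weights. If needed one can replace $h_0$ by a rational multiple, since only the sign pattern on $\mathfrak u^{\pm}$ matters.
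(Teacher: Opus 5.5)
Your proof is correct, but it runs in the opposite order from the route the paper indicates. The paper gives no argument of its own; it cites the earlier work. The statement is structured as: first, the modules $N^{\mathbf b\subseteq\mathbf c}(\lambda)$ with $L_{\mathfrak l_{\mathbf c}}^{\mathfrak l_{\mathbf c}\cap\mathfrak b}(\lambda)\in B$ form a semibrick; then Ringel's theorem makes $\operatorname{Filt}$ an exact abelian, extension-closed subcategory; finally, $\operatorname{Ind}_{\mathbf c}^{\mathfrak g}\operatorname{Infl}_{\mathfrak l_{\mathbf c}}^{\mathbf c}$ identifies it with $B$.

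You instead build an explicit exact quasi-inverse $G(E)=E_d$ from a grading element $h_0$. One concrete choice gives the $\varepsilon$- and $\delta$-coordinates in the $k$-th letter of $\mathbf c$ the value $-k$. Then $h_0$ vanishes exactly on the $\mathfrak{gl}(1|1)$-roots of the letters $s$ and is positive on the roots common to all $\Delta^{\mathbf b,+}$ with $\mathbf b\subseteq\mathbf c$. Everything else follows as a corollary:
\begin{itemize}
\item The semibrick property is the equivalence evaluated on simple objects.
\item Ringel's theorem is not actually needed. For a morphism $f$ in $\operatorname{Filt}$ one has $f\cong F(Gf)$, and exactness of $F$ gives $\ker f\cong F(\ker Gf)$ and $\operatorname{coker} f\cong F(\operatorname{coker} Gf)$ in $\mathcal O$. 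Hence $\operatorname{Filt}$ is closed under kernels and cokernels directly.
\end{itemize}

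\emph{What your route buys.} It is self-contained and shows exactly where the block hypothesis enters. The only property of $B$ used is that $h_0$ acts on it by a single scalar $d$. So the same argument applies verbatim to the larger Serre subcategory of all $\mathfrak l_{\mathbf c}$-modules on which $h_0$ acts by $d$, a sum of blocks. In the semibrick-first order, the Hom-orthogonality is easy, but the equivalence still needs a separate input. For example, one would need that $F$ is fully faithful and induces isomorphisms on $\operatorname{Ext}^1$ between simples; your $G$ supplies this in one stroke.

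\emph{What the semibrick-plus-Ringel route buys.} It places the result in the general framework of semibricks and wide subcategories used in the cited work on odd reflections. There, Hom-orthogonality is the primary input, and the identification of the $N$'s as the simple objects of $\operatorname{Filt}$ is automatic.

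One small remark: your map $F(GE)\to E$ is not the counit of $F\dashv(-)^{\mathfrak u}$. The $\mathfrak u$-invariants of $E$ can be strictly larger than $E_d$, because of singular vectors in lower $h_0$-degrees. The map is nonetheless natural, which is all your five-lemma induction requires.
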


\begin{proposition}\label{prop:N-as-image}
Let $\mathbf c\in\mathcal C(m,n)$. Denote by
$0^{\mathbf c}\in L(m,n)$ the element obtained from $\mathbf c$ by replacing
every symbol $s$ by $01$, and by $1^{\mathbf c}\in L(m,n)$ the element
obtained by replacing every $s$ by $10$. Then, for every $\lambda$,
\[
N^{0^{\mathbf c}\subseteq\mathbf c}(\lambda)
\cong
\operatorname{Im}\left(
M^{0^{\mathbf c}}(\lambda)
\longrightarrow
M^{1^{\mathbf c}}
\bigl(\lambda+\rho^{0^{\mathbf c}}-\rho^{1^{\mathbf c}}\bigr)
\right).
\]
\end{proposition}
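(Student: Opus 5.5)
The plan is to reduce everything to the Levi $\mathfrak l_{\mathbf c}$, which is a direct sum of $\mathfrak h$ and mutually orthogonal copies of $\mathfrak{gl}(1|1)$, and then use exactness of $\operatorname{Ind}_{\mathbf c}^{\mathfrak g}\operatorname{Infl}_{\mathfrak l_{\mathbf c}}^{\mathbf c}$. Transitivity of induction gives
\[
M^{0^{\mathbf c}}(\lambda)\cong \operatorname{Ind}_{\mathbf c}^{\mathfrak g}\operatorname{Infl}_{\mathfrak l_{\mathbf c}}^{\mathbf c} M_{\mathfrak l_{\mathbf c}}^{\mathfrak l_{\mathbf c}\cap 0^{\mathbf c}}(\lambda),
\]
since $0^{\mathbf c}\subseteq\mathbf c$ as Borel inside the parabolic. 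Likewise, because $1^{\mathbf c}\subseteq\mathbf c$,
\[
M^{1^{\mathbf c}}(\lambda')\cong \operatorname{Ind}_{\mathbf c}^{\mathfrak g}\operatorname{Infl}_{\mathfrak l_{\mathbf c}}^{\mathbf c} M_{\mathfrak l_{\mathbf c}}^{\mathfrak l_{\mathbf c}\cap 1^{\mathbf c}}(\lambda').
\]
Here $\lambda'=\lambda+\rho^{0^{\mathbf c}}-\rho^{1^{\mathbf c}}$. The nilradical of $\mathbf c$ is the same for both, and only the Borel of each $\mathfrak{gl}(1|1)$ factor changes. Note that $\rho^{0^{\mathbf c}}-\rho^{1^{\mathbf c}}=-\sum_i\alpha_i$, where $\alpha_i$ runs over the odd simple roots of the $\mathfrak{gl}(1|1)$ factors that are positive for $0^{\mathbf c}$. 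This is the iterated version of $\rho^{r_\alpha\mathbf b}=\rho^{\mathbf b}+\alpha$, applied to commuting odd reflections. So $\lambda'=\lambda-\sum\alpha_i$ is exactly the weight of the lowest vector $\prod_i f_{\alpha_i}v_\lambda$ of the Levi Verma module.

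The key step is the Levi computation. For a single $\mathfrak{gl}(1|1)$ with Borel $\mathfrak b_0$ (positive root $\alpha$) and opposite Borel $\mathfrak b_1$, the Verma module $M^{\mathfrak b_0}(\lambda)$ is two-dimensional with basis $v_\lambda, f_\alpha v_\lambda$. The module $M^{\mathfrak b_1}(\lambda-\alpha)$ is two-dimensional with highest vector of weight $\lambda-\alpha$, and it is generated by a vector killed by $f_\alpha$. I will describe the unique (up to scalar) nonzero map $M^{\mathfrak b_0}(\lambda)\to M^{\mathfrak b_1}(\lambda-\alpha)$, which sends $v_\lambda$ to $e_\alpha w$ where $w$ is the $\mathfrak b_1$-highest vector.
\begin{enumerate}
\item If $(\lambda,\alpha)\neq0$ (typical), the map is an isomorphism and its image is $M^{\mathfrak b_0}(\lambda)=L(\lambda)$. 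This is consistent with \cref{5.2.ch_eq}(4a).
\item If $(\lambda,\alpha)=0$, then $f_\alpha e_\alpha w=[f_\alpha,e_\alpha]w=0$, so the image is one-dimensional and equals $L(\lambda)$.
\end{enumerate}
In both cases the image is $L^{\mathfrak b_0}_{\mathfrak{gl}(1|1)}(\lambda)$. Taking tensor products over the commuting factors (the Levi Verma is the outer tensor product), the image of the Levi map is $L_{\mathfrak l_{\mathbf c}}^{\mathfrak l_{\mathbf c}\cap 0^{\mathbf c}}(\lambda)$.

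Next, apply the exact functor $\operatorname{Ind}_{\mathbf c}^{\mathfrak g}\operatorname{Infl}_{\mathfrak l_{\mathbf c}}^{\mathbf c}$. Exactness holds by PBW, since induction is tensoring with a free module. Exactness preserves images, so
\[
\operatorname{Im}\bigl(\operatorname{Ind}\operatorname{Infl}(\phi)\bigr)\cong N^{0^{\mathbf c}\subseteq\mathbf c}(\lambda),
\]
where $\phi$ is the Levi map.

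Finally, I must check that $\operatorname{Ind}\operatorname{Infl}(\phi)$ is the map in the statement. By \cref{5.2.ch_eq}(3), the space $\operatorname{Hom}(M^{0^{\mathbf c}}(\lambda),M^{1^{\mathbf c}}(\lambda'))$ is one-dimensional, with $\lambda$ playing the role of $\lambda+\rho^{0^{\mathbf c}}$ there. Hence it suffices that the induced map is nonzero, which holds because induction is faithful on nonzero maps. I expect the main obstacle to be bookkeeping rather than conceptual. One point is verifying the $\rho$-shift identity for simultaneous odd reflections: the reflections commute because their roots are mutually orthogonal and each stays simple after the others are applied. The other is identifying the $\mathbf c$-parabolic description of $M^{1^{\mathbf c}}$, including parity shifts, which can be absorbed since images are taken up to isomorphism.
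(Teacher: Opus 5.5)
Your proof is correct. The paper states this proposition without proof, and your argument is the one its setup suggests. It uses the displayed factorization $M^{\mathfrak b}(\lambda)\cong\operatorname{Ind}_{\mathbf c}^{\mathfrak g}\operatorname{Infl}_{\mathfrak l_{\mathbf c}}^{\mathbf c}M_{\mathfrak l_{\mathbf c}}^{\mathfrak l_{\mathbf c}\cap\mathfrak b}(\lambda)$, applied to both $0^{\mathbf c}$ and $1^{\mathbf c}$. It also uses the PBW exactness of parabolic induction, the same mechanism as in the proof of \cref{thm:DS-characterization-FU}. The only other ingredients are a $\mathfrak{gl}(1|1)^{\oplus r}$ computation and the fact that the relevant $\operatorname{Hom}$ space has dimension at most one, which follows from the weight-space count in \cref{5.2.ch_eq}(2).

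A few points are worth writing out explicitly:
\begin{enumerate}
\item The identity $f_\alpha e_\alpha w=[f_\alpha,e_\alpha]w=(\lambda-\alpha)(h_\alpha)w$ equals $(\lambda,\alpha)w$ up to a nonzero normalizing constant, because $(\alpha,\alpha)=0$. This handles the typical and atypical cases at once.
\item The outer tensor product of the $\mathfrak{gl}(1|1)$ simple tops is again simple. Each of them has one-dimensional weight spaces and is therefore of type M. This justifies identifying the Levi image with $L_{\mathfrak l_{\mathbf c}}^{\mathfrak l_{\mathbf c}\cap 0^{\mathbf c}}(\lambda)$.
\item The $\rho$-shift can be read off directly from
\[
\Delta_{\bar1}^{1^{\mathbf c},+}=\bigl(\Delta_{\bar1}^{0^{\mathbf c},+}\setminus\{\alpha_i\}_i\bigr)\cup\{-\alpha_i\}_i .
\]
This avoids having to argue that the odd reflections commute and remain simple.
\end{enumerate}
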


In what follows, we abbreviate
$U^{\mathbf c}(\lambda)
:=U^{0^{\mathbf c}\subseteq\mathbf c}(\lambda)$ and
$N^{\mathbf c}(\lambda)
:=N^{0^{\mathbf c}\subseteq\mathbf c}(\lambda)$.

For fixed $\mathbf c\in\mathcal C(m,n)$, an object $X\in\mathcal O$ is said
to have a $U^{\mathbf c}$-flag, respectively an $N^{\mathbf c}$-flag, if it
admits a finite filtration whose subquotients are isomorphic to modules
$U^{\mathbf c}(\lambda)$, respectively $N^{\mathbf c}(\lambda)$.
We denote the corresponding full subcategories by
$\mathcal F U^{\mathbf c}$ and $\mathcal F N^{\mathbf c}$.

For such $X$, we write $(X:U^{\mathbf c}(\lambda))$ and
$(X:N^{\mathbf c}(\lambda))$ for the corresponding flag multiplicities.
These are independent of the choice of filtration, since the characters of
the modules $U^{\mathbf c}(\lambda)$, respectively $N^{\mathbf c}(\lambda)$,
are linearly independent.

Set
\[
\Delta^{\mathbf c}
:=
\left(
\bigcup_{\mathbf b\subseteq\mathbf c}\Delta^{\mathbf b,+}
\right)
\setminus
\left(
\bigcap_{\mathbf b\subseteq\mathbf c}\Delta^{\mathbf b,+}
\right).
\]
Thus $\Delta^{\mathbf c}$ consists precisely of the roots whose signs vary
among the Borel subalgebras $\mathbf b\subseteq\mathbf c$.

\begin{definition}
We recall the Duflo--Serganova functor; see \cite{gorelik2022duflo}.
Set
$X:=\{x\in\mathfrak g_{\bar1}\mid[x,x]=0\}$.
For $x\in X$ and $M\in\mathfrak g\text{-sMod}$, define
\[
\operatorname{DS}_xM
:=
\ker x_M/\operatorname{Im}x_M,
\]
where $x_M$ denotes the action of $x$ on $M$.
\end{definition}

\begin{proposition}[\cite{gorelik2022duflo}]
\label{prop:DS_monoidal}
For $x\in X$, the superspace
$\mathfrak g_x:=\operatorname{DS}_x\mathfrak g$ naturally inherits the
structure of a Lie superalgebra. Moreover,
\[
\operatorname{DS}_x:
\mathfrak g\text{-sMod}
\longrightarrow
\mathfrak g_x\text{-sMod}
\]
is naturally a symmetric monoidal $\Bbbk$-linear functor.
\end{proposition}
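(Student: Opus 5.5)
The proof reduces everything to linear algebra for the single odd operator $x$. Since $x\in\mathfrak g_{\bar1}$ and $[x,x]=0$, on any supermodule $M$ we have $x_M^2=\tfrac12[x,x]_M=0$. Hence $\operatorname{DS}_xM=\ker x_M/\operatorname{Im}x_M$ is a well-defined superspace, and a morphism of supermodules, which commutes with $x_M$, induces a map on it. The plan has three steps: first the Lie superalgebra structure on $\mathfrak g_x$, then the $\mathfrak g_x$-action on $\operatorname{DS}_xM$ (which gives the functor), and finally the symmetric monoidal structure.

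\textbf{Step 1: the bracket on $\mathfrak g_x$.} Take $M=\mathfrak g$ with the adjoint action, so $\ker\operatorname{ad}x=\mathfrak g^x$ is the centralizer of $x$ and $\operatorname{Im}\operatorname{ad}x=[x,\mathfrak g]$.
\begin{itemize}
\item $\mathfrak g^x$ is a subalgebra, by the super Jacobi identity.
\item $[x,\mathfrak g]\subseteq\mathfrak g^x$, because $[x,[x,z]]=\tfrac12[[x,x],z]=0$.
\item $[x,\mathfrak g]$ is an ideal of $\mathfrak g^x$. For $y\in\mathfrak g^x$, Jacobi gives $[y,[x,z]]=[[y,x],z]\pm[x,[y,z]]=\pm[x,[y,z]]$.
\end{itemize}
Therefore $\mathfrak g_x=\mathfrak g^x/[x,\mathfrak g]$ is a Lie superalgebra.

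\textbf{Step 2: the $\mathfrak g_x$-action and functoriality.} Let $y\in\mathfrak g^x$. On $M$ we have $y_Mx_M=\pm x_My_M$, so $y_M$ preserves both $\ker x_M$ and $\operatorname{Im}x_M$. Next, for $v\in\ker x_M$ we have $[x,z]v=x(zv)\pm z(xv)=x(zv)\in\operatorname{Im}x_M$. So the ideal $[x,\mathfrak g]$ acts by zero on $\ker x_M/\operatorname{Im}x_M$. Consequently $\operatorname{DS}_xM$ is a $\mathfrak g_x$-supermodule, and the maps induced by $\mathfrak g$-morphisms are $\mathfrak g_x$-equivariant. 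This gives a $\Bbbk$-linear functor.

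\textbf{Step 3: symmetric monoidal structure.} On $M\otimes N$, $x$ acts by $x_M\otimes1+1\otimes x_N$ with the Koszul sign, so $(M\otimes N,x)$ is the tensor product of two $\mathbb Z/2$-graded complexes. Define
\[
\operatorname{DS}_xM\otimes\operatorname{DS}_xN\to\operatorname{DS}_x(M\otimes N),\qquad [m]\otimes[n]\mapsto[m\otimes n].
\]
This is well defined by the Leibniz rule. It is $\mathfrak g_x$-equivariant because elements of $\mathfrak g^x$ act on $M\otimes N$ through the coproduct.

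It is an isomorphism by a K\"unneth argument over the field $\Bbbk$; I expect this to be the main step. As a $\Bbbk[x]/(x^2)$-supermodule, $M$ splits as $H_M\oplus F_M$, where:
\begin{itemize}
\item $H_M\cong\operatorname{DS}_xM$ carries zero differential;
\item $F_M$ is a direct sum of two-dimensional pieces $\Bbbk v\oplus\Bbbk xv$. To obtain this, choose graded complements of $\ker x_M$ in $M$ and of $\operatorname{Im}x_M$ in $\ker x_M$; no finiteness is needed.
\end{itemize}
Tensoring a free piece $\Bbbk v\oplus\Bbbk xv$ with any complex gives an acyclic complex, since it is free over $\Bbbk[x]/(x^2)$. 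Hence the homology of $M\otimes N$ is $H_M\otimes H_N$, and this identification is exactly the map above.

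Compatibility with the remaining structure is then routine:
\begin{itemize}
\item associativity holds since the map is induced by the identity on representatives;
\item the unit is preserved, as $\operatorname{DS}_x$ of the trivial module is the trivial module;
\item the symmetry is preserved, because the super flip $m\otimes n\mapsto(-1)^{|m||n|}n\otimes m$ commutes with $x$ and descends with the same sign.
\end{itemize}
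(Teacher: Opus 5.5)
Your proof is correct and complete. The centralizer-modulo-image construction gives both the bracket on $\mathfrak g_x$ and the $\mathfrak g_x$-action, and the splitting of $M$ over $\Bbbk[x]/(x^2)$ into a trivial part and free two-dimensional pieces gives the K\"unneth isomorphism $\operatorname{DS}_xM\otimes\operatorname{DS}_xN\cong\operatorname{DS}_x(M\otimes N)$. The paper does not prove this statement itself but only cites \cite{gorelik2022duflo}, and your argument is, as far as I know, the standard one behind that reference.
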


\begin{definition}\label{def:DS_alpha}
For an odd root $\alpha$, choose a nonzero root vector
$e_\alpha\in\mathfrak g_\alpha$ and set
$\operatorname{DS}_\alpha:=\operatorname{DS}_{e_\alpha}$.
\end{definition}

\begin{theorem}\label{thm:DS-characterization-FU}
Let $\mathbf b\in L(m,n)$ and $\mathbf c\in\mathcal C(m,n)$ satisfy
$\mathbf b\subseteq\mathbf c$. Then
\[
\mathcal F U^{\mathbf c}
=
\left\{
M\in\mathcal F N^{\mathbf c}
\ \middle|\
\operatorname{DS}_{\alpha}M=0
\text{ for all }\alpha\in\Delta^{\mathbf c}
\right\}.
\]
Moreover,
\[
\mathcal F M^{\mathbf b}
=
\left\{
M\in\mathcal F N^{\mathbf c}
\ \middle|\
\operatorname{DS}_{\alpha}M=0
\text{ for all }
\alpha\in\Delta^{\mathbf b,+}\cap\Delta^{\mathbf c}
\right\}.
\]
\end{theorem}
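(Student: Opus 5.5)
The plan is to reduce everything to the rank-one situation $\mathfrak{gl}(1|1)$ inside the Levi $\mathfrak l_{\mathbf c}$ and then pass to $\mathfrak g$ via $\operatorname{Ind}_{\mathbf c}^{\mathfrak g}\operatorname{Infl}$, using \cref{prop:ringel-subcategory-bc}. The first step is the local computation for a single copy of $\mathfrak{gl}(1|1)$ with odd root $\alpha$. Here the projective $P(\lambda)$ is either a Verma module (when $(\lambda+\rho,\alpha)\neq0$) or a length-two extension of $M^{01}(\lambda)$ by $M^{01}(\lambda-\alpha)$ (atypical case). Meanwhile $N(\lambda)$ is either the Verma module or the one-dimensional simple. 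One checks directly that $\operatorname{DS}_{\pm\alpha}$ kills $P(\lambda)$ in both cases, kills the Verma modules for the Borel in which $\alpha$ is positive, and does not kill the atypical simples. It also kills a Verma module for $\alpha$ negative only when that module is typical. Since $\mathfrak l_{\mathbf c}$ is a product of such copies together with $\mathfrak h$, and $\operatorname{DS}$ is symmetric monoidal (\cref{prop:DS_monoidal}), this yields the corresponding characterization at the Levi level.

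The second step is to compare $\operatorname{DS}_\alpha$ on $\mathfrak g$-modules with $\operatorname{DS}_\alpha$ on the Levi, for $\alpha\in\Delta^{\mathbf c}$. Such an $\alpha$ is a root of one of the $\mathfrak{gl}(1|1)$ factors. As an $e_\alpha$-module, $\operatorname{Ind}_{\mathbf c}^{\mathfrak g}Y\cong U(\mathfrak u^-_{\mathbf c})\otimes Y$, and $e_\alpha$ normalizes the opposite nilradical $\mathfrak u^-_{\mathbf c}$. I would filter $U(\mathfrak u^-_{\mathbf c})$ by PBW degree so that the associated graded is $S(\mathfrak u^-_{\mathbf c})\otimes Y$ with $e_\alpha$ acting diagonally. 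The claim to establish is then
\[
\operatorname{DS}_\alpha\operatorname{Ind}_{\mathbf c}^{\mathfrak g}Y=0
\iff
\operatorname{DS}_\alpha Y=0 .
\]
The direction ``$\Leftarrow$'' follows by tensoring an $e_\alpha$-free module with anything and a spectral-sequence argument. The direction ``$\Rightarrow$'' comes from the top filtration piece being $Y$ itself, or alternatively from a weight argument isolating $Y$ as a direct summand of the associated graded.

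The third step passes from single modules to flags. I would first show that $\mathcal F U^{\mathbf c}$ and $\mathcal F M^{\mathbf b}$ are contained in $\mathcal F N^{\mathbf c}$, since the Levi projectives and Vermas have finite filtrations by the Levi simples. For $M\in\mathcal F N^{\mathbf c}$ with $\operatorname{DS}_\alpha M=0$ for all $\alpha\in\Delta^{\mathbf c}$, I then need to produce a $U^{\mathbf c}$-flag. By \cref{prop:ringel-subcategory-bc}, the extension closure of the $N^{\mathbf c}(\lambda)$ in a given Levi block is equivalent to that block $B$. Hence $M$ decomposes into pieces of the form $\operatorname{Ind}\operatorname{Infl}Y$ with $Y$ in Levi category $\mathcal O$. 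Combined with step two, the condition becomes $\operatorname{DS}_\alpha Y=0$ for all $\alpha$, and the Levi case of step one then says $Y$ is projective, i.e.\ has a $P$-flag. The $\mathcal F M^{\mathbf b}$ statement is handled in the same way, using only the roots in $\Delta^{\mathbf b,+}\cap\Delta^{\mathbf c}$ and the corresponding $\mathfrak{gl}(1|1)$ criterion for $M^{\mathbf b}$-flags.

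I expect the main obstacle to be the Levi-level statement: for $\mathfrak{gl}(1|1)$-modules (and products thereof) in category $\mathcal O$, vanishing of $\operatorname{DS}_{\pm\alpha}$ must characterize projectivity, respectively Verma flags, and not merely hold for them. A particular concern is that $\operatorname{DS}_{\alpha}$ and $\operatorname{DS}_{-\alpha}$ behave differently. I would first try to show this by noting that each block of $\mathfrak{gl}(1|1)$ $\mathcal O$ is, up to the $\mathfrak h$-action, modules over $\Lambda(e_\alpha,e_{-\alpha})$ with $[e_\alpha,e_{-\alpha}]$ acting nilpotently. For such a module, $e_\alpha$-freeness together with $e_{-\alpha}$-freeness forces freeness over the exterior algebra, by a standard rank-variety (Dade's lemma type) argument. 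Similarly, $e_\alpha$-freeness alone characterizes $\operatorname{Ind}$ from $\Bbbk[e_{-\alpha}]$-free modules, that is, Verma flags for the appropriate Borel.
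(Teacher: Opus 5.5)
Your Step 2 is essentially the whole of the paper's proof. The paper does it without filtrations: symmetrization gives $U(\mathfrak u^-_{\mathbf c})\cong S(\mathfrak u^-_{\mathbf c})$ as $\mathfrak l_{\mathbf c}$-modules, so monoidality yields $\operatorname{DS}_x(\operatorname{Ind}_{\mathbf c}^{\mathfrak g}\operatorname{Infl}Y)\cong S(\operatorname{DS}_x\mathfrak u^-_{\mathbf c})\otimes\operatorname{DS}_xY$ directly. It then only says ``apply this to the root vectors.'' Your Steps 1 and 3 try to supply what that sentence leaves out, and both have problems.

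\textbf{Step 3 does not prove the inclusion $\supseteq$.} \cref{prop:ringel-subcategory-bc} only controls extensions among the $N^{\mathbf c}(\lambda)$ with $\lambda$ in a single $\mathfrak l_{\mathbf c}$-block. A general $M\in\mathcal F N^{\mathbf c}$ glues different $\mathfrak l_{\mathbf c}$-blocks, so it is not a direct sum of modules $\operatorname{Ind}_{\mathbf c}^{\mathfrak g}\operatorname{Infl}Y$. Indeed, such a module splits along the block decomposition of $Y$. An indecomposable projective, by contrast, typically has a $U^{\mathbf c}$-flag with weights in several $\mathfrak l_{\mathbf c}$-blocks (for example, weights related by an even reflection outside $\mathfrak l_{\mathbf c}$). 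You also cannot argue layer by layer: $\operatorname{DS}_x$ is not exact, so $\operatorname{DS}_xM=0$ says nothing a priori about the subquotients of a flag.

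One way to close the gap is as follows.
\begin{enumerate}
\item Choose $h_0\in\mathfrak h$ central in $\mathfrak l_{\mathbf c}$ and negative on the roots of $\mathfrak u^-_{\mathbf c}$.
\item One has $\operatorname{Ext}^1_{\mathcal O}(N^{\mathbf c}(\mu),X)=0$ whenever all $h_0$-eigenvalues of $X$ lie below that of $L_{\mathfrak l_{\mathbf c}}(\mu)$. The reason is that the top $h_0$-eigenspace of such an extension is a copy of $L_{\mathfrak l_{\mathbf c}}(\mu)$ killed by $\mathfrak u^+_{\mathbf c}$, and adjunction then splits the extension.
\item Hence any $N^{\mathbf c}$-flag can be reordered by decreasing $h_0$-value. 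By the same top-eigenspace argument, its bottom part is $\operatorname{Ind}_{\mathbf c}^{\mathfrak g}\operatorname{Infl}Y$, where $Y$ is the top $h_0$-eigenspace of $M$.
\item Since $h_0$ is central in $\mathfrak l_{\mathbf c}$, $Y$ is an $\mathfrak l_{\mathbf c}$-direct summand of $M$, so $\operatorname{DS}_xY=0$.
\item Then $\operatorname{Ind}_{\mathbf c}^{\mathfrak g}\operatorname{Infl}Y$ is free, hence injective, over $\Bbbk[x]/(x^2)$. It therefore splits off over $\Bbbk[x]/(x^2)$, so the quotient again has $\operatorname{DS}_x=0$. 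Induct, and apply the Levi criterion to each layer.
\end{enumerate}

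\textbf{The rank-one facts in Step 1 are reversed.} Take $\alpha\in\Delta^{\mathbf b,+}$. The rank-one Verma module has basis $v,e_{-\alpha}v$, and $e_\alpha e_{-\alpha}v=[e_\alpha,e_{-\alpha}]v$ is a multiple of $(\lambda,\alpha)v$. So in the atypical case $e_\alpha$ acts by zero and $\operatorname{DS}_\alpha M^{\mathbf b}(\lambda)\neq0$. It is $e_{-\alpha}$ that acts freely on every $\mathbf b$-Verma module.

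The correct Levi criterion is therefore: $Y$ has an $\mathfrak l_{\mathbf c}\cap\mathbf b$-Verma flag if and only if $\operatorname{DS}_{-\alpha}Y=0$ for all $\alpha\in\Delta^{\mathbf b,+}\cap\Delta^{\mathbf c}$. Consequently the second displayed equality can only be proved with $\operatorname{DS}_{-\alpha}$ in place of $\operatorname{DS}_\alpha$. As printed it already fails for $\mathfrak{gl}(1|1)$ with $\mathbf c=s$ and $\mathbf b=01$: an atypical $M^{01}(\lambda)$ has $\operatorname{DS}_\alpha\neq0$, while every $10$-Verma module has $\operatorname{DS}_\alpha=0$. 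Your closing remark about $e_\alpha$-freeness and induction from $e_{-\alpha}$ points to the correct version, contradicting Step 1. The paper's one-line proof never examines the Levi level, so it does not catch this either.

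Finally, in the Dade-lemma argument you must keep the $\mathfrak h$-weights. Freeness over $\Bbbk[e_\alpha]/(e_\alpha^2)$ and over $\Bbbk[e_{-\alpha}]/(e_{-\alpha}^2)$ does not by itself give freeness over $\Lambda(e_\alpha,e_{-\alpha})$; consider $\Lambda(x,y)/(x+y)$. What reduces the test to root vectors is that the rank variety is torus-stable and the odd root vectors of $\mathfrak l_{\mathbf c}$ have pairwise distinct weights $\pm\alpha_i$. This argument also replaces your appeal to monoidality for passing from the $\mathfrak{gl}(1|1)$ factors to $\mathfrak l_{\mathbf c}$, which does not work because $\mathfrak l_{\mathbf c}$-modules need not be external tensor products.
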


\begin{proof}
Let $\mathfrak u_{\mathbf c}^{-}$ be the opposite nilradical associated with
$\mathbf c$. For an $\mathfrak l_{\mathbf c}$-module $M$, PBW gives
\[
\operatorname{Res}_{\mathfrak l_{\mathbf c}}^{\mathfrak g}
\operatorname{Ind}_{\mathbf c}^{\mathfrak g}
\operatorname{Infl}_{\mathfrak l_{\mathbf c}}^{\mathbf c}M
\cong
U(\mathfrak u_{\mathbf c}^{-})\otimes M.
\]
In the present setting, this identifies with
$S(\mathfrak u_{\mathbf c}^{-})\otimes M$ as
$\mathfrak l_{\mathbf c}$-modules.

Let $x\in\mathfrak l_{\mathbf c}$ be odd with $x^2=0$. By the monoidal
property of the Duflo--Serganova functor,
\[
\operatorname{DS}_x
\left(
\operatorname{Ind}_{\mathbf c}^{\mathfrak g}
\operatorname{Infl}_{\mathfrak l_{\mathbf c}}^{\mathbf c}M
\right)
\cong
S(\operatorname{DS}_x\mathfrak u_{\mathbf c}^{-})
\otimes
\operatorname{DS}_xM.
\]
Hence the left-hand side vanishes if and only if
$\operatorname{DS}_xM=0$. Applying this observation to the root vectors
corresponding to $\alpha\in\Delta^{\mathbf c}$, respectively
$\alpha\in\Delta^{\mathbf b,+}\cap\Delta^{\mathbf c}$, gives the two
characterizations.
\end{proof}

\begin{corollary}
For $\mathbf c\in\mathcal C(m,n)$,
\[
\mathcal F U^{\mathbf c}
=
\bigcap_{\mathbf b\subseteq\mathbf c}
\mathcal F M^{\mathbf b}.
\]
In particular, every projective module belongs to
$\mathcal F U^{\mathbf c}$.
\end{corollary}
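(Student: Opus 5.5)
The plan is to derive both assertions directly from \cref{thm:DS-characterization-FU}, with one extra observation: every $M^{\mathbf b}$-flag with $\mathbf b\subseteq\mathbf c$ refines to an $N^{\mathbf c}$-flag. The final claim about projectives will then follow from \cref{O.indproj}.

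First I will check that $\mathcal F M^{\mathbf b}\subseteq\mathcal F N^{\mathbf c}$ for every $\mathbf b\subseteq\mathbf c$. By the displayed isomorphism $M^{\mathbf b}(\lambda)\cong\operatorname{Ind}_{\mathbf c}^{\mathfrak g}\operatorname{Infl}_{\mathfrak l_{\mathbf c}}^{\mathbf c}M_{\mathfrak l_{\mathbf c}}^{\mathfrak l_{\mathbf c}\cap\mathfrak b}(\lambda)$, it suffices to work inside $\mathfrak l_{\mathbf c}$. This Levi is $\mathfrak h$ together with commuting copies of $\mathfrak{gl}(1|1)$. For each such Borel, a Verma module of $\mathfrak l_{\mathbf c}$ is a tensor product of $\mathfrak{gl}(1|1)$-Vermas, each of length at most two, so it has a finite filtration by simple $\mathfrak l_{\mathbf c}$-modules $L_{\mathfrak l_{\mathbf c}}^{\mathfrak l_{\mathbf c}\cap\mathfrak b}(\mu)$. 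Since $\operatorname{Ind}\circ\operatorname{Infl}$ is exact, this filtration transports to an $N^{\mathbf c}$-flag of $M^{\mathbf b}(\lambda)$. Extension closure then gives the inclusion. Here I am reading $N^{\mathbf c}$ as $N^{\mathbf b\subseteq\mathbf c}$ in \cref{thm:DS-characterization-FU}, and using that the simple modules of $\mathfrak l_{\mathbf c}$, and hence the modules $N$, do not depend on the choice of Borel $\mathbf b\subseteq\mathbf c$ up to relabelling.

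The key combinatorial point is
\[
\bigcup_{\mathbf b\subseteq\mathbf c}\bigl(\Delta^{\mathbf b,+}\cap\Delta^{\mathbf c}\bigr)=\Delta^{\mathbf c}.
\]
This is immediate from the definition: every $\alpha\in\Delta^{\mathbf c}$ lies in $\bigcup_{\mathbf b\subseteq\mathbf c}\Delta^{\mathbf b,+}$, hence in some $\Delta^{\mathbf b,+}$. With this in hand, the two inclusions go as follows.
\begin{itemize}
\item If $M\in\mathcal F U^{\mathbf c}$, then by \cref{thm:DS-characterization-FU} we have $M\in\mathcal F N^{\mathbf c}$ and $\operatorname{DS}_\alpha M=0$ for all $\alpha\in\Delta^{\mathbf c}$. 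In particular this holds for all $\alpha\in\Delta^{\mathbf b,+}\cap\Delta^{\mathbf c}$, so the second part of the theorem gives $M\in\mathcal F M^{\mathbf b}$ for every $\mathbf b\subseteq\mathbf c$.
\item Conversely, let $M\in\bigcap_{\mathbf b\subseteq\mathbf c}\mathcal F M^{\mathbf b}$. The first step gives $M\in\mathcal F N^{\mathbf c}$. The second part of the theorem, applied to each $\mathbf b$, gives $\operatorname{DS}_\alpha M=0$ for all $\alpha\in\Delta^{\mathbf b,+}\cap\Delta^{\mathbf c}$. By the displayed union this covers all of $\Delta^{\mathbf c}$, so $M\in\mathcal F U^{\mathbf c}$.
\end{itemize}

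For projectives, \cref{O.indproj}(2) places every $P^{\mathbf b_0}(\lambda)$ in $\bigcap_{\mathbf b\in L(m,n)}\mathcal F M^{\mathbf b}$, which is contained in the intersection over $\mathbf b\subseteq\mathbf c$. The equality just proved then gives $P^{\mathbf b_0}(\lambda)\in\mathcal F U^{\mathbf c}$.

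The only step needing care is the first one. It requires that the $N$-modules appearing in \cref{thm:DS-characterization-FU} for different $\mathbf b\subseteq\mathbf c$ are the same family, so that $\mathcal F N^{\mathbf c}$ is independent of $\mathbf b$. I would verify this on a single $\mathfrak{gl}(1|1)$ factor via \cref{5.2.ch_eq}(4b): the simple modules for the two Borels coincide after the shift $\lambda\mapsto\lambda+\alpha$.
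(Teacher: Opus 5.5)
Your proposal is correct and follows the intended derivation: the paper gives this as an immediate corollary of \cref{thm:DS-characterization-FU}, using $\Delta^{\mathbf c}=\bigcup_{\mathbf b\subseteq\mathbf c}(\Delta^{\mathbf b,+}\cap\Delta^{\mathbf c})$ to match the two DS-vanishing conditions, together with \cref{O.indproj}(2) for the statement about projectives. Your separate check that $\mathcal F M^{\mathbf b}\subseteq\mathcal F N^{\mathbf c}$ is harmless but redundant: the second equality of the theorem already contains this inclusion, and $N^{\mathbf c}=N^{0^{\mathbf c}\subseteq\mathbf c}$ is defined independently of $\mathbf b$.
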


\begin{theorem}\label{thm:U-N-reciprocity}
Let $\mathbf b\in L(m,n)$ and $\mathbf c\in\mathcal C(m,n)$ satisfy
$\mathbf b\subseteq\mathbf c$. Then, for all $\lambda,\mu$,
\[
\bigl(P^{\mathbf b}(\lambda):
U^{\mathbf b\subseteq\mathbf c}(\mu)\bigr)
=
\bigl[
N^{\mathbf b\subseteq\mathbf c}(\mu):
L^{\mathbf b}(\lambda)
\bigr].
\]
\end{theorem}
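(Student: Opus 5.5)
We derive the statement from ordinary BGG reciprocity (\cref{O.indproj}(4)) by refining the Verma flag of $P^{\mathbf b}(\lambda)$ through the $\mathfrak l_{\mathbf c}$-level structure, and then checking the refinement on the simple side by a character count.

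\emph{Step 1 (the Levi level).} Since $\mathfrak l_{\mathbf c}$ is $\mathfrak h$ plus mutually orthogonal copies of $\mathfrak{gl}(1|1)$, its category $\mathcal O$ is a tensor product of $\mathfrak{gl}(1|1)$ categories. There BGG reciprocity holds with respect to $\mathfrak l_{\mathbf c}\cap\mathfrak b$:
\[
\bigl(P_{\mathfrak l_{\mathbf c}}^{\mathfrak l_{\mathbf c}\cap\mathfrak b}(\mu):M_{\mathfrak l_{\mathbf c}}^{\mathfrak l_{\mathbf c}\cap\mathfrak b}(\nu)\bigr)
=
\bigl[M_{\mathfrak l_{\mathbf c}}^{\mathfrak l_{\mathbf c}\cap\mathfrak b}(\nu):L_{\mathfrak l_{\mathbf c}}^{\mathfrak l_{\mathbf c}\cap\mathfrak b}(\mu)\bigr].
\]
Both sides are $0$ or $1$, and the corresponding matrices are explicit: each factor contributes the identity when the weight is typical for that $\mathfrak{gl}(1|1)$, and a $2\times2$ unitriangular matrix otherwise. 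Applying the exact functor $\operatorname{Ind}_{\mathbf c}^{\mathfrak g}\operatorname{Infl}_{\mathfrak l_{\mathbf c}}^{\mathbf c}$ gives:
\begin{itemize}
\item each $U^{\mathbf b\subseteq\mathbf c}(\mu)$ has an $M^{\mathbf b}$-flag with multiplicities $a_{\mu\nu}:=(P_{\mathfrak l}(\mu):M_{\mathfrak l}(\nu))$;
\item each $M^{\mathbf b}(\nu)$ has an $N^{\mathbf b\subseteq\mathbf c}$-flag with multiplicities $c_{\nu\mu}:=[M_{\mathfrak l}(\nu):L_{\mathfrak l}(\mu)]$.
\end{itemize}
By Step 1, $a_{\mu\nu}=c_{\nu\mu}$.

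\emph{Step 2 (flag multiplicities of the projective).} By the Corollary to \cref{thm:DS-characterization-FU}, $P^{\mathbf b}(\lambda)\in\mathcal F U^{\mathbf c}$. If the filtration is built from $U^{\mathbf b\subseteq\mathbf c}$ modules rather than $U^{\mathbf c}=U^{0^{\mathbf c}\subseteq\mathbf c}$, one uses the first characterization of that theorem, which is independent of $\mathbf b\subseteq\mathbf c$. Refining a $U$-flag by Step 1 gives
\[
(P^{\mathbf b}(\lambda):M^{\mathbf b}(\nu))=\sum_\mu (P^{\mathbf b}(\lambda):U^{\mathbf b\subseteq\mathbf c}(\mu))\,a_{\mu\nu}.
\]
On the other hand, expanding characters of Verma modules into $N$-characters gives
\[
[M^{\mathbf b}(\nu):L^{\mathbf b}(\lambda)]=\sum_\mu c_{\nu\mu}\,[N^{\mathbf b\subseteq\mathbf c}(\mu):L^{\mathbf b}(\lambda)].
\]

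\emph{Step 3 (inverting).} BGG reciprocity equates the two left-hand sides. Writing $x_\mu$ for the $U$-multiplicities and $y_\mu$ for the $N$-composition multiplicities, we obtain $\sum_\mu x_\mu a_{\mu\nu}=\sum_\mu y_\mu a_{\mu\nu}$ for all $\nu$. The matrix $(a_{\mu\nu})$ is block diagonal, with blocks given by tensor products of $1\times1$ and $2\times2$ unitriangular matrices, hence invertible. Therefore $x_\mu=y_\mu$.

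The main obstacle is Step 2. One must be sure that $U$-flag multiplicities are well defined, which follows from linear independence of the characters of the $U^{\mathbf b\subseteq\mathbf c}(\mu)$ (a consequence of the unitriangularity above). One must also be sure that the $\mathbf b$-dependence of $U^{\mathbf b\subseteq\mathbf c}$ is compatible with the $\mathcal F U^{\mathbf c}$ membership supplied by the corollary. The cleanest route is to check via \cref{thm:DS-characterization-FU} that the relevant DS-vanishing condition does not depend on the choice of $\mathbf b\subseteq\mathbf c$. Failing that, one can argue directly: $P^{\mathbf b}(\lambda)$ is a summand of $\operatorname{Ind}_{\mathbf c}^{\mathfrak g}$ of a projective in $\mathcal O$ for $\mathfrak l_{\mathbf c}$, whose indecomposable summands are the $P_{\mathfrak l}(\mu)$; then use Krull--Schmidt together with closure of $\mathcal F U$ under summands, which follows from the DS characterization.
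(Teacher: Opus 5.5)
Your strategy matches the paper's: combine BGG reciprocity in $\mathcal O$ with BGG reciprocity for $\mathfrak l_{\mathbf c}$ to get $\sum_\mu x_\mu a_{\mu\nu}=\sum_\mu y_\mu a_{\mu\nu}$ for all $\nu$, then cancel $(a_{\mu\nu})$. The gap is in Step~3, at exactly the point where the paper passes to the Brundan--Losev--Webster truncations $\mathcal O_J$.

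The matrix $(a_{\mu\nu})$ is \emph{not} block diagonal with finite blocks. Already take $\mathbf c$ with a single $s$, and let $\beta$ be the $\mathbf b$-positive odd root of $\mathfrak l_{\mathbf c}$. Since $\beta$ is isotropic, if $\mu$ is atypical then so is every $\mu+k\beta$ with $k\in\mathbb Z$. Moreover, $M_{\mathfrak l}(\mu+k\beta)$ has composition factors $L_{\mathfrak l}(\mu+k\beta)$ and $L_{\mathfrak l}(\mu+(k-1)\beta)$. So an atypical $\mathfrak l_{\mathbf c}$-block is an infinite chain $\mu+\mathbb Z\beta$. On it, $a$ is the infinite bidiagonal matrix with $a_{\mu+k\beta,\mu+k\beta}=a_{\mu+k\beta,\mu+(k+1)\beta}=1$; your $2\times2$ matrix is only the local picture at one weight. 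This matrix kills a nonzero vector: $z_{\mu+k\beta}=(-1)^k$ satisfies $\sum_{\mu'}z_{\mu'}a_{\mu'\nu}=0$ for every $\nu$. So ``invertible, hence $x=y$'' does not follow from what you wrote. You need $x-y$ to be finitely supported, and you never show this for $y_\mu=\bigl[N^{\mathbf b\subseteq\mathbf c}(\mu):L^{\mathbf b}(\lambda)\bigr]$.

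The repair is short. First, $x$ is finitely supported because the $U$-flag of $P^{\mathbf b}(\lambda)$ is finite. For $y$, apply the exact functor $\operatorname{Ind}_{\mathbf c}^{\mathfrak g}\operatorname{Infl}_{\mathfrak l_{\mathbf c}}^{\mathbf c}$ to $M_{\mathfrak l}(\mu)\twoheadrightarrow L_{\mathfrak l}(\mu)$. This shows $N^{\mathbf b\subseteq\mathbf c}(\mu)$ is a quotient of $M^{\mathbf b}(\mu)$. Hence $y_\mu\le\bigl[M^{\mathbf b}(\mu):L^{\mathbf b}(\lambda)\bigr]=\bigl(P^{\mathbf b}(\lambda):M^{\mathbf b}(\mu)\bigr)$, which vanishes for all but finitely many $\mu$.

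Next, $a_{\mu\nu}\neq0$ forces $\mu\le_{\mathbf b}\nu$, since $\nu-\mu$ is a sum of $\mathbf b$-positive roots of $\mathfrak l_{\mathbf c}$, and $a_{\mu\mu}=1$. If $z=x-y\neq0$, pick $\mu_0$ minimal for $\le_{\mathbf b}$ in the finite support of $z$. Evaluating $\sum_\mu z_\mu a_{\mu\mu_0}=0$ leaves only the term $z_{\mu_0}$, a contradiction.

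With this finiteness-plus-triangularity argument your proof is complete. It is a more elementary and self-contained substitute for the paper's appeal to truncations, whose details the paper does not spell out.

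Separately, your worry in Step~2 is unnecessary. The indecomposable projectives and simples of $\mathcal O$ for $\mathfrak l_{\mathbf c}$ do not depend on the choice of Borel of $\mathfrak l_{\mathbf c}$. So $\{U^{\mathbf b\subseteq\mathbf c}(\mu)\}_\mu$ and $\{U^{\mathbf c}(\mu)\}_\mu$ are the same family of modules up to relabelling, and the Corollary applies directly.
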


\begin{proof}
Combining BGG reciprocity for $\mathfrak g$ with BGG reciprocity for
$\mathfrak l_{\mathbf c}$ gives
\[
\begin{aligned}
\sum_{\mu}
\bigl(P^{\mathbf b}(\lambda):U^{\mathbf b\subseteq\mathbf c}(\mu)\bigr)
\bigl(U^{\mathbf b\subseteq\mathbf c}(\mu):M^{\mathbf b}(\nu)\bigr)
&=
\bigl[M^{\mathbf b}(\nu):L^{\mathbf b}(\lambda)\bigr] \\
&=
\sum_{\mu}
\bigl(U^{\mathbf b\subseteq\mathbf c}(\mu):M^{\mathbf b}(\nu)\bigr)
\bigl[N^{\mathbf b\subseteq\mathbf c}(\mu):L^{\mathbf b}(\lambda)\bigr].
\end{aligned}
\]

To justify coefficient comparison in the infinite highest weight category,
we pass to the  truncations of Brundan--Losev--Webster \cite{brundan2017tensor}. For a suitable
finite interval $J$, let
$\pi_J:\mathcal O\to\mathcal O_J$ denote the corresponding Serre quotient.
The category $\mathcal O_J$ is again highest weight, and for $J$ sufficiently
large the relevant multiplicities are unchanged. Hence the usual finite
highest weight argument yields
\[
\bigl(P^{\mathbf b}(\lambda):
U^{\mathbf b\subseteq\mathbf c}(\mu)\bigr)
=
\bigl[
N^{\mathbf b\subseteq\mathbf c}(\mu):
L^{\mathbf b}(\lambda)
\bigr].
\]
\end{proof}

\subsubsection{Koszul grading}
\label{subsubsec:koszul-grading}
The following theorem extends results of Cheng--Lam--Wang\cite{cheng2015brundan}, who proved the
ungraded case $q=1$ for
$\mathbf c\in L(m,n)\cup\mathcal C(m,n)_1$, and of
Brundan--Losev--Webster\cite{brundan2017tensor}, who proved the graded case for
$\mathbf c\in L(m,n)$.

\begin{theorem}\label{thm:standard-koszul-kl-bc}
The following statements hold.
\begin{enumerate}
\item
The category $\mathcal O$ admits a Koszul graded lift.

\item
For every $\mathbf b\in L(m,n)$ and
$\mathbf c\in\mathcal C(m,n)$ with $\mathbf b\subset\mathbf c$, the modules
$P^{\mathbf b}(\mu)$,
$U^{\mathbf b\subset\mathbf c}(\lambda)$,
$N^{\mathbf b\subset\mathbf c}(\lambda)$, and
$L^{\mathbf b}(\mu)$ admit compatible graded lifts.

\item
The Grothendieck group $K_0(\mathcal F M^{\mathbf b})$ carries a natural
$U_q\mathfrak{sl}_\infty$-module structure. Under the isomorphism
\[
K_0(\mathcal F M^{\mathbf b})
\cong
\mathbb T^{\mathbf b},
\]
we have
\[
[U^{\mathbf b\subset\mathbf c}
(\lambda-\rho^{\mathbf b})\langle n\rangle]
\longmapsto
q^n v_\lambda^{\mathbf b\subset\mathbf c},
\qquad
[P^{\mathbf b}(\lambda-\rho^{\mathbf b})\langle n\rangle]
\longmapsto
q^n b_\lambda^{\mathbf b}.
\]
Consequently,
\[
\begin{aligned}
d_{\mu,\lambda}^{\mathbf b\subset\mathbf c}(q)
&=
\sum_{r\in\mathbb Z}
\bigl(
P^{\mathbf b}(\mu-\rho^{\mathbf b}):
U^{\mathbf b\subset\mathbf c}
(\lambda-\rho^{\mathbf b})\langle r\rangle
\bigr)q^r \\
&=
\sum_{r\in\mathbb Z}
\bigl[
N^{\mathbf b\subset\mathbf c}
(\lambda-\rho^{\mathbf b}):
L^{\mathbf b}(\mu-\rho^{\mathbf b})\langle r\rangle
\bigr]q^r.
\end{aligned}
\]
\end{enumerate}
\end{theorem}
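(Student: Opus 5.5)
The plan is to deduce everything from the Brundan--Losev--Webster theory of tensor product categorifications \cite{brundan2017tensor}, applied to a coarser ordering of the tensor factors. Part (1) and the $\mathbf c\in L(m,n)$ case are exactly \cite{brundan2017tensor}. There, for every $\mathbf b$, $\mathcal O$ (after the truncations $\pi_J$ already used in the proof of \cref{thm:U-N-reciprocity}) is the unique graded tensor product categorification of $V^{b_1}\otimes\cdots\otimes V^{b_{m+n}}$. In that realization $M^{\mathbf b}(\lambda-\rho^{\mathbf b})$ is the graded standard object, and $P^{\mathbf b}(\lambda-\rho^{\mathbf b})$ is the graded indecomposable projective corresponding to $b^{\mathbf b}_\lambda$. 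I take this as the base case; all gradings below are the ones induced from this Koszul grading.

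For general $\mathbf c\supseteq\mathbf b$ I would group each pair of positions $(a_i,a_i+1)$ with $c_i=s$ into a single factor. Uniqueness of tensor product categorifications, used as in Losev--Webster with blocks of factors, then identifies $\mathcal O$ with a graded tensor product categorification of $T^{c_1}\otimes\cdots\otimes T^{c_{m+n-r}}$. In this identification the factor $U$ is categorified by the $\mathfrak{gl}(1|1)$ category $\mathcal O$, and the relevant standardization functor is $\operatorname{Ind}_{\mathbf c}^{\mathfrak g}\operatorname{Infl}_{\mathfrak l_{\mathbf c}}^{\mathbf c}$.

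The key steps, in order, are the following.
\begin{enumerate}
\item Check that parabolic induction from $\mathfrak l_{\mathbf c}$ is the Losev--Webster standardization functor and that it is graded. \cref{prop:ringel-subcategory-bc} helps here: it shows that this functor is an equivalence onto $\operatorname{Filt}\{N^{\mathbf b\subseteq\mathbf c}\}$, and this is the defining property of standardization.
\item Transport the Koszul grading of $\mathfrak l_{\mathbf c}$-category $\mathcal O$, which is a tensor product of $\mathfrak{gl}(1|1)$ blocks and of the trivial $\mathfrak h$-part. Its projectives $P_{\mathfrak l_{\mathbf c}}$, Vermas and simples carry standard graded lifts, and inducing them gives graded lifts of $U^{\mathbf b\subseteq\mathbf c}$, $M^{\mathbf b}$ and $N^{\mathbf b\subseteq\mathbf c}$. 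These are compatible with the lifts of $P^{\mathbf b}$ and $L^{\mathbf b}$, because graded lifts of indecomposables in a Koszul category are unique up to shift; this gives (2).
\item On Grothendieck groups, graded standardization is the tensor product map. For a single $\mathfrak{gl}(1|1)$ factor, the graded projective $P^{01}$ has a graded Verma flag by $M^{01}(\lambda)$ and $M^{01}(\lambda-\alpha)\langle1\rangle$, so its class is $b^{01}_{(\lambda_1,\lambda_2)}\in U$. Tensoring, $[U^{\mathbf b\subset\mathbf c}(\lambda-\rho^{\mathbf b})\langle n\rangle]\mapsto q^nv_\lambda^{\mathbf b\subset\mathbf c}$. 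Combined with $[P^{\mathbf b}]\mapsto b^{\mathbf b}_\lambda$ and \cref{thm:canonical-basis-bc} ($b^{\mathbf b\subseteq\mathbf c}_\lambda=b^{\mathbf b}_\lambda$), expanding $b_\mu$ in hypercubic monomials gives the first equality for $d^{\mathbf b\subset\mathbf c}_{\mu,\lambda}$.
\item For the second equality, redo the proof of \cref{thm:U-N-reciprocity} with gradings. Use graded BGG reciprocity for $\mathfrak g$ and for $\mathfrak l_{\mathbf c}$ (which holds because of the Koszul lifts and the graded duality fixing simples), then compare coefficients in the truncated finite graded highest weight categories $\mathcal O_J$. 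This gives $(P^{\mathbf b}(\mu):U(\lambda)\langle r\rangle)=[N(\lambda):L^{\mathbf b}(\mu)\langle r\rangle]$.
\end{enumerate}

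The main obstacle is step (1): justifying that the BLW uniqueness theorem applies to the coarser factorization with the non-minuscule-type factor $U$, and that the resulting standardization functor really is graded parabolic induction with the expected shift conventions. I would first try to verify the Losev--Webster axioms directly. In particular, I would check that the categorical $\mathfrak{sl}_\infty$-action by translation functors preserves the filtration by $\operatorname{Filt}\{N^{\mathbf b\subseteq\mathbf c}\}$-subcategories, using \cref{thm:DS-characterization-FU} to control which objects lie in $\mathcal FU^{\mathbf c}$.

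A secondary delicate point is the sign of the grading shift in step (4), namely $q$ versus $q^{-1}$. I would settle it by checking the $\mathfrak{gl}(1|1)$ case by hand and then invoking the positivity $d\in q\mathbb Z[q]$ from \cref{thm:canonical-basis-bc}.
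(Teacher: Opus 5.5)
Your overall route is the paper's: Brundan--Losev--Webster for $\mathbf c=\mathbf b$, plus a graded $\mathfrak{gl}(1|1)$ analysis to pass from $\mathbf b$ to $\mathbf c$, which is all the paper's two-line proof says. The gap is in the mechanism you propose for the step you yourself flag as the main obstacle.

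\textbf{Why step 1 fails as planned.} The Losev--Webster uniqueness theorem concerns tensor product categorifications of tensor products of irreducible integrable highest weight modules, and BLW's version treats minuscule factors such as $V$ and $W$. No uniqueness theorem is available for a factorization in which one factor is $U$, categorified by a $\mathfrak{gl}(1|1)$-category $\mathcal O$. So neither ``uniqueness with blocks of factors'' nor a direct check of axioms will show that $\operatorname{Ind}_{\mathbf c}^{\mathfrak g}\operatorname{Infl}_{\mathfrak l_{\mathbf c}}^{\mathbf c}$ is graded for the BLW grading. Also, \cref{prop:ringel-subcategory-bc} does not characterize a standardization functor; that requires left adjointness to the quotient onto a stratum. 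Steps 2--4 all presuppose that parabolic induction is graded, so they are not yet justified. Uniqueness of graded lifts up to shift gives compatibility only once gradability is known, and it does not fix the shifts in the $M^{\mathbf b}$-flag of $U^{\mathbf b\subseteq\mathbf c}(\lambda)$.

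\textbf{A repair that avoids uniqueness.} Let $\xi$ be the coset of $\lambda$ modulo the $\mathbb Z$-span of the odd roots of $\mathfrak l_{\mathbf c}$, order such cosets via the roots of the nilradical of $\mathbf c$, and let $\mathcal O_{\le\xi}$ be the Serre subcategory of modules whose weights lie in cosets $\le\xi$.
\begin{enumerate}
\item For $X\in\mathcal O_{\le\xi}$, every weight vector with weight in $\xi$ is killed by the nilradical. Frobenius reciprocity then gives $\operatorname{Hom}_{\mathfrak g}(U^{\mathbf b\subseteq\mathbf c}(\lambda),X)\cong\operatorname{Hom}_{\mathfrak l_{\mathbf c}}(P_{\mathfrak l_{\mathbf c}}^{\mathfrak l_{\mathbf c}\cap\mathfrak b}(\lambda),X_\xi)$, where $X_\xi$ is the sum of the weight spaces in $\xi$. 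This is exact in $X$.
\item Hence $U^{\mathbf b\subseteq\mathbf c}(\lambda)$ is the projective cover of $L^{\mathbf b}(\lambda)$ in $\mathcal O_{\le\xi}$. Equivalently, it is the quotient of $P^{\mathbf b}(\lambda)$ by the trace of the projectives lying outside $\mathcal O_{\le\xi}$.
\item Similarly, $N^{\mathbf b\subseteq\mathbf c}(\lambda)$ is the largest quotient of $P^{\mathbf b}(\lambda)$ in $\mathcal O_{\le\xi}$ whose radical has no composition factor with highest weight in $\xi$. This is again a quotient by a sum of traces.
\item Traces of graded projectives are graded submodules. So, working in the truncations $\mathcal O_J$, the graded lifts in (2) and graded $U$-flags of projectives come for free.
\end{enumerate}

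What remains is to determine the shifts, which is the paper's ``graded $\mathfrak{gl}(1|1)$ analysis''. For a single atypical $s$ with odd simple root $\alpha$, it can be read off from BLW. By BGG reciprocity, $M^{\mathbf b}(\lambda+\alpha)$ occurs exactly once in $P^{\mathbf b}(\lambda)$ and not in the kernel of $P^{\mathbf b}(\lambda)\twoheadrightarrow U^{\mathbf b\subseteq\mathbf c}(\lambda)$, so its degree in $U^{\mathbf b\subseteq\mathbf c}(\lambda)$ equals its degree in $P^{\mathbf b}(\lambda)$. Note that the second Verma subquotient is $M(\lambda+\alpha)$, not $M(\lambda-\alpha)$ as in your step 3. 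With the gradings and shifts in place, your step 4 goes through as written.
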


\begin{proof}
Part~(1) is due to Brundan--Losev--Webster.
Parts~(2) and~(3) follow from the case $\mathbf c=\mathbf b$ established
by Brundan--Losev--Webster, together with the graded
$\mathfrak{gl}(1|1)$ analysis governing the passage from
$\mathbf b$ to $\mathbf c$.
\end{proof}

\begin{remark}
In the special case $\mathbf c=\mathbf b$, the equality
\[
\bigl(P^{\mathbf b}(\mu):M^{\mathbf b}(\lambda)\bigr)
=
d_{\mu,\lambda}^{\mathbf b}(1)
=
\bigl[M^{\mathbf b}(\lambda):L^{\mathbf b}(\mu)\bigr]
\]
is the super Kazhdan--Lusztig conjecture. It was conjectured by Brundan
for $\mathbf b=0^m1^n$ and first proved by Cheng--Lam--Wang using
super duality.
\end{remark}

\begin{remark}
Let $\mathbf b\in L(m,n)$ and $\mathbf c\in\mathcal C(m,n)$ satisfy
$\mathbf b\subset\mathbf c$. For a graded lift of
$N^{\mathbf b\subset\mathbf c}(\lambda)$, the grading filtration agrees
with the radical filtration. Hence
$d_{\mu,\lambda}^{\mathbf b\subset\mathbf c}(q)$ records the radical
layers of $N^{\mathbf b\subset\mathbf c}(\lambda)$:
\[
d_{\mu,\lambda}^{\mathbf b\subset\mathbf c}(q)
=
\sum_{r\geq0}
\left[
\frac{\operatorname{rad}^r
N^{\mathbf b\subset\mathbf c}(\lambda)}
{\operatorname{rad}^{r+1}
N^{\mathbf b\subset\mathbf c}(\lambda)}
:
L^{\mathbf b}(\mu)
\right]q^r.
\]
In particular,
$d_{\mu,\lambda}^{\mathbf b\subset\mathbf c}(1)
=
[N^{\mathbf b\subset\mathbf c}(\lambda):L^{\mathbf b}(\mu)]$.
\end{remark}

\begin{example}
\begin{enumerate}
\item
Chen--Coulembier--Mazorchuk\cite{chen2021translated} showed that
$M^{0^m1^n}(\lambda)$ has simple socle. Hence
$M^{0^m1^n}(\lambda)$ is rigid.

\item
We will see in \cref{thm:socle_simple} that
$N^{s^n}(\lambda)$ is rigid under a suitable condition.
\end{enumerate}
\end{example}

\subsubsection{Closed formula for odd reflections}
\label{subsubsec:closed-form-odd-reflections}

\begin{theorem}
For \(\mathbf b,\mathbf b'\in L(m,n)\) and \(\lambda\in\Lambda\), one has
\[
  \sum_{r\geq 0}
  \bigl[
    M^{\mathbf b}(\lambda-\rho^{\mathbf b})
    :
    L^{\mathbf b'}(\lambda-\rho^{\mathbf b'})\langle -r\rangle
  \bigr]q^r
  =
  q^{\ell_{\lambda}(\mathbf b,\mathbf b')},
\]
where \(\ell_{\lambda}(\mathbf b,\mathbf b')\) is the combinatorial quantity
defined in \cref{ld}.
\end{theorem}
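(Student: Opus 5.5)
We prove the statement in two steps: first the ungraded version (the multiplicity is $1$), then the grading shift.

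\emph{Ungraded multiplicity.} By \cref{5.2.ch_eq}(1) the modules $M^{\mathbf b}(\lambda-\rho^{\mathbf b})$ and $M^{\mathbf b'}(\lambda-\rho^{\mathbf b'})$ have the same character. Hence $[M^{\mathbf b}(\lambda-\rho^{\mathbf b}):L]=[M^{\mathbf b'}(\lambda-\rho^{\mathbf b'}):L]$ for every simple $L$. In particular
\[
\bigl[M^{\mathbf b}(\lambda-\rho^{\mathbf b}):L^{\mathbf b'}(\lambda-\rho^{\mathbf b'})\bigr]
=\bigl[M^{\mathbf b'}(\lambda-\rho^{\mathbf b'}):L^{\mathbf b'}(\lambda-\rho^{\mathbf b'})\bigr]=1 .
\]
Therefore the graded multiplicity is a single monomial $q^{d}$, and it remains to show $d=\ell_\lambda(\mathbf b,\mathbf b')$.

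\emph{Locating the grading degree.} By \cref{5.2.ch_eq}(3) there is a unique (up to scalar) nonzero map $\phi:M^{\mathbf b'}(\lambda-\rho^{\mathbf b'})\to M^{\mathbf b}(\lambda-\rho^{\mathbf b})$. Its image has top $L^{\mathbf b'}(\lambda-\rho^{\mathbf b'})$, and this simple occurs only once in the target. So $d$ is the graded degree of $\phi$, equivalently the radical layer of $M^{\mathbf b}(\lambda-\rho^{\mathbf b})$ containing this factor. We use the Koszul grading of \cref{thm:standard-koszul-kl-bc}, in which the radical and grading filtrations of standard objects agree and homomorphisms between standard objects are homogeneous.

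\emph{Upper bound.} Choose a shortest path in $L(m,n)_\lambda$ and lift it to a path $\mathbf b=\mathbf b_0,\mathbf b_1,\dots,\mathbf b_k=\mathbf b'$ in $L(m,n)$.
\begin{itemize}
\item Steps with color in $A_\lambda$ are isomorphisms of Verma modules by \cref{5.2.ch_eq}(4a), so they contribute degree $0$.
\item Each of the $\ell=\ell_\lambda(\mathbf b,\mathbf b')$ remaining steps is an atypical odd reflection. There the relevant map factors through $N^{\mathbf c}$ for $\mathbf c=\mathbf b_i+\mathbf b_{i+1}\in\mathcal C(m,n)_1$ (\cref{prop:N-as-image}). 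The $\mathfrak{gl}(1|1)$ computation ($\mathfrak l_{\mathbf c}$-Verma of length $2$, Koszul) gives a homogeneous map of degree exactly $1$.
\end{itemize}
Composing gives a homogeneous map of degree $\ell$. If this composite is nonzero, it equals $\phi$ up to scalar by \cref{5.2.ch_eq}(3), and then $d=\ell$.

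\emph{Main obstacle: nonvanishing and independence of path.} We must show that the composite along a shortest path is nonzero. This also forces $d$ to be path-independent and rules out a smaller degree, since $\phi$ is unique. We would use the defining property of $L(m,n)/A$ that shortest walks use pairwise distinct colors. Then each atypical root $\alpha$ whose sign flips is reflected exactly once. Evaluated on highest weight vectors, the composite is a product $\prod e_{-\alpha}$ over distinct odd roots, which lies in $\Lambda(\mathfrak g_{\bar1})$. It is therefore nonzero by PBW freeness of $M^{\mathbf b}$ over $U(\mathfrak n^-_{\mathbf b})$, since these roots are all $\mathbf b$-negative after accounting for the color bookkeeping.

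This is the same mechanism as the homomorphism combinatorics in \cite{hirota2025odd}, which we would import: a path is shortest iff the composite is nonzero. A non-shortest path repeats a color, producing $e_{-\alpha}^2=0$, which is consistent with the degree being $\ell$ rather than the path length.

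Finally, since the grading shift of a nonzero homogeneous map between standard modules with simple top is determined, $d=\ell_\lambda(\mathbf b,\mathbf b')$. Reading off the sign convention $\langle -r\rangle$ from \cref{thm:standard-koszul-kl-bc} yields the formula.
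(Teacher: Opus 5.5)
Your route differs from the paper's, and its first two steps are sound. Equality of characters (\cref{5.2.ch_eq}(1)) gives ungraded multiplicity one. So the graded multiplicity is a single monomial, whose exponent is the degree of the unique map $\phi\colon M^{\mathbf b'}(\lambda-\rho^{\mathbf b'})\to M^{\mathbf b}(\lambda-\rho^{\mathbf b})$. The paper instead inducts on $\ell_0(\mathbf b,\mathbf b')$ along a geodesic of $L(m,n)$. At each atypical step it uses $0\to M^{\mathbf b'}(\lambda+\alpha-\rho^{\mathbf b'})\to U^{\mathbf b'\subset\mathbf b'+\mathbf b''}(\lambda-\rho^{\mathbf b'})\to M^{\mathbf b'}(\lambda-\rho^{\mathbf b'})\to 0$ together with graded $U$-flag reciprocity to shift the degree by one. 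It therefore only counts multiplicities and never has to show that a composite of homomorphisms is nonzero. Your approach makes the additivity of degrees explicit, but that nonvanishing is exactly where the proposal has a gap.

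\textbf{The gap: nonvanishing of the composite.} You lift an arbitrary shortest path of $L(m,n)_\lambda$, and nothing controls the lift inside the contracted components. It can cross a color $\gamma\in A_\lambda$ twice, for instance out and back, or by adding a typical box to make an atypical box addable and removing it later. Then the image of $v^{\mathbf b'}$ contains both factors $e_{-\gamma}$ and $e_{\gamma}$, so not all factors lie in $\mathfrak n^-_{\mathbf b}$, and PBW freeness gives nothing. The phrase ``after accounting for the color bookkeeping'' hides this. Note also that a repeated color is crossed in opposite directions, which produces $e_{\alpha}e_{-\alpha}$, not $e_{-\alpha}^2$.

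\textbf{The missing identity.} Viewing $\mathbf b,\mathbf b'$ as Young diagrams, one has $\ell_\lambda(\mathbf b,\mathbf b')=|(\mathbf b\triangle\mathbf b')\setminus A_\lambda|$. To see this:
\begin{enumerate}
\item An edge of color in $A_\lambda$ toggles only a box in $A_\lambda$. Hence walks in $L(m,n)$ with $k$ atypical edges correspond to walks of length $k$ in $L(m,n)_\lambda$, with no loops.
\item Every walk from $\mathbf b$ to $\mathbf b'$ toggles each box of $\mathbf b\triangle\mathbf b'$ an odd number of times.
\item A geodesic of $L(m,n)$ toggles exactly those boxes, each once.
\end{enumerate}
The paper uses this identity tacitly when it asserts $\ell_\lambda(\mathbf b,\mathbf b'')=\ell_\lambda(\mathbf b,\mathbf b')+1$.

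\textbf{The repair.} Take your path to be a geodesic of $L(m,n)$ itself. By the identity it has exactly $\ell_\lambda(\mathbf b,\mathbf b')$ atypical steps. Along it every root changes sign at most once, so each reflected root $\beta_i$ is $\mathbf b$-positive and the $\beta_i$ are pairwise distinct. Then $v^{\mathbf b'}\mapsto e_{-\beta_{k-1}}\cdots e_{-\beta_0}v^{\mathbf b}$ is nonzero, since its leading symbol in $\operatorname{gr}U(\mathfrak n^-_{\mathbf b})$ is a nonzero wedge product of distinct odd vectors.

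\textbf{Degree exactly $1$.} You should also justify the claim that a single atypical reflection has degree exactly $1$, rather than assert it. For example, apply \cref{thm:standard-koszul-kl-bc} with $\mathbf c=\mathbf b_i+\mathbf b_{i+1}$. The $\mathfrak{gl}(1|1)$ canonical basis element contributes exactly $q$, and your multiplicity-one observation rules out any further contribution.

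With these two repairs your argument is complete.
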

\begin{proof}
We argue by induction on \(\ell_{0}(\mathbf b,\mathbf b')\).  If
\(\ell_{\lambda}(\mathbf b,\mathbf b')=0\), equivalently
\(\mathbf b=\mathbf b'\) in \(L(m,n)_\lambda\), the claim is clear.
Suppose that \(\mathbf b'\) and \(\mathbf b''\) are adjacent by the odd
reflection corresponding to an odd root \(\alpha\), and that
\[
  \ell_{0}(\mathbf b,\mathbf b')+1
  =
  \ell_{0}(\mathbf b,\mathbf b'').
\]
Assume the statement for \(\mathbf b'\).  We prove it for \(\mathbf b''\).
If \((\lambda,\alpha)\neq 0\), then the odd reflection gives
\[
  M^{\mathbf b'}(\lambda-\rho^{\mathbf b'})
  \cong
  M^{\mathbf b''}(\lambda-\rho^{\mathbf b''}).
\]
Moreover, \(\mathbf b'\) and \(\mathbf b''\) have the same image in
\(L(m,n)_\lambda\).  Hence
\[
  \ell_{\lambda}(\mathbf b,\mathbf b')
  =
  \ell_{\lambda}(\mathbf b,\mathbf b''),
\]
and the claim follows from the induction hypothesis.
Now assume \((\lambda,\alpha)=0\).  Then
\[
  \ell_{\lambda}(\mathbf b,\mathbf b')+1
  =
  \ell_{\lambda}(\mathbf b,\mathbf b'').
\]
For the adjacent Borels \(\mathbf b'\) and \(\mathbf b''\), we have the exact
sequence
\[
  0
  \longrightarrow
  M^{\mathbf b'}(\lambda+\alpha-\rho^{\mathbf b'})
  \longrightarrow
  U^{\mathbf b'\subset \mathbf b'+\mathbf b''}
    (\lambda-\rho^{\mathbf b'})
  \longrightarrow
  M^{\mathbf b'}(\lambda-\rho^{\mathbf b'})
  \longrightarrow
  0,
\]
and
\[
  \operatorname{top}
  M^{\mathbf b'}(\lambda+\alpha-\rho^{\mathbf b'})
  \cong
  L^{\mathbf b''}(\lambda-\rho^{\mathbf b''}).
\]
Since \(P^{\mathbf b}(\lambda-\rho^{\mathbf b})\) has a
\(U^{\mathbf b'\subset \mathbf b'+\mathbf b''}\)-flag and contains
\(U^{\mathbf b'\subset \mathbf b'+\mathbf b''}(\lambda-\rho^{\mathbf b'})\)
as a subquotient, the above exact sequence shifts the corresponding
composition multiplicity by one degree.  Hence
\[
  \sum_{r\geq 0}
  \bigl[
    M^{\mathbf b}(\lambda-\rho^{\mathbf b})
    :
    L^{\mathbf b''}(\lambda-\rho^{\mathbf b''})\langle -r\rangle
  \bigr]q^r
  =
  q\,
  q^{\ell_{\lambda}(\mathbf b,\mathbf b')}
  =
  q^{\ell_{\lambda}(\mathbf b,\mathbf b'')}.
\]
This completes the induction.
\end{proof}

\subsection{Some classes of dominant weights}
\label{subsec:dominant-weight-classes}

\subsubsection{Closed formula for maximally singular weights}
\label{subsubsec:maximally-singular-weights}

Throughout this subsection, we fix
$\mathbf b=0^m1^n\in L(m,n)$.

\begin{theorem}\label{thm:closed-form-e-r}
Let $p=(1^m|0^n)$ and let $0\leq r\leq m+n$.
For $S\subseteq\{1,\ldots,m+n\}$, let $p^S$ be obtained from $p$ by
interchanging $0$ and $1$ precisely at the positions in $S$. Then

\[
e^{(r)}v_p
=
\sum_{\substack{S\subseteq\{1,\ldots,m+n\}\\ |S|=r}}
q^{\sum_{i\in S}i-\frac{r(r+1)}2}
v_{p^S}.
\]
\end{theorem}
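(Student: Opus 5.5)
The plan is a direct computation in $\mathbb T^{0^m1^n}=V^{\otimes m}\otimes W^{\otimes n}$. I read $e$ as the generator $e_0$, which is the only choice for which $e^{(r)}v_p$ involves exactly the monomials $v_{p^S}$. With this reading $v_p=v_1^{\otimes m}\otimes w_0^{\otimes n}$, and each tensor factor is a two-state system on which $e_0$ acts by a single flip:
\[
e_0v_1=v_0,\qquad e_0v_0=0,\qquad e_0w_0=w_1,\qquad e_0w_1=0.
\]
Thus flipping the $i$-th factor of $v_p$ interchanges $0$ and $1$ in the $i$-th entry of $p$, and $e_0^2$ kills every factor.

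\textbf{Step 1: the $k_0^{-1}$ weights.} From the defining formulas,
\[
k_0v_1=q^{-1}v_1,\qquad k_0v_0=qv_0,\qquad k_0w_0=q^{-1}w_0,\qquad k_0w_1=qw_1 .
\]
So $k_0^{-1}$ acts by $q$ on every unflipped factor and by $q^{-1}$ on every flipped factor, uniformly for $V$ and $W$.

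\textbf{Step 2: expand $e^r$.} Iterating $\Delta(e_0)=k_0^{-1}\otimes e_0+e_0\otimes1$ gives
\[
\Delta^{(m+n)}(e_0)=\sum_j (k_0^{-1})^{\otimes(j-1)}\otimes e_0\otimes 1^{\otimes(m+n-j)} .
\]
Hence $e_0^r v_p$ is a sum over ordered $r$-tuples of distinct positions $(j_1,\dots,j_r)$, since repeated positions vanish. Fix a set $S=\{s_1<\dots<s_r\}$ and an order $\sigma$ in which its positions are flipped. When position $j$ is flipped after the set $T$ has already been flipped, the coefficient picked up is
\[
q^{(j-1)-2\,\#(T\cap[1,j-1])}.
\]
Summing over the steps, the total exponent for $(S,\sigma)$ is
\[
\sum_{i\in S}(i-1)-2N(\sigma),
\]
where $N(\sigma)$ counts the pairs $s_a<s_b$ with $s_a$ flipped before $s_b$. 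Writing $\operatorname{inv}(\sigma)$ for the number of inversions of $\sigma$, we have $N(\sigma)=\binom r2-\operatorname{inv}(\sigma)$.

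\textbf{Step 3: sum over orders and divide.} Using the standard identity $\sum_{\sigma\in\mathfrak S_r}q^{2\operatorname{inv}\sigma}=q^{r(r-1)/2}[r]!$ for balanced quantum integers, the sum over all $\sigma$ gives
\[
q^{\sum_{i\in S}i-r}\,q^{-r(r-1)}\,q^{r(r-1)/2}[r]!
= q^{\sum_{i\in S}i-\frac{r(r+1)}2}\,[r]! .
\]
Dividing by $[r]!$ yields exactly the coefficient of $v_{p^S}$ claimed in the theorem.

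The only step needing care is the bookkeeping of the $k_0^{-1}$ factors in Step 2: the sign of the exponent depends on whether a factor has already been flipped, so the order of application matters. This reduces to the inversion-generating-function identity in Step 3, which I expect to be the main (though routine) obstacle, along with confirming that the coproduct convention puts $k_0^{-1}$ on the left factors.
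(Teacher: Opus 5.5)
Your proposal is correct and takes the same route as the paper. The paper's proof just says the formula is a direct consequence of the coproduct formula, and your reading $e=e_0$, the $k_0^{-1}$ eigenvalue bookkeeping, and the identity $\sum_{\sigma}q^{2\operatorname{inv}\sigma}=q^{r(r-1)/2}[r]!$ supply exactly the computation the paper leaves implicit; your $r=1$ and $r=2$ coefficients agree with the worked $\mathfrak{gl}(3|3)$ example in the appendix.
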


\begin{proof}
This is a direct concequence of the coproduct formula.
See also examples\cref{app:gl22-examples,app:gl33-examples}.
\end{proof}

\begin{remark}
The formula above can also be reformulated in terms of lengths of paths in
$L(m,n)$. However, the computations in
\cref{subsec:gl22-s2-kl} suggest that, for other choices of $\mathbf b$, one
should not expect a formula of comparably simple form.
\end{remark}

\subsubsection{Characterizations of Kac--Wakimoto weights}
\label{subsubsec:kac-wakimoto-weights}

A weight satisfying the equivalent conditions below is called a
\emph{Kac-Wakimoto weight} (a.k.a. Kostant, totally connected, tame weight).

The equivalence of \((2)\), \((3)\), and \((4)\) was proved in
\cite{brundanstroppelII}, \cite{heidersdorf2017mixed}, and \cite{chmutov2015kac}, respectively. The conditions \((5)\), \((6)\),
and \((7)\) appear to be new. Note that \((7)\) is an invariant of the abelian
category \(\mathcal O\).
Condition \((8)\), due to Kac and Wakimoto \cite{kac1994integrable}, appears to be the earliest
definition.

\begin{theorem}\label{kww}
Let \(\mu\in\Lambda^{\mathrm{reg,dom}}\). Then the following conditions are
equivalent.
\begin{enumerate}
  \item
  There is no \(\wedge\) between two \(\vee\)'s in the weight diagram of
  \(\mu\).

  \item
  For every \(\lambda\in\Lambda^{\mathrm{reg,dom}}\), one has
  \[
    \sum_{i\geq0}
    \dim\operatorname{Ext}^i_{K_{\Lambda}}
    \bigl(K(\lambda),L(\mu)\bigr)
    \leq 1 .
  \]

  \item
  The simple module \(L(\mu)\) is isomorphic to a tensor product of an
  irreducible mixed tensor module with some Berezinian powers.

  \item
  The character of \(L(\mu)\) is given by the Kac--Wakimoto character formula.

  \item
  The set
  \[
    \bigl\{
      L^{\mathbf b}
      \bigl(\mu+\rho^{0^m1^n}-\rho^{\mathbf b}\bigr)
      \mid
      \mathbf b\in L(m,n)
    \bigr\}
  \]
  is precisely the set of composition factors of \(K(\mu)\).

  \item
  \[
    \ell K(\mu)=\operatorname{aty}(\mu)+1.
  \]

  \item
  \[
    \ell\bigl(\operatorname{rad}P_{\mathcal F}(\mu)/
    \operatorname{rad}^2P_{\mathcal F}(\mu)\bigr)
    =
    \operatorname{aty}(\mu)+1.
  \]
  \item There exist \(\mathbf b\in L(m,n)\), a weight \(\nu\), and
\(\operatorname{aty}L(\mu)\) mutually orthogonal \(\mathbf b\)-simple roots
\(\alpha_1,\ldots,\alpha_{\operatorname{aty}L(\mu)}\) such that
\(L(\mu)\cong L^{\mathbf b}(\nu)\) and
\[
  (\nu+\rho^{\mathbf b},\alpha_i)=0
  \qquad
  \text{for all }i.
\]
\end{enumerate}
\end{theorem}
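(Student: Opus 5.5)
Since (1)$\Leftrightarrow$(2)$\Leftrightarrow$(3)$\Leftrightarrow$(4)$\Leftrightarrow$(8) are already in the literature (Brundan--Stroppel, Heidersdorf, Chmutov et al., Kac--Wakimoto), the plan is to show that each new condition (5), (6), (7) is equivalent to the combinatorial condition (1). All three steps use the same source: the arc-diagram description of $K(\mu)$ and $P_{\mathcal F}(\mu)$ in \cref{Fhw}. In arc-diagram language, (1) says that the $\vee$'s of $\mu$ occupy a consecutive run of the non-$\circ,\times$ vertices. Equivalently, $\underline{\mu}$ is a nested family of $r=\operatorname{aty}(\mu)$ cups with no $\wedge$ inside the region enclosed by the outermost cup apart from the cup endpoints.

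\textbf{(1)$\Leftrightarrow$(6).} By \cref{Fhw}, $\ell K(\mu)=\#\{\nu\mid \nu\subset\mu\}$. Here $\nu\subset\mu$ means that $\underline{\nu}\mu$ is oriented. I would compute the minimum of this count over all weights of atypicality $r$, and show it equals $r+1$, attained exactly under (1).

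For the lower bound, I would exhibit $r+1$ distinct $\nu$'s for any $\mu$: for each $k=0,\dots,r$, move the $k$ rightmost $\vee$'s of $\mu$ leftward in a controlled way. Alternatively, use the chain $\mu=\nu_0>\nu_1>\dots>\nu_r$ produced by successively reversing the innermost anticlockwise cups.

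For the upper bound under (1), the nested-cup shape forces any admissible $\underline{\nu}$ to be nested cups anchored at the block's $\vee$-run. The resulting count is $r+1$, and this is computed directly.

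For the converse, if a $\wedge$ sits between two $\vee$'s, I would produce an extra $\nu$ by an explicit local modification of the cup diagram. This case-check, producing one additional $\nu$ beyond the chain, is the main combinatorial obstacle. I would try to reduce it to defect $2$ with a single intervening $\wedge$, where the count is at least $4>3$, and then embed that configuration into the general one.

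\textbf{(1)$\Leftrightarrow$(7).} By Koszulity, $\operatorname{rad}P/\operatorname{rad}^2P$ is the degree-one part of $P_{\mathcal F}(\mu)$. Its composition factors are the $L(\nu)$ with $\operatorname{Ext}^1(L(\mu),L(\nu))\neq0$. Brundan--Stroppel describe these via the $\mathrm{Ext}^1$-quiver: they correspond to the cups of $\underline{\mu}$, together with neighbouring configurations. Hence the count equals the number of degree-one neighbours of $\mu$. I would show this number is at least $r+1$, with equality exactly under (1). The same reduction to the defect-two local picture should handle the converse here as well.

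\textbf{(5)$\Leftrightarrow$(1).} Here I would use \cref{oddkl} with $\lambda=\mu+\rho^{0^m1^n}$ and $\mathbf b=0^m1^n$. Since $K(\mu)$ is a quotient of $M^{0^m1^n}(\mu)$, every $L^{\mathbf b'}(\lambda-\rho^{\mathbf b'})$ occurs in $M^{0^m1^n}(\mu)$, in degree $\ell_\lambda(0^m1^n,\mathbf b')$. The distinct simples in the set of (5) are indexed by the vertices of $L(m,n)_\lambda$.

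Next I would identify which of these simples survive in $K(\mu)$. A parabolic (Kac-module) version of the same induction along odd reflections through $\mathfrak g_{\bar1}$-roots should show that the set of (5) is always contained in the composition factors of $K(\mu)$. Its cardinality is $\#L(m,n)_\lambda$, which under the Kac--Wakimoto normal form (8) equals $r+1$. Combining this with (6) gives the equivalence: (5) holds if and only if $\ell K(\mu)$ equals the size of this set, and that size is $r+1$ precisely in the Kac--Wakimoto case.

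The hardest point of this step is the containment in $K(\mu)$ (rather than merely in $M^{0^m1^n}(\mu)$). I would first try to prove it by showing that the highest weight vectors obtained via odd reflections lie in $\Lambda(\mathfrak g_{-1})\otimes L_{\bar0}(\mu)$.
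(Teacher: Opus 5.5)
\textbf{The gap is in your step for (5).} The skeleton is right, and it is the paper's. Every member of $S_\mu:=\{L^{\mathbf b}(\mu+\rho^{0^m1^n}-\rho^{\mathbf b})\mid\mathbf b\in L(m,n)\}$ is a composition factor of $K(\mu)$, and $K(\mu)$ is multiplicity free by \cref{Fhw}. Hence (5) holds if and only if $\ell K(\mu)=\#S_\mu$.

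To turn this into (5)$\Leftrightarrow$(6) you need $\#S_\mu=\operatorname{aty}(\mu)+1$ for \emph{every} $\mu\in\Lambda^{\mathrm{reg,dom}}$. You only claim it under the normal form (8), and you even assert that the size is $r+1$ ``precisely in the Kac--Wakimoto case''. That assertion is false. As written, (5)$\Rightarrow$(1) does not follow, because for a non-Kac--Wakimoto $\mu$ you have not excluded $\#S_\mu=\ell K(\mu)>r+1$.

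The missing observation is elementary:
\begin{itemize}
\item In the encoding of $\mu$ one has $\mu_1>\cdots>\mu_m$ and $\mu_{m+1}<\cdots<\mu_{m+n}$. So the atypical pairs $(i_1,j_1),\ldots,(i_r,j_r)$ satisfy $i_1<\cdots<i_r$ and $j_1>\cdots>j_r$.
\item The corresponding boxes therefore form a chain in the rectangle, and a Young diagram can contain only a segment of this chain starting at its bottom-left end.
\item Each of the resulting $r+1$ classes is an interval of the Young lattice, hence connected by edges of non-atypical colour.
\end{itemize}
Thus $L(m,n)_\lambda$ is always a path with $r+1$ vertices. Then (5)$\Leftrightarrow$(6) is immediate, with no input from (1) or (8).

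The containment you single out as the hardest point is actually easy. Take a path from $0^m1^n$ to $\mathbf b$ that only adds boxes. The roots $\alpha_1,\ldots,\alpha_k$ along it are distinct roots of $\mathfrak g_1$. Let $v_\mu$ be a highest weight vector of $L_{\bar0}(\mu)$. Then $e_{-\alpha_k}\cdots e_{-\alpha_1}\otimes v_\mu\neq0$ in $K(\mu)\cong\Lambda(\mathfrak g_{-1})\otimes L_{\bar0}(\mu)$. By the odd-reflection lemma this vector is $\mathbf b$-primitive of weight $\lambda-\rho^{\mathbf b}$, so it generates a submodule with top $L^{\mathbf b}(\lambda-\rho^{\mathbf b})$. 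The point is non-vanishing, not membership in $\Lambda(\mathfrak g_{-1})\otimes L_{\bar0}(\mu)$, which is automatic.

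\textbf{Three further points.}

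Your (7) argument is the paper's count: the $r$ modules $K(\lambda)\langle1\rangle$ in the flag of the rigid module $P_{\mathcal F}(\mu)$, one per cup of $\underline{\mu}$, plus the degree-one layer of $K(\mu)$. But your claimed bound ``at least $r+1$'' fails for $r=0$. Then $P_{\mathcal F}(\mu)=K(\mu)=L(\mu)$ and the count is $0$. So (7) is false for typical $\mu$ even though (1) holds, and it must be restricted to atypical weights. The paper's proof has the same blind spot.

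In (1)$\Leftrightarrow$(6), your alternative chain obtained by reversing cups of $\underline{\mu}$ goes the wrong way. It produces weights $\lambda>\mu$ with $\mu\subset\lambda$, which index the $K$-flag of $P_{\mathcal F}(\mu)$, not composition factors of $K(\mu)$. The lower bound $\ell K(\mu)\geq r+1$ comes for free from $S_\mu$ once its cardinality is known.

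The converse half of (1)$\Leftrightarrow$(6), via your defect-two reduction and embedding, is still only a plan. The paper's proof only links (5) with (6) and (7) with (1), so this is a link you genuinely have to supply.
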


\begin{proof}

The equivalence of \((5)\) and \((6)\) follows from
\cref{Fhw}. Indeed, note that for every regular dominant weight \(\mu\), the
set
\[
  \bigl\{
    M^{\mathbf b}
    \bigl(\mu+\rho^{0^m1^n}-\rho^{\mathbf b}\bigr)
    \mid
    \mathbf b\in L(m,n)
  \bigr\}
\]
has cardinality \(\operatorname{aty}(\mu)+1\).

It remains to prove \((7)\). Projective module
\(P_{\mathcal F}(\mu)\) is rigid. Hence it is enough to count the composition
factors in degree \(1\). By \cref{Fhw}, this number is
\[
  \#\{\lambda\mid \deg(\mu\overline{\lambda})=1\}
  +
  \#\{\lambda\mid \deg(\underline{\lambda}\mu)=1\}.
\]
The first term is precisely \(\operatorname{aty}(\mu)\). The second term is
minimal, and equal to \(1\), exactly in the case of condition \((1)\). Thus
\[
  \ell\bigl(\operatorname{rad}P_{\mathcal F}(\mu)/
  \operatorname{rad}^2P_{\mathcal F}(\mu)\bigr)
  =
  \operatorname{aty}(\mu)+1
\]
if and only if \((1)\) holds. This proves the claim.
\end{proof}

\subsection{Results of Brundan--Goodwin}
\label{subsec:brundan-goodwin}
In this subsection, we review the work of Brundan and Goodwin
\cite{brundan2019whittaker}.

Let $\mathfrak g=\mathfrak{gl}(n|n)$ and fix the principal nilpotent element
\[
e
:=
\sum_{i=1}^{n-1}e_{i,i+1}
+
\sum_{i=1}^{n-1}e_{n+i,n+i+1}
\in\mathfrak g_{\bar0}.
\]
Define $\chi\in\mathfrak g^*$ by
$\chi(x):=\operatorname{str}(ex)$ for $x\in\mathfrak g$.

Let $\mathfrak p$ and $\mathfrak m$ be the subalgebras associated with the
principal good grading, and set
$\mathfrak m_\chi:=\{x-\chi(x)\mid x\in\mathfrak m\}\subset U(\mathfrak g)$.
Let $\mathfrak g\text{-sMod}_\chi$ be the full subcategory of
$\mathfrak g\text{-sMod}$ consisting of supermodules on which
$\mathfrak m_\chi$ acts locally nilpotently.

Following the right-module convention of Brundan--Goodwin, set
\[
Q_\chi
:=
U(\mathfrak g)/\mathfrak m_\chi U(\mathfrak g),
\qquad
U(\mathfrak g,e)
:=
\operatorname{End}_{\text{sMod-}U(\mathfrak g)}(Q_\chi).
\]
Thus $Q_\chi$ is naturally a
$(U(\mathfrak g,e),U(\mathfrak g))$-superbimodule.

The Whittaker coinvariant construction equips
$M/\mathfrak m_\chi M$ with a natural $U(\mathfrak g,e)$-module structure.
We write
\[
H_0(M):=M/\mathfrak m_\chi M.
\]

\begin{lemma}[\cite{brundan2019whittaker}]
\label{lem:H0_welldefined_exact_duality}
The Whittaker coinvariants functor
\[
H_0:s\mathcal O
\longrightarrow
U(\mathfrak g,e)\text{-sMod}_{\mathrm{fd}},
\qquad
M\longmapsto H_0(M),
\]
is well defined and exact.
\end{lemma}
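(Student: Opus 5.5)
The plan is to identify $H_0$ with the tensor functor $Q_\chi\otimes_{U(\mathfrak g)}-$, to read off the $U(\mathfrak g,e)$-structure from that description, and then to reduce finite-dimensionality and exactness to the even Lie algebra $\mathfrak g_{\bar0}=\mathfrak{gl}(n)\oplus\mathfrak{gl}(n)$. For $\mathfrak g_{\bar0}$ the corresponding facts for principal Whittaker coinvariants on $\mathcal O_{\bar0}$ are classical (Mili\v{c}i\'c--Soergel, Backelin). I have not checked that this reduction goes through for $\mathfrak g$ itself, and that is where I expect the difficulty to lie.

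\emph{Well-definedness.} We have
\[
H_0(M)=M/\mathfrak m_\chi M\cong Q_\chi\otimes_{U(\mathfrak g)}M .
\]
Since $U(\mathfrak g,e)=\operatorname{End}(Q_\chi)$ acts on $Q_\chi$ on the side opposite to $U(\mathfrak g)$, it acts naturally on $H_0(M)$, and the construction is functorial and compatible with parity shifts.

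\emph{Finite-dimensionality.} The functor $Q_\chi\otimes_{U(\mathfrak g)}-$ is right exact, and every object of $s\mathcal O$ is a quotient of a finite direct sum of projectives. By \cref{O.indproj} each projective is a summand of some $\operatorname{Ind}^{\mathfrak g}_{\mathfrak g_{\bar0}}P_{\bar0}(\lambda)$. It therefore suffices to show that $H_0\operatorname{Ind}^{\mathfrak g}_{\mathfrak g_{\bar0}}N$ is finite dimensional for $N\in\mathcal O_{\bar0}$. Decompose $\mathfrak m=\mathfrak m_{\bar0}\oplus\mathfrak m_{\bar1}$, and filter by PBW:
\[
\operatorname{Res}^{\mathfrak g}_{\mathfrak g_{\bar0}}\operatorname{Ind}^{\mathfrak g}_{\mathfrak g_{\bar0}}N\cong\Lambda(\mathfrak g_{\bar1})\otimes N .
\]
This is a finite filtration of $\mathfrak g_{\bar0}$-modules in $\mathcal O_{\bar0}$. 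Moreover $\chi$ vanishes on $\mathfrak m_{\bar1}$, because $e$ is even. So $H_0$ of the induced module is a quotient of the $\mathfrak m_{\bar0,\chi}$-coinvariants of a module in $\mathcal O_{\bar0}$. Those coinvariants are finite dimensional by the classical result: each Verma module of $\mathfrak g_{\bar0}$ contributes a one-dimensional space for the principal nilpotent, and objects of $\mathcal O_{\bar0}$ have finite Verma-type filtrations up to finite length.

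\emph{Exactness.} Right exactness is automatic, so it remains to show
\[
\operatorname{Tor}_1^{U(\mathfrak m)}(\Bbbk_\chi,M)=0\qquad\text{for all }M\in s\mathcal O .
\]
I would compute this with the Chevalley--Eilenberg complex of the Lie superalgebra $\mathfrak m$ twisted by $\chi$. I would then run the Hochschild--Serre spectral sequence for the ideal-like decomposition
\[
U(\mathfrak m)\cong U(\mathfrak m_{\bar0})\otimes\Lambda(\mathfrak m_{\bar1}),
\]
or, more robustly, filter by the good grading so that $\mathfrak m_{\bar0}$-homology is taken first. The $E^2$ page involves $\operatorname{Tor}^{U(\mathfrak m_{\bar0})}_p(\Bbbk_\chi,-)$ applied to modules $\Lambda^q(\mathfrak m_{\bar1})\otimes M$ restricted to $\mathfrak g_{\bar0}$. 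These restricted modules lie in $\mathcal O_{\bar0}$, because $\mathfrak m_{\bar1}$ is finite dimensional and $\mathfrak h$-semisimple. By the classical exactness of Whittaker coinvariants on $\mathcal O_{\bar0}$, the terms vanish for $p>0$.

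What remains is the complex
\[
H_0^{\mathfrak m_{\bar0}}\bigl(\Lambda^\bullet\mathfrak m_{\bar1}\otimes M\bigr).
\]
Here I must show that the odd homology vanishes in positive degree. This is the step I expect to be the main obstacle. My first attempt would be to check it on the generators $\operatorname{Ind}^{\mathfrak g}_{\mathfrak g_{\bar0}}N$, for which $\operatorname{Res}_{U(\mathfrak m)}$ should be induced from $U(\mathfrak m_{\bar0})$ tensored with a free $\Lambda(\mathfrak m_{\bar1})$-part. This uses that $\mathfrak m_{\bar1}$ is a complement inside $\mathfrak g_{\bar1}$, so $\Lambda(\mathfrak g_{\bar1})$ is free over $\Lambda(\mathfrak m_{\bar1})$.

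Given this on generators, I would pass to arbitrary $M$ by dimension shifting. First, a summand of $\operatorname{Ind}^{\mathfrak g}_{\mathfrak g_{\bar0}}\operatorname{Res}^{\mathfrak g}_{\mathfrak g_{\bar0}}M$ is $M$ itself, by \cref{indres} (the unit is split). Second, $H_0\circ\operatorname{Ind}^{\mathfrak g}_{\mathfrak g_{\bar0}}$ is exact, being a composite of the exact functor $\operatorname{Ind}$ with coinvariants that are exact on $\mathcal O_{\bar0}$ after the reduction above. Together these give $\operatorname{Tor}_1=0$ for every $M$, which yields exactness of $H_0$ on all of $s\mathcal O$.
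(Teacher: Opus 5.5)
The paper does not prove this lemma; it is quoted from Brundan--Goodwin, so your argument has to stand on its own. Well-definedness and finite-dimensionality are fine, but the exactness argument has a genuine gap, and it sits in the step you present as routine.

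To pass from the generators $\operatorname{Ind}^{\mathfrak g}_{\mathfrak g_{\bar0}}N$ to arbitrary $M$, you assert that $M$ is a direct summand of $\operatorname{Ind}^{\mathfrak g}_{\mathfrak g_{\bar0}}\operatorname{Res}^{\mathfrak g}_{\mathfrak g_{\bar0}}M$ ``by \cref{indres}''. On those generators the $\operatorname{Tor}$-vanishing is indeed obtainable via Shapiro's lemma. But both splittings in \cref{indres} live on $\mathcal W_{\bar0}$: they say that a $\mathfrak g_{\bar0}$-module $N$ is a summand of $\operatorname{Res}\operatorname{Ind}N$. The statement you need holds only for $\mathfrak g_{\bar0}$-relatively projective $M$ and is false in general. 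For $\mathfrak{gl}(1|1)$ the category $\mathcal O_{\bar0}$ is semisimple, so $\operatorname{Ind}\operatorname{Res}M$ is always projective. Your claim would then make every object of $\mathcal O$ projective, hence $\mathcal O$ semisimple. Yet an atypical Verma module is a non-split extension of two simples.

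Without the splitting, the counit sequence $0\to K\to\operatorname{Ind}\operatorname{Res}M\to M\to 0$ only gives
\[
\operatorname{Tor}_1^{U(\mathfrak m)}(\Bbbk_\chi,M)\cong\ker\bigl(H_0(K)\to H_0(\operatorname{Ind}\operatorname{Res}M)\bigr).
\]
That is precisely the injectivity you are trying to prove. The embedding $M\hookrightarrow\operatorname{Coind}\operatorname{Res}M$ shifts $\operatorname{Tor}$-degree the wrong way. So the reduction to induced modules is circular.

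The spectral-sequence step does not close the gap either.
\begin{itemize}
\item In general $\mathfrak m_{\bar0}$ is not an ideal of $\mathfrak m$: $[\mathfrak m_{\bar0},\mathfrak m_{\bar1}]$ is a nonzero subspace of $\mathfrak m_{\bar1}$ once $n\ge3$. Thus $U(\mathfrak m)\cong U(\mathfrak m_{\bar0})\otimes\Lambda(\mathfrak m_{\bar1})$ is only a PBW isomorphism, and there is no Hochschild--Serre sequence taking $\mathfrak m_{\bar0}$-homology first.
\item The Chevalley--Eilenberg complex of a Lie superalgebra carries $S^\bullet(\mathfrak m_{\bar1})$, not $\Lambda^\bullet(\mathfrak m_{\bar1})$, in the odd directions. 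It is unbounded, so vanishing of ``odd homology in positive degree'' is a freeness statement over $\Lambda(\mathfrak m_{\bar1})$, not a finite Koszul computation.
\end{itemize}

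What the classical theorem legitimately gives, by Shapiro's lemma, is
\[
\operatorname{Tor}_i^{U(\mathfrak m)}(Y,M)\cong\operatorname{Tor}_i^{U(\mathfrak m_{\bar0})}(\Bbbk_\chi,\operatorname{Res}M)=0\qquad(i>0),\qquad Y:=\Bbbk_\chi\otimes_{U(\mathfrak m_{\bar0})}U(\mathfrak m).
\]
But $Y$ has simple top $\Bbbk_\chi$ (Frobenius reciprocity) and dimension $2^{\dim\mathfrak m_{\bar1}}$, so it is indecomposable. Hence $\Bbbk_\chi$ is not a summand of $Y$, and this does not yield $\operatorname{Tor}_1^{U(\mathfrak m)}(\Bbbk_\chi,M)=0$.

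You need a genuinely new input controlling the odd part of $\mathfrak m$ on all of $s\mathcal O$, and the proposal does not supply one. One option is to show directly that $M\mapsto\dim H_0(M)$ is additive on short exact sequences, since right exactness plus additivity forces exactness. Another is a Backelin--Skryabin-type argument with Whittaker vectors in completed duals.
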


\begin{theorem}[\cite{brown2013principal}]
\label{thm:Miura_injective}
The Miura transform is an injective algebra homomorphism
\[
U(\mathfrak g,e)
\longrightarrow
U(\mathfrak p)
\longrightarrow
U(\mathfrak g(0))
\cong
U\bigl(\mathfrak{gl}(1|1)^{\oplus n}\bigr),
\]
where $\mathfrak g(0)$ is the degree-zero part of the principal good
grading.
\end{theorem}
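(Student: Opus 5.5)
The plan follows the strategy of Brown--Brundan--Goodwin. First, realize $U(\mathfrak g,e)$ inside $U(\mathfrak p)$. Then identify the Miura transform with the restriction of a natural algebra map $U(\mathfrak p)\to U(\mathfrak g(0))$. Finally, prove injectivity by a leading-term argument with respect to a suitable filtration.

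\emph{Realization inside $U(\mathfrak p)$.} Since $\mathfrak g=\mathfrak m\oplus\mathfrak p$ for the principal good grading, PBW gives an isomorphism of right $U(\mathfrak p)$-supermodules $Q_\chi\cong U(\mathfrak p)$. Under it, $U(\mathfrak g,e)=\operatorname{End}(Q_\chi)$ is identified with the twisted $\mathfrak m$-invariants
\[
\{u\in U(\mathfrak p)\mid [x-\chi(x),u]\in\mathfrak m_\chi U(\mathfrak g)\text{ for all }x\in\mathfrak m\},
\]
acting by left multiplication. This yields the first arrow $U(\mathfrak g,e)\to U(\mathfrak p)$, which is injective by construction. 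Write $\mathfrak p=\mathfrak g(0)\oplus\mathfrak p_{>0}$. The space $U(\mathfrak p)\mathfrak p_{>0}$ is a two-sided ideal, because $\mathfrak p_{>0}$ is an ideal of $\mathfrak p$. The quotient by this ideal is $U(\mathfrak g(0))\cong U(\mathfrak{gl}(1|1)^{\oplus n})$, which gives the second arrow, and the composite is an algebra homomorphism.

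\emph{Injectivity.} I would use the Kazhdan filtration. For it, $\operatorname{gr}U(\mathfrak g,e)\cong S(\mathfrak g^e)$, a supersymmetric algebra on the centralizer; for principal $e$ in $\mathfrak{gl}(n|n)$ this centralizer has dimension $2n|2n$, with homogeneous basis vectors in degrees $0,2,\dots,2n-2$. It then suffices to exhibit generators of $U(\mathfrak g,e)$ whose Miura images have algebraically independent leading terms in $\operatorname{gr}U(\mathfrak g(0))=S(\mathfrak g(0))$. I would take the generators from the truncated shifted super Yangian presentation, namely $D_i^{(r)},E^{(r)},F^{(r)}$. Their Miura images are explicit sums over the $n$ tensor factors $\mathfrak{gl}(1|1)$ via the quantum-determinant-type formulas.

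The leading terms of the even diagonal generators should be, up to lower terms, supersymmetric elementary polynomials in the Cartan variables of the $n$ copies of $\mathfrak{gl}(1|1)$. The odd generators should contribute linear expressions in the odd root vectors $e_{i},f_{i}$ of the copies, multiplied by such polynomials. One then checks that the leading terms of PBW monomials in these generators are linearly independent in $S(\mathfrak g(0))$. This shows that $\operatorname{gr}$ of the Miura map is injective on $S(\mathfrak g^e)$, and injectivity of the Miura transform follows by the standard filtration argument.

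The main obstacle is this last independence check in the super setting. The odd generators square to lower-order terms, and the supersymmetric polynomials are not free, so a naive ``leading terms are algebraically independent'' argument fails. I would first try to handle it by specializing the Cartan variables to generic values. This reduces the question to a Vandermonde-type nondegeneracy for the linear parts of the odd generators, which should hold because the $n$ tensor factors give distinct evaluation parameters.
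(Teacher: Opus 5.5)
The paper gives no proof here; it quotes the result from Brown--Brundan--Goodwin, so your argument has to stand on its own. Your first half is fine. $U(\mathfrak g)=\mathfrak m_\chi U(\mathfrak g)\oplus U(\mathfrak p)$ identifies $U(\mathfrak g,e)$ with the twisted invariants in $U(\mathfrak p)$, and $U(\mathfrak p)\mathfrak p_{>0}$ is a two-sided ideal with quotient $U(\mathfrak g(0))$, so the composite is an algebra homomorphism.

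The gap is injectivity, which is the entire content of the theorem. You reduce it to linear independence of leading terms of Miura images of PBW monomials in $D_i^{(r)},E^{(r)},F^{(r)}$, but this is never carried out. The images are only described as what they ``should be'', and the decisive check is left as ``the main obstacle''. The reduction also presupposes that these explicit elements lie in $U(\mathfrak g,e)$ and that their Kazhdan symbols form a basis of $\mathfrak g^e$. That is part of the Yangian presentation, and as far as I recall its standard proof (Brundan--Kleshchev for $\mathfrak{gl}_N$, Brown--Brundan--Goodwin here) checks the relations after applying the Miura map, which uses its injectivity. Quoting it therefore risks circularity.

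The obstacle you name is also not the real one. In $\operatorname{gr}$ odd symbols square to zero, and modulo odd coordinates the images of even symbols are symmetric functions in the Cartan coordinates of the two $\mathfrak{gl}_n$ blocks, not supersymmetric polynomials. For a homomorphism $S(\mathfrak g^e)\to S(\mathfrak g(0))$ of free supercommutative algebras, two conditions suffice. First, the bodies of the even images must be algebraically independent. Second, the $2n\times 2n$ matrix of coefficients of the linear odd parts of the odd images must have nonzero determinant in $\Bbbk[\mathfrak h^*]$. To see this, write a kernel element as $\sum_I f_I\alpha^I$ and multiply by the odd monomial complementary to an inclusion-minimal $I$ with $f_I\neq0$; this isolates $f_I$ times the top odd monomial and reduces to the even case. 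Your ``generic Cartan values plus Vandermonde'' idea is exactly this Jacobian condition, but it must be computed, not asserted.

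You can avoid generators altogether. Via the supertrace form, $S(\mathfrak p)=\operatorname{gr}U(\mathfrak p)$ is the coordinate ring of $\chi+\mathfrak m^\perp\cong e+(\mathfrak g(0)\oplus\mathfrak m)$, where $\mathfrak m=\bigoplus_{j<0}\mathfrak g(j)$. Symbols of elements of $U(\mathfrak g,e)$ are invariant under the unipotent supergroup $M$ with Lie algebra $\mathfrak m$. Since the Miura map kills $\mathfrak p_{>0}$, its associated graded is restriction of functions to $e+\mathfrak g(0)$.

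Now consider the action map $a\colon M\times(e+\mathfrak g(0))\to e+(\mathfrak g(0)\oplus\mathfrak m)$. Its differential at $(1,e+t)$ is $(x,k)\mapsto k+[x,e+t]$. Because $[\mathfrak g(-2j),e]\subseteq\mathfrak g(-2j+2)$, this map is block triangular for the grading. Its diagonal blocks are $\operatorname{id}_{\mathfrak g(0)}$ and $\operatorname{ad}t$ on each $\mathfrak g(-2j)$, $j\geq1$. Choose $t$ even diagonal with $t_i\neq t_j$ whenever $i$ and $j$ lie in different columns; then the differential is bijective. Hence $a$ is \'etale at $(1,e+t)$. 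Since the underlying even space is an irreducible affine space, $a^*$ is injective on polynomial superfunctions.

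If $f$ is an invariant vanishing on $e+\mathfrak g(0)$, then $a^*f=0$, so $f=0$. Thus the associated graded Miura map is injective, and therefore so is the Miura map. This works verbatim in the super setting because the principal good grading used here is even.
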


Hence every $\mathfrak{gl}(1|1)^{\oplus n}$-module may be regarded as a
$U(\mathfrak g,e)$-module via the Miura transform.

\begin{theorem}[\cite{brundan2019whittaker}]
\label{thm:H0BG_identity}
For every $M\in\mathfrak{gl}(1|1)^{\oplus n}\text{-sMod}$, there is a
natural $U(\mathfrak g,e)$-module isomorphism
\[
H_0(\operatorname{BG}(M))\cong M,
\]
where $M$ on the right-hand side is regarded as a $U(\mathfrak g,e)$-module
via the Miura transform.
\end{theorem}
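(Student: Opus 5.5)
The plan is to prove the isomorphism directly from PBW and the definition of the Miura transform. I assume $\operatorname{BG}(M)$ is the parabolically induced module $U(\mathfrak g)\otimes_{U(\mathfrak p)}\operatorname{Infl}_{\mathfrak g(0)}^{\mathfrak p}M$, possibly twisted by the one-dimensional shift character built into the Miura transform. Here the positive-degree part $\mathfrak p_{>0}$ of the good grading acts trivially.

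\textbf{Step 1 (vector space isomorphism).} For the principal good grading, $\mathfrak g=\mathfrak m\oplus\mathfrak p$ as superspaces, where $\mathfrak m$ is the negative-degree part. PBW gives $U(\mathfrak g)\cong U(\mathfrak m)\otimes U(\mathfrak p)$. Hence, as an $\mathfrak m$-module,
\[
\operatorname{BG}(M)\cong U(\mathfrak m)\otimes M ,
\]
which is free over $U(\mathfrak m)$.

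Since $\chi$ vanishes on $[\mathfrak m,\mathfrak m]$, the map $x\mapsto x-\chi(x)$ is an algebra automorphism of $U(\mathfrak m)$, and it carries the augmentation ideal onto the left ideal generated by $\mathfrak m_\chi$. Therefore
\[
H_0(\operatorname{BG}(M))=\operatorname{BG}(M)/\mathfrak m_\chi\operatorname{BG}(M)\cong \Bbbk_\chi\otimes_{U(\mathfrak m)}\bigl(U(\mathfrak m)\otimes M\bigr)\cong M .
\]
This isomorphism is natural in $M$. Concretely, the composite $M\to\operatorname{BG}(M)\to H_0(\operatorname{BG}(M))$, $v\mapsto \overline{1\otimes v}$, is a bijection.

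\textbf{Step 2 (matching the $U(\mathfrak g,e)$-actions).} I will use the standard fact that each element of $U(\mathfrak g,e)$ has a unique representative $\tilde u\in U(\mathfrak p)$ modulo the $\mathfrak m_\chi$-ideal. This comes from the decomposition $U(\mathfrak g)=U(\mathfrak p)\oplus \mathfrak m_\chi U(\mathfrak g)$, in the right-module convention of Brundan--Goodwin. The embedding $U(\mathfrak g,e)\hookrightarrow U(\mathfrak p)$ obtained this way is exactly the first arrow in \cref{thm:Miura_injective}.

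The action of $u$ on $H_0(\operatorname{BG}(M))$ is then computed by $u\cdot\overline{1\otimes v}=\overline{\tilde u\otimes v}$. On $1\otimes M$, the element $\tilde u$ acts through the quotient $U(\mathfrak p)\to U(\mathfrak g(0))$, because $\mathfrak p_{>0}$ acts by zero. So the action equals $\overline{1\otimes \xi(u)v}$, where $\xi$ is the Miura transform. This is precisely the action of $U(\mathfrak g,e)$ on $M$ pulled back along $\xi$. Naturality in $M$ is clear, since every map above is functorial.

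\textbf{Main obstacle.} The real work is in Step 2: checking that the bimodule conventions agree. Brundan--Goodwin define $U(\mathfrak g,e)$ as $\operatorname{End}(Q_\chi)$ with right modules, and the Miura map may include a $\rho$-type shift automorphism of $U(\mathfrak g(0))$. One must check that the same shift appears in the definition of $\operatorname{BG}$, so that no extra twist survives.

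I would first verify this on the generators of $U(\mathfrak g,e)$ (the truncated super Yangian generators), using Brown--Brundan--Goodwin's explicit Miura formulas. As a test case I would take $n=1$, where $\mathfrak m=0$, $U(\mathfrak g,e)=U(\mathfrak{gl}(1|1))$ and the statement is tautological. Once the conventions are fixed, the argument above is formal.
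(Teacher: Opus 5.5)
Your argument is correct and is the standard one. The paper only cites Brundan--Goodwin for this statement, but the functor isomorphism $\mathrm{MT}_\mu^{*}\circ H_0^{\mathfrak l_\mu}\cong H_0^{\mathfrak g}\circ\operatorname{Ind}^{\mathfrak g}_{\mathfrak p_\mu}\circ\operatorname{Infl}^{\mathfrak p_\mu}_{\mathfrak l_\mu}$, established in the proof of \cref{thm:MT_inverse_on_Verma_nogamma}, rests on exactly your decomposition $U(\mathfrak g)=\mathfrak m_\chi U(\mathfrak g)\oplus U(\mathfrak p)$; for $\mu=(1^n)$, where $e_\mu=0$, it reduces to precisely this statement.

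Your Step 2 is already a complete argument, so the generator-by-generator check against the explicit Miura formulas is unnecessary. Any $\rho$-type shift only has to be matched between the definitions of $\operatorname{BG}$ and of the Miura map; it does not need to be computed.
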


\begin{definition}\label{def:BG_modules}
Define
\[
\Lambda^{\mathrm{BG}}
:=
\left\{
\lambda\in\Lambda
\ \middle|\
\operatorname{aty}L^{0^n1^n}(\lambda)
=
\sum_{i=1}^n
\operatorname{aty}L(\lambda_i\mid\lambda_{n+i})
\right\}.
\]
\end{definition}

\begin{theorem}[\cite{brown2013principal,brundan2019whittaker}]
\label{thm:Uge_Verma_modules}
The following statements hold.
\begin{enumerate}
\item
For every $\lambda\in\Lambda$,
\[
H_0M^{()}(\lambda-\rho^{()})
\cong
\overline M(\lambda),
\]
where $\overline M(\lambda)$ is the Verma module for $U(\mathfrak g,e)$
defined using the triangular decomposition arising from the super Yangian
realization. In particular, its isomorphism class depends only on the Weyl group dot orbit
of $\lambda$.

\item
The module
$\overline L(\lambda):=\operatorname{top}\overline M(\lambda)$ is simple,
and every simple $U(\mathfrak g,e)$-supermodule is isomorphic to
$\overline L(\lambda)$ for some $\lambda\in\Lambda$.

\item
We have
$H_0N^{s^n}(\lambda)\cong\overline L(\lambda)$ if and only if
$\lambda\in\Lambda^{\mathrm{BG}}$.

\item
We have
\[
H_0L^{()}(\lambda-\rho^{()})
\cong
\begin{cases}
\overline L(\lambda),
& \lambda-\rho^{()}\text{ is antidominant},\\
0,
& \text{otherwise}.
\end{cases}
\]
\end{enumerate}
\end{theorem}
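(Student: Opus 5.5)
The statement is the cited result of Brown and Brundan--Goodwin, and I would organize its proof around three inputs: exactness of $H_0$ (\cref{lem:H0_welldefined_exact_duality}), the Miura transform (\cref{thm:Miura_injective}), and the identity $H_0\circ\operatorname{BG}\cong\mathrm{id}$ (\cref{thm:H0BG_identity}).

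For~(1), write $M^{()}(\lambda-\rho^{()})=U(\mathfrak g)\otimes_{U(\mathbf b)}\Bbbk_{\lambda-\rho^{()}}$. Then
\[
H_0M^{()}(\lambda-\rho^{()})\cong Q_\chi\otimes_{U(\mathfrak g)}U(\mathfrak g)\otimes_{U(\mathbf b)}\Bbbk\cong Q_\chi\otimes_{U(\mathbf b)}\Bbbk .
\]
I would use the PBW/Kazhdan filtration to show that $Q_\chi$ is free over $U(\mathbf b)$ modulo $\mathfrak m_\chi$ in the appropriate sense. This identifies the coinvariants with a module induced from the ``Borel'' part of Brown's triangular decomposition of $U(\mathfrak g,e)$ (via the truncated shifted super Yangian). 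In particular it is generated by a highest weight vector whose weight, read through the Miura transform into $U(\mathfrak{gl}(1|1)^{\oplus n})$, is the image of $\lambda$. The dependence only on the dot orbit then comes from the fact that the Cartan part of $U(\mathfrak g,e)$ acts through $W$-invariant (symmetric) functions of the entries of $\lambda$.

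Part~(2) is then standard highest weight theory for the triangular decomposition. Each $\overline M(\lambda)$ is finite-dimensional with a unique maximal submodule, so its top is simple. Any simple finite-dimensional module has a highest weight vector, hence is a quotient of some $\overline M(\lambda)$.

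For~(3), I would identify $N^{s^n}(\lambda)$ with $\operatorname{BG}\bigl(L_{\mathfrak{gl}(1|1)^{\oplus n}}(\lambda)\bigr)$, which comes from the parabolic induction defining $N^{\mathbf c}$. By \cref{thm:H0BG_identity}, $H_0N^{s^n}(\lambda)$ is then the outer tensor product $\bigotimes_i L_{\mathfrak{gl}(1|1)}(\lambda_i\mid\lambda_{n+i})$ pulled back along the Miura transform. This tensor product has dimension $2^{\,n-\sum_i\operatorname{aty}L(\lambda_i|\lambda_{n+i})}$, and it surjects onto $\overline L(\lambda)$ because it is a highest weight quotient of $\overline M(\lambda)$. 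It therefore remains to show $\dim\overline L(\lambda)=2^{\,n-\operatorname{aty}L^{0^n1^n}(\lambda)}$. I would deduce this from the known characters of $\overline L(\lambda)$ (Brown--Brundan--Goodwin), or by applying $\mathrm{DS}$-type reductions atypicality by atypicality. Equality of dimensions holds exactly when $\lambda\in\Lambda^{\mathrm{BG}}$.

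For~(4), $H_0$ kills every module whose Gelfand--Kirillov dimension is smaller than the maximal one. Among the simples $L^{()}(\lambda-\rho^{()})$, the modules of maximal GK dimension are precisely the antidominant ones, by the Irving-type criterion \cref{antidom} combined with restriction to $\mathfrak g_{\bar0}$. In the antidominant case, exactness of $H_0$ makes $H_0L^{()}$ a nonzero quotient of $\overline M(\lambda)$. To get simplicity I would check that exactly one antidominant composition factor in the relevant block has nonzero image, via a multiplicity count of $H_0$ applied to a self-dual projective-injective module.

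I expect the main obstacle to be the dimension identity for $\overline L(\lambda)$ in~(3), since it requires genuine character information about finite $W$-superalgebras rather than formal arguments.
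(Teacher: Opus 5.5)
The paper only cites Brown and Brundan--Goodwin for this statement, so your sketch has to stand on its own. It does not, in parts (1) and (4).

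\textbf{Part (1).} Your PBW/freeness mechanism is not available for $\mathbf b=()=0^n1^n$. For the good grading with degree-zero part $\mathfrak{gl}(1|1)^{\oplus n}$, the distinguished Borel is not contained in $\mathfrak p$: for $n\geq 2$ the root vector $e_{2,n+1}$ of $\varepsilon_2-\delta_1$ lies in $\mathbf b\cap\mathfrak m$. So there is no factorization $U(\mathfrak g)\cong U(\mathfrak m)\otimes(\cdots)\otimes U(\mathbf b)$.

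Such a factorization exists exactly for $\mathbf b\subseteq s^n$. In that case \cref{thm:H0BG_identity} shows that $H_0M^{\mathbf b}$ is the Miura pullback of a tensor product of $\mathfrak{gl}(1|1)$ Verma modules. That module need not be generated by its highest weight vector, so in general it is not $\overline M(\lambda)$.

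In fact $H_0M^{\mathbf b}$ genuinely depends on $\mathbf b$. By \cref{thm:MT_inverse_on_Verma_nogamma} with $\mu=(n)$ and $c=(1)$, the anti-distinguished Borel gives $\overline M(\lambda)^\vee$, which is not isomorphic to $\overline M(\lambda)$ for atypical $\lambda$. Any proof of (1) must therefore use a property that singles out $()$, and your mechanism uses none.

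The relevant property is that every simple root $\alpha$ of $\mathfrak g_{\bar0}$ is $()$-simple. If $\langle\mu+\rho,\alpha^\vee\rangle\in\mathbb Z_{>0}$, the cokernel of $M^{()}(s_\alpha\cdot\mu)\hookrightarrow M^{()}(\mu)$ is locally $f_\alpha$-nilpotent. Since $f_\alpha\in\mathfrak m$ and $\chi(f_\alpha)\neq 0$, the element $f_\alpha-\chi(f_\alpha)\in\mathfrak m_\chi$ acts invertibly on this cokernel, so $H_0$ kills it. This is where dot-orbit invariance actually comes from. Your remark about symmetric functions only shows that $\overline M(\lambda)$ is orbit-invariant once (1) is already known.

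You would still need to show that the image of the highest weight vector is a highest weight vector generating $H_0M^{()}$. The dimension count $\dim H_0M^{()}=2^n$ then finishes; it can be computed from any $\mathbf b\subseteq s^n$, since $H_0$ is exact and the characters agree.

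\textbf{Part (4).} Your GK-dimension claim is only asserted, and \cref{antidom} concerns socles, not GK dimension. At best it gives vanishing. Exactness shows $H_0L^{()}(\lambda-\rho^{()})$ is a quotient of $\overline M(\lambda)$, but not that it is nonzero. A multiplicity count on a self-dual projective-injective does not bound the number of composition factors of $H_0L$.

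What is missing is an argument forcing $H_0L^{()}(\lambda-\rho^{()})\in\{0,\overline L(\lambda)\}$. One such argument uses compatibility of $H_0$ with the dualities. Since $L=\operatorname{Im}(M\to M^\vee)$, you get $H_0L=\operatorname{Im}(\overline M(\lambda)\to\overline M(\lambda)^\vee)$. This image is $0$ or $\overline L(\lambda)$, because $[\overline M(\lambda):\overline L(\lambda)]=1$.

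Vanishing for non-antidominant weights is then the $f_\alpha-\chi(f_\alpha)$ argument above. Nonvanishing in the antidominant case follows because $\overline L(\lambda)$ is a composition factor of $H_0M^{()}(\lambda-\rho^{()})$. It must therefore be $H_0$ of an antidominant composition factor whose weight lies in the dot orbit of $\lambda-\rho^{()}$, and there is only one such weight.

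\textbf{Part (3).} Your claim that $H_0N^{s^n}(\lambda)$ is a highest weight quotient of $\overline M(\lambda)$ fails in general, for the same non-cyclicity reason as in (1). For example, take $n=2$ with both columns typical but $\lambda$ atypical for the appropriate odd root joining the two $\mathfrak{gl}(1|1)$ blocks. You only need the claim for $\lambda\in\Lambda^{\mathrm{BG}}$. There it is enough that $\overline L(\lambda)$ is a composition factor, namely the top of the submodule generated by the highest weight vector.

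Finally, the formula $\dim\overline L(\lambda)=2^{\,n-\operatorname{aty}}$ that you import is essentially equivalent to (3). That step is therefore a citation rather than a proof.
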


\begin{definition}\label{def:pi_Serre}
Let
$\mathbb V:s\mathcal O\to s\bar{\mathcal O}$ and
$\mathbb V:\mathcal O\to\bar{\mathcal O}$
denote the Soergel functors. They are the Serre quotient functors by the
Serre subcategories generated by the non-antidominant simple objects.
\end{definition}

\begin{remark}\label{rem:barO_properly_stratified}
Chen--Cheng--Mazorchuk \cite{chen2023whittaker} showed that
$\bar{\mathcal O}$ is a properly stratified category.
\end{remark}

\begin{theorem}[\cite{brown2013principal,brundan2019whittaker}]
\label{thm:irr_barO}
There is an equivalence of categories
\[
\kappa:H_0(s\mathcal O)\xrightarrow{\sim}s\bar{\mathcal O}
\]
such that $\mathbb V\cong\kappa\circ H_0$ on $s\mathcal O$.
\end{theorem}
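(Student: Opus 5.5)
The plan is to construct $\kappa$ from the universal property of the Serre quotient and then show it is an equivalence onto its image.

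\textbf{Step 1: $H_0$ factors through $\mathbb V$.} By \cref{lem:H0_welldefined_exact_duality}, $H_0$ is exact on $s\mathcal O$. By \cref{thm:Uge_Verma_modules}(4), $H_0$ kills every simple object $L^{()}(\lambda-\rho^{()})$ with $\lambda-\rho^{()}$ not antidominant. Since $H_0$ is exact, it kills the whole Serre subcategory generated by these simples. The universal property of Serre quotients (\cref{def:pi_Serre}) then gives an exact functor
\[
\Phi:s\bar{\mathcal O}\to U(\mathfrak g,e)\text{-sMod}_{\mathrm{fd}}
\qquad\text{with}\qquad
H_0\cong\Phi\circ\mathbb V,
\]
and $\Phi$ is unique up to isomorphism. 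The essential image of $\Phi$ coincides with the essential image $H_0(s\mathcal O)$, because $\mathbb V$ is essentially surjective. Hence it suffices to show that $\Phi$ is fully faithful; then $\kappa$ is a quasi-inverse of $\Phi$ on its image, and $\mathbb V\cong\kappa\circ H_0$.

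\textbf{Step 2: $\Phi$ is faithful.} The simple objects of $s\bar{\mathcal O}$ are the images $\mathbb V L^{()}(\lambda-\rho^{()})$ with $\lambda-\rho^{()}$ antidominant, up to parity shift. By \cref{thm:Uge_Verma_modules}(4), $\Phi$ sends each of them to $\overline L(\lambda)$, which is nonzero.

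These images must also be pairwise non-isomorphic. For this I would use that each dot orbit contains a unique antidominant weight, together with the classification in \cref{thm:Uge_Verma_modules}(1),(2). From these, $\overline L(\lambda)\cong\overline L(\lambda')$ should force $\lambda'\in W\cdot\lambda$, which I would verify by comparing highest weights for the Yangian triangular decomposition. An exact functor that kills no simple object kills no nonzero object, so $\Phi$ is faithful and reflects isomorphism classes of simples.

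\textbf{Step 3: $\Phi$ is full.} Write $\mathbb V\cong\operatorname{Hom}_{\mathcal O}(P,-)$, where $P$ is the direct sum of the projective covers $P^{()}(\lambda)$ over antidominant $\lambda$ in a finite truncation; this is legitimate after passing to the BLW truncations used in the proof of \cref{thm:U-N-reciprocity}. Then $s\bar{\mathcal O}$ is identified with modules over $\operatorname{End}(P)^{\mathrm{op}}$, and $\mathbb V P$ is a projective generator.

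Since $\Phi$ is exact, fullness reduces to showing that the natural map
\[
\operatorname{End}_{\mathcal O}(P)\longrightarrow\operatorname{End}_{U(\mathfrak g,e)}(H_0P)
\]
is an isomorphism, together with the analogous bijection on $\operatorname{Hom}(H_0P,H_0M)$. Injectivity follows from Step 2. For surjectivity I would use the right adjoint of $H_0$, namely the Skryabin-type functor $\operatorname{Hom}_{U(\mathfrak g,e)}(Q_\chi,-)$ followed by taking the $\mathfrak h$-finite part landing in $\mathcal O$. The goal is to show that the unit $P^{()}(\lambda)\to R\,H_0P^{()}(\lambda)$ is an isomorphism for antidominant $\lambda$. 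I would prove this by comparing dimensions: the multiplicities of $H_0$ of standard objects are controlled by \cref{thm:Uge_Verma_modules}(1) (the Verma modules $\overline M$), and by BGG reciprocity (\cref{O.indproj}) these should match $\dim\operatorname{Hom}(P,-)$.

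\textbf{Main obstacle.} I expect Step 3 to be the hardest part. In the super setting, $H_0$ of a simple module may be larger than one-dimensional on each relevant piece, so a naive count of $\dim\operatorname{End}$ does not suffice. I would first try to reduce to the even part: the functors $\operatorname{Ind}^{\mathfrak g}_{\mathfrak g_{\bar0}}$ and $\operatorname{Res}$ are compatible with Whittaker coinvariants, and \cref{indres} shows that antidominant projectives are summands of induced even antidominant projectives. Kostant's and Soergel's classical result (the Endomorphismensatz) for $\mathfrak g_{\bar0}$ then gives the isomorphism there, and it should transfer to the summand.
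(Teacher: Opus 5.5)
Your Steps 1 and 2 are correct. Exactness of $H_0$ together with \cref{thm:Uge_Verma_modules}(4) gives $H_0\cong\Phi\circ\mathbb V$, and $\Phi$ is faithful because it kills no simple object of $s\bar{\mathcal O}$. The paper gives no proof of this theorem; it quotes it from Brown--Brundan--Goodwin and Brundan--Goodwin, and what is quoted is essentially the fullness you defer to Step 3.

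The gap is in Step 3: your dimension comparison for the unit $P\to RH_0P$ is circular. By adjunction,
\[
[RH_0P:L(\mu)]=\dim\operatorname{Hom}_{\mathcal O}\bigl(P(\mu),RH_0P\bigr)=\dim\operatorname{Hom}_{U(\mathfrak g,e)}\bigl(H_0P(\mu),H_0P\bigr),
\]
which is exactly the Hom space you are trying to compute. Knowing $H_0$ on Verma modules only gives you the composition factors of each $H_0M$. It determines Hom spaces between $U(\mathfrak g,e)$-modules only if you also know one of the following equivalent facts:
\begin{itemize}
\item for $\lambda$ antidominant, $H_0P(\lambda)$ is a projective cover of $\overline L(\lambda)$ in some abelian category of $U(\mathfrak g,e)$-modules containing $H_0(s\mathcal O)$;
\item a super Endomorphismensatz: $U(\mathfrak g,e)\to\operatorname{End}_{\mathcal O}(P)^{\mathrm{op}}$ is surjective and $H_0\cong\operatorname{Hom}_{\mathcal O}(P,-)$ as $U(\mathfrak g,e)$-modules.
\end{itemize}
With that input your count closes: $\dim\operatorname{Hom}(H_0P(\lambda),H_0M)=[M:L(\lambda)]=\dim\operatorname{Hom}_{\mathcal O}(P(\lambda),M)$, and Step 2 gives injectivity. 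The proposal never supplies it.

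There is also a smaller mismatch. An isomorphism $P\cong RH_0P$ controls Homs \emph{into} $H_0P$, whereas your reduction asks for Homs \emph{out of} $H_0P$. To pass between them you must invoke that antidominant projectives are injective.

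The fallback through $\mathfrak g_{\bar0}$ does not provide the missing input. Whittaker coinvariants for $\mathfrak g_{\bar0}$ take values in modules over $U(\mathfrak g_{\bar0},e)\cong Z(\mathfrak g_{\bar0})$, and the two sides do not match naively. For instance, $H_0\operatorname{Ind}^{\mathfrak g}_{\mathfrak g_{\bar0}}M_{\bar0}(\mu)$ has a filtration by $2^{n^2}$ Verma modules $\overline M(\nu)$, while $H_0^{\mathfrak g_{\bar0}}M_{\bar0}(\mu)$ is one-dimensional.

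More importantly, transporting Soergel's theorem would require a $U(\mathfrak g,e)$-side counterpart of $\operatorname{Res}$, and of the adjunction in \cref{indres}, so that $\operatorname{End}_{U(\mathfrak g,e)}(H_0\operatorname{Ind}P_{\bar0})$ could be computed from even data. No such functor is constructed, and producing one with the required adjunction is a statement of the same depth as the theorem itself. Your idempotent-truncation step, passing from $\operatorname{Ind}P_{\bar0}$ to its summand, is fine, but there is no isomorphism for it to act on. As written, fullness of $\Phi$, which is the substance of the theorem, remains unproved.
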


\begin{theorem}\label{thm:socle_simple}
If $\lambda\in\Lambda^{\mathrm{BG}}$, then
\[
\operatorname{soc}N^{s^n}(\lambda)
\cong
L^{0^n1^n}\bigl((\lambda-\rho)^{\mathrm{antidom}}\bigr).
\]
\end{theorem}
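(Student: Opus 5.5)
We argue with the Whittaker coinvariants functor $H_0$ and the Soergel functor $\mathbb V$. The target is twofold: show that every simple submodule of $N^{s^n}(\lambda)$ is antidominant, and that there is only one of them.

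\textbf{Step 1: the socle consists of antidominant simples.} First we place $N^{s^n}(\lambda)$ inside a module with an even Verma flag. By \cref{prop:N-as-image}, $N^{s^n}(\lambda)$ is the image of the nonzero map $M^{(01)^n}(\lambda)\to M^{(10)^n}(\lambda+\rho^{(01)^n}-\rho^{(10)^n})$. It is therefore a submodule of a $\mathbf b$-Verma module, and $\mathcal F M^{\mathbf b}\subseteq\mathcal F M_{\bar0}$ by \cref{O.indproj}. Now let $L$ be any simple submodule of $N^{s^n}(\lambda)$. Writing $L=L^{0^n1^n}(\nu)$, \cref{antidom} forces $\nu$ to be antidominant. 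The Soergel functor $\mathbb V$ kills exactly the non-antidominant simples, so it is faithful on the socle, and hence
\[
\operatorname{soc}N^{s^n}(\lambda)\hookrightarrow N^{s^n}(\lambda)
\]
is still nonzero after applying $\mathbb V$.

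\textbf{Step 2: identify the socle via $H_0$.} By \cref{thm:irr_barO} we have $\mathbb V\cong\kappa\circ H_0$. Since $\lambda\in\Lambda^{\mathrm{BG}}$, \cref{thm:Uge_Verma_modules}(3) gives $H_0N^{s^n}(\lambda)\cong\overline L(\lambda)$, which is simple. Exactness of $H_0$ (\cref{lem:H0_welldefined_exact_duality}) gives
\[
H_0\operatorname{soc}N^{s^n}(\lambda)\hookrightarrow\overline L(\lambda).
\]
Each antidominant simple has nonzero $H_0$ by \cref{thm:Uge_Verma_modules}(4), so the number of simple summands of the socle is at most $\ell\,\overline L(\lambda)=1$. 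The socle is therefore a single simple $L^{0^n1^n}(\nu)$ with $\nu$ antidominant and $H_0L^{0^n1^n}(\nu)\cong\overline L(\lambda)$.

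\textbf{Step 3: pin down $\nu$.} Two things must be matched here.
\begin{itemize}
\item We compare the labelling convention $L^{0^n1^n}(\nu)$ with the $()$-labelling used in \cref{thm:Uge_Verma_modules}(4). This uses the odd reflection rule of \cref{5.2.ch_eq}: in the shifted $\rho^{\mathbf b}$ normalisation the simple modules coincide.
\item We identify which antidominant weight maps to $\overline L(\lambda)$. Simples of $U(\mathfrak g,e)$ are classified up to the data determining $\overline M(\lambda)$, which depends only on the dot orbit of $\lambda$. On antidominant weights $\kappa\circ H_0=\mathbb V$ is injective on isoclasses, because $\mathbb V$ is a Serre quotient functor. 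Hence $\nu$ is the unique antidominant weight in the linkage/dot orbit attached to $\lambda$, namely $(\lambda-\rho)^{\mathrm{antidom}}$.
\end{itemize}

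The main obstacle is Step 3: the $\rho$-shift bookkeeping between the $()$, $0^n1^n$ and $s^n$ conventions, and checking that $\overline L(\lambda)$ determines precisely the orbit of $\lambda-\rho$ rather than a different odd-reflected representative. I would handle it with the highest-weight computation through the Miura transform, which reduces it to $\mathfrak{gl}(1|1)^{\oplus n}$ (\cref{thm:H0BG_identity}). In Step 1, the nonvanishing of the socle is automatic because the module has finite length.
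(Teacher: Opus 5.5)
Your proposal is correct and follows essentially the same route as the paper. Irving's criterion (\cref{antidom}) makes every simple submodule of $N^{s^n}(\lambda)$ antidominant; the simplicity of $H_0N^{s^n}(\lambda)\cong\overline L(\lambda)$, together with $\mathbb V\cong\kappa\circ H_0$, leaves room for only one antidominant constituent; and parts (2) and (4) of \cref{thm:Uge_Verma_modules} identify it. Your embedding into $M^{(10)^n}$ via \cref{prop:N-as-image} is a tidy way to place $N^{s^n}(\lambda)$ in $\mathcal F M_{\bar0}$, which the paper simply asserts.

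The first bullet of your Step~3 is vacuous, since $()$ is the empty diagram, i.e.\ $0^n1^n$ itself. Your claim there that simples coincide in the shifted normalisation is also false across atypical odd reflections (cf.\ \cref{5.2.ch_eq}(4b)). Your Serre-quotient injectivity argument, combined with the orbit invariance in part (1), already pins down $\nu=(\lambda-\rho)^{\mathrm{antidom}}$ without any Miura computation.
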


\begin{proof}
By \cref{thm:irr_barO}, the module $N^{s^n}(\lambda)$ has a unique
antidominant composition factor. On the other hand,
$N^{s^n}(\lambda)\in\mathcal F M_{\bar0}$, so
\cref{antidom} implies that every simple submodule of
$N^{s^n}(\lambda)$ is antidominant. Hence its socle is simple.
The assertion now follows from parts~(2) and~(4) of
\cref{thm:Uge_Verma_modules}.
\end{proof}

\begin{theorem}\label{thm:antidom_factors}
Let $\mathbf b\in L(n,n)$ and $\lambda\in\Lambda$. Choose
$\lambda^{\mathrm{BG}}\in(W\cdot\lambda)\cap\Lambda^{\mathrm{BG}}$. Then
\[
\bigoplus_{\nu\in\Lambda^{\mathrm{antidom}}}
L^{0^n1^n}(\nu)^{
\oplus[M^{\mathbf b}(\lambda-\rho^{\mathbf b}):
L^{0^n1^n}(\nu)]}
\cong
\bigoplus_{\mu\in\Lambda}
L^{0^n1^n}\bigl((\mu-\rho)^{\mathrm{antidom}}\bigr)^{
\oplus[\overline K(\lambda^{\mathrm{BG}}):\overline L(\mu)]}.
\]
Here $\overline K(\lambda)$ denotes the restriction, via the Miura
transform, of the corresponding Kac module for
$\mathfrak{gl}(1|1)^{\oplus n}$.
\end{theorem}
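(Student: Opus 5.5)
We apply the exact functor $\mathbb V$ (equivalently $H_0$, via \cref{thm:irr_barO}) to $M^{\mathbf b}(\lambda-\rho^{\mathbf b})$ and compare the result with $\overline K(\lambda^{\mathrm{BG}})$. The left-hand side is the semisimple module recording the antidominant composition factors of $M^{\mathbf b}(\lambda-\rho^{\mathbf b})$, with multiplicity. By \cref{thm:Uge_Verma_modules}(4), these are exactly the simple objects not killed by $H_0$: each $L^{0^n1^n}(\nu)$ with $\nu$ antidominant is sent to a simple $\overline L(\nu+\rho^{()})$, and all other simples go to $0$. The plan is therefore to prove that $H_0M^{\mathbf b}(\lambda-\rho^{\mathbf b})$ and $\overline K(\lambda^{\mathrm{BG}})$ have the same class in the Grothendieck group of $U(\mathfrak g,e)$-modules. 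Once this holds, we translate each $\overline L(\mu)$ back to $L^{0^n1^n}((\mu-\rho)^{\mathrm{antidom}})$. This translation is legitimate because, by \cref{thm:Uge_Verma_modules}(1),(2),(4), $\overline L(\mu)$ depends only on the dot orbit of $\mu$, and its unique antidominant preimage is $L^{0^n1^n}((\mu-\rho)^{\mathrm{antidom}})$.

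\textbf{Step 1: the Grothendieck class is independent of $\mathbf b$.} By \cref{5.2.ch_eq}(1), the modules $M^{\mathbf b}(\lambda-\rho^{\mathbf b})$ all have the same character. Category $\mathcal O$ has linearly independent simple characters, so they all have the same class in $K_0(\mathcal O)$. Applying the exact functor $H_0$ from \cref{lem:H0_welldefined_exact_duality}, the class $[H_0M^{\mathbf b}(\lambda-\rho^{\mathbf b})]$ is independent of $\mathbf b$. We may therefore take $\mathbf b=(01)^n$, the Borel contained in $\mathbf c=s^n$. The same argument, together with \cref{thm:Uge_Verma_modules}(1), lets us replace $\lambda$ by any element of its $W$-dot orbit at the level of $H_0$-classes, and in particular by $\lambda^{\mathrm{BG}}$. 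This orbit-replacement needs some care, since it changes the Verma module and not just the Borel.

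\textbf{Step 2: identify the $s^n$-Verma module with a Brundan--Goodwin module.} For $\mathbf b=(01)^n$, the Verma module is $M^{(01)^n}(\lambda)\cong\operatorname{Ind}_{\mathbf c}^{\mathfrak g}\operatorname{Infl}M_{\mathfrak l_{\mathbf c}}(\lambda)$, where $\mathfrak l_{\mathbf c}\cong\mathfrak{gl}(1|1)^{\oplus n}$ plus $\mathfrak h$. Its Levi factor is a tensor product of $\mathfrak{gl}(1|1)$ Verma (Kac) modules. We then want to recognise $\operatorname{Ind}_{\mathbf c}^{\mathfrak g}\operatorname{Infl}(-)$ as the functor $\operatorname{BG}$, or at least as having the same $H_0$. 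If so, \cref{thm:H0BG_identity} gives $H_0M^{(01)^n}(\lambda)\cong\overline K(\lambda)$, namely the Kac module for $\mathfrak{gl}(1|1)^{\oplus n}$ pulled back along the Miura transform. Combining this with Step 1 yields $[H_0M^{\mathbf b}(\lambda-\rho^{\mathbf b})]=[\overline K(\lambda^{\mathrm{BG}})]$.

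\textbf{Step 3: conclude.} Expand both sides of this equality in simple classes $[\overline L(\mu)]$ and pull back to $\mathcal O$ as described in the first paragraph. Both sides of the claimed isomorphism are semisimple, so equality of multiplicities gives the isomorphism.

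\textbf{Main obstacle.} The hard part is Step 2, and the choice of the representative $\lambda^{\mathrm{BG}}$ in particular. One has to check that the $\rho$-shifts $\rho^{(01)^n}$ versus $\rho^{()}$ match the Miura conventions. One also has to see why the Brundan--Goodwin condition is needed. On the graded level, $\overline K(\lambda)$ for $\lambda$ in different dot-orbit representatives has the same class only after accounting for which $\mathfrak{gl}(1|1)$ factors are atypical. Choosing $\lambda^{\mathrm{BG}}$ makes the $\mathfrak{gl}(1|1)$-atypicalities realize the full atypicality, consistent with \cref{thm:Uge_Verma_modules}(3). I would first verify that the $K_0$-classes of $\overline K$ agree on a dot orbit, and otherwise fall back on the explicit identification through $N^{s^n}$ and \cref{thm:socle_simple}.
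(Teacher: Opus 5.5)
Your proposal is correct and follows essentially the paper's proof. You use the same Grothendieck-group chain: from $\mathbf b$ to $0^n1^n$ by equality of characters, from $\lambda$ to $\lambda^{\mathrm{BG}}$ by dot-orbit invariance of $H_0M^{()}$ (\cref{thm:Uge_Verma_modules}(1)), then to $(01)^n$, which is identified with Brundan--Goodwin's induction so that \cref{thm:H0BG_identity} produces $\overline K(\lambda^{\mathrm{BG}})$.

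The only difference is the final matching step. The paper expands along the $N^{s^n}$-flag and uses \cref{thm:socle_simple} to get $\mathbb V N^{s^n}(\mu)\cong\mathbb V L^{0^n1^n}((\mu-\rho)^{\mathrm{antidom}})$. You instead match $\overline L(\mu)$ with $H_0L^{0^n1^n}((\mu-\rho)^{\mathrm{antidom}})$ directly via \cref{thm:Uge_Verma_modules}(1),(2),(4), which is the same fact the socle theorem rests on. Your Steps 1--2 already show that the class of $\overline K$ is constant on the dot orbit, so that check in your last paragraph is unnecessary. The role of $\lambda^{\mathrm{BG}}$ is only to make the $\mathfrak{gl}(1|1)^{\oplus n}$-constituents $\mu$ pull back to the simples $\overline L(\mu)$, by \cref{thm:Uge_Verma_modules}(3), as the indexing in the statement requires.
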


\begin{proof}
In the Grothendieck group of $\bar{\mathcal O}$, we have
\[
\begin{aligned}
[\mathbb V M^{\mathbf b}(\lambda-\rho^{\mathbf b})]
&=
[\mathbb V M^{()}(\lambda-\rho)]\\
&=
[\mathbb V M^{()}(\lambda^{\mathrm{BG}}-\rho)]\\
&=
[\mathbb V M^{(01)^n}(\lambda^{\mathrm{BG}})]\\
&=
\sum_{\mu\in\Lambda}
[M^{(01)^n}(\lambda^{\mathrm{BG}}):N^{s^n}(\mu)]
[\mathbb V N^{s^n}(\mu)].
\end{aligned}
\]
By \cref{thm:H0BG_identity,thm:irr_barO},
\[
[M^{(01)^n}(\lambda^{\mathrm{BG}}):N^{s^n}(\mu)]
=
[\overline K(\lambda^{\mathrm{BG}}):\overline L(\mu)],
\]
while \cref{thm:socle_simple} gives
$\mathbb V N^{s^n}(\mu)
\cong
\mathbb V L^{0^n1^n}((\mu-\rho)^{\mathrm{antidom}})$.
Comparing the antidominant composition factors gives the result.
\end{proof}

The socle of a $\mathbf b$-Verma supermodule is known for
$\mathbf b=()$ and $\mathbf b=(01)^n$ by \cite{chen2021translated},
but remains unknown for general $\mathbf b$. The following consequence
of the preceding theorem will be useful for this problem.

\begin{corollary}\label{cor:antidom_factors_as_socles}
Let $\mathbf b\in L(n,n)$ and $\lambda\in\Lambda$. Choose
$\lambda^{\mathrm{BG}}\in(W\cdot\lambda)\cap\Lambda^{\mathrm{BG}}$. Then
$\operatorname{soc}M^{\mathbf b}(\lambda-\rho^{\mathbf b})$ is a direct
summand of
\[
\bigoplus_{\mu\in\Lambda}
L^{0^n1^n}\bigl((\mu-\rho)^{\mathrm{antidom}}\bigr)^{
\oplus[\overline K(\lambda^{\mathrm{BG}}):\overline L(\mu)]}.
\]
\end{corollary}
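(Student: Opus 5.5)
The plan is to deduce this corollary directly from \cref{thm:antidom_factors} together with \cref{antidom}. The idea is that every simple submodule of a $\mathbf b$-Verma supermodule is antidominant, so the socle is a semisimple module built only from antidominant composition factors. \cref{thm:antidom_factors} identifies the semisimple module collecting all such factors.

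\textbf{Step 1: every simple submodule is antidominant.} Set $M:=M^{\mathbf b}(\lambda-\rho^{\mathbf b})$. By \cref{O.indproj}(3), $M\in\mathcal F M^{\mathbf b}\subseteq\mathcal F M_{\bar0}$. Let $L\hookrightarrow M$ be a simple submodule. Since $\mathcal O$ is independent of the Borel as a Serre subcategory, $L\cong L^{0^n1^n}(\nu)$ for some $\nu\in\Lambda$, possibly after a parity shift that is absorbed by the convention $M\in\mathcal W$. Then \cref{antidom}, direction (2)$\Rightarrow$(1), shows that $\nu$ is antidominant. Hence
\[
\operatorname{soc}M\cong\bigoplus_{\nu\in\Lambda^{\mathrm{antidom}}}L^{0^n1^n}(\nu)^{\oplus s_\nu}.
\]

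\textbf{Step 2: bound the multiplicities.} Each socle multiplicity satisfies $s_\nu\le[M:L^{0^n1^n}(\nu)]$, because the socle is a submodule of $M$ and composition multiplicities are additive. Therefore $\operatorname{soc}M$ is a direct summand of
\[
\bigoplus_{\nu\in\Lambda^{\mathrm{antidom}}}L^{0^n1^n}(\nu)^{\oplus[M:L^{0^n1^n}(\nu)]}.
\]
Both modules are semisimple with $s_\nu\le$ the ambient multiplicity, so the socle is isomorphic to a sub-sum, which is a direct summand.

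\textbf{Step 3: conclude.} By \cref{thm:antidom_factors}, the semisimple module in Step 2 is isomorphic to
\[
\bigoplus_{\mu}L^{0^n1^n}\bigl((\mu-\rho)^{\mathrm{antidom}}\bigr)^{\oplus[\overline K(\lambda^{\mathrm{BG}}):\overline L(\mu)]},
\]
which gives the claim. Finiteness is not an issue: $M$ has finite length, so only finitely many terms are nonzero.

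\textbf{Main obstacle.} The only step needing care is the application of \cref{antidom} in Step 1. That proposition is phrased for $L^{0^m1^n}(\lambda)$ and for objects of $\mathcal F M_{\bar0}$. We must check that the embedded simple, which a priori is described with respect to $\mathbf b$, is rewritten as a $0^n1^n$-highest weight simple; this uses the fact that the simples of $\mathcal O$ are the same for every Borel. We must also check that the parity convention keeps everything inside $\mathcal W$. Beyond this, the argument is formal.
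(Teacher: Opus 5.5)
Your proposal is correct and follows the paper's argument. The paper's one-line proof likewise combines Irving's criterion (\cref{antidom}, applied through $M^{\mathbf b}(\lambda-\rho^{\mathbf b})\in\mathcal F M_{\bar0}$) to show that every simple submodule is antidominant, and then uses \cref{thm:antidom_factors} to identify the semisimple module of antidominant composition factors; your bound $s_\nu\le[M:L^{0^n1^n}(\nu)]$ just makes explicit what the paper leaves implicit.
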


\begin{proof}
This follows immediately from \cref{thm:antidom_factors} and Irving's
criterion in \cref{antidom}.
\end{proof}

\subsection{Remarks on the classification of Morita equivalence classes of blocks}
\label{subsec:morita-classification-blocks}
In this subsection, we assume that $\mathbf b=0^m1^n$ and omit the
superscript $\mathbf b$ whenever no confusion can arise.

\begin{theorem}[\cite{coulembier2017homological}]
\label{thm:injective-projective-dimension}
Let $\lambda\in\Lambda$. Then
\[
\operatorname{pd}_{\mathcal O}I(\lambda)
=
\operatorname{pd}_{\mathcal O_{\bar0}}I_{\bar0}(\lambda).
\]
In particular,
\[
\lambda\text{ is regular dominant}
\quad\Longleftrightarrow\quad
\operatorname{pd}_{\mathcal O}I(\lambda)=2\ell(w_0),
\]
whereas
\[
\lambda\text{ is antidominant}
\quad\Longleftrightarrow\quad
\operatorname{pd}_{\mathcal O}I(\lambda)=0.
\]
\end{theorem}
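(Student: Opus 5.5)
The plan is to reduce the statement to the purely even category $\mathcal O_{\bar0}$ using the functors $\operatorname{Res}^{\mathfrak g}_{\mathfrak g_{\bar0}}$ and $\operatorname{Ind}^{\mathfrak g}_{\mathfrak g_{\bar0}}\cong\operatorname{Coind}^{\mathfrak g}_{\mathfrak g_{\bar0}}$ from \cref{indres}. I will prove the two inequalities of the equality separately, and then invoke known results on $\mathcal O_{\bar0}$ for the two characterizations.

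First, both functors are exact and each has an exact adjoint on both sides. Hence both send projectives to projectives and injectives to injectives. Consequently, applying either functor to a projective resolution gives a projective resolution, so neither functor increases projective dimension.

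For the inequality $\operatorname{pd}_{\mathcal O}I(\lambda)\le\operatorname{pd}_{\mathcal O_{\bar0}}I_{\bar0}(\lambda)$, I use $\operatorname{Hom}_{\mathfrak g}(L(\lambda),\operatorname{Coind}I_{\bar0}(\lambda))\cong\operatorname{Hom}_{\mathfrak g_{\bar0}}(\operatorname{Res}L(\lambda),I_{\bar0}(\lambda))$. This is nonzero because $L_{\bar0}(\lambda)$ is the top of $\operatorname{Res}L(\lambda)$ in the appropriate sense; if necessary, I will dualize the embedding from the proof of \cref{antidom} with the duality on $\mathcal O$. Thus the injective module $\operatorname{Coind}I_{\bar0}(\lambda)$ has $L(\lambda)$ in its socle, so $I(\lambda)$ is a direct summand of it. Then
\[
\operatorname{pd}I(\lambda)\le\operatorname{pd}\operatorname{Ind}I_{\bar0}(\lambda)\le\operatorname{pd}I_{\bar0}(\lambda).
\]

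For the reverse inequality, $\operatorname{Res}I(\lambda)$ is injective in $\mathcal O_{\bar0}$. It contains $L_{\bar0}(\lambda)\subseteq\operatorname{Res}L(\lambda)\subseteq\operatorname{Res}I(\lambda)$, using the embedding from the proof of \cref{antidom}. Hence $I_{\bar0}(\lambda)$ is a direct summand of $\operatorname{Res}I(\lambda)$. This gives
\[
\operatorname{pd}I_{\bar0}(\lambda)\le\operatorname{pd}\operatorname{Res}I(\lambda)\le\operatorname{pd}I(\lambda),
\]
and combining with the previous paragraph yields the equality.

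The two characterizations then become statements about $\mathcal O_{\bar0}=\mathcal O(\mathfrak{gl}_m\oplus\mathfrak{gl}_n)$. I will cite Mazorchuk's computation of projective dimensions of indecomposable injectives (equivalently, by duality, of tilting or twisted modules). First, $I_{\bar0}(\lambda)$ is projective exactly when $\lambda$ is antidominant, since these are the projective-injective modules $P_{\bar0}(w_0\cdot\lambda^{\mathrm{dom}})$. Second, the maximum value $2\ell(w_0)$, which is the global dimension of a regular block, is attained only at the dominant weight of a regular block. For the latter, I expect to use the known formula $\operatorname{pd}I_{\bar0}(w\cdot\lambda)=2\mathbf a(w_0w)$ on regular blocks, where $\mathbf a$ is Lusztig's $\mathbf a$-function. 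I also need the bound that singular blocks have strictly smaller finitistic or global dimension.

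The main obstacle is this last input: the precise formula in singular blocks, together with matching the $\rho$-shifted dominance conventions of the paper to those in the literature. The reduction to $\mathcal O_{\bar0}$ itself is formal.
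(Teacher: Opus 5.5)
Your argument is correct. The paper gives no proof of its own and only quotes Coulembier--Serganova, and your two direct-summand statements give the sandwich $\operatorname{pd}I(\lambda)\le\operatorname{pd}\operatorname{Ind}I_{\bar0}(\lambda)\le\operatorname{pd}I_{\bar0}(\lambda)\le\operatorname{pd}\operatorname{Res}I(\lambda)\le\operatorname{pd}I(\lambda)$, which is the natural reduction underlying that citation; after it, both characterizations are the known facts about $\mathcal O_{\bar0}$ that you invoke.

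Two small points. First, for $\operatorname{Hom}_{\mathfrak g_{\bar0}}(\operatorname{Res}L(\lambda),I_{\bar0}(\lambda))\neq0$ you only need $[\operatorname{Res}L(\lambda):L_{\bar0}(\lambda)]\neq0$, so the claim about the ``top'' is unnecessary (and $L_{\bar0}(\lambda)$ need not be the whole top). Second, the singular case needs no precise formula: a highest weight category whose poset has height $h$ has global dimension at most $2h$, and a singular orbit $W\cdot\lambda$ with the Bruhat order has height $\ell(w_0)-\ell(w_0^\lambda)<\ell(w_0)$, where $w_0^\lambda$ is the longest element of the dot-stabilizer of $\lambda$.
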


Thus regular dominant and antidominant simple modules can be characterized
purely in terms of the underlying abelian category. In particular, the full
subcategory $\mathcal F$ of finite-dimensional modules is categorically
determined and hence is preserved under equivalences of blocks.

By \cref{kww}, the same is true for Kac--Wakimoto weights: the set of
Kac--Wakimoto weights in a block can be characterized purely in terms of
the abelian category. This provides a substantially smaller and more
structured family of objects on which one can test equivalence of blocks.

There is another useful categorical invariant coming from the results of
Brundan--Goodwin. Let $\mathcal B$ be a block of atypicality $r$.
By \cref{thm:Uge_Verma_modules} and
\cite{brundan2019whittaker}, for every Verma module $M(\lambda)$ in
$\mathcal B$ one has
\[
\sum_{\nu\in\Lambda^{\mathrm{antidom}}}
[M(\lambda):L(\nu)]
=
2^r.
\]
Indeed, the left-hand side is the composition length of the corresponding
Verma module for the principal finite $W$-superalgebra under the Whittaker
coinvariants functor.

For a dominant weight $\lambda$ in $\mathcal B$, define
\[
a_{\mathcal B}(\lambda)
:=
\sum_{\nu\in\Lambda^{\mathrm{antidom}}}
[P(\lambda):L(\nu)].
\]
By BGG reciprocity,
\[
\begin{aligned}
a_{\mathcal B}(\lambda)
&=
\sum_{\mu}
(P(\lambda):M(\mu))
\sum_{\nu\in\Lambda^{\mathrm{antidom}}}
[M(\mu):L(\nu)]\\
&=
2^r
\sum_{\mu}(P(\lambda):M(\mu))^2.
\end{aligned}
\]

The Verma flags of dominant projective modules can be computed by
Brundan's bumping algorithm. Consequently,
$a_{\mathcal B}(\lambda)$ is explicitly computable.

Considering all dominant projective modules at once, however, produces
far too much data. We therefore propose to restrict attention to those
dominant projectives indexed by Kac--Wakimoto weights. Since both the
Kac--Wakimoto condition and the antidominant simple objects admit
categorical characterizations, the resulting collection of numbers
$a_{\mathcal B}(\lambda)$ is an invariant of the underlying abelian
category. Moreover, once the Kac--Wakimoto weights in a block are arranged
according to the categorical ordering, these
numbers form a naturally ordered sequence of invariants.

This approach may be viewed as a refinement of the method of
Coulembier--Serganova \cite{coulembier2017homological}. They used
homological invariants to show, in particular, that category $\mathcal O$
for $\mathfrak{gl}(3|1)$ contains infinitely many pairwise non-equivalent
blocks. Our proposal is to supplement such homological invariants with the
multiplicities attached to the categorically distinguished
Kac--Wakimoto weights. Together with computer calculations, this may
provide new examples toward the classification of Morita equivalence
classes of blocks in super category $\mathcal O$.

The results of Brundan--Goodwin
\cite{brundan2019whittaker} already impose strong restrictions on Morita
equivalences between blocks. For $\mathfrak{gl}(n|n)$, the case of
atypicality $n-1$ is essentially explicit from their results. In contrast,
the classification of blocks of atypicality $n-2$ appears to remain open
in general. Thus this is a natural first case in which to test the
invariants proposed above.

Homological dimensions provide additional invariants that should also be
taken into account. In particular, the finitistic projective dimension of
a block is preserved under equivalence and gives a basic numerical
constraint complementary to the invariants above.

There is also a conceptually different approach to the classification
problem, suggested by \cref{conj:highest-weight-structures-from-borels}.
This conjecture is motivated by Coulembier's classification of blocks in
the classical BGG category $\mathcal O$
\cite{coulembier2020classification}. A key ingredient in the classical
setting is that, in the presence of a simple-preserving duality, the
highest weight structure of the relevant finite highest weight category is
unique up to equivalence. The conjecture above may be regarded as a super
analogue in which uniqueness is replaced by the finite family of highest
weight structures arising from the different Borel subalgebras
$\mathbf b\in L(m,n)$.

\subsection{Parabolic Miura transforms}
\label{subsec:parabolic-miura-transforms}

\begin{theorem}[Parabolic Miura transform {\cite{brown2013principal}}]\label{thm:parabolic_Miura}
Fix a composition $\mu=(\mu_1,\dots,\mu_r)$ of $n$ and consider the Levi subsuperalgebra
\[
\mathfrak l_{\mu}:=\bigoplus_{i=1}^{r}\mathfrak{gl}(\mu_i|\mu_i)\ \subset\ \mathfrak{gl}(n|n).
\]
Let $\mathfrak p_{\mu}\subset \mathfrak{gl}(n|n)$ be the parabolic subsuperalgebra coming from the principal
good grading with Levi factor $\mathfrak l_{\mu}$, and let $e_{\mu}\in(\mathfrak l_{\mu})_{\bar0}$ be the
principal nilpotent element (equivalently, the image of $e$ under the projection $\mathfrak p_{\mu}\twoheadrightarrow
\mathfrak l_{\mu}$).

The \emph{parabolic Miura transform} is the algebra homomorphism
\[
\mathrm{MT}_{\mu}:\ U(\mathfrak{gl}(n|n),e)\ \longrightarrow\ U(\mathfrak l_{\mu},e_{\mu}),
\]
defined as the composite
\[
U(\mathfrak{gl}(n|n),e)\ \longrightarrow\ U(\mathfrak p_{pr})\ \twoheadrightarrow\ U(\mathfrak p_{\mu}),
\]
followed by the natural projection $U(\mathfrak p_{\mu})\twoheadrightarrow U(\mathfrak l_{\mu})$ and the
canonical quotient map $U(\mathfrak l_{\mu})\twoheadrightarrow U(\mathfrak l_{\mu},e_{\mu})$.

Moreover, $\mathrm{MT}_{\mu}$ is injective.
\end{theorem}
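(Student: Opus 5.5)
The plan is to deduce everything from the injectivity of the full Miura transform in \cref{thm:Miura_injective}, by showing that it factors through $\mathrm{MT}_{\mu}$.

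I will work with the realization of the finite $W$-superalgebra as twisted invariants:
\[
U(\mathfrak g,e)\cong U(\mathfrak p)^{\operatorname{tw}\mathfrak m},
\]
namely the elements $u\in U(\mathfrak p)$ with $[x,u]\in U(\mathfrak g)\mathfrak m_\chi$ for all $x\in\mathfrak m$, and likewise
\[
U(\mathfrak l_{\mu},e_{\mu})\cong U(\mathfrak p\cap\mathfrak l_{\mu})^{\operatorname{tw}(\mathfrak m\cap\mathfrak l_{\mu})}.
\]
I will also use two properties of the principal good grading.
\begin{itemize}
\item Restricted to $\mathfrak l_{\mu}$, it is again the principal good grading for $e_{\mu}$.
\item Its degree-zero part $\mathfrak g(0)\cong\mathfrak{gl}(1|1)^{\oplus n}$ is contained in $\mathfrak l_{\mu}$ and coincides with $\mathfrak l_{\mu}(0)$.
\end{itemize}

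\textbf{Step 1: well-definedness.} The projection $\pi:\mathfrak p_{\mu}\twoheadrightarrow\mathfrak l_{\mu}$ has kernel the nilradical $\mathfrak u_{\mu}$, which is an ideal of $\mathfrak p_{\mu}$. The character $\chi$ vanishes on $\mathfrak u_{\mu}\cap\mathfrak m$, since $e\in\mathfrak l_{\mu}$ up to terms killed by the supertrace pairing. Hence $\pi$ carries the left ideal generated by $\mathfrak m_\chi\cap\mathfrak p_{\mu}$ into the one generated by $(\mathfrak m\cap\mathfrak l_{\mu})_{\chi}$. It is also equivariant for the twisted adjoint action of $\mathfrak m\cap\mathfrak l_{\mu}$. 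Consequently the image of a twisted $\mathfrak m$-invariant lands in the twisted $(\mathfrak m\cap\mathfrak l_{\mu})$-invariants, that is, in $U(\mathfrak l_{\mu},e_{\mu})$. The map is a composite of algebra homomorphisms, so $\mathrm{MT}_{\mu}$ is an algebra homomorphism.

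\textbf{Step 2: factorization.} Apply the Miura transform of \cref{thm:Miura_injective} to each factor $\mathfrak{gl}(\mu_i|\mu_i)$ of $\mathfrak l_{\mu}$. This gives a map
\[
\mathrm{Mi}_{\mathfrak l_{\mu}}:U(\mathfrak l_{\mu},e_{\mu})\longrightarrow U(\mathfrak l_{\mu}(0))=U(\mathfrak g(0)),
\]
induced by the projection $\mathfrak p\cap\mathfrak l_{\mu}\twoheadrightarrow\mathfrak l_{\mu}(0)$. The projection $\mathfrak p\twoheadrightarrow\mathfrak g(0)$ factors as
\[
\mathfrak p\twoheadrightarrow\mathfrak p\cap\mathfrak l_{\mu}\twoheadrightarrow\mathfrak g(0),
\]
because both kernels are spanned by positive-degree root spaces. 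Therefore
\[
\mathrm{Mi}_{\mathfrak l_{\mu}}\circ\mathrm{MT}_{\mu}=\mathrm{Mi}_{\mathfrak g}.
\]

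\textbf{Step 3: injectivity.} Since $\mathrm{Mi}_{\mathfrak g}$ is injective by \cref{thm:Miura_injective}, so is $\mathrm{MT}_{\mu}$.

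The main obstacle is Step 1: making the twisted-invariant compatibility precise. In particular one must check that $\mathfrak m\cap\mathfrak l_{\mu}$ is exactly the $\mathfrak m$-subalgebra attached to the good grading of $\mathfrak l_{\mu}$, and that $\chi|_{\mathfrak l_{\mu}}=\chi_{e_{\mu}}$. I would verify both directly from the explicit pyramid description of the principal good grading for $\mathfrak{gl}(n|n)$. In that description each block $\mathfrak{gl}(\mu_i|\mu_i)$ corresponds to a consecutive set of columns, and $e_{\mu}$ is obtained from $e$ by deleting the arrows that connect different blocks.
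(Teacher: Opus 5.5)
The paper does not prove this statement; it quotes it from \cite{brown2013principal}. Your argument is therefore a genuine addition rather than a reconstruction, and it is correct.

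You read $\mathrm{MT}_\mu$ as the restriction to $U(\mathfrak g,e)\subseteq U(\mathfrak p)$ of the algebra map $\pi\colon U(\mathfrak p)\to U(\mathfrak p\cap\mathfrak l_\mu)$ that kills $\mathfrak p\cap\mathfrak u_\mu$. This is the right interpretation. Taken literally, $U(\mathfrak p_{pr})\to U(\mathfrak p_\mu)$ is an inclusion, since $\mathfrak g(\geq 0)\subseteq\mathfrak p_\mu$. Likewise $U(\mathfrak l_\mu,e_\mu)$ is a subquotient rather than a quotient of $U(\mathfrak l_\mu)$.

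With that reading, your Steps 2--3 are airtight. The composite $\mathfrak p\to\mathfrak p\cap\mathfrak l_\mu\to\mathfrak g(0)$ kills $(\mathfrak p\cap\mathfrak u_\mu)+\mathfrak l_\mu(>0)=\mathfrak g(>0)$. So the full Miura transform factors through $\mathrm{MT}_\mu$, and injectivity follows from \cref{thm:Miura_injective}. You do not even need injectivity of the Miura transform for $\mathfrak l_\mu$. What your route buys is that injectivity for every composition $\mu$ becomes a formal consequence of the principal case plus compatibility of the good gradings. The citation is shorter but leaves the well-definedness of $\mathrm{MT}_\mu$ implicit.

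Step 1 needs to be stated more carefully. Because $\mathfrak u_\mu\subseteq\mathfrak g(>0)$, one has $\mathfrak u_\mu\cap\mathfrak m=0$, so the vanishing you assert is vacuous. Meanwhile $\chi$ does not vanish on the opposite nilradical $\mathfrak u_\mu^-\subseteq\mathfrak m$. For example, $\chi(e_{j+1,j})=1$ for $j=\mu_1+\cdots+\mu_i<n$, coming from the term $e_{j,j+1}$ of $e$ that joins two blocks. So $\pi$ is not compatible with all of $\mathfrak m$, and it need not be.

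What is actually needed is the following.
(i) $\chi|_{\mathfrak l_\mu}=\operatorname{str}(e_\mu\,\cdot\,)$, because $e-e_\mu\in\mathfrak u_\mu$ and $\operatorname{str}(\mathfrak u_\mu\mathfrak p_\mu)=0$.
(ii) Since $\mathfrak p_\mu=\mathfrak p\oplus(\mathfrak m\cap\mathfrak l_\mu)$, PBW for $U(\mathfrak g)\cong U(\mathfrak p)\otimes U(\mathfrak m)$ gives $U(\mathfrak p_\mu)\cap U(\mathfrak g)\mathfrak m_\chi=U(\mathfrak p_\mu)(\mathfrak m\cap\mathfrak l_\mu)_\chi$.

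Now take $u\in U(\mathfrak g,e)$ and $y\in\mathfrak m\cap\mathfrak l_\mu$. The supercommutator $[y,u]$ lies in $U(\mathfrak p_\mu)$ and in $U(\mathfrak g)\mathfrak m_\chi$, hence in the intersection in (ii). Therefore $[y,\pi(u)]=\pi([y,u])\in U(\mathfrak l_\mu)(\mathfrak m\cap\mathfrak l_\mu)_\chi$. This is exactly the twisted invariance defining $U(\mathfrak l_\mu,e_\mu)$, using (i). With (ii) made explicit, your proof is complete.
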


\begin{theorem}\label{thm:MT_inverse_on_Verma_nogamma}
Fix a composition $\mu=(\mu_1,\dots,\mu_r)$ of $n$ and a choice
$c=(c_1,\dots,c_r)\in\{0,1\}^r$.
Set
\[
\mathfrak l_\mu:=\bigoplus_{i=1}^r \mathfrak{gl}(\mu_i|\mu_i)\subset \mathfrak{gl}(n|n),
\qquad
\mu_{<i}:=\sum_{j<i}\mu_j\ (1\le i\le r).
\]
For $\lambda\in\Lambda$, write
\[
\lambda|_{\mathfrak{gl}(\mu_i|\mu_i)}\in \Lambda_{\mu_i|\mu_i}
\]
for the restriction of $\lambda$ to the Cartan subalgebra of the $i$-th Levi summand
$\mathfrak{gl}(\mu_i|\mu_i)\subset \mathfrak l_\mu$.
Let $\overline{M}_i(\lambda)$ denote the Verma $U(\mathfrak{gl}(\mu_i|\mu_i),e_{\mu_i})$-supermodule of highest
weight $\lambda|_{\mathfrak{gl}(\mu_i|\mu_i)}$ (defined using the standard triangular decomposition coming from
the super Yangian realization), and set
\[
\overline{M}_i^{c_i}(\lambda):=
\begin{cases}
\overline{M}_i(\lambda), & c_i=0,\\
\overline{M}_i(\lambda)^{\vee}, & c_i=1.
\end{cases}
\]
Using the canonical identification
$U(\mathfrak l_\mu,e_\mu)\cong \bigotimes_{i=1}^r U(\mathfrak{gl}(\mu_i|\mu_i),e_{\mu_i})$, define the
$U(\mathfrak l_\mu,e_\mu)$-module
\[
\overline{M}^{c}(\lambda)
:=\overline{M}_1^{c_1}(\lambda)\boxtimes\cdots\boxtimes \overline{M}_r^{c_r}(\lambda).
\]

Then the pullback of $\overline{M}^{c}(\lambda)$ along $\mathrm{MT}_\mu$ is isomorphic to the Whittaker
coinvariants of a Verma module:
\[
\mathrm{MT}_\mu^{*}\bigl(\overline{M}^{c}(\lambda)\bigr)
\ \cong\
H_{0}\,M^{\mathfrak b_{\mu,c}}\!\bigl(\lambda-\rho^{\mathfrak b_{\mu,c}}\bigr),
\]
where $\mathfrak b_{\mu,c}\in L(n,n)$ is the Borel subalgebra corresponding to the partition
\[
\bigl((\mu_{<r}+c_r\mu_r)^{\mu_r},\ (\mu_{<r-1}+c_{r-1}\mu_{r-1})^{\mu_{r-1}},\ \dots,\ (c_1\mu_1)^{\mu_1}\bigr).
\]

\end{theorem}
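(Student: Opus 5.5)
The plan is to compute $H_0$ of the $\mathfrak b_{\mu,c}$-Verma module in two stages, separating the Levi $\mathfrak l_\mu$ from the rest of $\mathfrak{gl}(n|n)$. Write $\mathfrak b:=\mathfrak b_{\mu,c}$. The first step is combinatorial. I will check from the definition of the partition that $\mathfrak b$ contains the parabolic $\mathfrak q_\mu:=\mathfrak l_\mu\oplus\mathfrak u_\mu$, where $\mathfrak u_\mu$ is the nilradical attached to the good grading. I will also check that $\mathfrak b\cap\mathfrak{gl}(\mu_i|\mu_i)$ is the distinguished Borel $0^{\mu_i}1^{\mu_i}$ when $c_i=0$, and the antidistinguished Borel $1^{\mu_i}0^{\mu_i}$ when $c_i=1$. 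The reason is that the Young diagram of $\mathfrak b$ is a staircase of blocks of size $\mu_i\times\mu_i$. Block $i$ is either empty or full according to $c_i$, and the blocks lying between different Levi factors are fully determined.

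Granting this, induction in stages gives
\[
M^{\mathfrak b}(\lambda-\rho^{\mathfrak b})\cong\operatorname{Ind}_{\mathfrak q_\mu}^{\mathfrak g}\operatorname{Infl}\bigl(\boxtimes_i M_i^{\mathfrak b_i}(\lambda-\rho^{\mathfrak b})\bigr).
\]
The second step is a parabolic analogue of \cref{thm:H0BG_identity}. For an $\mathfrak l_\mu$-module $X$ in the relevant category, I want
\[
H_0\bigl(\operatorname{Ind}_{\mathfrak q_\mu}^{\mathfrak g}\operatorname{Infl}X\bigr)\cong \mathrm{MT}_\mu^{*}\bigl(H_0^{\mathfrak l_\mu}X\bigr).
\]
I would prove this exactly as Brundan--Goodwin prove their identity, replacing $\mathfrak g(0)=\mathfrak{gl}(1|1)^{\oplus n}$ by $\mathfrak l_\mu$. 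By PBW, $\operatorname{Ind}X\cong U(\mathfrak u_\mu^-)\otimes X$, where the opposite nilradical $\mathfrak u_\mu^-$ sits inside $\mathfrak m$ up to terms in $\mathfrak m\cap\mathfrak l_\mu$. Killing the $\mathfrak m_\chi$-coinvariants over $\mathfrak u_\mu^-$ therefore leaves $X/(\mathfrak m\cap\mathfrak l_\mu)_{\chi}X=H_0^{\mathfrak l_\mu}X$. The action of $U(\mathfrak g,e)$ then factors through $U(\mathfrak p)\to U(\mathfrak p_\mu)\to U(\mathfrak l_\mu)$. This is precisely the definition of $\mathrm{MT}_\mu$ in \cref{thm:parabolic_Miura}. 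It is compatible with $\chi$ because $e_\mu$ is the projection of $e$.

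The third step identifies the factors. Using $U(\mathfrak l_\mu,e_\mu)\cong\bigotimes_i U(\mathfrak{gl}(\mu_i|\mu_i),e_{\mu_i})$ and the monoidality of $H_0$ for outer tensor products, we get
\[
H_0^{\mathfrak l_\mu}\bigl(\boxtimes_iM_i\bigr)\cong\boxtimes_i H_0M_i .
\]
When $c_i=0$, \cref{thm:Uge_Verma_modules}(1) applied to $\mathfrak{gl}(\mu_i|\mu_i)$ gives $\overline M_i(\lambda)$. Here I must check that the $\rho$-shifts match, namely that $\rho^{\mathfrak b}$ restricted to the $i$-th summand differs from its $\rho$ only by a multiple of $\operatorname{ber}$ that is absorbed in the convention.

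When $c_i=1$, the $i$-th Borel is antidistinguished, and I need $H_0M^{1^k0^k}(\nu-\rho^{1^k0^k})\cong\overline M_i(\nu)^\vee$. I would derive this by applying the duality on $\mathcal O$ together with the corresponding duality on $U(\mathfrak{gl}(k|k),e)$-modules intertwined by $H_0$, and by using a Chevalley-type anti-involution to exchange the two Borels. Alternatively, one can compare characters and socles: $M^{1^k0^k}$ and the dual of $M^{0^k1^k}$ both have simple antidominant-controlled socle, via the dual form of \cref{antidom}, and they have equal $H_0$-characters by \cref{5.2.ch_eq}(1).

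I expect this $c_i=1$ identification, together with the exact bookkeeping of $\rho$-shifts and of the Borel attached to the stated partition, to be the main obstacle. The parabolic $H_0$-identity should be a routine adaptation of Brundan--Goodwin.
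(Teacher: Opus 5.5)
Your route is the paper's. The paper's proof consists only of your Step~2. It reduces the theorem to $\mathrm{MT}_\mu^{*}\circ H_0^{\mathfrak l_\mu}\cong H_0\circ\operatorname{Ind}^{\mathfrak g}_{\mathfrak p_\mu}\circ\operatorname{Infl}^{\mathfrak p_\mu}_{\mathfrak l_\mu}$. It proves this through the bimodule isomorphism $Q_\chi/Q_\chi U(\mathfrak u_\mu)\cong U(\mathfrak l_\mu)/(\mathfrak m_\chi\cap\mathfrak l_\mu)U(\mathfrak l_\mu)$, which comes from $U(\mathfrak g)=\mathfrak m_\chi U(\mathfrak g)\oplus U(\mathfrak p)$ and the surjection $U(\mathfrak p)\to U(\mathfrak p\cap\mathfrak l_\mu)$ with kernel generated by $\mathfrak u_\mu$. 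Your PBW computation on $U(\mathfrak u_\mu^-)\otimes X$ is the module-level form of the same argument.

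The paper leaves your Step~1 and the $c_i=0$ part of Step~3 implicit, and your bookkeeping there is correct. The word of $\mathfrak b_{\mu,c}$ is the concatenation of the blocks $0^{\mu_i}1^{\mu_i}$ or $1^{\mu_i}0^{\mu_i}$, which gives exactly the stated partition. On the $i$-th summand, $\rho^{\mathfrak b_{\mu,c}}$ differs from the Levi $\rho$ by an integer multiple of that summand's Berezinian. One slip: $\mathfrak b_{\mu,c}$ is contained in $\mathfrak q_\mu$, not the other way round.

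\textbf{The gap: the case $c_i=1$.} You need $H_0M^{1^k0^k}(\nu-\rho^{1^k0^k})\cong\overline M(\nu)^{\vee}$. The paper does not prove this either: its ``it suffices to show'' simply takes $H_0^{\mathfrak l_\mu}$ of the Levi Verma module to be $\overline M^{c}(\lambda)$, and it never defines $\vee$ on $U(\mathfrak g,e)$-modules. Neither of your two suggested arguments closes the step.

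\emph{Your fallback fails.} $M^{0^k1^k}(\nu-\rho)^\vee$ has socle $L^{0^k1^k}(\nu-\rho)$, which is not antidominant in general; the dual form of \cref{antidom} constrains tops of modules with a dual Verma flag, not socles. In any case, equal characters together with simple socles cannot produce an isomorphism of $U(\mathfrak g,e)$-modules.

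\emph{Your first route needs a genuine extra input.} For $k\geq2$ the $\mathfrak g$-modules $M^{1^k0^k}(\nu-\rho^{1^k0^k})$ and $M^{0^k1^k}(\nu-\rho)^\vee$ are not isomorphic once $\nu-\rho$ is not antidominant. The former lies in $\mathcal FM_{\bar0}$, so its socle is antidominant by \cref{antidom}; the latter has socle $L^{0^k1^k}(\nu-\rho)$. Duality on $\mathcal O$, even twisted by an anti-involution, sends modules with an even Verma flag to modules with an even dual Verma flag. So it can never produce the $1^k0^k$-Verma module itself, and the comparison must be made after applying $H_0$.

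\emph{A workable fix.} Take the automorphism $\bigl(\begin{smallmatrix}A&B\\ C&D\end{smallmatrix}\bigr)\mapsto\bigl(\begin{smallmatrix}D&C\\ B&A\end{smallmatrix}\bigr)$ of $\mathfrak{gl}(k|k)$, composed with $\operatorname{Ad}\operatorname{diag}(1,-1,1,\dots)$ in both blocks. It preserves $\mathfrak h$, the even Borel and $\mathfrak m_\chi$, and it sends $0^k1^k$ to $1^k0^k$. Hence $H_0M^{1^k0^k}$ is a twist of some $\overline M(\nu')$ by an automorphism of $U(\mathfrak{gl}(k|k),e)$. What remains is a statement internal to the $W$-superalgebra: this twist coincides with $\overline M(\nu)^\vee$. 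One way to check it is to show, in the super Yangian presentation, that both are lowest weight Verma modules with the same lowest weight.
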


\begin{proof}
It suffices to show that
\[
\mathrm{MT}_{\mu}^{*}\circ H_{0}^{\mathfrak l_{\mu}}
\ \cong\
H_{0}^{\mathfrak g}\circ
\operatorname{Ind}^{\mathfrak g}_{\mathfrak p_{\mu}}\circ
\operatorname{Infl}^{\mathfrak p_{\mu}}_{\mathfrak l_{\mu}},
\]
where $H_{0}^{\mathfrak g}$ (resp.\ $H_{0}^{\mathfrak l_{\mu}}$) denotes the Whittaker coinvariants functor for
$\mathfrak g$ (resp.\ $\mathfrak l_{\mu}$).

Write $\mathfrak u_{\mu}$ for the positive nilradical of $\mathfrak p_{\mu}$.
The key point is the following natural bimodule isomorphism:
\[
\bigl(U(\mathfrak g)/\mathfrak m_{\chi}U(\mathfrak g)\bigr)\Big/\bigl(U(\mathfrak g)/\mathfrak m_{\chi}U(\mathfrak g)\bigr)\,U(\mathfrak u_{\mu})
\ \cong\
U(\mathfrak l_{\mu})\Big/\bigl(\mathfrak m_{\chi}\cap \mathfrak l_{\mu}\bigr)\,U(\mathfrak l_{\mu}),
\]
as $\bigl(U(\mathfrak l_{\mu}),U(\mathfrak g)\bigr)$-superbimodules.

Indeed, using the vector space decompositions
\[
U(\mathfrak g)=\mathfrak m_{\chi}U(\mathfrak g)\ \oplus\ U(\mathfrak p),
\qquad
U(\mathfrak l_{\mu})=(\mathfrak m_{\chi}\cap\mathfrak l_{\mu})U(\mathfrak l_{\mu})\ \oplus\ U(\mathfrak p_{\mu}),
\]
we obtain a natural surjection
\[
U(\mathfrak p)\twoheadrightarrow
U(\mathfrak p_{\mu}),
\]
whose kernel is $U(\mathfrak p_{\mu})U(\mathfrak u_{\mu})$.
Passing to the quotient $U(\mathfrak g)/\mathfrak m_{\chi}U(\mathfrak g)$ identifies
\[
\bigl(U(\mathfrak g)/\mathfrak m_{\chi}U(\mathfrak g)\bigr)\Big/\bigl(U(\mathfrak g)/\mathfrak m_{\chi}U(\mathfrak g)\bigr)\,U(\mathfrak u_{\mu})
\ \cong\
U(\mathfrak l_{\mu})/(\mathfrak m_{\chi}\cap \mathfrak l_{\mu})U(\mathfrak l_{\mu}),
\]
and this isomorphism is natural and  bimodule isomorphism.
\end{proof}

\begin{remark}\label{rem:H0_Verma_Miura_limits}
For a general Borel subalgebra $\mathfrak b$, the module $H_{0}M^{\mathfrak b}(\lambda)$ cannot always be
interpreted as a $W$-superalgebra Verma module of the form appearing in
Theorem~\ref{thm:MT_inverse_on_Verma_nogamma}.

In small ranks, however, such an interpretation is often possible. For $\mathfrak{gl}(1|1)$ and
$\mathfrak{gl}(2|2)$ it is possible for every $\mathfrak b\in L(n,n)$. For $\mathfrak{gl}(3|3)$ it is possible
for every $\mathfrak b\in L(3,3)$ except $\mathfrak b=(1)$ and $\mathfrak b=(32)$. For $\mathfrak{gl}(4|4)$ it
is possible for every $\mathfrak b\in L(4,4)$ except
\[
(1),\ (2),\ (1^{2}),\ (4^{2}2),\ (43^{2}),\ (4^{2}3).
\]
\end{remark}

\appendix

\section{$\mathfrak{gl}(2|2)$ examples}
\label{app:gl22-examples}

\subsection{Computation of $0s1$-Kazhdan--Lusztig polynomials for dominant weights}
\label{subsec:gl22-0s1-kl}

In this subsection, we assume that \(\mathbf b=0011\) and \(\mathbf c=0s1\).
Let \(\mathcal O_\lambda\) denote the block containing \(L(\lambda)\), and let
\(\operatorname{pr}_\lambda\) denote the projection onto this block.
The induced module \(\operatorname{Ind} P_{\overline{0}}(\lambda)\) has a
Verma flag whose subquotients are, with multiplicities,
\[
  \bigl\{
    M(\lambda+\sum_{\alpha\in S}\alpha)
    \bigm|
    S\subseteq \Delta^+_{\overline{1}}
  \bigr\}.
\]
Since \(P(\lambda)\) is a direct summand of
\(\operatorname{pr}_\lambda\operatorname{Ind}P_{\overline{0}}(\lambda)\), the
Verma flag of \(P(\lambda)\) is obtained from a sub-multiset of this one.  This
makes the following posets natural.
\begin{definition}
For \(\lambda\), let \(I\langle\lambda\rangle\) be the multiset
\[
  \bigl\{
    \lambda+\sum_{\alpha\in S}\alpha
    \bigm|
    S\subseteq\Delta^+_{\overline{1}}
  \bigr\},
\]
ordered by the dominant order.  We regard it as a poset whose vertices are
labelled by their multiplicities.
Let \(R\langle\lambda\rangle\) be the subposet of \(I\langle\lambda\rangle\)
whose vertices are the highest weights appearing in a Verma flag of
\(\operatorname{pr}_\lambda\operatorname{Ind}P_{\overline{0}}(\lambda)\), again
counted with multiplicities.
Let \(r\langle\lambda\rangle\) be the subposet of \(R\langle\lambda\rangle\)
whose vertices are the highest weights appearing in a Verma flag of
\(P(\lambda)\), again counted with multiplicities.
\end{definition}

\subsubsection{Atypicality one}
\label{subsubsec:gl22-0s1-atypicality-one}
\medskip
\noindent
\textbf{Generic cases.}

Assume that $b,c\neq a,a+1$.

\medskip
\noindent
\textbf{Case $(ab|ac)$.}
\begin{align*}
e_a v_{(a+1,b|a,c)}
&=
v_{(a,b|a,c)}
+qv_{(a+1,b|a+1,c)}\\
&=
u_{(ab|ac)}
+qu_{(a+1,b|a+1,c)}.
\end{align*}

\medskip
\noindent
\textbf{Case $(ab|ca)$.}
\begin{align*}
e_a v_{(a+1,b|c,a)}
&=
v_{(a,b|c,a)}
+qv_{(a+1,b|c,a+1)}\\
&=
u_{(ab|ca)}
+qu_{(a+1,b|c,a+1)}.
\end{align*}

\medskip
\noindent
\textbf{Case $(ba|ac)$.}
\begin{align*}
e_a v_{(b,a+1|a,c)}
&=
v_{(b,a|a,c)}
+qv_{(b,a+1|a+1,c)}\\
&=
u_{(ba|ac)}.
\end{align*}

\medskip
\noindent
\textbf{Case $(ba|ca)$.}
\begin{align*}
e_a v_{(b,a+1|c,a)}
&=
v_{(b,a|c,a)}
+qv_{(b,a+1|c,a+1)}\\
&=
u_{(ba|ca)}
+qu_{(b,a+1|c,a+1)}.
\end{align*}

\medskip
\noindent
\textbf{Non-generic cases.}

By symmetry, it suffices to consider the following cases.

\medskip
\noindent
\textbf{Case $(20|00)$.}
\begin{align*}
e_0v_{(21|00)}
&=
v_{(20|00)}
+qv_{(21|10)}
+q^2v_{(21|01)}\\
&=
u_{(20|00)}
+q^2u_{(21|01)}.
\end{align*}

\medskip
\noindent
\textbf{Case $(10|00)$.}
\begin{align*}
e_1e_0v_{(21|00)}
&=
v_{(10|00)}
+qv_{(11|10)}
+qv_{(21|20)}\\
&\qquad
+q^2v_{(11|01)}
+q^2v_{(21|02)}\\
&=
u_{(10|00)}
+qu_{(21|20)}
+q^2u_{(11|01)}
+q^2u_{(21|02)}.
\end{align*}

\medskip
\noindent
\textbf{Case $(30|01)$.}
\begin{align*}
e_1v_{(32|01)}
&=
v_{(31|01)}
+qv_{(32|02)},\\
e_0e_1v_{(32|01)}
&=
v_{(30|01)}
+qv_{(31|11)}
+qv_{(32|12)}\\
&=
u_{(30|01)}
+qu_{(32|12)}.
\end{align*}

\medskip
\noindent
\textbf{Case $(20|01)$.}
\begin{align*}
e_2e_0e_1v_{(32|01)}
&=
v_{(20|01)}
+qv_{(21|11)}
+qv_{(22|12)}
+qv_{(32|13)}\\
&=
u_{(20|01)}
+qu_{(22|12)}
+qu_{(32|13)}.
\end{align*}

\medskip
\noindent
\textbf{Case $(21|01)$.}
\begin{align*}
e_1v_{(32|01)}
&=
v_{(31|01)}
+qv_{(32|02)},\\
e_2e_1v_{(32|01)}
&=
v_{(21|01)}
+qv_{(22|02)}
+qv_{(32|03)}\\
&=
u_{(21|01)}
+qu_{(22|02)}
+qu_{(32|03)}.
\end{align*}

\medskip
\noindent
\textbf{Case $(11|01)$.}
\begin{align*}
e_1^{(2)}v_{(22|01)}
&=
v_{(11|01)}
+qv_{(12|01)}
+q^2v_{(21|02)}\\
&=
u_{(11|01)}
+qu_{(12|01)}
+q^2u_{(21|02)}.
\end{align*}

In each case, $R\langle\lambda\rangle=r\langle\lambda\rangle$, and the corresponding Hasse diagram is one of the following:

\begin{figure}[ht]
\centering

\begin{minipage}{0.19\textwidth}
\centering
\begin{tikzpicture}[scale=0.55]
  \coordinate (A) at (3,3);
  \coordinate (B) at (1,2);
  \coordinate (C) at (5,2);
  \coordinate (D) at (1,1);
  \coordinate (E) at (5,1);
  \draw (A)--(B);
  \draw (A)--(C);
  \draw (B)--(D);
  \draw (C)--(E);
  \foreach \P in {A,B,C,D,E}{\fill (\P) circle (2pt);}
\end{tikzpicture}
\end{minipage}
\hfill
\begin{minipage}{0.19\textwidth}
\centering
\begin{tikzpicture}[scale=0.55]
  \coordinate (A) at (3,3);
  \coordinate (B) at (3,2);
  \coordinate (C) at (3,1);
  \coordinate (D) at (3,0);
  \draw (A)--(B);
  \draw (B)--(C);
  \draw (C)--(D);
  \foreach \P in {A,B,C,D}{\fill (\P) circle (2pt);}
\end{tikzpicture}
\end{minipage}
\hfill
\begin{minipage}{0.19\textwidth}
\centering
\begin{tikzpicture}[scale=0.55]
  \coordinate (A) at (3,3);
  \coordinate (B) at (1,2);
  \coordinate (C) at (5,2);
  \draw (A)--(B);
  \draw (A)--(C);
  \foreach \P in {A,B,C}{\fill (\P) circle (2pt);}
\end{tikzpicture}
\end{minipage}
\hfill
\begin{minipage}{0.19\textwidth}
\centering
\begin{tikzpicture}[scale=0.55]
  \coordinate (A) at (3,3);
  \coordinate (B) at (3,2);
  \coordinate (C) at (3,1);
  \draw (A)--(B);
  \draw (B)--(C);
  \foreach \P in {A,B,C}{\fill (\P) circle (2pt);}
\end{tikzpicture}
\end{minipage}
\hfill
\begin{minipage}{0.19\textwidth}
\centering
\begin{tikzpicture}[scale=0.55]
  \coordinate (A) at (3,3);
  \coordinate (B) at (3,2);
  \draw (A)--(B);
  \foreach \P in {A,B}{\fill (\P) circle (2pt);}
\end{tikzpicture}
\end{minipage}

\end{figure}

\subsubsection{Atypicality two}
\label{subsubsec:gl22-0s1-atypicality-two}
In this subsection, we determine $R\langle\lambda\rangle$ for each
$\lambda$.

\medskip
\noindent
\textbf{Case $(00|00)$.}
\begin{align*}
e_0^{(2)}v_{(11|00)}
&=
v_{(00|00)}
+qv_{(01|10)}
+q^2v_{(01|01)}
+q^2v_{(10|10)}\\
&\qquad
+q^3v_{(10|01)}
+q^4v_{(11|11)}\\
&=
u_{(00|00)}
+q^2u_{(01|01)}
+q^2u_{(10|10)}
+q^3u_{(10|01)}.
\end{align*}

\begin{center}
\begin{tikzpicture}[scale=0.85,every node/.style={transform shape}]
  \node (A) at (4,0) {$(22|22)$};

  \node (B) at (1,1) {$(21|12)$};
  \node (C) at (3,1) {$(21|21)$};
  \node (D) at (5,1) {$(12|12)$};
  \node (E) at (7,1) {$(12|21)$};

  \node (F) at (4,2) {$(11|11)_2$};

  \node (G) at (1,3) {$(01|10)$};
  \node (H) at (3,3) {$(01|01)$};
  \node (I) at (5,3) {$(10|10)$};
  \node (J) at (7,3) {$(10|01)$};

  \node (K) at (4,4) {$(00|00)$};

  \draw (A)--(B);
  \draw (A)--(C);
  \draw (A)--(D);
  \draw (A)--(E);

  \draw (B)--(F);
  \draw (C)--(F);
  \draw (D)--(F);
  \draw (E)--(F);

  \draw (F)--(G);
  \draw (F)--(H);
  \draw (F)--(I);
  \draw (F)--(J);

  \draw (G)--(K);
  \draw (H)--(K);
  \draw (I)--(K);
  \draw (J)--(K);
\end{tikzpicture}
\end{center}

Hence
\[
\operatorname{pr}_{\mathcal O^{ma}}
\operatorname{Ind}_{\mathfrak g_{\bar0}}^{\mathfrak g}
P_{\bar0}(00|00)
=
P^{0^21^2}(00|00)
\oplus
P^{0^21^2}(11|11).
\]

\medskip
\noindent
\textbf{Case $(10|01)$.}
\begin{align*}
e_1v_{(32|01)}
&=
v_{(31|01)}
+qv_{(32|02)},\\
e_0e_1v_{(32|01)}
&=
v_{(30|01)}
+qv_{(31|11)}
+qv_{(32|12)},\\
e_2e_0e_1v_{(32|01)}
&=
v_{(20|01)}
+qv_{(21|11)}
+qv_{(22|12)}
+qv_{(32|13)}.
\end{align*}

Applying $e_1$ once more gives
\begin{align*}
e_1e_2e_0e_1v_{(32|01)}
&=
v_{(10|01)}
+qv_{(20|02)}
+qv_{(11|11)}
+qv_{(21|21)}\\
&\qquad
+2q^2v_{(21|12)}
+qv_{(12|12)}
+q^3v_{(22|22)}\\
&\qquad
+qv_{(31|13)}
+q^2v_{(32|23)}\\
&=
u_{(10|01)}
+qu_{(20|02)}
+qu_{(21|21)}
+qu_{(12|12)}\\
&\qquad
+q^2u_{(21|12)}
+qu_{(31|13)}.
\end{align*}

\begin{center}
\begin{tikzpicture}[scale=0.9,every node/.style={transform shape}]
  \node (A) at (0,0) {$(32|23)$};
  \node (B) at (3,1) {$(31|13)$};
  \node (C) at (0,1) {$(22|22)$};

  \node (D) at (-1,2) {$(21|21)$};
  \node (E) at (1,2) {$(12|12)$};
  \node (F) at (3,2) {$(21|12)_2$};

  \node (G) at (0,3) {$(11|11)$};
  \node (H) at (3,3) {$(20|02)$};

  \node (I) at (0,4) {$(10|01)$};

  \draw (A)--(B);
  \draw (A)--(C);

  \draw (B)--(F);

  \draw (C)--(D);
  \draw (C)--(E);
  \draw (C)--(F);

  \draw (D)--(G);
  \draw (E)--(G);
  \draw (F)--(G);
  \draw (F)--(H);

  \draw (G)--(I);
  \draw (H)--(I);
\end{tikzpicture}
\end{center}

Hence
\[
\operatorname{pr}_{\mathcal O^{ma}}
\operatorname{Ind}_{\mathfrak g_{\bar0}}^{\mathfrak g}
P_{\bar0}(10|01)
=
P^{0^21^2}(10|01).
\]

\medskip
\noindent
\textbf{Case $(c0|0c)$, $c>1$.}
We first have
\[
e_0v_{(c+1,1|0,c)}
=
v_{(c+1,0|0,c)}
+qv_{(c+1,1|1,c)}.
\]
Applying $e_c$ gives
\begin{align*}
e_ce_0v_{(c+1,1|0,c)}
&=
v_{(c,0|0,c)}
+qv_{(c+1,0|0,c+1)}\\
&\qquad
+qv_{(c,1|1,c)}
+q^2v_{(c+1,1|1,c+1)}\\
&=
u_{(c0|0c)}
+qu_{(c+1,0|0,c+1)}.
\end{align*}

\begin{center}
\begin{tikzpicture}[scale=0.8,every node/.style={transform shape}]
  \node (A) at (4,0) {$(c+2,2\mid2,c+2)$};

  \node (B) at (1,1) {$(c+2,1\mid1,c+2)$};
  \node (C) at (7,1) {$(c+1,2\mid2,c+1)$};

  \node (D) at (4,2) {$(c+1,1\mid1,c+1)_2$};

  \node (E) at (1,3) {$(c+1,0\mid0,c+1)$};
  \node (F) at (7,3) {$(c,1\mid1,c)$};

  \node (G) at (4,4) {$(c,0\mid0,c)$};

  \draw (A)--(B);
  \draw (A)--(C);

  \draw (B)--(D);
  \draw (C)--(D);

  \draw (D)--(E);
  \draw (D)--(F);

  \draw (E)--(G);
  \draw (F)--(G);
\end{tikzpicture}
\end{center}

Hence
\[
\operatorname{pr}_{\mathcal O^{ma}}
\operatorname{Ind}_{\mathfrak g_{\bar0}}^{\mathfrak g}
P_{\bar0}(c0|0c)
=
P^{0^21^2}(c0|0c)
\oplus
P^{0^21^2}(c+1,0|0,c+1).
\]

\subsection{Computation of $s^2$-Kazhdan--Lusztig polynomials in atypicality two}
\label{subsec:gl22-s2-kl}
In this subsection, we assume that $\mathbf b=0101$ and
$\mathbf c=s^2$. We compute in the same manner as Cao and Lam
\cite{cao2016inversion}.

\medskip
\noindent
\textbf{Case $(0110)$.}
\begin{align*}
b_{(0110)}
&=
qv_{(0000)}
+v_{(0110)}
+q^2v_{(0011)}
+q^2v_{(1100)}\\
&\qquad
+q^4v_{(1001)}
+q^3v_{(1111)}\\
&=
qu_{(0000)}
+u_{(0110)}
+q^4u_{(1001)}.
\end{align*}

\medskip
\noindent
\textbf{Case $(1111)$.}
\begin{align*}
b_{(1111)}
&=
qv_{(1001)}
+q^2v_{(2002)}
+v_{(1111)}
+qv_{(2211)}\\
&\qquad
+q^3v_{(2112)}
+qv_{(2112)}
+qv_{(1122)}\\
&\qquad
+q^2v_{(2222)}
+q^2v_{(3113)}
+qv_{(3223)}\\
&=
qu_{(1001)}
+q^2u_{(2002)}
+u_{(1111)}
+q^3u_{(2112)}\\
&\qquad
+qu_{(2112)}
+q^2u_{(3113)}
+qu_{(3223)}.
\end{align*}

\medskip
\noindent
\textbf{Case $(c11c)$, $c>1$.}
\begin{align*}
b_{(c11c)}
&=
qv_{(c,0,0,c)}
+q^2v_{(c+1,0,0,c+1)}\\
&\qquad
+v_{(c,1,1,c)}
+qv_{(c+1,1,1,c+1)}\\
&=
qu_{(c,0,0,c)}
+q^2u_{(c+1,0,0,c+1)}\\
&\qquad
+u_{(c,1,1,c)}
+qu_{(c+1,1,1,c+1)}.
\end{align*}

\section{$\mathfrak{gl}(3|3)$ examples}
\label{app:gl33-examples}

\begin{example}\label{ex:e3-on-111000}

\[
\begin{aligned}
ev_{(111|000)}
&=
v_{(011|000)}
+qv_{(101|000)}
+q^2v_{(110|000)}
+q^3v_{(111|100)}
+q^4v_{(111|010)}
+q^5v_{(111|001)}.
\end{aligned}
\]

\[
\begin{aligned}
e^2v_{(111|000)}
&=
[2]\Bigl(
v_{(001|000)}
+qv_{(010|000)}
+q^2v_{(011|100)}
+q^3v_{(011|010)}
+q^4v_{(011|001)}
\\
&\quad
+q^2v_{(100|000)}
+q^3v_{(101|100)}
+q^4v_{(101|010)}
+q^5v_{(101|001)}
\\
&\quad
+q^4v_{(110|100)}
+q^5v_{(110|010)}
+q^6v_{(110|001)}
+q^6v_{(111|110)}
+q^7v_{(111|101)}
+q^8v_{(111|011)}
\Bigr).
\end{aligned}
\]

\[
\begin{aligned}
e^3v_{(111|000)}
&=
[3]!\Bigl(
v_{(000|000)}
+qv_{(001|100)}
+q^2v_{(001|010)}
+q^3v_{(001|001)}
\\
&\quad
+q^2v_{(010|100)}
+q^3v_{(010|010)}
+q^4v_{(010|001)}
+q^4v_{(011|110)}
\\
&\quad
+q^5v_{(011|101)}
+q^6v_{(011|011)}
+q^3v_{(100|100)}
+q^4v_{(100|010)}
\\
&\quad
+q^5v_{(100|001)}
+q^5v_{(101|110)}
+q^6v_{(101|101)}
+q^7v_{(101|011)}
\\
&\quad
+q^6v_{(110|110)}
+q^7v_{(110|101)}
+q^8v_{(110|011)}
+q^9v_{(111|111)}
\Bigr).
\end{aligned}
\]

Equivalently,
\[
e^3v_{(111|000)}
=
[3]!
\sum_{\substack{S\subset\{1,\ldots,6\}\\ |S|=3}}
q^{\sum_{i\in S}i-6}v_{(111|000)^S}.
\]
\end{example}

\begin{example}

\[
e_3v_{(654|123)}
=
v_{(653|123)}
+
qv_{(654|124)}.
\]

\[
e_2e_3v_{(654|123)}
=
v_{(652|123)}
+
qv_{(653|133)}
+
qv_{(654|134)}.
\]

\[
e_1e_2e_3v_{(654|123)}
=
v_{(651|123)}
+
qv_{(652|223)}
+
qv_{(653|233)}
+
qv_{(654|234)}.
\]

\[
e_4e_1e_2e_3v_{(654|123)}
=
v_{(641|123)}
+
qv_{(642|223)}
+
qv_{(643|233)}
+
qv_{(644|234)}
+
qv_{(654|235)}.
\]

\[
\begin{aligned}
e_3e_4e_1e_2e_3v_{(654|123)}
&=
v_{(631|123)}
+qv_{(641|124)}
+qv_{(632|223)}
+q^2v_{(642|224)} \\
&\quad
+qv_{(633|233)}
+qv_{(643|243)}
+2q^2v_{(643|234)}
+qv_{(634|234)} \\
&\quad
+q^3v_{(644|244)}
+qv_{(653|235)}
+q^2v_{(654|245)}.
\end{aligned}
\]

\[
\begin{aligned}
e_2e_3e_4e_1e_2e_3v_{(654|123)}
&=
v_{(621|123)}
+qv_{(622|223)}
+qv_{(623|233)}
+qv_{(624|234)} \\
&\quad
+qv_{(631|133)}
+2q^2v_{(632|233)}
+qv_{(632|323)}
+q^3v_{(633|333)} \\
&\quad
+q^2v_{(634|334)}
+qv_{(641|134)}
+3q^2v_{(642|234)} \\
&\quad
+qv_{(642|243)}
+qv_{(642|324)}
+2q^3v_{(643|334)}
+q^2v_{(643|343)} \\
&\quad
+q^3v_{(644|344)}
+qv_{(652|235)}
+q^2v_{(653|335)}
+q^2v_{(654|345)}.
\end{aligned}
\]

\[
\begin{aligned}
e_5e_2e_3e_4e_1e_2e_3v_{(654|123)}
&=
v_{(521|123)}
+qv_{(522|223)}
+qv_{(523|233)}
+qv_{(524|234)} \\
&\quad
+qv_{(531|133)}
+2q^2v_{(532|233)}
+qv_{(532|323)}
+q^3v_{(533|333)} \\
&\quad
+q^2v_{(534|334)}
+qv_{(541|134)}
+3q^2v_{(542|234)} \\
&\quad
+qv_{(542|243)}
+qv_{(542|324)}
+2q^3v_{(543|334)}
+q^2v_{(543|343)} \\
&\quad
+q^3v_{(544|344)}
+qv_{(552|235)}
+q^2v_{(553|335)}
+q^2v_{(554|345)} \\
&\quad
+qv_{(652|236)}
+q^2v_{(653|336)}
+q^2v_{(654|346)}.
\end{aligned}
\]

\[
\begin{aligned}
e_4e_5e_2e_3e_4e_1e_2e_3v_{(654|123)}
&=
v_{(421|123)}
+qv_{(422|223)}
+qv_{(423|233)}
+qv_{(424|234)} \\
&\quad
+qv_{(431|133)}
+2q^2v_{(432|233)}
+qv_{(432|323)}
+q^3v_{(433|333)} \\
&\quad
+q^2v_{(434|334)}
+qv_{(441|134)}
+3q^2v_{(442|234)} \\
&\quad
+qv_{(442|243)}
+qv_{(442|324)}
+2q^3v_{(443|334)}
+q^2v_{(443|343)} \\
&\quad
+q^3v_{(444|344)}
+qv_{(452|235)}
+q^2v_{(453|335)}
+q^2v_{(454|345)} \\
&\quad
+qv_{(524|235)}
+q^2v_{(534|335)}
+qv_{(541|135)} \\
&\quad
+4q^2v_{(542|235)}
+qv_{(542|253)}
+qv_{(542|325)} \\
&\quad
+3q^3v_{(543|335)}
+q^2v_{(543|353)}
+2q^3v_{(544|345)} \\
&\quad
+q^2v_{(544|354)}
+q^3v_{(554|355)}
+qv_{(642|236)} \\
&\quad
+q^2v_{(643|336)}
+q^2v_{(644|346)}
+q^2v_{(654|356)}.
\end{aligned}
\]

\[
\begin{aligned}
e_3e_4e_5e_2e_3e_4e_1e_2e_3v_{(654|123)}
&=
v_{(321|123)}
+qv_{(322|223)}
+qv_{(323|233)}
+qv_{(324|234)} \\
&\quad
+qv_{(331|133)}
+2q^2v_{(332|233)}
+qv_{(332|323)}
+q^3v_{(333|333)} \\
&\quad
+q^2v_{(334|334)}
+qv_{(341|134)}
+3q^2v_{(342|234)} \\
&\quad
+qv_{(342|243)}
+qv_{(342|324)}
+2q^3v_{(343|334)}
+q^2v_{(343|343)} \\
&\quad
+q^3v_{(344|344)}
+qv_{(352|235)}
+q^2v_{(353|335)}
+q^2v_{(354|345)} \\
&\quad
+qv_{(421|124)}
+q^2v_{(422|224)}
+2q^2v_{(423|234)}
+qv_{(423|243)} \\
&\quad
+q^3v_{(424|244)}
+2q^2v_{(431|134)}
+qv_{(431|143)}
+5q^3v_{(432|234)} \\
&\quad
+3q^2v_{(432|243)}
+2q^2v_{(432|324)}
+qv_{(432|423)} \\
&\quad
+(q^2+3q^4)v_{(433|334)}
+2q^3v_{(433|343)}
+q^2v_{(433|433)} \\
&\quad
+2q^4v_{(434|344)}
+q^3v_{(434|434)}
+q^3v_{(441|144)} \\
&\quad
+(q^2+3q^4)v_{(442|244)}
+q^3v_{(442|424)} \\
&\quad
+(q^3+3q^5)v_{(443|344)}
+2q^4v_{(443|434)}
+q^3v_{(443|443)} \\
&\quad
+q^6v_{(444|444)}
+q^2v_{(452|245)}
+2q^3v_{(453|345)}
+q^2v_{(453|435)} \\
&\quad
+q^4v_{(454|445)}
+qv_{(523|235)}
+q^2v_{(524|245)}
+qv_{(531|135)} \\
&\quad
+4q^2v_{(532|235)}
+qv_{(532|253)}
+qv_{(532|325)}
+(q+3q^3)v_{(533|335)} \\
&\quad
+q^2v_{(533|353)}
+3q^3v_{(534|345)}
+q^2v_{(534|354)}
+q^2v_{(534|435)} \\
&\quad
+q^2v_{(541|145)}
+4q^3v_{(542|245)}
+q^2v_{(542|254)}
+q^2v_{(542|425)} \\
&\quad
+5q^4v_{(543|345)}
+2q^3v_{(543|354)}
+3q^3v_{(543|435)}
+q^2v_{(543|453)} \\
&\quad
+2q^5v_{(544|445)}
+q^4v_{(544|454)}
+q^3v_{(553|355)}
+q^4v_{(554|455)} \\
&\quad
+qv_{(632|236)}
+q^2v_{(633|336)}
+q^2v_{(634|346)}
+q^2v_{(642|246)} \\
&\quad
+2q^3v_{(643|346)}
+q^2v_{(643|436)}
+q^4v_{(644|446)} \\
&\quad
+q^2v_{(653|356)}
+q^3v_{(654|456)}.
\end{aligned}
\]

\[
\begin{aligned}
e_3e_4e_5e_2e_3e_4e_1e_2e_3v_{(654|123)}
&=
u_{(321|123)}
+qu_{(331|133)}
+q^2u_{(332|233)}
+qu_{(341|134)} \\
&\quad
+2q^2u_{(342|234)}
+qu_{(342|243)}
+qu_{(352|235)}
+qu_{(421|124)} \\
&\quad
+2q^2u_{(431|134)}
+qu_{(431|143)}
+3q^3u_{(432|234)}
+2q^2u_{(432|243)} \\
&\quad
+q^2u_{(433|334)}
+q^3u_{(441|144)}
+(q^2+2q^4)u_{(442|244)}
+q^5u_{(443|344)} \\
&\quad
+q^2u_{(452|245)}
+q^3u_{(453|345)}
+qu_{(531|135)}
+3q^2u_{(532|235)} \\
&\quad
+qu_{(532|253)}
+qu_{(533|335)}
+q^2u_{(541|145)}
+3q^3u_{(542|245)} \\
&\quad
+q^2u_{(542|254)}
+2q^4u_{(543|345)}
+q^3u_{(543|354)}
+q^3u_{(553|355)} \\
&\quad
+qu_{(632|236)}
+q^2u_{(642|246)}
+q^3u_{(643|346)}
+q^2u_{(653|356)} \\
&\quad
+qv_{(323|233)}
+qv_{(324|234)}
+qv_{(332|323)}
+q^2v_{(334|334)} \\
&\quad
+qv_{(342|324)}
+q^3v_{(344|344)}
+q^2v_{(354|345)}
+2q^2v_{(423|234)} \\
&\quad
+qv_{(423|243)}
+q^3v_{(424|244)}
+2q^2v_{(432|324)}
+qv_{(432|423)} \\
&\quad
+q^2v_{(433|433)}
+2q^4v_{(434|344)}
+q^3v_{(442|424)}
+2q^4v_{(443|434)} \\
&\quad
+q^3v_{(443|443)}
+q^2v_{(453|435)}
+qv_{(523|235)}
+q^2v_{(524|245)} \\
&\quad
+qv_{(532|325)}
+3q^3v_{(534|345)}
+q^2v_{(534|354)}
+q^2v_{(542|425)} \\
&\quad
+3q^3v_{(543|435)}
+q^2v_{(543|453)}
+q^2v_{(634|346)}
+q^2v_{(643|436)}.
\end{aligned}
\]

\end{example}

\begin{example}
    \[
e_0v_{(531|024)}
=
v_{(530|024)}+qv_{(531|124)}.
\]

\[
e_2e_0v_{(531|024)}
=
v_{(520|024)}
+qv_{(531|034)}
+qv_{(521|124)}
+q^2v_{(531|134)}.
\]

\[
\begin{aligned}
e_4e_2e_0v_{(531|024)}
&=
v_{(420|024)}
+qv_{(530|025)}
+qv_{(430|034)}
+q^2v_{(530|035)} \\
&
+qv_{(421|124)}
+q^2v_{(531|125)}
+q^2v_{(431|134)}
+q^3v_{(531|135)}.
\end{aligned}
\]
\[
\begin{aligned}
&=
u_{(420|024)}
+qu_{(530|025)}
+qu_{(430|034)}
+q^2u_{(530|035)}
\end{aligned}
\]
\end{example}

\bibliographystyle{alpha}
\bibliography{refs}

\bigskip

\noindent
\textsc{Shunsuke Hirota} \\
\textsc{Department of Mathematics, Kyoto University} \\
Kitashirakawa Oiwake-cho, Sakyo-ku, 606-8502, Kyoto \\
\textit{E-mail address}: \href{mailto:hirota.shunsuke.48s@st.kyoto-u.ac.jp}{shun299509732@gmail.com}

\end{document}